\font\elevensf=cmss10 scaled\magstephalf
\newtheorem{theorem} {{\elevensf THEOREM}}[section]
\newtheorem{lemma} {{\elevensf LEMMA}}[section]
\newtheorem{corollary} {{\elevensf COROLLARY}}[section]
\newtheorem{example} {{\elevensf EXAMPLE}}[section]
\newtheorem{remark} {{\elevensf REMARK}}[section]
\newtheorem{definition} {{\elevensf DEFINITION}}[section]
\newcommand{\bpr}{\begin{proof}}
\newcommand{\epr}{\end{proof}}
\newcommand{\lb}{\left(}
\newcommand{\rb}{\right)}
\newcommand{\A}{\chi_{{\omega}_{A^{\epsilon}}}(x)}
\newcommand{\AC}{\chi_{{\omega}_{A^{\epsilon}}}(x_1)}
\newcommand{\B}{\chi_{{\omega}_{B^{\epsilon}}}(x)}
\newcommand{\AB}{\chi_{{\omega}_{A^{\epsilon}} \cap {\omega}_{B^{\epsilon}}} (x)}
\renewcommand\qed{$\blacksquare$}
\def\CC{{\rm \kern.24em \vrule width.02em height1.4ex depth-.05ex \kern-.26emC}}
\def\TagOnRight
\def\AA{{it I} \hskip-3pt{\tt A}}
\def\QQ{\rlap {\raise 0.4ex \hbox{$\scriptscriptstyle |$}} {\hskip -0.1em Q}}
\def\theequation{\@arabic{\c@section}.\@arabic{\c@equation}}
\begin{document}

\baselineskip 14pt
\parindent.4in
\catcode`\@=11

\begin{center}

{\Huge \bf Convergence Relative to a Microstructure: Properties, Optimal
Bounds and Application} \\[5mm]

{\bf Tuhin GHOSH and Muthusamy VANNINATHAN }\\[4mm]

\textit{Centre for Applicable Mathematics, Tata Institute of Fundamental Research, India.}\\[2mm]

Email : \textit{tuhin@math.tifrbng.res.in\ ,\ vanni@math.tifrbng.res.in }

\end{center} 

\begin{abstract}
\noindent
In this work, we study a new notion involving convergence of microstructures
represented by matrices $B^\epsilon$ related to the classical $H$-convergence of
$A^\epsilon$. It incorporates the interaction between the two microstructures. This
work is about its effects on various aspects : existence,
examples, optimal bounds on emerging macro quantities, application etc. Five among
them are highlighted below : $(1)$ The usual arguments based on translated inequality, 
$H$-measures, Compensated Compactness etc for obtaining optimal bounds are
not enough. Additional compactness properties are needed. $(2)$ Assuming  two-phase microstructures, the bounds define naturally four optimal regions in the phase space of macro quantities. The classically known single region in the self-interacting case , namely $B^\epsilon= A^\epsilon$ can be recovered from them, a result that indicates we are dealing with a true extension of the $\mathcal{G}$-closure problem.
$(3)$ Optimality of the bounds is not immediate because of (a priori)
non-commutativity of macro-matrices, an issue not present in the
self-interacting case. Somewhat surprisingly though, commutativity follows
a posteriori. $(4)$ From the application to ``Optimal Oscillation-Dissipation Problems'', it emerges that oscillations and dissipation can co-exist
optimally and the microstructures behind them need not be the same though
they are closely linked. Furthermore, optimizers are found among $N$-rank
laminates with interfaces. This is a new feature. $(5)$ Explicit computations in the case of canonical microstructures are performed, in which we make use of $H$-measure in a novel way. 
\end{abstract}
\vskip .5cm\noindent
{\bf Keywords:} Homogenization, Optimal Design Problem, Optimal Oscillation Dissipation Problem, 
Compensated Compactness, $H$-measure, Optimal bounds. 
\vskip .5cm
\noindent
{\bf Mathematics Subject Classification:} 35B; 49J; 62K05; 78M40; 74Q20; 78A48

\section{Introduction}
\setcounter{equation}{0}
In this work, we are concerned with a new notion of convergence related to the classical $H$-convergence of the homogenization theory.
We begin by recalling the notion of $H$-convergence \cite{T}. Let $\mathcal{M}(\alpha, \beta;\Omega)$ with
$0<\alpha<\beta$ denote the set of all real $N\times N$ symmetric matrices $A(x)$ of functions defined almost
everywhere on a bounded open subset $\Omega$ of $\mathbb{R}^N$ such that if $A(x)=[a_{kl}(x)]_{1\leq k,l\leq N}$ then 
\begin{equation*} a_{kl}(x)=a_{lk}(x)\ \forall l, k=1,..,N \ \mbox{and }\   (A(x)\xi,\xi)\geq \alpha|\xi|^2,\ |A(x)\xi|\leq \beta|\xi|,\  \forall\xi \in \mathbb{R}^N,\ \mbox{ a.e. }x\in\Omega.\end{equation*}
Let $A^\epsilon$ and $A^{*}$ belong to $\mathcal{M}(a_1,a_2, \Omega)$ with $0 < a_1 < a_2$. We say $A^\epsilon \xrightarrow{H} A^{*}$ or $H$-converges to 
a homogenized matrix $A^{*}$, if $A^\epsilon\nabla u^\epsilon \rightharpoonup A^{*}\nabla u$ in $L^2(\Omega)$ weak, for all test sequences $u^\epsilon$ satisfying 
\begin{equation}\begin{aligned}\label{ad13}
u^{\epsilon} &\rightharpoonup u \quad\mbox{weakly in }H^1(\Omega)\\
-div(A^\epsilon\mathbb\nabla u^\epsilon(x))& \mbox{ is strongly convergent in } H^{-1}(\Omega).
\end{aligned}\end{equation}
Convergence of the canonical energy densities follows as a consequence:
\begin{equation}\label{ad12} A^\epsilon\nabla u^\epsilon\cdot\nabla u^\epsilon  \rightharpoonup  A^{*}\nabla u\cdot\nabla u \mbox{ in } \mathcal{D}^\prime(\Omega). \end{equation}
Further, total energies over $\Omega$ converge if $u^\epsilon$ and $u$, for example, lie in $H^1_0(\Omega)$ :
\begin{equation}\label{ad15} \int_\Omega A^\epsilon\nabla u^\epsilon\cdot\nabla u^\epsilon dx  \rightarrow \int_\Omega A^{*}\nabla u\cdot\nabla u\ dx. \end{equation}

In this work, we are concerned with other oscillating quadratic energy densities; more precisely, let
us take another sequence of matrices $B^\epsilon$ and consider the corresponding energy density :
\begin{equation}\label{bs14}
B^\epsilon \nabla u^\epsilon\cdot \nabla u^\epsilon
\end{equation}
and study its behaviour as $\epsilon$ tends to zero. Just like \eqref{ad12}, 
we may expect the appearance of new macro quantities in its weak limit. This motivates
the following notion:
\begin{definition}\label{sid}
Let $A^\epsilon$ $H$-converge to $A^{*}$. Let $B^\epsilon$ 
be given in $\mathcal{M}(b_1,b_2;\Omega)$ where $0 < b_1 < b_2$. 
We say $B^\epsilon$  converges to the matrix of functions $B^{\#}(x)$ relative to $A^\epsilon$ 
(denoted $B^\epsilon\xrightarrow{A^\epsilon}B^{\#}$) if for all test sequences $u^\epsilon$ satisfying \eqref{ad13} we have 
\begin{equation}\label{ad17} B^\epsilon\nabla u^\epsilon\cdot\nabla u^\epsilon  \rightharpoonup  B^{\#}\nabla u\cdot\nabla u \mbox{ in } \mathcal{D}^\prime(\Omega). \end{equation}
\end{definition}
\noindent
Further, analogous to \eqref{ad15}, we have as a consequence 
\begin{equation*} \int_\Omega B^\epsilon\nabla u^\epsilon\cdot\nabla u^\epsilon dx  \rightarrow \int_\Omega B^{\#}\nabla u\cdot\nabla u dx,
\end{equation*}
if $ u^\epsilon$ and $u$ belong to $H^1_0(\Omega)$ (see Remark \ref{ub19}). 
We will also show $B^{\#}\in\mathcal{M}(b_1,\widetilde{b_2};\Omega)$ with $\widetilde{b_2}=b_2\frac{a_2}{a_1}\geq b_2$ (see \eqref{Sd3}).\\
\\
The significance of the limit $B^{\#}$ in the context of Calculus of Variations will be clear in the application in Section \ref{qw10}.
In some sense, $B^{\#}$ can be interpreted as macro coefficients associated with the optimal dissipation of energy of oscillations associated with $A^{*}$.

The main characteristic feature behind $B^{\#}$ is the interaction between microstructures of $A^\epsilon$ and $B^\epsilon$
through the triple product of oscillating quantities \eqref{bs14}. The case $A^\epsilon=B^\epsilon$is classical and is
referred to as self- interacting case. In this case, comparing \eqref{ad12} with \eqref{ad17}, we have  $B^{\#}=A^{*}$ and thus the new notion of relative convergence extends $H$-convergence. 
We may say that $H$-convergence is based on both flux-energy convergence whereas the new notion is energy-based. 
(In general the weak limit of $B^\epsilon\nabla u^\epsilon$ is not equal to $B^{\#}\nabla u$). It is 
worthwhile to note that $B^{\#}$ is nontrivial and interesting even if $B^\epsilon$ is 
independent of $\epsilon$. On the other hand, if $A^\epsilon$ is independent of $\epsilon$
then $B^{\#}$ is somewhat trivial because it coincides with the weak limit $\overline{B}$
of $B^\epsilon$. Since $B^{*}$ ($H$-limit of $B^\epsilon$) is defined using solely the
canonical oscillating test functions of $B^\epsilon$ (not involving those of $A^\epsilon$) $B^{\#}$ is in general different
from $B^{*}$. In fact, we give an expression of $B^{\#}$ using the oscillating test 
functions associated to $A^\epsilon$ in the next section (see Corollary \ref{Sd14}) 
which will show that $B^{\#}$ and $B^{*}$ need not be the same. 

There is another type of interaction between $A^\epsilon$ and $B^\epsilon$.
Indeed, it is possible to associate with $B^\epsilon$ macro quantities $B^{\#}_{FL}$
describing effective behaviour of fluxes relative to the given microstructure $A^\epsilon$; 
More precisely, $B^{\#}_{FL}$ possesses the following property : 
\begin{equation*}B^\epsilon\nabla u^\epsilon \rightharpoonup B^{\#}_{FL}\nabla u\ \mbox{ in } L^2(\Omega) \mbox{ weak, } \end{equation*}
for all test sequences $u^\epsilon$ satisfying \eqref{ad13}.
Existence of such macro coefficients is obtained using the
correctors associated with $A^\epsilon$ (cf. Remark \ref{ad16}). 
It is immediately seen that $B^{\#}_{FL}$ coincides with $A^{*}$ if 
$B^\epsilon = A^\epsilon$ and on the other hand if $B^\epsilon=B$ is independent of $\epsilon$, 
then $B^{\#}_{FL} = B$. Unfortunately, the above matrix $B^{\#}_{FL}$ is not capable of describing
the macro behaviour of energy density $B^\epsilon \nabla u^\epsilon \cdot \nabla  u^\epsilon$; 
that is, we cannot say that the weak limit of $B^\epsilon \nabla u^\epsilon\cdot \nabla u^\epsilon$ 
is  given by $B^{\#}_{FL} \nabla u\cdot \nabla u$. Since the applications that we envisage in Calculus of Variations involve energy
densities in their objective functionals, $B^{\#}_{FL}$  does not seem to play any role in the examples
of Section \ref{qw5}. In contrast, $B^{\#}$ will play a role, as we shall see below. For more elaborate 
study of $B^{\#}_{FL}$, see \cite{TG-MV}. In general, $B^{\#}$ is different from $B^{\#}_{FL}$, even for
simple microstructures as shown below (see Remark \ref{ot10}). This can also be seen in more general terms 
: $B^{\#}_{FL}$ is born out of interaction between $B^\epsilon$ and the canonical oscillations associated with
$A^\epsilon$ (namely, the correctors) in a linear fashion whereas $B^{\#}$ is an outcome of quadratic interaction
with the correctors of $A^\epsilon$. This entire article is concerned with the effects of this later interaction.

There are advantages in separating the macro behaviour of flux density and energy density which are ``mixed'' in $H$-convergence. For one thing, we can take $B^\epsilon$ more general and this has consequences in Calculus of Variations as shown in Section \ref{qw5}.  

Our first result is a compactness result proving the existence of $B^{\#}$. (See Theorem \ref{bs8}) The
main difference with $H$-limit is that the oscillations in $\nabla u^\epsilon$ are
restricted by conditions \eqref{ad13} in which $A^\epsilon$ appears and $B^\epsilon$ does not. 
The proof introduces adjoint state associated with the constraint \eqref{ad13}. Repeated application
of div-curl lemma then enables us to prove the nice behaviour of the energy density \eqref{ad17}. This
result implicit in \cite{KP,KV} is given a quick proof by different arguments. The point to note is that
even if the entire sequence $A^\epsilon$ $H$-converges to $A^{*}$ and $B^\epsilon = B$ is independent of $\epsilon$, 
there may be several limit points $B^{\#}$ (see \eqref{FL19}). All such limit points lie on the fibre over $A^{*}$, an object defined in \eqref{kfab}.

Basic objectives in the theory of Homogenization include not only to show the
existence of the homogenized limit $A^{*}$ but also establish optimal bounds on
it independent of microgeometric details along with application to optimal design problem (ODP). This has 
been completely accomplished in the case of two-phase composites \cite{A,GB,LC,ML1,T}.
We intend to develop a similar program for the new object $B^{\#}$. Our main assumption
in this task will be that $A^\epsilon$ and $B^\epsilon$ have two phases with prescribed volume proportions :
\begin{equation*} A^{\epsilon}(x) = a^{\epsilon}(x)I\ \mbox{ where, }\ a^{\epsilon}(x)= a_1\A +a_2 (1-\A)\ \mbox{ a.e. in }\ x\in\Omega.\end{equation*}
$\A$ defines the microstructures in the medium, with $\A\rightharpoonup \theta_A(x)$ in $L^{\infty}$ weak* topology, $\theta_A(x)$ satisfying 
$0\leq \theta_A(x)\leq 1$ a.e. $x\in\Omega$ defines the local proportion for the component $a_1 I$ in $\Omega$ (or, $(1-\theta_A(x))$ for the component $a_2 I$). 
\begin{equation*} B^{\epsilon}(x)= b^{\epsilon}(x)I\ \mbox{ where }\ b^{\epsilon}(x)= b_1\B + b_2 (1-\B)\ \mbox{ a.e. in }\ x\in\Omega.\end{equation*}
and we assume that $\B\rightharpoonup \theta_B(x)$ in $L^{\infty}$ weak* topology, $\theta_B(x)$ satisfying $0\leq \theta_B(x)\leq 1$ a.e. $x\in\Omega$
defines the local proportion for the component $b_1 I$ in $\Omega$ (or, $(1-\theta_B(x))$ for the component $b_2 I$).

Before describing our results, let us cite few situations where interactions of similar kind arise and the above notion and our results may be of interest. 

From the works of Busse, Howard, Doering and Constantin cited in \cite{BUS,DCR}, we see that there is a general interest of averaging
as many observables as possible in complex/oscillating systems and deriving bounds on the emerging macro parameters independent of the
details of the complexities. Some of these observables have the triple product structure as in \eqref{bs14}. In the present context, the
model complex system is defined by $A^\epsilon$ via the conditions \eqref{ad13}. Thanks to the theory of $H$-convergence, we could study the 
behaviour of the canonical energy density \eqref{ad12} which corresponds to self- interacting case. The $H$-limit $A^{*}$ describes its behaviour. 
Estimates on $A^{*}$ constitute one of the celebrated results in the theory (see the references \cite{A,GB,LC,ML1,T}). Within the linear framework, 
other basic quantities of interest are quadratic densitites of the form \eqref{bs14} for the same system \eqref{ad13}. Since $(A^{*},B^{\#})$ are the macro quantities in their weak limits, it is natural to seek optimal bounds on $(A^{*},B^{\#})$ independent of microstructural details. Such bounds are stated in 
Section \ref{ad18} and this article is devoted to their proof and applications. 

Secondly, let us cite the example of an Optimal Control Problem (OCP) which is a linear-quadratic regulator problem in which both the state
equation and the cost functional involves finely oscillating coefficients $A^\epsilon$ and $B^\epsilon$ respectively \cite{KP,KV}. In such a
case, there is an interaction between the cost functional and the state equation through their microstructures. The limit homogenized problem 
is again an OCP of similar type with $(A^{*},B^{\#})$ replacing $(A^\epsilon,B^\epsilon)$ respectively. This shows that $B^{\#}$ is an inevitable
macro quantity appearing in such fundamental optimization processes. 

Next, let us mention the question of distinguishing two microstructures admitting the same homogenized matrix $A^{*}$.
(Such questions could be of interest in Inverse Problems). Since $B^{\#}$ incorporates some features of the microstructures
underlying $A^\epsilon$ through the interaction of $A^\epsilon$ and $B^\epsilon$, the idea is to exploit it in the process of
distinction. Thus, let us consider two sequences $A^\epsilon$ and $A^\epsilon_1$ admitting the same $H$-limit $A^{*}$. Further,
we assume both $A^\epsilon$ and $A^\epsilon_1$ have same two-phases $(a_1,a_2)$ with the same local volume proportions $(\theta_A(x)
,(1-\theta_A(x))$. Their microgeometries need not however be the same. The question of interest is the following : Can one distinguish
the microgeometries of $A^\epsilon$ and $A^\epsilon_1$ by means of the macro quantity $B^{\#}$ for some suitable choice of $B^\epsilon$ ?

Let $B^\epsilon = B$ be independent of $\epsilon$, that is, $B^\epsilon$ admits no microstructure.
Suppose  $B\xrightarrow{A^\epsilon} B^{\#}$ and $B\xrightarrow{A^\epsilon_1} B^{\#}_1$ relative to 
$A^\epsilon$ and $A^\epsilon_1$ respectively. Then it can be shown that $B^{\#}
= B^{\#}_1$. Indeed, in this case, $B^{\#}$ can be expressed as a (complicated) function of ${A^{*},B,
\theta_A,a_1,a_2}$ (see Remark \ref{ad19}). Thus distinction is not possible in this case. The next natural
step is to take $B^\epsilon$ with two-phase microstructure as a diagnostic tool to distinguish between the 
two microgeometries of $A^\epsilon$ and $A^\epsilon_1$. In this case, we can indeed choose a microstructure 
$B^\epsilon$ such that $B^{\#}$ and $B^{\#}_1$ are different (see Remark \ref{Sd12}). 

As the fourth situation, let us mention one novel application to Calculus of Variations 
which is carried out in this paper (see Section \ref{qw5}). Our bounds in Section \ref{ad18} 
will be used in full force.  Application of $H$-Convergence to Calculus of Variations is classical 
and relaxed solutions are described in terms of $H$-limits $A^{*}$ \cite{A,CHA,MT1,GB,LC,ML1}.
The objective functional in this class of problems are defined on single characteristic function
$\chi_{\omega_A}$ and depends only on the state but not on its gradient.  The usefulness of the 
$H$-convergence in such problems is well-known; it provides a method to obtain relaxed version of
the problem \cite{A,CHA,MT1}. Indeed, homogenization limits and optimal bounds on them describe all relaxed microstructures
along with the constraints on them. Minimizers are found among them and further one can write down the optimality
conditions at minimizers. For various methods of relaxation, one may refer to \cite{KS1,KS2,KS3}. 

Our objective functional which is minimized involves the gradient of the state with variable coefficients. 
More precisely, it is
\begin{equation*}\int_{\Omega}B(x)\nabla u\cdot \nabla u\ dx\ \ \ (cf.\eqref{ub11}). \end{equation*}
The case $B(x) = I$ is treated in the literature. Its relaxation will involve the functional  
$\int_\Omega I^{\#}\nabla u\cdot \nabla u$ and optimal bounds on it.  See \cite{GRB,LV}; One of our objectives in this
work is to derive stronger results which yield optimal point-wise bounds on such energy densities
(cf.\eqref{bs1},\eqref{Sd11}) and we get thereby the relaxed objective functional (see Section \ref{qw5}).
Earlier work was done without formally introducing $I^{\#}$; introduction of $I^{\#}$ simplifies somewhat the relaxation process. 

We now consider the case $B(x)\neq I$; More precisely we assume $B(x)$ has $2$-phases given by 
$B(x)= b_1\chi_{\omega_B}(x) +b_2(1-\chi_{\omega_B}(x))$. Now the above functional depends on the
pair of characteristic functions $(\chi_{\omega_A},\chi_{\omega_B})$ and we minimize over them. Such
problems are called Optimal Oscillation-Dissipation Problems because oscillations are present in the gradient fields of the state and we try to minimize/dissipate their energy although in a non-uniform way via $B(x)$. Interacting microstructures naturally appear in the minimization process.
Following interpretation may be given to this type of problems : when $B(x) = I$, that is when $\chi_{\omega_B}
\equiv 1$, one seeks a generalized microstructure on which
the energy of the oscillation of gradient of the state is suppressed/dissipated to the extent possible within the 
constraints. Since $B(x)=I$, this dissipation occurs uniformly in the space. Anticipating non-uniformity in the
energy dissipation in heterogeneous media, it is more natural to admit a variable matrix $B(x)$ : roughly, larger
value of $B(x)$ means smaller energy of the oscillations in the gradient. Furthermore, by minimizing with respect
to $(\chi_{\omega_A},\chi_{\omega_B})$, we allow the system to choose its own ``conductivity matrix'' $A^{*}$ and
an associated inverse ``dissipative matrix''$B^{\#}$.  Basic difficulty in such Optimal Oscillation-Dissipation
Problems is as follows : To minimize the objective functional, ideally speaking, we would like to have minimal
value of $B$ in regions of large gradient of the state. The difficulty is that such regions can be large and 
are not known a priori. Moreover, the minimal value material in $B$ can have small volume. Such issues make
the problem somewhat nontrivial. As usual, one does not expect ``classical solution'' to this problem and
we need to relax it. The new notion of relative limit $B^{\#}$ and the bounds on it stated in Section \ref{ad18} 
are needed to formulate the relaxed version. 

Minimizers for this problem would be pairs $(A^{*},B^{\#})$ in which $A^{*}$ captures oscillatory effects and $B^{\#}$ describes 
dissipation effects. Both these effects co-exist optimally in the problem. 
Quite remarkably, minimizers are found among $N$-rank laminates across which
the core-matrix values get switched. This seems to be a new phenomenon when
compared with the fact that minimizers for the classical ODP usually do
not have such interfaces. It may be mentioned that there are other problems of the type min-max for which our results apply. See Remark \ref{bs6}.

As the last situation, we mention the following extension of the classical \textit{G-closure} problem for conductivities : finding all possible conductivities at a given location in the physical domain when two conductors are mixed
in a given volume proportion $(\theta_A,1-\theta_A)$. The set is classically denoted as $\mathcal{G}_{\theta_A}$.  Its extension incorporating  interaction between $A^\epsilon$ and $B^\epsilon$ is as follows : Given $\{a_1,a_2,\theta_A\}$, $\{b_1,b_2,\theta_B\}$, find optimal bounds on $(A^{*},B^{\#})$ independent of microgeometric details of $A^\epsilon$ and $B^\epsilon$. They can depend on $A^{*}$, but otherwise independent of microgeometric details of $A^\epsilon$. When $B^\epsilon = A^\epsilon$, we know that $B^{\#} = A^{*}$ and so the later problem obviously extends the first one. In this work, we solve the extended problem, namely, we estimate $(A^{*},B^{\#})$ independent of
microstructures in terms of $\{a_1,a_2, \theta_A\}$, $\{b_1,b_2,\theta_B\}$.
As an easy example, we can choose $B^\epsilon$ to be any function of $A^\epsilon$ without altering its microstructure. In this context, we cite \cite{AM} in which a different problem involving $A^\epsilon,B^\epsilon$ with the same underlying microstructure is considered without introducing $B^{\#}$. Obviously, more challenging case is to deal with the change of microstructures; namely that $A^\epsilon$ and $B^\epsilon$ have different microstructures. While this case is treated completely here, the more difficult problem of deriving optimal bounds on $B^{\#}$ in terms of $(a_1,a_2,\theta_A),(b_1,b_2,\theta_B)$, 
independent of microstructures and without involving $A^{*}$ is left open. 

Next, we highlight some of main points of our work. 

First point is concerned with the proof of optimal bounds on $(A^{*},B^{\#})$ assuming
that $B^\epsilon$ and $A^\epsilon$ have two-phases. This is an interesting mathematical
challenge because it involves the study of interaction of two microstructures $A^\epsilon$ and $B^\epsilon$. We need to choose a suitable method capable of handling interacting microstructures. Of various methods available to obtain optimal bounds on $A^{*}$, we found that the method of using the combination of translated inequality and
Compensated Compactness Theory along with $H$-measure \cite{T,T1} can be extended to include interaction between microstructures and establish the required bounds on $(A^{*},B^{\#})$.

Two key elements of the method are important : one is the choice of a
suitable macro field \eqref{abc1} and the associated oscillating gradient field
satisfying the basic compactness condition \eqref{ad13}. This will be used to test the appropriate translated inequality. The self-interacting case suggests the use of the same  oscillating field for both $A^{*}$ and $(A^{*},B^{\#})$.

The second element is the introduction of a suitable oscillating system
with differential constraints to bound tightly the so-called $H$-measure
term. Due to interaction of microstructures, this step is complicated and
we need additional compactness \eqref{eiz} to carry it out; mere \eqref{ad13} is not enough. The fundamental problem is that generally compactness suppresses
oscillation. Therefore the question arises whether the oscillating field
satisfying all the previous requirements has also the additional
compactness. This is a delicate point. While the oscillation around \eqref{abc1}
exists along with the compactness \eqref{ad13} for all $A^{*}$, the additional
compactness holds only for some special $A^{*}$. In fact our results show that
for $A^{*}$ in the interior of the phase region, such additional compactness
does not hold. Fortunately, for $A^{*}$ on the boundary of the phase region, it
does hold and it is a consequence of optimality nature of such $A^{*}$. Thus
both additional compactness and the usual oscillations co-exist for $A^{*}$
belonging to boundary. Using this property of the system,we are able to
deduce optimal lower bound on $(A^{*},B^{\#})$ if  $A^{*}$ lies on boundary of the region. Exploiting the phase space structure of $A^{*}$, same estimates on
$(A^{*},B^{\#})$ are later extended  if $A^{*}$ lies in the interior by other means.

The second point is about the bounds themselves. Estimates on $(A^{*},B^{\#})$ keeping $\{a_1,a_2,\theta_A\}$, $\{b_1,b_2,\theta_B\}$ fixed are quantified
in the form of inequalities in Section \ref{ad18}. Naturally, the analysis divides 
the physical domain $\Omega$ into four sub-domains denoted as $\Omega_{(Li,Uj)}$ $i,j=1,2$ (see \eqref{FL1}) with interfaces separating them depending on local volume proportions $\{\theta_A,\theta_B\}$. Figures showing these regions are included in Section \ref{ad18}. These estimates seem complicated. We arrived at them via a two-pronged strategy : on one hand, computing $B^{\#}$
for canonical microstructures of $A^\epsilon$ and on the other, trying to produce a rigorous proof. In our procedure of proving optimal estimates, we go though several intermediate easy cases before taking up the general case pointing out
the difficulties. The estimates obtained in the four cases are labeled as $L1$, $L2$, $U1$, $U2$. They define four optimal regions denoted as $(Li,Uj)$ in the phase space of macro parameters $(A^{*},B^{\#})$, whose union is denoted as 
$\mathcal{G}_{(\theta_A,\theta_B)}$. This constitutes an important structural change in the product space $(A^{*},B^{\#})$ when compared with the single known region $\mathcal{G}_{\theta_A}$ in the $A^{*}$-space. It is instructive to imagine $\mathcal{G}_{(\theta_A,\theta_B)}$ consisting of pairs $(A^{*},B^{\#})$ in which $B^{\#}$ is in a fibre sitting over each $A^{*}\in\mathcal{G}_{\theta_A}$. While each fibre is convex (see Remark \ref{qw1}), it is not clear what kind of other convexity properties each region in  $\mathcal{G}_{(\theta_A,\theta_B)}$ enjoys.  The link between the regions $\mathcal{G}_{(\theta_A,\theta_B)}$ and  $\mathcal{G}_{\theta_A}$ is interesting. In the self-interacting case, namely if $A^\epsilon = B^\epsilon$, the bounds L1 and L2 coincide with the well-known lower and upper bounds on $A^{*}$ respectively, thus we recover $\mathcal{G}_{\theta_A}$ from them. On the other hand, the bounds U1, U2 on $A^{*}$ do not seem to have any special significance in the classical phase space. 
(For details, see Remark \ref{eiu} and Figure 2). Finally, while relaxed solutions for Optimal Design Problems (ODP) are found in $\mathcal{G}_{\theta_A}$, we need the regions in $(A^{*},B^{\#})$ -space to capture relaxed solutions for Optimal Oscillation-Dissipation Problems (OODP). It is worthwhile to point out that relaxed solutions are found among $N$-rank laminates with an interface across which the core-matrix values are interchanged. This appears to be a new feature. 

As the third point, let us recall  that explicit computation of $A^{*}$ on various canonical microstructures such as simple and $N$-rank laminates, Hashin-Shtrikman coated balls are found in the literature \cite{T}. These are useful in proving the optimality of bounds on $A^{*}$. New computations for $B^{\#}$ associated to the above $A^{*}$ are made in our work in Section \ref{ub12} and in Section \ref{ts}. They reveal the structure of $B^{\#}$ which is used in establishing that various bounds in Section \ref{ad18} are indeed saturated. They also help us find relaxed solutions of OODP. As an example, let us mention laminates.  Assuming that $A^{\epsilon}$ is governed with $p-$sequential laminate microstructures, and that $\theta_A\leq \theta_B$, we construct the relative limits $B^{\#}_p$ and $B^{\#}_{p,p}$ associated to the $p$-laminate $A^{*}_p$ with the same lamination directions $\{e_i\}_{1\leq i\leq p}$ and with the same proportions $\{m_i\}_{1\leq i\leq p}$ as in $A^{*}_p$ but  with  $\omega_{A^\epsilon}\subseteq \omega_{B^\epsilon}$ (see \eqref{bs16}). To carry this out, we first establish a general relation (eg. \eqref{ad3}) between $A^{*}$ and $B^{\#}$ using $H$-measure techniques and we exploit it subsequently to compute the relative limits $B^{\#}_p$ and $B^{\#}_{p,p}$ corresponding to $p$-laminate $A^{*}_p$.

The final point stated in Section \ref{ad18} and proved in Section \ref{qw4} is about optimality of the regions $(Li,Uj)$ in the sub-domain $int(\Omega_{(Li,Uj)})$, $i,j=1,2$. This means that given $(A^{*},B^{\#})$ lying in a region $(Li,Uj)$, they can be realized as $H$-limit and a relative limit for suitable sequences $(A^\epsilon$, $B^\epsilon)$ with two phases and with their local volume proportions satisfying appropriate inequalities. This task has been accomplished in the case of $A^{*}$ in the literature (see \cite{A} for instance). Because of the presence of two matrices $(A^{*},B^{\#})$, non-commutativity is an extra difficulty which is not present in the self-interacting case. This is tackled with what we call Optimality-Commutativity Lemma (Lemma \ref{zz14}) which exploits the specific structure of our bounds. Apart from this, we need one more new element and that is fibre-wise convexity of the region (Li,Uj) in the phase space (see Remark \ref{qw1}). Modulo these new elements, the proof of optimality follows an established procedure. It has three parts. In the first, we work with macroscopically homogeneous cases. In the second part, we treat the general case by using piecewise constant approximation of function. 
We exploit here the fibre-wise convexity of the regions (Li,Uj). Third part proves optimality of all four regions taken together in $\Omega$. Somewhat strangely, another consequence of Optimality-Commutativity Lemma is that the $H$-limit $A^{*}$ and the relative limit $B^{\#}$ lying on the fibre over $A^{*}$ commute with each other in $\Omega$, i.e. $A^{*}(x)B^{\#}(x)=B^{\#}(x)A^{*}(x)$. This commutativity property is not a priori clear to start with ; our arguments need the optimal bounds to show it. An easy consequence of commutativity is that the optimal bounds stated in Section \ref{ad18}, can be formulated in terms of eigenvalues of $A^{*}$ and $B^{\#}$ (see \eqref{eg6},\eqref{eg7},\eqref{eg10},\eqref{eg5}).  

It is well-known \cite[Page No. 690]{ML} that the \textit{G-closure problem} for elasticity is open due to the fact that laminated microstructures are insufficient to generate all possible homogenized materials and that this difficulty does not exist for $H$-limits $A^{*}$ of the conductivity problem with two phases. In this context, let us state that the \textit{G-closure} type difficulty does not exist for relative limits $B^{\#}$ either. 
   
Let us now describe briefly the plan of this paper. After proving the existence of $B^{\#}$ in the next section, we present also its straight-forward properties. We take up the one-dimensional
case in some detail in Section \ref{hsf}. Optimal estimates involving the pair $(A^{*},B^{\#})$ which
constitute the main results of this work are stated in Section \ref{ad18}. There are four cases, which
naturally arise in the analysis. Proof of these bounds is given in Section \ref{ts}. In the same section, we treat separately the easy case when $B^\epsilon = B$ independent of $\epsilon$. Here, we distinguish two types of arguments: one to bound the energy density 
associated with $B^{\#}$, namely $B^{\#}\nabla u\cdot\nabla u$ and another to estimate
the matrix $B^{\#}$ itself via its quadratic form $B^{\#}\eta\cdot\eta$. Computation
of new macro coefficients $B^{\#}$ on classical microstructures associated with
$A^\epsilon$ can be found in Section \ref{ub12} and Section \ref{ts}. There is a separate discussion in Section \ref{qw4} about the optimality of the regions $(Li,Uj)$ $i,j=1,2$ defined by the bounds L1,L2,U1,U2. 
Final Section \ref{qw5} is devoted to two applications to problems of Calculus of Variations.\\
\\
For other comments on the contents of this paper, the reader is refer to various sections below. 

\tableofcontents

\section{Existence of $B^{\#}$ and its properties}\label{Sd13}
\setcounter{equation}{0}
\subsection{Existence of $B^{\#}$}\label{bs8}
\begin{theorem}\label{ot1}
 Given $B^\epsilon\in\mathcal{M}(b_1,b_2;\Omega)$, there exist a subsequence and $B^{\#}\in\mathcal{M}(b_1,\widetilde{b_2};\Omega)$ with $\widetilde{b_2}=b_2\frac{a_2}{a_1}$ such that 
 \begin{equation}\label{dc1} B^\epsilon\nabla u^\epsilon\cdot\nabla u^\epsilon \rightharpoonup B^{\#}\nabla u\cdot\nabla u \ \mbox{ in }\mathcal{D}^\prime(\Omega).\end{equation}
\end{theorem}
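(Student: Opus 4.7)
The plan is to construct $B^{\#}$ by first testing the claimed relation \eqref{dc1} against the oscillating test functions (correctors) of $A^\epsilon$, and then extending to arbitrary $u^\epsilon$ satisfying \eqref{ad13} via the Murat--Tartar corrector theorem. For each $\lambda\in\R^N$ let $w^\epsilon_\lambda\in H^1(\Omega)$ be the oscillating test functions of $A^\epsilon$: $w^\epsilon_\lambda\rightharpoonup\lambda\cdot x$ in $H^1_{\mathrm{loc}}(\Omega)$, $A^\epsilon\nabla w^\epsilon_\lambda\rightharpoonup A^{*}\lambda$ in $L^2_{\mathrm{loc}}$, and $-\mathrm{div}(A^\epsilon\nabla w^\epsilon_\lambda)$ is strongly convergent in $H^{-1}_{\mathrm{loc}}$. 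Assemble them into the corrector matrix $P^\epsilon$ with $i$-th column $\nabla w^\epsilon_{e_i}$, so that $P^\epsilon\rightharpoonup I$ in $L^2_{\mathrm{loc}}$. Since $B^\epsilon\leq b_2 I\leq (b_2/a_1)A^\epsilon$ pointwise, one has the matrix inequality
$$0\leq (P^\epsilon)^T B^\epsilon P^\epsilon\leq \frac{b_2}{a_1}(P^\epsilon)^T A^\epsilon P^\epsilon.$$
The div-curl lemma applied entrywise to the curl-free fields $\nabla w^\epsilon_{e_i}$ and the $H^{-1}_{\mathrm{loc}}$-div-compact fields $A^\epsilon\nabla w^\epsilon_{e_j}$ gives $(P^\epsilon)^T A^\epsilon P^\epsilon\rightharpoonup A^{*}$ in $\mathcal{D}^\prime(\Omega)$. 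Consequently the nonnegative sequence on the left is dominated by a sequence whose distributional limit lies in $L^\infty_{\mathrm{loc}}$ bounded by $\widetilde{b_2} I$, and a subsequence of $(P^\epsilon)^T B^\epsilon P^\epsilon$ converges weakly-$*$ in $L^\infty_{\mathrm{loc}}$ to a symmetric matrix $B^{\#}$ with $0\leq B^{\#}\leq\widetilde{b_2} I$. The lower bound $B^{\#}\geq b_1 I$ follows from the pointwise inequality $B^\epsilon P^\epsilon\lambda\cdot P^\epsilon\lambda\geq b_1|P^\epsilon\lambda|^2$, weak lower semicontinuity of the $L^2$ norm, and $P^\epsilon\lambda\rightharpoonup\lambda$; hence $B^{\#}\in\mathcal{M}(b_1,\widetilde{b_2};\Omega)$.

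To establish \eqref{dc1} for a general $u^\epsilon$ satisfying \eqref{ad13}, I invoke the Murat--Tartar corrector theorem, which yields $\nabla u^\epsilon - P^\epsilon\nabla u\to 0$ in $L^q_{\mathrm{loc}}(\Omega)^N$ for some $q>1$. The algebraic decomposition
$$B^\epsilon\nabla u^\epsilon\cdot\nabla u^\epsilon=B^\epsilon P^\epsilon\nabla u\cdot P^\epsilon\nabla u+2B^\epsilon P^\epsilon\nabla u\cdot(\nabla u^\epsilon-P^\epsilon\nabla u)+B^\epsilon(\nabla u^\epsilon-P^\epsilon\nabla u)\cdot(\nabla u^\epsilon-P^\epsilon\nabla u)$$
reduces matters to showing that the two remainder terms vanish in $\mathcal{D}^\prime(\Omega)$ and that the leading term converges to $B^{\#}\nabla u\cdot\nabla u$. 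The latter follows from the already-established weak-$*$ convergence $(P^\epsilon)^T B^\epsilon P^\epsilon\rightharpoonup^{*} B^{\#}$, by approximating $\nabla u$ in $L^2_{\mathrm{loc}}$ by piecewise constant vector fields and exploiting the uniform $L^\infty_{\mathrm{loc}}$ bound on $(P^\epsilon)^T B^\epsilon P^\epsilon$.

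I expect the principal difficulty to be the cross term $B^\epsilon P^\epsilon\nabla u\cdot(\nabla u^\epsilon-P^\epsilon\nabla u)$ in the last step: $P^\epsilon\nabla u$ is bounded but not strongly convergent in $L^2$, while the corrector remainder is only small in $L^q$ for some $q<2$, so a direct weak--strong pairing in $L^2$ is unavailable. The standard remedy is to first verify the identity for piecewise affine $u$, for which the decomposition reduces on each affine piece to the corrector case already handled in the first paragraph, and then pass to $u\in H^1(\Omega)$ by density, using the uniform $L^\infty_{\mathrm{loc}}$ bound on $(P^\epsilon)^T B^\epsilon P^\epsilon$ to ensure continuity in $\nabla u$.
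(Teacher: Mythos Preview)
Your corrector-based strategy is a natural alternative to the paper's approach, but it contains a genuine gap. The claim that $(P^\epsilon)^T B^\epsilon P^\epsilon$ is bounded in, and converges weakly-$*$ in, $L^\infty_{\mathrm{loc}}$ is false: the corrector columns $\nabla w^\epsilon_{e_i}$ are a priori bounded only in $L^2_{\mathrm{loc}}$ (or $L^p_{\mathrm{loc}}$ for some $p>2$ via Meyers), so $(P^\epsilon)^T B^\epsilon P^\epsilon$ is bounded merely in $L^1_{\mathrm{loc}}$ (or $L^{p/2}_{\mathrm{loc}}$). The domination $0\le (P^\epsilon)^T B^\epsilon P^\epsilon\le (b_2/a_1)(P^\epsilon)^T A^\epsilon P^\epsilon$ together with $(P^\epsilon)^T A^\epsilon P^\epsilon\rightharpoonup A^{*}$ in $\mathcal{D}'$ only guarantees that any Radon-measure limit of the left-hand side is an $L^\infty$ function bounded by $\widetilde{b_2}I$; it does \emph{not} furnish a uniform $L^\infty$ bound on the sequence itself. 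Since your density step (approximating $\nabla u\in L^2$ by piecewise constants) and your treatment of the cross term both rely explicitly on this nonexistent $L^\infty$ bound, the argument as written does not close. Likewise the quadratic remainder $B^\epsilon r^\epsilon\cdot r^\epsilon$, with $r^\epsilon=\nabla u^\epsilon-P^\epsilon\nabla u\to 0$ only in $L^q_{\mathrm{loc}}$ for $q<2$, does not obviously vanish in $\mathcal{D}'$ without further equi-integrability input.

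The paper sidesteps all of these integrability issues by introducing, for the given $u^\epsilon$, an adjoint state $p^\epsilon\in H^1$ solving $\mathrm{div}(A^\epsilon\nabla p^\epsilon-B^\epsilon\nabla u^\epsilon)=0$, and writing $B^\epsilon\nabla u^\epsilon\cdot\nabla u^\epsilon=A^\epsilon\nabla u^\epsilon\cdot\nabla p^\epsilon-z^\epsilon\cdot\nabla u^\epsilon$ with $z^\epsilon:=A^\epsilon\nabla p^\epsilon-B^\epsilon\nabla u^\epsilon$ divergence-free. Each factor is then a curl-free/div-compact pairing, so the div--curl lemma applies directly to each term for \emph{arbitrary} $u^\epsilon$ satisfying \eqref{ad13}, with no approximation of $u$ and no appeal to $L^\infty$ corrector bounds. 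The matrix $B^{\#}$ is defined analogously through auxiliary test functions $\psi^\epsilon_k$ solving $\mathrm{div}(A^\epsilon\nabla\psi^\epsilon_k-B^\epsilon(\nabla\chi^\epsilon_k+e_k))=0$, and repeated div--curl arguments identify the limit $z=A^{*}\nabla p-B^{\#}\nabla u$. If you wish to salvage the corrector route, you would need to replace the false $L^\infty$ bound by a careful Meyers-based equi-integrability argument; the paper's adjoint-state method is both shorter and more robust.
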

\begin{proof}
\textbf{Step(1): Definition of $B^{\#}$:}
Let $\{e_k\}_k,k=1,2,..,N$ be the standard basis vectors in $\mathbb{R}^N$.  
We define the oscillatory test functions $\chi_{k}^{\epsilon}$, $\zeta_{k}^{\epsilon}$ and 
$\psi_{k}^{\epsilon}$ in $H^1(\Omega)$ to define $A^{*}$, $B^{*}$, $B^{\#}$ 
as follows. Let $A^\epsilon$ $H$- converges to $A^{*}$, then upto a subsequence still denoted by $A^\epsilon$, there exists 
a sequence $\{\chi_k^\epsilon\}\in H^1(\Omega)$ such that
\begin{equation*}\begin{aligned}
\chi_{k}^{\epsilon} &\rightharpoonup 0 \mbox{ weakly in }H^1(\Omega),\\ 
 A^{\epsilon}(\nabla\chi_{k}^{\epsilon} + e_k ) &\rightharpoonup A^{*}{e_k} \mbox{ weakly in }L^2(\Omega) \mbox{ with }\\
 -div (A^{\epsilon}(\nabla\chi_{k}^{\epsilon} + e_k )) &= -div(A^{*}{e_k})\ \mbox{ in }\Omega, \ \ k=1,2,..,N.
\end{aligned}\end{equation*}
We consider the matrix $X^{\epsilon}$ defined by its columns $(\nabla\chi_{k}^{\epsilon})$
is called the corrector matrix for $A^{\epsilon}$, with the following property :
\begin{equation}\label{ot9}\nabla u^{\epsilon} - (X^{\epsilon}+I)\nabla u \rightarrow 0 \quad\mbox{ in }L_{loc}^1(\Omega).\end{equation}
The existence of such sequence $\{\chi_{k}^{\epsilon}\}$ is well known in homogenization theory, for more details one may look at \cite{A,T}.\\
\\
Similarly, let $B^\epsilon$ $H$-converges to $B^{*}$, then upto a subsequence still denoted by $B^\epsilon$, we define the corrector matrix $Y^{\epsilon}$ defined by its columns 
$(\nabla\zeta_{k}^{\epsilon})$ satisfying
\begin{equation*}\begin{aligned}
\zeta_{k}^{\epsilon} &\rightharpoonup 0 \mbox{ weakly in }H^1(\Omega),\\
 B^{\epsilon}(\nabla\zeta_{k}^{\epsilon} + e_k ) &\rightharpoonup B^{*}e_k \mbox{ weakly in }L^2(\Omega) \mbox{ with }\\
-div(B^{\epsilon}(\nabla\zeta_{k}^{\epsilon} + e_k )) &= -div(B^{*}e_k)\ \mbox{ in }\Omega, \ \ k=1,2,..,N.
\end{aligned}\end{equation*}
And finally we define the test functions $\psi_{k}^{\epsilon}$ bounded uniformly with respect to $\epsilon$ in $H^1(\Omega)$, satisfying
\begin{equation*} div(A^{\epsilon} \nabla \psi_{k}^{\epsilon}  - B^{\epsilon}(\nabla\chi_{k}^{\epsilon} + e_k)) =0\ \mbox{ in }\Omega, \ \ k=1,2,..,N.\end{equation*}
Such test functions $\{\psi_{k}^{\epsilon}\}$ have been introduced in \cite{KP,KV} subject to an optimal control problem.
Then upto a subsequence we consider the limit as 
\begin{equation*}\begin{aligned}
\psi_{k}^{\epsilon} &\rightharpoonup \psi_{k} \mbox{ weakly in }H^1(\Omega),\\
A^{\epsilon}\nabla \psi_{k}^{\epsilon}  - B^{\epsilon}(\nabla\chi_{k}^{\epsilon} + e_k) &\rightharpoonup\ \varsigma_k \mbox{\ (say) weakly in }L^2(\Omega)\mbox{ with }\\
div(\varsigma_k) &=0\ \mbox{ in }\Omega, \ \ k=1,2,..,N.
\end{aligned}\end{equation*}
Next, we define the limiting matrix $B^{\#}$ : For each $k=1,2,..,N$ 
\begin{equation}\label{Sd2}
 B^{\#}e_k :=  A^{*}\nabla \psi_{k} - \varsigma_k = A^{*}\nabla \psi_{k}- lim \{ A^{\epsilon}\nabla \psi_{k}^{\epsilon}  - B^{\epsilon}(\nabla\chi_{k}^{\epsilon} + e_k)\}; 
\end{equation}
and as a perturbation of $H$-limit $B^{*}$ we write
\begin{equation*}
B^{\#}e_k = B^{*}e_k +  A^{*}\nabla \psi_k - lim \{ A^{\epsilon} \nabla \psi_k^\epsilon - B^{\epsilon}(\nabla\chi_k^{\epsilon} - \nabla\zeta_k^{\epsilon})\}. 
\end{equation*}
\ \ \ \ (The above limits are to be understood as  $L^2(\Omega)$ weak limit).\\
\\
\textbf{Step(2):}
We introduce $p^\epsilon\in H^1(\Omega)$ such that 
\begin{equation}\begin{aligned}\label{FG1}
\nabla p^\epsilon \rightharpoonup \nabla p \mbox{ weakly in } L^2(\Omega),&\\
 div(A^{\epsilon}\nabla p^{\epsilon} - B^{\epsilon}\nabla u^{\epsilon}) =\ 0 \mbox{ in }\Omega.&
\end{aligned}\end{equation}
We introduce the new flux $z^{\epsilon} = A^{\epsilon}\nabla p^{\epsilon} - B^{\epsilon}\nabla u^{\epsilon}$, 
and say $ z^{\epsilon}\rightharpoonup  z$ weakly in  $L^2(\Omega)$.\\
\\
\textbf{Step(3):}
We apply the well-known div-curl lemma \cite{T} several times to simply have the following convergences :
\begin{equation*} (A^{\epsilon}\nabla \psi_{k}^{\epsilon}- B^{\epsilon}(\nabla\chi_{k}^{\epsilon} + {e_k}))\cdot (\nabla\chi_{k}^{\epsilon} + {e_k})\rightharpoonup  (A^{*}\nabla \psi_{k}- B^{\#}e_k)\cdot e_k \ \mbox{ in }\mathcal{D}^{\prime}(\Omega)\end{equation*}
and
\begin{equation*} A^{\epsilon}(\nabla\chi_{k}^{\epsilon} + {e_k})\cdot \nabla\psi_{k}^{\epsilon}\rightharpoonup  A^{*}e_k\cdot\nabla\psi_{k} \ \mbox{ in }\mathcal{D}^{\prime}(\Omega).  \end{equation*}
Since $A^\epsilon = (A^\epsilon)^t$ and $A^{*}=(A^{*})^t$, thus combining the above two convergences we obtain
\begin{equation}\label{FL12} B^{\epsilon}(\nabla\chi_{k}^{\epsilon} + {e_k})\cdot (\nabla\chi_{k}^{\epsilon} + {e_k})\rightharpoonup B^{\#}e_k\cdot e_k  \ \mbox{ in }\mathcal{D}^{\prime}(\Omega), \ k=1,2..,N.  \end{equation}
On the other hand, thanks to div-curl lemma we also have 
\begin{equation*} (A^\epsilon\nabla p^\epsilon- B^\epsilon\nabla u^\epsilon)\cdot (\nabla\chi_{k}^{\epsilon} + {e_k})\rightharpoonup  z\cdot e_k \ \mbox{ in }\mathcal{D}^{\prime}(\Omega) \end{equation*}
and
\begin{equation*} A^\epsilon (\nabla\chi_{k}^{\epsilon} + {e_k})\cdot\nabla p^\epsilon \rightharpoonup  A^{*}e_k\cdot\nabla p \ \mbox{ in }\mathcal{D}^{\prime}(\Omega).  \end{equation*}
Thus one gets,
\begin{equation}\label{eir} B^\epsilon\nabla u^\epsilon\cdot (\nabla\chi_{k}^{\epsilon} + {e_k})\rightharpoonup A^{*}\nabla p\cdot e_k- z\cdot e_k \ \mbox{ in }\mathcal{D}^{\prime}(\Omega). \end{equation}
Similarly, having 
\begin{equation*} (A^{\epsilon}\nabla \psi_{k}^{\epsilon}- B^{\epsilon}(\nabla\chi_{k}^{\epsilon} + {e_k}))\cdot \nabla u^{\epsilon}\rightharpoonup  (A^{*}\nabla \psi_{k}- B^{\#}e_k)\cdot\nabla u \ \mbox{ in }\mathcal{D}^{\prime}(\Omega)  \end{equation*}
and
\begin{equation*} A^\epsilon\nabla u^\epsilon\cdot\nabla \psi_{k}^{\epsilon} \rightharpoonup  A^{*}\nabla u\cdot\nabla \psi_k \ \mbox{ in }\mathcal{D}^{\prime}(\Omega), \end{equation*}
one obtains,
\begin{equation}\label{FG13} B^\epsilon (\nabla\chi_{k}^{\epsilon} + {e_k})\cdot \nabla u^\epsilon\rightharpoonup B^{\#}e_k\cdot\nabla u \ \mbox{ in }\mathcal{D}^{\prime}(\Omega). \end{equation}
Now by simply combining \eqref{eir} and \eqref{FG13}, we determine the expression of $z$ : 
\begin{equation*} z\cdot e_k = (A^{*}\nabla p -B^{\#}\nabla u)\cdot e_k, \ \ k=1,2,..N.\end{equation*}
Thus, 
$ z = A^{*}\nabla p -B^{\#}\nabla u $. Since $div\ z^\epsilon =0$ and $z^\epsilon \rightharpoonup z\ \mbox{ in }L^2(\Omega) \mbox{ weak }$, so $div\ z=0$.\\

\noindent
Therefore, we conclude that
\begin{equation*} B^\epsilon\nabla u^\epsilon\cdot\nabla u^\epsilon =\ A^\epsilon\nabla u^\epsilon\cdot\nabla p^\epsilon - z^\epsilon\cdot\nabla u^\epsilon \rightharpoonup A^{*}\nabla u\cdot\nabla p - z\cdot\nabla u = B^{\#}\nabla u\cdot\nabla u\ \mbox{ in } \mathcal{D}^\prime(\Omega).\end{equation*}
Hence, \eqref{dc1} follows. We prove later that $B^{\#}\in \mathcal{M}(b_1,\widetilde{b_2};\Omega)$. (See \eqref{Sd3}).
\hfill\end{proof}
\noindent
The above result justifies the Definition \ref{sid} stated in Introduction.  
\begin{remark}\label{ub19}
 If $u^\epsilon$ satisfies Dirichlet boundary condition i.e. $u^\epsilon\in H^1_0(\Omega)$, then it is natural to impose same
 Dirichlet boundary condition on $p^\epsilon$ also. Multiplying the equation $-div(A^\epsilon\nabla u^\epsilon)= f^\epsilon$ in $\Omega$ for $u^\epsilon$ (where
 $f^\epsilon$ strongly converges to $f$ in $H^{-1}(\Omega)$), by $p^\epsilon$ and the equation $div(A^\epsilon\nabla p^\epsilon-B^\epsilon\nabla u^\epsilon)=0$ in $\Omega$ for $p^\epsilon$
 by $u^\epsilon$, and passing to the limit after subtraction,
 we obtain the desired result, namely 
 \begin{equation*}
  \int_\Omega B^\epsilon \nabla u^\epsilon\cdot\nabla u^\epsilon\ dx \rightarrow \int_\Omega B^{\#}\nabla u\cdot\nabla u\ dx. 
 \end{equation*}
\hfill\qed\end{remark}
\begin{remark}\label{abc3}
The quadratic quantities \eqref{bs14}
are bounded in $L^1(\Omega)$ and so their weak
limit points are a priori  Radon measures. However, due to the special
nature of the sequence $\nabla u^\epsilon$, (\eqref{ad13} is satisfied), they are in fact $L^1(\Omega)$ functions according to  Definition \ref{sid} and  Theorem \ref{ot1}.
\hfill\qed\end{remark}
\begin{remark}\label{ad16}
Let us consider the flux sequence $\{B^\epsilon\nabla u^\epsilon\}_{\epsilon>0}$, which is bounded uniformly in $L^2(\Omega).$ 
Then using \eqref{ot9}, upto a subsequence we find 
\begin{equation*} B^\epsilon\nabla u^\epsilon \rightharpoonup B^{\#}_{FL}\nabla u \ \mbox{ in }L^2(\Omega) \end{equation*}
where, 
\begin{equation*} B^\epsilon(\nabla\chi^\epsilon_k + e_k)\rightharpoonup B^{\#}_{FL}e_k \mbox{ in }\mathcal{D}^\prime(\Omega), \ \ k=1,2,..,N.\end{equation*}
However, in general the limiting macro quantities $B^{\#}_{FL}$ and $B^{\#}$ appear in flux and energy convergence
respectively need not be same, i.e. $B^{\#}_{FL} \neq B^{\#}$ (cf. Remark \ref{ot10}).
\hfill\qed\end{remark}
\begin{remark}\label{zz17}
Above arguments show that the following characteristic property of $(A^{*},B^{\#})$ :
$v^\epsilon\in H^1(\Omega)$ such that $\nabla v^\epsilon \rightharpoonup \nabla v$ in $L^2(\Omega)$ and $div(A^\epsilon\nabla v^\epsilon)$ in $H^{-1}(\Omega)$ convergent then 
\begin{equation*}
A^\epsilon\nabla v^\epsilon \rightharpoonup A^{*}\nabla v \mbox{ in } L^2(\Omega)\ \ 
\mbox{and }\ \  B^\epsilon\nabla v^\epsilon\cdot\nabla v^\epsilon \rightharpoonup B^{\#}\nabla v\cdot\nabla v\mbox{ in }\mathcal{D}^\prime(\Omega). 
\end{equation*}
Moreover if $u^\epsilon$ is another test sequence like $v^\epsilon$ then 
\begin{equation*} B^\epsilon\nabla u^\epsilon\cdot\nabla v^\epsilon \rightharpoonup B^{\#}\nabla u\cdot\nabla v\mbox{ in }\mathcal{D}^\prime(\Omega). \end{equation*}
This follows simply due to   
\begin{equation*}B^\epsilon\nabla u^\epsilon\cdot\nabla v^\epsilon =\ A^\epsilon\nabla v^\epsilon\cdot\nabla p^\epsilon - z^\epsilon\cdot\nabla v^\epsilon \rightharpoonup A^{*}\nabla v\cdot\nabla p - z\cdot\nabla v = B^{\#}\nabla u\cdot\nabla v\ \mbox{ in } \mathcal{D}^\prime(\Omega). \end{equation*}
\hfill\qed\end{remark}

\subsection{Properties of $B^{\#}$}
Let us first define $B^{\#}$ element wise i.e. to define $(B^{\#})_{lk}=B^{\#}e_l\cdot e_k.$ 
We consider the sequences  $(A^{\epsilon}\nabla\psi_{k}^{\epsilon} - B^{\epsilon}(\nabla\chi_{k}^{\epsilon} + e_k))$ and $(\nabla\chi_{l}^{\epsilon} +{e_l})$ 
where $e_k,e_l\in\mathbb{R}^N$ are the canonical basis vectors, then by applying the div-curl lemma as before,
we have the following elements wise convergence
\begin{equation}\label{dc2}
B^{\epsilon}(\nabla\chi_k^{\epsilon}+e_k)\cdot(\nabla\chi_l^{\epsilon}+ e_l) \rightharpoonup  B^{\#}e_k\cdot e_l\quad\mbox{in }\mathcal{D}^{\prime}(\Omega)
\end{equation}
\begin{remark}
Notice that if $A^{\epsilon}$ is independent of $\epsilon$ i.e. $A^{\epsilon} = A$ then we have $X^{\epsilon}=0$, so $B^{\#} = \overline{B}$,
where, $\overline{B}$ is the $L^{\infty}(\Omega)$ weak* limit of $B^{\epsilon}$.
\hfill\qed
\end{remark}
\noindent
As an application of the above distributional convergence \eqref{dc2} one has the following :
\begin{enumerate}
\item[(i)]
Since $\{B^{\epsilon}\}_{\epsilon>0}$ is symmetric, $B^{\#}$ is also a symmetric matrix.
\item[(ii)] Let $B^{\epsilon} \in \mathcal{M}(b_1,b_2;\ \Omega)$. 
Then the ellipticity constant of $B^{\#}$ remains same 
as for $B^\epsilon$ i.e. $B^{\#}\geq b_1I$. However, the upper bounds for $B^\epsilon$ and $B^{\#}$ need not be the same. We have $B^{\#}\leq\widetilde{b_2}I$ with $\widetilde{b_2}=b_2\frac{a_2}{a_1}$. In this context, let us recall that the homogenized matrix $B^{*}$ admits bounds : $b_1I\leq B^{*}\leq b_2I$. 
\end{enumerate}
Let $\lambda=(\lambda_1,\lambda_2,..,\lambda_N)\in\mathbb{R}^N$ be an arbitrary vector, we define the corresponding oscillatory test function 
$\chi^\epsilon_\lambda= \sum_{k=1}^N \lambda_k\chi^\epsilon_k\in H^1(\Omega)$ and $\zeta^\epsilon_\lambda= \sum_{k=1}^N \lambda_k\zeta^\epsilon_k\in H^1(\Omega)$ to have 
\begin{equation}\label{ot11}B^{\#}\lambda\cdot\lambda =\ limit\ B^{\epsilon}(\nabla\chi_{\lambda}^{\epsilon}+ \lambda)\cdot(\nabla\chi_{\lambda}^{\epsilon}+\lambda);\end{equation}
and as a perturbation of the $H$-limit $B^{*}$,  
\begin{equation}\label{os18}
B^{\#}\lambda\cdot {\lambda}=\ B^{*}\lambda\cdot {\lambda} + limit\ B^{\epsilon}(\nabla\chi_{\lambda}^{\epsilon} - \nabla\zeta_{\lambda}^{\epsilon})\cdot(\nabla\chi_{\lambda}^{\epsilon} - \nabla\zeta_{\lambda}^{\epsilon}). \\                 
\end{equation}
\ \ \ (The above `limits' are to be understood in the sense of distributions.) \\
\begin{corollary}\label{Sd14}
\begin{equation}\label{Sd4}
B^{\#} \geq B^{*}
\end{equation}
where the equality holds if and only if $\nabla(\chi_{\lambda}^{\epsilon} - \zeta_{\lambda}^{\epsilon})\rightarrow 0 $ in $L^2(\Omega)$ for each $\lambda\in\mathbb{R}^N.$
\hfill\qed\end{corollary}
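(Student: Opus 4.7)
The approach is to read off both claims directly from the representation \eqref{os18}, which has already been established. Set
$$
\mu_\lambda \;:=\; \lim_{\epsilon\to 0}\, B^{\epsilon}(\nabla\chi_{\lambda}^{\epsilon} - \nabla\zeta_{\lambda}^{\epsilon})\cdot(\nabla\chi_{\lambda}^{\epsilon} - \nabla\zeta_{\lambda}^{\epsilon}) \quad \text{in } \mathcal{D}'(\Omega),
$$
so that \eqref{os18} reads $B^{\#}\lambda\cdot\lambda - B^{*}\lambda\cdot\lambda = \mu_\lambda$. Since $B^{\epsilon}\in\mathcal{M}(b_1,b_2;\Omega)$, the sequence inside the limit is pointwise non-negative and $L^1$-bounded, hence its distributional limit $\mu_\lambda$ is a non-negative distribution. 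In fact, by the same argument as in Remark \ref{abc3} (the gradient sequence $\nabla(\chi_\lambda^\epsilon-\zeta_\lambda^\epsilon)$ satisfies the compactness \eqref{ad13} with $A^\epsilon$ replaced by a suitable combination, so Theorem \ref{ot1} applies), $\mu_\lambda$ is an $L^\infty$ function of $x$. This gives $(B^{\#}-B^{*})\lambda\cdot\lambda \geq 0$ a.e.\ in $\Omega$ for every $\lambda\in\mathbb{R}^N$, which by symmetry of the matrices yields \eqref{Sd4}.

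For the equality statement, suppose first that $B^{\#} = B^{*}$ a.e., so that $\mu_\lambda \equiv 0$ for every $\lambda\in\mathbb{R}^N$. The coercivity $B^{\epsilon}\geq b_1 I$ gives, for every non-negative $\varphi\in C_c^\infty(\Omega)$,
$$
b_1 \int_\Omega |\nabla(\chi_\lambda^\epsilon - \zeta_\lambda^\epsilon)|^2\,\varphi\,dx \;\leq\; \int_\Omega B^\epsilon(\nabla\chi_\lambda^\epsilon - \nabla\zeta_\lambda^\epsilon)\cdot(\nabla\chi_\lambda^\epsilon - \nabla\zeta_\lambda^\epsilon)\,\varphi\,dx \;\longrightarrow\; 0,
$$
so $\nabla(\chi_\lambda^\epsilon - \zeta_\lambda^\epsilon)\to 0$ in $L^2_{\mathrm{loc}}(\Omega)$; combined with the uniform $H^1(\Omega)$ bound on the correctors, a standard localization upgrade produces strong $L^2(\Omega)$ convergence. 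Conversely, if $\nabla(\chi_\lambda^\epsilon - \zeta_\lambda^\epsilon)\to 0$ in $L^2(\Omega)$ for each $\lambda$, then the upper bound $B^\epsilon\leq b_2 I$ gives $L^1(\Omega)$ convergence to zero of the quadratic form, whence $\mu_\lambda=0$ and \eqref{os18} forces $B^{\#}\lambda\cdot\lambda = B^{*}\lambda\cdot\lambda$ for each $\lambda$, i.e.\ $B^{\#} = B^{*}$.

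No real obstacle is expected: the corollary is an immediate consequence of the already-derived identity \eqref{os18} together with uniform ellipticity and boundedness of $B^\epsilon$. The only mildly delicate bookkeeping is the transition between local and global $L^2$ convergence of $\nabla(\chi_\lambda^\epsilon-\zeta_\lambda^\epsilon)$ in the equality characterization; this is genuinely cosmetic, since both formulations are natural in the homogenization context and the correctors are in any case constructed on $\Omega$.
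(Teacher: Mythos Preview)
Your approach is correct and matches the paper's: the corollary is stated immediately after \eqref{os18} with no proof beyond a \qed, so the intended argument is exactly to read off both claims from that identity together with the uniform bounds $b_1 I \le B^\epsilon \le b_2 I$.

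One small remark: your parenthetical justification that $\mu_\lambda$ is an $L^\infty$ function --- via Remark \ref{abc3} and the claim that $\nabla(\chi_\lambda^\epsilon-\zeta_\lambda^\epsilon)$ satisfies \eqref{ad13} for ``a suitable combination'' of coefficients --- is both unnecessary and not obviously correct (there is no evident operator for which $div$ applied to this difference is $H^{-1}$-convergent). You do not need it: \eqref{os18} already \emph{identifies} $\mu_\lambda$ with $(B^{\#}-B^{*})\lambda\cdot\lambda$, which is an $L^\infty$ function because $B^{\#}$ and $B^{*}$ are. The rest of your argument, including the flagged local-to-global upgrade in the equality case, is fine.
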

\noindent 
In the following lemma as an application of the distributional converge \eqref{ot11}, we provide the general bounds on $B^{\#}$.  
\begin{lemma}[\textbf{General Bounds}]\label{hsk}
Let $A^{\epsilon}\in \mathcal{M}(a_1,a_2;\Omega)$ with $0 <a_1\leq a_2 <\infty $ and $B^{\epsilon}\in\mathcal{M}(b_1,b_2;\ \Omega)$ with $0 <b_1\leq b_2 <\infty $,
$H$-converges to $A^{*}\in\mathcal{M}(a_1,a_2;\ \Omega)$ and $B^{*}\in\mathcal{M}(b_1,b_2;\ \Omega)$ respectively.
Then we have the following bounds 
\begin{equation}\label{Sd3} b_1I\leq \underline{B} \leq B^{*} \leq B^{\#} \leq \frac{b_2}{a_1}A^{*} \leq \frac{b_2}{a_1}\overline{A}\leq b_2\frac{a_2}{a_1}I\end{equation}
where, $(\underline{B})^{-1}$ is the $L^{\infty}$ weak* limit of the matrix sequence $(B^{\epsilon})^{-1}$ and $\overline{A}$ is the $L^{\infty}$ weak* limit of the matrix sequence $A^{\epsilon}$.
\end{lemma}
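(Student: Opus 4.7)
The plan is to establish the chain
$$b_1 I \leq \underline{B} \leq B^{*} \leq B^{\#} \leq \frac{b_2}{a_1} A^{*} \leq \frac{b_2}{a_1} \overline{A} \leq b_2 \frac{a_2}{a_1} I$$
link by link, exploiting the representations of $B^{\#}$, $B^{*}$ and $A^{*}$ already in hand. The two outermost inequalities follow immediately by passing to the weak-$*$ limit in the pointwise matrix bounds $(B^\epsilon)^{-1} \leq b_1^{-1}I$ and $A^\epsilon \leq a_2 I$ respectively.

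For the harmonic-mean bound $\underline{B} \leq B^{*}$, I would invoke the pointwise convex inequality
$$B^\epsilon \xi \cdot \xi \geq 2\eta \cdot \xi - (B^\epsilon)^{-1}\eta \cdot \eta, \qquad \xi,\eta \in \mathbb{R}^N,$$
applied with $\xi = \nabla\zeta_\lambda^\epsilon + \lambda$ and a constant vector $\eta$. Passing to the distributional limit, using $\nabla\zeta_\lambda^\epsilon + \lambda \rightharpoonup \lambda$ together with the identity $B^{*}\lambda \cdot \lambda = \lim B^\epsilon(\nabla\zeta_\lambda^\epsilon + \lambda)\cdot(\nabla\zeta_\lambda^\epsilon + \lambda)$, yields $B^{*}\lambda \cdot \lambda \geq 2\eta \cdot \lambda - \underline{B}^{-1}\eta \cdot \eta$; optimizing in $\eta$ via the choice $\eta = \underline{B}\lambda$ gives the bound. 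The central inequality $B^{*} \leq B^{\#}$ is precisely Corollary \ref{Sd14}, and the Voigt-type upper bound $A^{*} \leq \overline{A}$ is the classical companion to the first step, proved by an entirely analogous convexity-and-limit argument applied to the corrector sequence $\chi_\lambda^\epsilon$ of $A^\epsilon$.

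The genuinely new link is $B^{\#} \leq \frac{b_2}{a_1}A^{*}$, and here the common corrector representation of $B^{\#}$ and $A^{*}$ in terms of the \emph{same} oscillating field $\nabla\chi_\lambda^\epsilon + \lambda$ is decisive. Starting from the pointwise matrix inequality $B^\epsilon \leq b_2 I \leq \frac{b_2}{a_1}A^\epsilon$, which holds a.e.\ since $A^\epsilon \geq a_1 I$, I apply both sides to the vector $\nabla\chi_\lambda^\epsilon + \lambda$ to obtain
$$B^\epsilon(\nabla\chi_\lambda^\epsilon + \lambda)\cdot(\nabla\chi_\lambda^\epsilon + \lambda) \leq \frac{b_2}{a_1}\, A^\epsilon(\nabla\chi_\lambda^\epsilon + \lambda)\cdot(\nabla\chi_\lambda^\epsilon + \lambda).$$
Passing to the distributional limit — the left-hand side tends to $B^{\#}\lambda \cdot \lambda$ by \eqref{ot11}, and the right-hand side tends to $\frac{b_2}{a_1} A^{*}\lambda \cdot \lambda$ by the self-interacting specialization of the same identity (the case $B^\epsilon = A^\epsilon$) — yields the desired inequality for each $\lambda \in \mathbb{R}^N$.

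I do not anticipate any genuine obstacle. The one point that requires a little care is to keep the corrector sequences correctly paired with their targets ($\zeta_\lambda^\epsilon$ for $B^{*}$, $\chi_\lambda^\epsilon$ for both $A^{*}$ and $B^{\#}$), and to confirm that the pointwise matrix inequalities pass cleanly to the distributional limit when tested against nonnegative $\varphi \in \mathcal{D}(\Omega)$; once both representations are visible, the new upper bound on $B^{\#}$ is essentially a one-line consequence of the fact that $B^\epsilon$ and $A^\epsilon$ coexist on the same medium and the corrector of the latter is used to define the former.
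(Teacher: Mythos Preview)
Your proof is correct and follows essentially the same route as the paper: the classical harmonic/arithmetic mean bounds and Corollary~\ref{Sd14} handle all but the new link $B^{\#}\le \frac{b_2}{a_1}A^{*}$, and for that one you use exactly the paper's argument---the pointwise chain $B^{\epsilon}\le b_2 I \le \frac{b_2}{a_1}A^{\epsilon}$ tested against $\nabla\chi_{\lambda}^{\epsilon}+\lambda$ and passed to the distributional limit via \eqref{ot11}. The only difference is cosmetic: you spell out the convexity argument for $\underline{B}\le B^{*}$ explicitly, whereas the paper simply quotes it as a standard fact.
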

\begin{proof}
In general we have the following trivial bounds with respect to $L^{\infty}(\Omega)$ weak* limit of the sequence and its inverse
\begin{equation*}\underline{A} \leq A^{*} \leq \overline{A} \quad\mbox{ and }\quad \underline{B} \leq B^{*}\leq \overline{B}\end{equation*}
where, $A^{\epsilon}$, $(A^{\epsilon})^{-1}$, $B^{\epsilon}$, $(B^{\epsilon})^{-1}$ converges to $\overline{A}$, $(\underline{A})^{-1}$, $\overline{B}$, $(\underline{B})^{-1}$ in $L^{\infty}(\Omega)$ weak* limit respectively.\\
So by using \eqref{Sd4} we have the lower bound for $B^{\#}$
\begin{equation*} b_1I\leq \underline{B} \leq B^{*} \leq B^{\#}.\end{equation*} 
Next we seek the upper bound for $B^{\#}$. In general, $B^{\#}\leq \overline{B}$ is not true (cf. Remark \ref{Sd8}).
In fact, in the one-dimension problem (Example: \ref{hsf}) for two phase material ($a^{\epsilon}(x)=a_1\chi_{a_\epsilon}(x)+a_2(1-\chi_{a_\epsilon})$, and $b^{\epsilon}(x)=b_1\chi_{b_\epsilon}(x)+ b_2(1-\chi_{b_\epsilon})$ ) by taking $(\frac{a_2}{a_1} -1)$ small enough or $(\frac{b_2}{b_1} -1)$ is large enough 
it could be possible to get even $b^{\#}\geq b_2 \ ( > \overline{b} )$. 
It's a new phenomena. However we derive upper bounds of it in terms of $A^{*}$ (or $\overline{A}$).
Let us consider the following inequality
\begin{equation*}B^{\epsilon}(\nabla\chi_{\lambda}^{\epsilon} +\lambda)\cdot(\nabla\chi_{\lambda}^{\epsilon} + \lambda) \leq\ b_2(\nabla\chi_{\lambda}^{\epsilon} + \lambda)\cdot(\nabla\chi_{\lambda}^{\epsilon} + \lambda) \leq\ \frac{b_2}{a_1} A^{\epsilon}(\nabla\chi_{\lambda}^{\epsilon} +{\lambda})\cdot(\nabla\chi_{\lambda}^{\epsilon} +\lambda).\end{equation*}
So by using the distributional convergence we have
\begin{equation*}B^{\#}{\lambda}\cdot {\lambda} \leq\ \frac{b_2}{a_1} A^{*}{\lambda}\cdot {\lambda} \leq\  \frac{b_2}{a_1}\overline{A}\ \lambda\cdot {\lambda}\leq \frac{b_2}{a_1}a_2 \lambda\cdot {\lambda}\end{equation*}
which gives the desired upper bound on $B^{\#}$.
\hfill\end{proof}
\begin{remark}
A bound which is not sharp was found earliar in \cite{KR}.
Notice that if $B^{\epsilon} = \frac{b_2}{a_1}A^{\epsilon}$, then clearly $B^{\#} =\frac{b_2}{a_1}A^{*}$ and the fourth inequality becomes equality in \eqref{Sd3}
and Corollary \ref{hsk} provides the condition for the third inequality to become equality in \eqref{Sd3}.
\hfill\qed\end{remark}
\noindent
\begin{remark}[Localization]\label{ub13}
If $A^\epsilon\in\mathcal(a_1,a_2;\Omega)$ $H$-converges to $A^{*}$ and $\omega$ is an open subset of $\Omega$, then the sequence $A^\epsilon|_{\omega}$ restrictions of $A^\epsilon$ to $\omega$ $H$ converges to $A^{*}|_{\omega}$ 
(see \cite[Lemma 10.5]{T}). Similarly we remark that
if $B^\epsilon\in\mathcal(b_1,b_2;\Omega)$ converges to $B^{\#}$
relative to $A^\epsilon$ in $\Omega$, then $B^\epsilon|_\omega$ converges to 
$B^{\#}|_\omega$ relative to $A^\epsilon|_\omega$. 
\hfill\qed\end{remark}
\begin{remark}\label{sii}
If $A^\epsilon\in\mathcal{M}(a_1,a_2;\Omega)$ 
strongly  converges to $A^{*}$ in $L^p(\Omega)$ for $1\leq p<\infty$ then 
$A^\epsilon$ $H$-converges to $A^{*}$ (see, \cite[Lemma 1.2.22]{A}). 
Similarly, we remark that, 
if $A^\epsilon\in\mathcal{M}(a_1,a_2;\Omega)$ and $B^\epsilon\in\mathcal{M}(b_1,b_2;\Omega)$ strongly converge to $A^{*}$ and $B^{\#}$ respectively in $L^p(\Omega)$ for $1\leq p<\infty$ then $B^\epsilon$ converges to $B^{\#}$ relative to $A^\epsilon$. 
\hfill\qed\end{remark}
\begin{lemma}\label{pol5}
 If $B^\epsilon_i \xrightarrow{A^\epsilon} B^{\#}_i$ for $i=1,..,k$ then $\sum_{i=1}^k c_i B^\epsilon_i\xrightarrow{A^\epsilon} \sum_{i=1}^k c_i B^{\#}_i$, where $c_i>0$ are constants and $\sum_{i=1}^k c_i =1$.   
\end{lemma}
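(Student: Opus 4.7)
The plan is a direct appeal to linearity in Definition \ref{sid}. The key observation is that the admissibility condition \eqref{ad13} for the test sequence $u^\epsilon$ involves only $A^\epsilon$, not the matrices $B^\epsilon_i$, so one and the same family of test sequences certifies the relative convergence of every $B^\epsilon_i$. This is an important asymmetry compared with classical $H$-convergence, where the relevant oscillating test functions depend on the matrix being homogenized and block a naive linearity argument; here the situation is much friendlier.

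Given any admissible test sequence $u^\epsilon$ satisfying \eqref{ad13}, I would exploit the linearity of the quadratic form in its matrix argument to write
\begin{equation*}
\Bigl(\sum_{i=1}^k c_i B^\epsilon_i\Bigr)\nabla u^\epsilon\cdot\nabla u^\epsilon \;=\; \sum_{i=1}^k c_i\,\bigl(B^\epsilon_i\nabla u^\epsilon\cdot\nabla u^\epsilon\bigr).
\end{equation*}
By the hypothesis $B^\epsilon_i\xrightarrow{A^\epsilon}B^{\#}_i$, each summand converges in $\mathcal{D}^\prime(\Omega)$ to $c_i B^{\#}_i\nabla u\cdot\nabla u$. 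Since a finite sum passes to the limit in $\mathcal{D}^\prime(\Omega)$, the left-hand side converges to $\bigl(\sum_{i=1}^k c_i B^{\#}_i\bigr)\nabla u\cdot\nabla u$, which is exactly the conclusion demanded by Definition \ref{sid}.

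The remaining item is to verify that $\sum_i c_i B^\epsilon_i$ actually sits in the admissible class $\mathcal{M}(\cdot,\cdot;\Omega)$ so that Definition \ref{sid} applies in the first place. If each $B^\epsilon_i\in\mathcal{M}(b_{1,i},b_{2,i};\Omega)$, the convex combination $\sum_i c_i B^\epsilon_i$ is symmetric (as a convex combination of symmetric matrices) and inherits the pointwise a.e.\ bounds $(\min_i b_{1,i})\,I\leq \sum_i c_i B^\epsilon_i\leq (\max_i b_{2,i})\,I$. No essential obstacle is anticipated: the whole argument rests on the linearity of the defining triple-product convergence \eqref{ad17} in its $B^\epsilon$ slot, together with the $B^\epsilon$-independence of the admissible test sequences.
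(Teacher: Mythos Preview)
Your proof is correct and is exactly the approach the paper takes: the paper's entire proof reads ``The proof simply follows the definition of the relative convergence \eqref{dc1}.'' Your write-up spells out precisely this, using the $B^\epsilon$-independence of the test sequences in \eqref{ad13} and the linearity of \eqref{ad17} in its $B^\epsilon$ slot.
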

\begin{proof}
The proof simply follows the definition of the relative convergence  \eqref{dc1}. 
\hfill\end{proof}
We end this section by establishing that the macro quantities $(A^{*},B^{\#})$ can be estimated in terms of underlying microstructure. Such results will be needed in our study of 
$(A^{*},B^{\#})$ and in establishing optimal bounds on $(A^{*},B^{\#})$.
(See Section \ref{ad18}).
The estimate on $A^{*}$ are classical \cite{A} and we extend it for the
relative limit $B^{\#}$. 
\begin{lemma}\label{zz15}
Let $A^{\epsilon,i}\in\mathcal{M}(a_1,a_2;\Omega)$, $B^{\epsilon,i}\in\mathcal{M}(b_1,b_2;\Omega)$ and 
\begin{center}
$A^{\epsilon,i} = \{a_1\chi_{\omega_{A^{\epsilon,i}}} + a_2(1-\chi_{\omega_{A^{\epsilon,i}}})\}I \xrightarrow{H} A^{*,i}$
and
$B^{\epsilon,i} = \{b_1\chi_{\omega_{B^{\epsilon,i}}} + b_2(1-\chi_{\omega_{B^{\epsilon,i}}})\}I \xrightarrow{A^{\epsilon,i}} B^{\#,i}$
\end{center}
for $i=1,2$. We assume that $A^{*,i}$ and $B^{\#,i}$ are the constant matrices. 
Then there exist positive constants $C>0$ and $\delta_A>0,\delta_B>0$ (independent of microstructures) such that, 
\begin{equation}\label{FL16}
||(A^{*,1}-A^{*,2})||\ \leq\  C\ \underset{\epsilon\rightarrow 0}{limsup}\lb \int_\Omega |(\chi_{\omega_{A^{\epsilon,1}}}(x) - \chi_{\omega_{A^{\epsilon,2}}}(x))|\  dx\rb^{\delta_A} ;
\end{equation}
\begin{align}\label{FL14}
||(B^{\#,1}-B^{\#,2})||\ \leq\ & C\ \underset{\epsilon\rightarrow 0}{limsup}\ \{\lb \int_\Omega |(\chi_{\omega_{A^{\epsilon,1}}}(x) - \chi_{\omega_{A^{\epsilon,2}}}(x))|\  dx\rb^{\delta_A}\notag\\
&\qquad\qquad\qquad\qquad+ \lb \int_\Omega |(\chi_{\omega_{B^{\epsilon,1}}}(x) - \chi_{\omega_{B^{\epsilon,2}}}(x))|\ dx \rb^{\delta_B}\}.
\end{align}
\end{lemma}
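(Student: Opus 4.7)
The plan is to handle \eqref{FL16} via the classical stability of $H$-convergence under strong coefficient perturbations, and then upgrade the same circle of ideas for \eqref{FL14} by using the energy characterization of $B^{\#}$ through the correctors $\chi_k^\epsilon$. The basic observation is that $A^{\epsilon,1}-A^{\epsilon,2}=(a_1-a_2)(\chi_{\omega_{A^{\epsilon,1}}}-\chi_{\omega_{A^{\epsilon,2}}})I$, whose factor is $\{-1,0,1\}$-valued, so that for every $q\in[1,\infty)$
\[
\|A^{\epsilon,1}-A^{\epsilon,2}\|_{L^q(\Omega)}^{q}=|a_1-a_2|^{q}\,\|\chi_{\omega_{A^{\epsilon,1}}}-\chi_{\omega_{A^{\epsilon,2}}}\|_{L^1(\Omega)},
\]
and analogously for $B^{\epsilon,1}-B^{\epsilon,2}$. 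Each $L^{q}$ smallness of a matrix difference therefore translates into an $L^{1}$ smallness of the corresponding characteristic function difference raised to the power $1/q$. The estimate \eqref{FL16} then follows from the standard quantitative continuity of $H$-limits proved by combining energy bounds with Meyers' higher integrability of the correctors $\nabla\chi_k^{\epsilon,i}$ (cf.\ \cite{A}); the exponent $\delta_A>0$ is inherited from the Meyers exponent.

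For \eqref{FL14}, since $B^{\#,i}$ is constant, testing the distributional convergence \eqref{dc2} against any fixed $\phi\in C_c^\infty(\Omega)$ with $\int\phi\,dx=1$ gives
\[
B^{\#,i}e_k\cdot e_l=\lim_{\epsilon\to 0}\int_\Omega \phi\,B^{\epsilon,i}\bigl(\nabla\chi_k^{\epsilon,i}+e_k\bigr)\cdot\bigl(\nabla\chi_l^{\epsilon,i}+e_l\bigr)\,dx.
\]
I would then decompose $(B^{\#,1}-B^{\#,2})e_k\cdot e_l=\mathrm{(I)}+\mathrm{(II)}$, where $\mathrm{(I)}$ varies only the matrix,
\[
\mathrm{(I)}=\lim_{\epsilon\to 0}\int_\Omega \phi\,(B^{\epsilon,1}-B^{\epsilon,2})\bigl(\nabla\chi_k^{\epsilon,1}+e_k\bigr)\cdot\bigl(\nabla\chi_l^{\epsilon,1}+e_l\bigr)\,dx,
\]
and $\mathrm{(II)}$ keeps $B^{\epsilon,2}$ and varies only the correctors. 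Term $\mathrm{(I)}$ is controlled, by H\"older, by $\|B^{\epsilon,1}-B^{\epsilon,2}\|_{L^q}$ paired with the $L^p$ norms of $\nabla\chi_k^{\epsilon,1}+e_k$ and $\nabla\chi_l^{\epsilon,1}+e_l$, with $2/p+1/q=1$. Meyers' higher integrability supplies uniform $L^p$ bounds on the correctors for some $p>2$, hence $q<\infty$, and the reduction above yields the $\|\chi_{\omega_{B^{\epsilon,1}}}-\chi_{\omega_{B^{\epsilon,2}}}\|_{L^1}^{\delta_B}$ summand with $\delta_B=1/q$.

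Term $\mathrm{(II)}$ reorganizes into sums of products involving the corrector differences $\nabla(\chi_k^{\epsilon,1}-\chi_k^{\epsilon,2})$ and $\nabla(\chi_l^{\epsilon,1}-\chi_l^{\epsilon,2})$. Because $A^{*,i}$ is constant, each corrector satisfies $-\mathrm{div}\bigl(A^{\epsilon,i}(\nabla\chi_k^{\epsilon,i}+e_k)\bigr)=0$, and subtracting these relations gives
\[
-\mathrm{div}\bigl(A^{\epsilon,1}\nabla(\chi_k^{\epsilon,1}-\chi_k^{\epsilon,2})\bigr)=\mathrm{div}\bigl((A^{\epsilon,1}-A^{\epsilon,2})(\nabla\chi_k^{\epsilon,2}+e_k)\bigr)\quad\text{in }\Omega.
\]
A Meyers estimate for this equation converts the $L^{q}$ smallness of $A^{\epsilon,1}-A^{\epsilon,2}$ into an $L^{r}$ smallness of $\nabla(\chi_k^{\epsilon,1}-\chi_k^{\epsilon,2})$ for some $r>2$; pairing with the Meyers bound on $\nabla\chi_l^{\epsilon,1}+e_l$ and applying H\"older delivers the $\|\chi_{\omega_{A^{\epsilon,1}}}-\chi_{\omega_{A^{\epsilon,2}}}\|_{L^1}^{\delta_A}$ summand and completes \eqref{FL14}.

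The main obstacle is precisely this quantitative control of the corrector difference: a plain $L^{2}$ energy estimate only bounds $\|\nabla(\chi_k^{\epsilon,1}-\chi_k^{\epsilon,2})\|_{L^2}$ by $\|A^{\epsilon,1}-A^{\epsilon,2}\|_{L^{\infty}}$, which is useless since the coefficient differences remain $O(1)$ in $L^\infty$ while being $o(1)$ in $L^1$. The small $L^{2+\eta}$ gain furnished by Meyers is what makes the passage $L^{q}\to L^{r}$ possible, and the resulting exponents $\delta_A,\delta_B>0$ depend only on $a_1,a_2,b_1,b_2$ and $N$, not on the microstructures.
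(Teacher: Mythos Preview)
Your strategy is essentially the same as the paper's: split the difference of $B^{\#,i}$ into a term where only the $B$-coefficient varies and a term where only the oscillating test functions vary, and control both via Meyers' higher integrability. Two points deserve comment.

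First, the paper avoids the boundary/localization issue you create by working with correctors $\chi_k^{\epsilon,i}$ and a cutoff $\phi$. It instead takes test functions $w_k^{\epsilon,i}\in H^1(\Omega)$ solving $-\mathrm{div}(A^{\epsilon,i}\nabla w_k^{\epsilon,i})=0$ in $\Omega$ with common Dirichlet data $w_k^{\epsilon,i}=x_k$ on $\partial\Omega$. Then $w_k^{\epsilon,1}-w_k^{\epsilon,2}\in H^1_0(\Omega)$, so the subtraction yields a clean global energy identity, and the paper also shows directly (via the adjoint state) that $\int_\Omega B^{\epsilon,i}\nabla w_k^{\epsilon,i}\cdot\nabla w_k^{\epsilon,i}\,dx\to |\Omega|\,B^{\#,i}e_k\cdot e_k$, eliminating the need for a cutoff. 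Your route is workable but requires interior/Caccioppoli estimates that you do not spell out.

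Second, your last paragraph contains a misconception. The plain $L^2$ energy estimate on the difference equation does \emph{not} merely give control by $\|A^{\epsilon,1}-A^{\epsilon,2}\|_{L^\infty}$; it gives
\[
\|\nabla(w_k^{\epsilon,1}-w_k^{\epsilon,2})\|_{L^2(\Omega)}\le C\,\bigl\|(A^{\epsilon,1}-A^{\epsilon,2})(\nabla w_k^{\epsilon,2}+? )\bigr\|_{L^2(\Omega)}\le C\,\|\chi_{\omega_{A^{\epsilon,1}}}-\chi_{\omega_{A^{\epsilon,2}}}\|_{L^{2p/(p-2)}},
\]
where the second inequality uses H\"older and Meyers $L^p$ on $\nabla w_k^{\epsilon,2}$. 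This $L^2$ bound on the gradient difference is already enough to control term~(II) by Cauchy--Schwarz against the (uniformly $L^2$-bounded) other factor; no $L^{r}$, $r>2$, estimate on the difference is needed. The paper obtains $\delta_A=(p-2)/(2p)$ and $\delta_B=(p-2)/p$ precisely this way.
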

\bpr
As $A^{*,i}$, $i=1,2$ are constant homogenized matrices we can take the following oscillatory test functions (See \cite[Page no. 6]{KP}) $\{w^{\epsilon,i}_k\}_{k=1}^N\in (H^1(\Omega))^N$ satisfying : 
\begin{equation}\label{zz16}
-div A^{\epsilon,i}\nabla w^{\epsilon,i}_k = 0 \mbox{ in } \Omega, \quad w^{\epsilon,i}_k = x_k \ \mbox{ on }\partial\Omega, \ i=1,2
\end{equation}
so that,
\begin{equation*}
\nabla w^{\epsilon,i}_k \rightharpoonup e_k \mbox{ in } L^2(\Omega)\ \mbox{ and }\ A^{\epsilon,i}\nabla w^{\epsilon,i}_k \rightharpoonup A^{*,i}e_k \mbox{ in }L^2(\Omega),\ i=1,2. 
\end{equation*}
Using $w^{\epsilon,1}_k- w^{\epsilon,2}_k =0$ on $\partial\Omega$ and integrating by parts in \eqref{zz16} we have 
\begin{equation*}
 \int_\Omega A^{\epsilon,1}\nabla(w^{\epsilon,1}_k -w^{\epsilon,2}_k)\cdot\nabla(w^{\epsilon,1}_k -w^{\epsilon,2}_k)\ dx 
 = \int_\Omega (A^{\epsilon,2}-A^{\epsilon,1})\nabla w^{\epsilon,2}_k\cdot\nabla (w^{\epsilon,1}_k -w^{\epsilon,2}_k)\ dx.
\end{equation*}
Then by using coercivity of $A^{\epsilon,1}$ we simply get   
\begin{equation}\label{zz19}
 ||\nabla(w^{\epsilon,1}_k -w^{\epsilon,2}_k)||_{L^2(\Omega)} \leq C ||(\chi_{\omega_{A^{\epsilon,1}}}-\chi_{\omega_{A^{\epsilon,2}}})\nabla w^{\epsilon,2}_k||_{L^2(\Omega)}.
\end{equation}
By using the Meyers theorem (cf. \cite[Theorem 1.3.41]{A}), there exists an exponent $p>2$ such that $||\nabla w^{\epsilon,i}_k||_{L^p(\Omega)}\leq C$, $i=1,2$ (independent of $\epsilon$),
and further using Young's inequality in \eqref{zz19} we deduce
\begin{equation*}
 ||\nabla(w^{\epsilon,1}_k -w^{\epsilon,2}_k)||_{L^2(\Omega)} \leq C ||(\chi_{\omega_{A^{\epsilon,1}}}-\chi_{\omega_{A^{\epsilon,2}}})||_{L^{\frac{2p}{p-2}}(\Omega)}. 
\end{equation*}
\begin{align*}
|(A^{*,1}-A^{*,2})e_k| &\leq\ \underset{\epsilon\rightarrow 0}{limsup}\ |\{\int_\Omega A^{\epsilon,1}(\nabla w^{\epsilon,1}_k -\nabla w^{\epsilon,2}_k)\cdot e_k\ dx + \int_\Omega (A^{\epsilon,1}-A^{\epsilon,2})\nabla w^{\epsilon,2}_k\cdot e_k\ dx\}| \\
&\leq C\ \underset{\epsilon\rightarrow 0}{limsup} \lb||\nabla(w^{\epsilon,1}_k -w^{\epsilon,2}_k)||_{L^2(\Omega)} + ||(\chi_{\omega_{A^{\epsilon,1}}}-\chi_{\omega_{A^{\epsilon,2}}})||_{L^{\frac{2p}{p-2}}(\Omega)}\rb\\
&\leq C\ \underset{\epsilon\rightarrow 0}{limsup} ||(\chi_{\omega_{A^{\epsilon,1}}}-\chi_{\omega_{A^{\epsilon,2}}})||_{L^{\frac{2p}{p-2}}(\Omega)}.
\end{align*}
On the other hand, by the definition of relative convergence, we have 
\begin{equation*}
B^{\epsilon,i}\nabla w^{\epsilon,i}_k\cdot\nabla w^{\epsilon,i}_k \rightharpoonup B^{\#,i}e_k\cdot e_k \mbox{ in }\mathcal{D}^\prime(\Omega), \ i=1,2.
\end{equation*}
Moreover, we claim :
\begin{equation}\label{zz18}
\int_\Omega B^{\epsilon,i}\nabla w^{\epsilon,i}_k\cdot\nabla w^{\epsilon,i}_k\ dx \rightarrow \int_\Omega B^{\#,i}e_k\cdot e_k\ dx, \ i=1,2.
\end{equation}
Introducing the adjoint state $p^{\epsilon,i}_k\in H^1_0(\Omega)$ via the relation :
\begin{equation*}
 div (A^{\epsilon,i}\nabla p^{\epsilon,i}_k -B^{\epsilon,i}\nabla w^{\epsilon,i}_k) =0 \mbox{ in }\Omega ; \ \ i=1,2.
\end{equation*}
Multiply the above equation of $p^{\epsilon,i}_k$ by $w^{\epsilon,i}_k$ and the equation of $w^{\epsilon,i}_k$ by $p^{\epsilon,i}_k$, we obtain 
\begin{equation*}
 \int_\Omega B^\epsilon\nabla w^{\epsilon,i}_k\cdot\nabla w^{\epsilon,i}_k\ dx = -\int_{\partial\Omega}(z^{\epsilon,i}_k\cdot \nu)\ x_k \ d\sigma \mbox{ with }z^{\epsilon,i}_k = (A^{\epsilon,i}\nabla p^{\epsilon,i}_k -B^{\epsilon,i}\nabla w^{\epsilon,i}_k).
\end{equation*}
Taking limit $\epsilon \rightarrow 0$ and using that $z^{\epsilon,i}_k \rightharpoonup z^{i}_k = A^{*,i}\nabla p^{i}_k -B^{\#,i}e_k$ in $L^2(\Omega)$ weak, we get 
\begin{equation*}
 \int_\Omega B^\epsilon\nabla w^{\epsilon,i}_k\cdot\nabla w^{\epsilon,i}_k\ dx \rightarrow  -\int_{\partial\Omega}(z^i_k\cdot \nu)\ x_k\ d\sigma
\end{equation*}
By repeating the above steps, we can compute the above limit and establish our claim \eqref{zz18}. \\
\\
Then it follows that,
\begin{align*}
|(B^{\#,1}-B^{\#,2})e_k\cdot e_k| &\leq\ \underset{\epsilon\rightarrow 0}{limsup}\ |\{\int_\Omega B^{\epsilon,1}(\nabla w^{\epsilon,1}_k\cdot\nabla w^{\epsilon,1}_k -\nabla w^{\epsilon,2}_k\cdot\nabla w^{\epsilon,2}_k)\ dx \\
&\qquad\qquad\qquad\qquad + \int_\Omega (B^{\epsilon,1}-B^{\epsilon,2})\nabla w^{\epsilon,2}_k\cdot \nabla w^{\epsilon,2}_k\ dx\}| \\
&\leq C\ \underset{\epsilon\rightarrow 0}{limsup} \lb||\nabla(w^{\epsilon,1}_k -w^{\epsilon,2}_k)||_{L^2(\Omega)} + ||(\chi_{\omega_{B^{\epsilon,1}}}-\chi_{\omega_{B^{\epsilon,2}}})||_{L^{\frac{p}{p-2}}(\Omega)}\rb\\
&\leq C\ \underset{\epsilon\rightarrow 0}{limsup} \lb||(\chi_{\omega_{A^{\epsilon,1}}}-\chi_{\omega_{A^{\epsilon,2}}})||_{L^{\frac{2p}{p-2}}(\Omega)} + ||(\chi_{\omega_{B^{\epsilon,1}}}-\chi_{\omega_{B^{\epsilon,2}}})||_{L^{\frac{p}{p-2}}(\Omega)}\rb.
\end{align*}
Thus \eqref{FL16}, \eqref{FL14} follows with $\delta_A = \frac{p-2}{2p}$ and $\delta_B =  \frac{p-2}{p}$.   
\hfill\epr
\noindent
Finally, we add one covariance property for the relative convergence.  
\begin{lemma}[Covariance property]\label{pol6}
Let $A^\epsilon \in \mathcal{M}(a_1,a_1;\Omega)$ $H$-converges to $A^{*}$, and $B^\epsilon\in\mathcal{M}(b_1,b_2;\Omega)$ converges to $B^{\#}$ relative to $A^\epsilon$, and $\phi$ is a diffeomorphism $\Omega$ onto $\phi(\Omega)$. Let us define :
\begin{equation*}
 \widetilde{A}^\epsilon(\phi(x)) \stackrel{def}{=} \frac{1}{\mbox{det }(\nabla \phi(x))}\nabla \phi(x) A^\epsilon(x)\nabla\phi^{T}(x),\
 \widetilde{B}^\epsilon(\phi(x)) \stackrel{def}{=} \frac{1}{\mbox{det }(\nabla \phi(x))}\nabla \phi(x) B^\epsilon(x)\nabla\phi^{T}(x).
\end{equation*} 
Then we have  
\begin{align*}
\widetilde{A}^\epsilon(\phi(x)) &\xrightarrow{H} \frac{1}{\mbox{det }(\nabla \phi(x))}\nabla \phi(x) A^{*}(x)\nabla\phi^{T}(x) = \widetilde{A}^{*}(\phi(x)) \mbox{ in }\phi(\Omega)\\
\mbox{and }\ \widetilde{B}^\epsilon(\phi(x)) &\xrightarrow{\widetilde{A}^\epsilon(\phi(x))} \frac{1}{\mbox{det }(\nabla \phi(x))}\nabla \phi(x) B^{\#}(x)\nabla\phi^{T}(x) = \widetilde{B}^{\#}(\phi(x)) \mbox{ in }\phi(\Omega).
\end{align*}
\end{lemma}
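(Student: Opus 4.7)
The plan is to reduce the statement to the already-established scalar relative convergence on $\Omega$ by pulling back test sequences from $\phi(\Omega)$ via $\phi$. The $H$-convergence covariance statement for $\widetilde{A}^\epsilon$ is classical (Tartar; see \cite{T}) and I would simply invoke it. The new content is the analogous covariance for $B^{\#}$, which I intend to prove by combining three ingredients: (i) the chain rule for gradients under $\phi$, (ii) the change-of-variables formula with Jacobian $\mbox{det}(\nabla\phi)$, and (iii) the characteristic property of $(A^{*},B^{\#})$ recorded in Remark \ref{zz17}.

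First, I would pick an arbitrary test sequence $\widetilde{v}^\epsilon\in H^1(\phi(\Omega))$ satisfying the analogue of \eqref{ad13} for $\widetilde{A}^\epsilon$ on $\phi(\Omega)$, namely $\widetilde{v}^\epsilon\rightharpoonup \widetilde{v}$ in $H^1(\phi(\Omega))$ and $-\mbox{div}(\widetilde{A}^\epsilon\nabla\widetilde{v}^\epsilon)$ strongly convergent in $H^{-1}(\phi(\Omega))$. Define the pulled-back sequence $v^\epsilon(x):=\widetilde{v}^\epsilon(\phi(x))$ on $\Omega$, so that $\nabla v^\epsilon(x)=\nabla\phi^{T}(x)(\nabla\widetilde{v}^\epsilon)(\phi(x))$. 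A direct computation using the defining formula of $\widetilde{A}^\epsilon$ shows
\begin{equation*}
\int_{\Omega} A^\epsilon\nabla v^\epsilon\cdot\nabla\varphi\,dx = \int_{\phi(\Omega)} \widetilde{A}^\epsilon\nabla\widetilde{v}^\epsilon\cdot\nabla\widetilde{\varphi}\,dy
\end{equation*}
for $\varphi=\widetilde{\varphi}\circ\phi$, $\widetilde{\varphi}\in\mathcal{D}(\phi(\Omega))$. This proves that $v^\epsilon$ satisfies the compactness condition \eqref{ad13} for $A^\epsilon$ on $\Omega$, and that its weak $H^1(\Omega)$-limit is $v=\widetilde{v}\circ\phi$.

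Next, by the hypothesis $B^\epsilon\xrightarrow{A^\epsilon}B^{\#}$ and Remark \ref{zz17}, applied to the admissible test sequence $v^\epsilon$, I obtain
\begin{equation*}
B^\epsilon\nabla v^\epsilon\cdot\nabla v^\epsilon \rightharpoonup B^{\#}\nabla v\cdot\nabla v\ \ \mbox{in }\mathcal{D}'(\Omega).
\end{equation*}
The last step is to transport this distributional convergence onto $\phi(\Omega)$. For any $\widetilde{\varphi}\in\mathcal{D}(\phi(\Omega))$, setting $\varphi=\widetilde{\varphi}\circ\phi\in\mathcal{D}(\Omega)$ and performing the change of variables $y=\phi(x)$, the identities $\nabla v^\epsilon(x)=\nabla\phi^{T}(x)(\nabla\widetilde{v}^\epsilon)(\phi(x))$ and $dx=\mbox{det}(\nabla\phi(x))^{-1}dy$ give
\begin{equation*}
\int_\Omega B^\epsilon\nabla v^\epsilon\cdot\nabla v^\epsilon\,\varphi\,dx = \int_{\phi(\Omega)}\widetilde{B}^\epsilon\nabla\widetilde{v}^\epsilon\cdot\nabla\widetilde{v}^\epsilon\,\widetilde{\varphi}\,dy,
\end{equation*}
and a similar identity with $B^{\#}$, $v$ in place of $B^\epsilon$, $v^\epsilon$. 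Passing to the limit on both sides yields the required distributional convergence $\widetilde{B}^\epsilon\nabla\widetilde{v}^\epsilon\cdot\nabla\widetilde{v}^\epsilon \rightharpoonup \widetilde{B}^{\#}\nabla\widetilde{v}\cdot\nabla\widetilde{v}$ in $\mathcal{D}'(\phi(\Omega))$. Since $\widetilde{v}^\epsilon$ was an arbitrary admissible test sequence, this is exactly the relative convergence $\widetilde{B}^\epsilon\xrightarrow{\widetilde{A}^\epsilon}\widetilde{B}^{\#}$.

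The only mild technical obstacle is checking that the pullback $v^\epsilon$ satisfies the strong $H^{-1}$ convergence of $-\mbox{div}(A^\epsilon\nabla v^\epsilon)$ on $\Omega$; this follows because $\varphi\mapsto \widetilde{\varphi}\circ\phi^{-1}$ is an isomorphism of $H^{1}_0$ respecting duality (with constants depending on $\|\nabla\phi\|_\infty$, $\|\nabla\phi^{-1}\|_\infty$), so strong convergence in $H^{-1}(\phi(\Omega))$ pulls back to strong convergence in $H^{-1}(\Omega)$. No further subtleties arise because $\phi$ is a fixed (not oscillating) diffeomorphism, so the two microstructural scales are untouched by the change of coordinates.
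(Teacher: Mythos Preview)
Your proof is correct and is exactly the argument the paper has in mind: the paper's own proof simply says ``analogous to the proof of the covariance property of $H$-convergence given in \cite[Lemma 21.1]{T}'', and what you have written is precisely that Tartar-style pullback argument, augmented by the characteristic property of Remark~\ref{zz17} to handle the $B^{\#}$ part.
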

\begin{proof}
The proof is analogous to the proof of the covariance property of $H$-convergence given in \cite[Lemma 21.1]{T}.
\hfill\end{proof}

\section{One-dimensional Case}\label{hsf}
\setcounter{equation}{0}

Let us present the one-dimensional case where we compute the macro limits $A^{*}$ and $B^{\#}$ explicitly. We point out that the relation \eqref{FL19} was earliar obtained in \cite{KP}.

Let $0 < a_1 \leq a^{\epsilon} \leq  a_2 \mbox{ and } 0 < b_1 \leq b^{\epsilon} \leq b_2$ in some bounded open interval $I\subset \mathbb{R}$. 
We have $u^{\epsilon},p^{\epsilon}\in H^1_0(I)$ solving the following system of equations with $f\in H^{-1}(I)$ : 
\begin{equation*}\begin{aligned}
&-\frac{d}{dx}(a^{\epsilon}\frac{du^{\epsilon}}{dx}) = f  \mbox{ in }I,\\
& \frac{d}{dx}(a^{\epsilon}\frac{dp^{\epsilon}}{dx} - b^{\epsilon}\frac{du^{\epsilon}}{dx}) = 0 \mbox{ in }I.
\end{aligned}\end{equation*}
Then from the one-dimensional homogenization, we know that 
\begin{equation*}u^{\epsilon}\rightharpoonup u \mbox{ weakly in }H^1_0(I)\mbox{ and the flux }\sigma^{\epsilon} = a^{\epsilon}\frac{du^{\epsilon}}{dx}\rightarrow \sigma=a^{*}\frac{du}{dx} \mbox{ strongly in }L^2(I),\end{equation*}
where $(a^{*})^{-1}= L^{\infty}(I)\mbox{ weak* limit of }(a^{\epsilon})^{-1}= \underline{a}^{-1}$ (say). \\  
Similarly, from the adjoint equation, we have 
\begin{align*}p^{\epsilon} \rightharpoonup p\mbox{ in }H^1_0(I)\mbox{ and the flux }z^{\epsilon} = (a^{\epsilon}\frac{dp^{\epsilon}}{dx} - b^{\epsilon}\frac{du^{\epsilon}}{dx}) = c^{\epsilon}\mbox{ (constant) converges strongly}&\quad\\
\mbox{ to $z=c$ (constant) in }L^2(I)&.\end{align*}
We combine these two strongly convergent sequences $\sigma^{\epsilon}$, $z^{\epsilon}$ in the following way : 
\begin{equation}\label{qw3}
\frac{dp^{\epsilon}}{dx} - \frac{b^{\epsilon}}{(a^{\epsilon})^2}\sigma^{\epsilon} = \frac{c^{\epsilon}}{a^{\epsilon}}.\end{equation}
Passing to the limit as $\epsilon \rightarrow 0$ in the above equation we have, 
\begin{equation*}
\frac{dp}{dx} - (lim^{*}\frac{b^{\epsilon}}{(a^{\epsilon})^2})\sigma = \frac{c}{\underline{a}}, \quad\mbox{ where }(lim^{*}\frac{b^{\epsilon}}{(a^{\epsilon})^2}) = \mbox{$L^{\infty}(I)$ weak* limit of }\frac{b^{\epsilon}}{(a^{\epsilon})^2}.
\end{equation*}
Then by using it in \eqref{qw3}, we have 
\begin{equation*}\begin{aligned}
&z = \underline{a}\frac{dp}{dx} - \{(\underline{a})^2(lim^{*}\frac{b^{\epsilon}}{(a^{\epsilon})^2})\}\frac{du}{dx} = c\\  
\mbox{or, }\ \ &\frac{d}{dx}(\underline{a}\frac{dp}{dx} - \{(\underline{a})^2(lim^{*}\frac{b^{\epsilon}}{(a^{\epsilon})^2})\}\frac{du}{dx}) = 0  
\end{aligned}\end{equation*}
Therefore, 
\begin{equation}\label{FL19}
b^{\#} = (\underline{a})^2\hspace{1pt}(lim^{*}\frac{b^{\epsilon}}{(a^{\epsilon})^2}).
\end{equation}
Even if $a^\epsilon \xrightarrow{H} a^{*} =\underline{a}$ and $b^\epsilon \rightharpoonup \overline{b}$ in $L^{\infty}(I)$ weak* for the entire sequence, $\frac{b^\epsilon}{(a^\epsilon)^2}$ may oscillate as microstructures vary and so its limit need not be unique. In other words, the behaviour is different from  
$a^{*}=\underline{a}$. 
\begin{remark}\label{ot10}
One also notices that, the flux
\begin{equation*} b^\epsilon\frac{du^{\epsilon}}{dx} =\frac{b^\epsilon}{a^\epsilon}\sigma^\epsilon \rightharpoonup (lim^{*}\frac{b^{\epsilon}}{a^{\epsilon}})\hspace{1.5pt}\sigma = (lim^{*}\frac{b^{\epsilon}}{a^{\epsilon}})\hspace{2pt}\underline{a}\frac{du}{dx}, \mbox{ weakly in }L^2(I),\end{equation*}
and the energy
\begin{equation*} b^\epsilon\frac{du^{\epsilon}}{dx}\frac{du^{\epsilon}}{dx} \rightharpoonup (lim^{*}\frac{b^{\epsilon}}{(a^{\epsilon})^2})\hspace{2pt}(\underline{a})^2\frac{du}{dx}\frac{du}{dx} \mbox{ weakly in }\mathcal{D}^\prime(I).\end{equation*}
The limiting macro quantities that appear in flux and energy convergence respectively need not be same. For instance, let $a^\epsilon,b^\epsilon$ be two-phase medium having the same microstructure i.e.  
\begin{equation*} a^\epsilon = a_1 \chi^\epsilon + a_2(1-\chi^\epsilon),\ (0<a_1<a_2<\infty)\ \mbox{ and } b^\epsilon = b_1\chi^\epsilon + b_2(1-\chi^\epsilon), \ (0<b_1\leq b_2<\infty).\end{equation*}
Then one can show 
\begin{equation*} b^{\#}_{FL} = (lim^{*}\frac{b^{\epsilon}}{a^{\epsilon}})\hspace{2pt}\underline{a} \neq  (lim^{*}\frac{b^{\epsilon}}{(a^{\epsilon})^2})\hspace{2pt}(\underline{a})^2 = b^{\#}.
\end{equation*}
\hfill\qed\end{remark}
\begin{remark}\label{Sd12}
Let us consider two-phase media, $a^\epsilon_1$ and $a^\epsilon_2$ given by two different microstructures, and possessing the same homogenized limit $\underline{a}$.
Then there exists  $b^\epsilon$ such that $a^\epsilon_j \xrightarrow{H} \underline{a}$, $b^\epsilon\xrightarrow{a^\epsilon_j}
b^{\#}_j $ for $j=1,2$ with $b^{\#}_1 \neq b^{\#}_2$.
However, if $b^\epsilon$ is independent of $\epsilon$, then $b^{\#}$ is constant. 

\hfill\qed\end{remark}
\subsection{One-dimensional Bounds}\label{Sd6}
Here we find bounds on $b^{\#}$ where both $a^\epsilon(x)$ and $b^\epsilon(x)$ are given by two phase medium.
Let $I$ be an interval in $\mathbb{R}$, we consider
\begin{equation*}\begin{aligned}
a^{\epsilon}(x) &= a_1\A + a_2(1-\A)\mbox{ with }(a_1 < a_2 ),\ \ x\in I\\
\mbox{and }\quad b^{\epsilon}(x) &= b_1\B + b_2(1-\B)\mbox{ with }(b_1\leq b_2),\ \ x\in I.
\end{aligned}\end{equation*}
Let us assume that,
\begin{equation*}
\A\rightharpoonup \theta_A(x) \mbox{ and } \B \rightharpoonup \theta_B(x) \quad\mbox{ in }L^{\infty}(I)\mbox{ weak*. }
\end{equation*}
Then following \eqref{FL19} we find
\begin{align}
b^{\#}(x) &=(\underline{a})^{2}\underset{\epsilon\rightarrow 0}{lim}\ \frac{b^{\epsilon}}{(a^{\epsilon})^2}(x)\notag \\
&=(\underline{a})^{2} \underset{\epsilon\rightarrow 0}{lim}\ \{ \frac{b_2}{a_2^2} + \frac{(b_1-b_2)}{a_2^2}\B + (\frac{b_2}{a_1^2}-\frac{b_2}{a_2^2})\A - {(b_2-b_1)}(\frac{1}{a_1^2} -\frac{1}{a_2^2})\AB \}\notag\\
&=(\underline{a})^{2}\{\frac{b_2}{a_2^2} + \frac{(b_1-b_2)}{a_2^2}\theta_B + {b_2}(\frac{1}{a_1^2} -\frac{1}{a_2^2})\theta_A - {(b_2-b_1)}(\frac{1}{a_1^2} -\frac{1}{a_2^2})\theta_{AB}\}\label{FL18}
\end{align}
where, $\theta_{AB}$ = $L^{\infty}(I)$ weak* limit of $\A.\B (= \AB$) and it satisfies the following bounds :
\begin{equation}\begin{aligned}\label{FG2} 
\mbox{ Upper Bound : }\ \ & \theta_{AB}\ \leq\ min\ \{\theta_A,\theta_B\}.\\
\mbox{ Lower Bound : }\ \ & \theta_{AB}\ \geq\ 0, \ \mbox{ always, }\\
\mbox{and }\ \ \ \           &  \theta_{AB}\ \geq\ \theta_A + \theta_B -1, \ \mbox{ whenever }\theta_A + \theta_B \geq 1.
\end{aligned}\end{equation}  
Using this, we find maximum and the minimum value of $b^{\#}$ (cf.\eqref{FL18}) for given $(a_1,a_2,\theta_A)$ and $(b_1,b_2,\theta_B)$ fixed.
Notice that, the final term in the expression \eqref{FL18} has the negative sign. So in order to get the lower (or upper) bound of the left hand side
we need to use upper (or lower) bound of $\theta_{AB}$. Thus we obtain the following lower bound
\begin{equation}\begin{aligned}\label{FL20}
\mbox{when $\theta_A \leq \theta_B$,  }\ b^{\#} \geq\ &\ (\underline{a})^{2}\{\frac{b_2}{a_2^2} + \frac{(b_1-b_2)}{a_2^2}\theta_B + {b_1}(\frac{1}{a_1^2} -\frac{1}{a_2^2})\theta_A \}= l^{\#}_1(x) \mbox{ (say)} \\
\mbox{when $\theta_B \leq \theta_A $, }\ b^{\#} \geq\ &\ (\underline{a})^{2}\{\frac{b_2}{a_2^2} + \frac{(b_1-b_2)}{a_1^2}\theta_B + {b_2}(\frac{1}{a_1^2} -\frac{1}{a_2^2})\theta_A\}= l^{\#}_2(x) \mbox{ (say).}
\end{aligned}\end{equation}
Similarly, we find the upper bounds
\begin{equation}\begin{aligned}\label{FL17}
\mbox{when $\theta_A + \theta_B \leq 1,$ }\ b^{\#} \leq\ &\ (\underline{a})^{2}\{\frac{b_2}{a_2^2} + \frac{(b_1-b_2)}{a_2^2}\theta_B + {b_2}(\frac{1}{a_1^2} -\frac{1}{a_2^2})\theta_A\} = u^{\#}_1(x) \mbox{ (say) }\\
\mbox{when $\theta_A + \theta_B \geq 1,$ }\ b^{\#} \leq\ &\ (\underline{a})^{2}\{\frac{(b_2-b_1)}{a_1^2} + \frac{b_1}{a_2^2} + \frac{(b_1-b_2)}{a_1^2}\theta_B + {b_1}(\frac{1}{a_1^2} -\frac{1}{a_2^2})\theta_A\}= u^{\#}_2(x) 
\end{aligned}\end{equation}           
Moreover, a simple computation shows that
\begin{equation*}  max\ \{\ l^{\#}_1, l^{\#}_2\ \}\ \leq \  min\ \{\ u^{\#}_1,u^{\#}_2\ \}.
\end{equation*}
\textbf{One-dimensional Bound :} 
\begin{equation}\label{FL13}
 min\ \{\ l^{\#}_1, l^{\#}_2\ \} \leq b^{\#} \leq   max\ \{\ u^{\#}_1, u^{\#}_2\ \}.
\end{equation}
\textbf{Optimality of the Bounds : }
We define
\begin{align*}
u_{\#}=\begin{cases} u^{\#}_1 \quad\mbox{if }\theta_A +\theta_B \leq 1 \\
        u^{\#}_2  \quad\mbox{if }\theta_A +\theta_B \geq 1 
       \end{cases}
,\quad l_{\#}=\begin{cases}
   l^{\#}_1 \quad\mbox{if }\theta_A \leq \theta_B \\      
   l^{\#}_2  \quad\mbox{if }\theta_B \leq \theta_A
\end{cases}.
\end{align*}
Then the following theorem provides the optimality of the region defined by the bounds \eqref{FL13}.
\begin{theorem}\label{Sd10}
By varying microstructures for $\{a^\epsilon, b^\epsilon \}$, $b^{\#}(x)$ attains all values in the interval $[l_{\#}(x),u_{\#}(x)]$. 
\end{theorem}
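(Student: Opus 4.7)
My proof plan rests on the observation that formula \eqref{FL18} expresses $b^{\#}(x)$ as an affine function of $\theta_{AB}(x)$, with coefficient $-(\underline{a})^{2}(b_{2}-b_{1})\bigl(\tfrac{1}{a_{1}^{2}}-\tfrac{1}{a_{2}^{2}}\bigr)$, which is strictly negative in the nondegenerate range $a_{1}<a_{2}$, $b_{1}<b_{2}$ (the degenerate sub-cases collapse the interval $[l_{\#},u_{\#}]$ to a point and are trivial). Hence, by monotonicity, as $\theta_{AB}$ sweeps through its admissible Fr\'echet-type interval $\bigl[\max\{0,\theta_{A}+\theta_{B}-1\},\min\{\theta_{A},\theta_{B}\}\bigr]$ recorded in \eqref{FG2}, the value $b^{\#}$ sweeps through the entire closed interval $[l_{\#}(x),u_{\#}(x)]$ described by \eqref{FL20}--\eqref{FL17}. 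So the theorem is reduced to the realizability statement: for every measurable $\theta_{AB}^{*}(x)$ lying in the above Fr\'echet interval pointwise, there exist two-phase microstructures $\chi_{\omega_{A^{\epsilon}}}$, $\chi_{\omega_{B^{\epsilon}}}$ having weak$^{*}$ limits $\theta_{A}$, $\theta_{B}$ respectively, for which the product $\chi_{\omega_{A^{\epsilon}}}\chi_{\omega_{B^{\epsilon}}}$ has weak$^{*}$ limit $\theta_{AB}^{*}$.

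I would begin with the macroscopically homogeneous case in which $\theta_{A}$, $\theta_{B}$, and the target $\theta_{AB}^{*}$ are constants on $I$. Set
\begin{equation*}
p_{11}=\theta_{AB}^{*},\quad p_{10}=\theta_{A}-\theta_{AB}^{*},\quad p_{01}=\theta_{B}-\theta_{AB}^{*},\quad p_{00}=1-\theta_{A}-\theta_{B}+\theta_{AB}^{*};
\end{equation*}
the Fr\'echet condition ensures $p_{ij}\geq 0$ and $\sum p_{ij}=1$. Partition each cell of an $\epsilon$-periodic tiling of $I$ into four consecutive subintervals of relative lengths $p_{11},p_{10},p_{01},p_{00}$ and declare $\chi_{\omega_{A^{\epsilon}}}=1$ on the first two and $\chi_{\omega_{B^{\epsilon}}}=1$ on the first and third. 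A direct computation then gives $\chi_{\omega_{A^{\epsilon}}}\rightharpoonup\theta_{A}$, $\chi_{\omega_{B^{\epsilon}}}\rightharpoonup\theta_{B}$ and $\chi_{\omega_{A^{\epsilon}}}\chi_{\omega_{B^{\epsilon}}}\rightharpoonup p_{11}=\theta_{AB}^{*}$ in $L^{\infty}(I)$ weak$^{*}$; together with \eqref{FL19} this yields exactly $b^{\#}=$ target value. The extremal endpoints admit especially transparent realizations: for $l_{\#}$ one maximizes overlap (take $\omega_{A^{\epsilon}}\subseteq\omega_{B^{\epsilon}}$ when $\theta_{A}\leq\theta_{B}$, or the symmetric inclusion), while for $u_{\#}$ one minimizes it (disjoint supports when $\theta_{A}+\theta_{B}\leq 1$, or union equal to $I$ otherwise).

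For spatially varying data, I would approximate $\theta_{A},\theta_{B}$ and the prescribed $\theta_{AB}^{*}$ by piecewise constants on a partition $I=\bigsqcup_{k}I_{k}$, apply the homogeneous construction above separately on each $I_{k}$ at scale $\epsilon_{k}\ll|I_{k}|$, and combine the local microstructures into a single pair $(\chi_{\omega_{A^{\epsilon}}},\chi_{\omega_{B^{\epsilon}}})$. Refining the partition and extracting a diagonal subsequence produces weak$^{*}$ limits $\theta_{A},\theta_{B},\theta_{AB}^{*}$, and hence by \eqref{FL19} the corresponding $b^{\#}$. The continuity of the map from characteristic-function data to $(a^{*},b^{\#})$ provided by Lemma \ref{zz15}, applied locally on each $I_{k}$ (in view of the localization Remark \ref{ub13}), ensures that the output $b^{\#}_{\epsilon}$ associated with the piecewise construction converges to the desired limit along the diagonal.

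The main obstacle is the combination of approximation and local construction without spoiling the weak$^{*}$ limits: one must choose the inner scale $\epsilon_{k}$ fast enough compared with the refinement of the partition so that (a) boundary mismatches between neighbouring cells $I_{k}$ contribute negligibly to the weak$^{*}$ averages of $\chi_{\omega_{A^{\epsilon}}}$, $\chi_{\omega_{B^{\epsilon}}}$ and their product, and (b) the quantitative estimates of Lemma \ref{zz15} give control of $b^{\#}_{\epsilon}-b^{\#}$ in terms of $L^{1}$-distances that vanish in the limit. In one dimension these interface errors are concentrated at finitely many points per period, so the bookkeeping is routine. Once this diagonal extraction is in place, the affine formula \eqref{FL18} delivers every prescribed value of $b^{\#}(x)\in[l_{\#}(x),u_{\#}(x)]$, proving optimality.
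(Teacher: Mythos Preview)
Your proposal is correct and follows essentially the same strategy as the paper: both recognize from \eqref{FL18} that $b^{\#}$ is affine and strictly decreasing in $\theta_{AB}$, so optimality reduces to realizing every admissible value of $\theta_{AB}$ in the Fr\'echet interval, and both construct microstructures achieving this. The paper's proof is terser---it simply invokes a Lyapunov-type lemma from \cite{T} (which already applies to variable $\theta$) in place of your explicit four-subinterval periodic construction, so your piecewise-constant approximation, diagonal extraction, and appeal to Lemma \ref{zz15} for the spatially varying case are more elaborate than necessary but not incorrect.
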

\begin{proof}
The equality of the above bound (lower and upper) can be achieved easily. For upper bound equality $u_{\#}$, one considers 
the ${\omega}_{A^{\epsilon}}$ and ${\omega}_{B^{\epsilon}}$ in such a way that they intersect (${\omega}_{A^{\epsilon}} \cap {\omega}_{B^{\epsilon}}$)  in a least way,
i.e. whenever we have $\theta_A +\theta_B \leq 1$, we take ${\omega}_{A^{\epsilon}} \cap {\omega}_{B^{\epsilon}} =\emptyset$ and for $\theta_A +\theta_B \geq 1$, 
we consider ${\omega}_{A^{\epsilon}}$, ${\omega}_{B^{\epsilon}}$ in such a way that $L^{\infty}$ weak* limit of $\AB$ becomes  $(\theta_A +\theta_B-1)$.
Similarly for the lower bound equality $l_{\#}$, one takes ${\omega}_{B^{\epsilon}} \subseteq {\omega}_{A^{\epsilon}}$ for $\theta_A\geq \theta_B$. When $\theta_B\geq \theta_A$ we take ${\omega}_{A^{\epsilon}} \subseteq {\omega}_{B^{\epsilon}}$.

Another important thing to notice that for given any value $v$ between $v\in (l_{\#},u_{\#})$, there are corresponding microstructures $\A,\B$ such that simply by fixing the value of the weak* limit of $\AB$ to be $\theta_{AB}$, we can achieve the intermediate value $v$. The following lemma from \cite{T} allows us to do that. For this purpose, it is enough to express $v$ in terms of $\theta_{AB}$ using the relation \eqref{FL18}.
\hfill\end{proof}
\begin{lemma}[Lypunov]
If $\theta \in L^{\infty}(\Omega)$ satisfies $0 \leq \theta \leq 1$ a.e. in $\Omega$ then there exists
a sequence of characteristic functions $\chi_{\epsilon}$ satisfying $\chi_{\epsilon}(x)\rightharpoonup \theta(x)$ in $L^{\infty}(\Omega)$ weak*.
\hfill\qed
\end{lemma}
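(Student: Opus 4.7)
The plan is to prove the Lyapunov lemma in two stages: first approximate $\theta$ by piecewise constant functions on shrinking partitions of $\Omega$, then realize each piecewise constant function as the weak* limit of an $\epsilon$-periodic family of characteristic functions, and finally combine the two limits through a diagonal argument.

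For the first stage, I would take a sequence of dyadic partitions $\{Q_{n,k}\}_{k}$ of $\Omega$ into cubes of side length $2^{-n}$ (truncated near $\partial\Omega$ as needed) and set $\theta_n(x) = \sum_k \bar\theta_{n,k}\,\chi_{Q_{n,k}}(x)$, where $\bar\theta_{n,k} := |Q_{n,k}|^{-1}\int_{Q_{n,k}}\theta$ lies in $[0,1]$. By the Lebesgue differentiation theorem, $\theta_n \to \theta$ almost everywhere, and since $|\theta_n|\le 1$, dominated convergence gives $\theta_n \to \theta$ in $L^1(\Omega)$, hence in $L^\infty(\Omega)$ weak*.

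For the second stage, fix $n$ and, inside each cube $Q_{n,k}$, tile $Q_{n,k}$ by translates of $\epsilon$-scaled unit cubes and, in each small cube, carve out a fixed sub-cube of relative volume $\bar\theta_{n,k}$. The indicator function $\chi_{n,k}^\epsilon$ of the resulting set is a characteristic function of a measurable subset of $Q_{n,k}$, and the classical periodic homogenization computation (oscillatory test functions tested against $L^1$ densities) yields $\chi_{n,k}^\epsilon \rightharpoonup \bar\theta_{n,k}$ in $L^\infty(Q_{n,k})$ weak* as $\epsilon \to 0$. Summing over $k$, the function $\chi_n^\epsilon := \sum_k \chi_{n,k}^\epsilon$ is itself the characteristic function of a subset of $\Omega$, and $\chi_n^\epsilon \rightharpoonup \theta_n$ in $L^\infty(\Omega)$ weak*.

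Finally, I would diagonalize. Since $L^1(\Omega)$ is separable, the unit ball of $L^\infty(\Omega)$ is metrizable for the weak* topology; let $d$ be a compatible metric. For each $n$ choose $\epsilon_n > 0$ so small that $d(\chi_n^{\epsilon_n},\theta_n) \le 1/n$. Combining with $d(\theta_n,\theta) \to 0$ (which follows from the $L^1$-convergence of Stage 1) gives $\chi_n^{\epsilon_n} \rightharpoonup \theta$ weak* in $L^\infty(\Omega)$, as required. The main technical nuisance — rather than a genuine obstacle — is the treatment of the boundary cells that are not fully contained in $\Omega$, but these have total measure tending to zero with $n$ and can be ignored by setting $\chi_n^\epsilon = 0$ there, since this perturbs $\theta_n$ by an $L^1$-negligible amount.
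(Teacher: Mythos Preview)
Your argument is correct: the three-stage scheme (piecewise-constant dyadic approximation, periodic realization on each cell, diagonalization via metrizability of the weak* unit ball) is a standard and complete proof of this lemma. The only cosmetic point is that in Stage~1 the implication ``$L^1$-convergence $\Rightarrow$ $L^\infty$ weak* convergence'' uses the uniform bound $|\theta_n|\le 1$ together with a density argument for $L^1$ test functions, which you implicitly have.

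Note, however, that the paper does not prove this lemma at all: it is simply quoted (with a \qed) as a known result from Tartar~\cite{T}. So there is no ``paper's own proof'' to compare against; your construction supplies exactly the kind of explicit argument that the paper defers to the reference.
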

We conclude this section by making some general comments on $N$ dimensional case. In general the homogenized matrices $A^{*}$, $B^{\#}$ are difficult to calculate. 
Description of the set of all possible macro quantities is analogous to the known famous \textit{G-closure problem}
\cite{A,AM} in homogenization theory. It largely depends the microstructure or microgeometry which provides the structure 
or geometry in microscale in which way the components are mixed. Our goal is to obtain bounds on the set of all possible limit matrices from below and above which are independent of microstructure. We also want the bounds to be optimal in the sense of Theorem \ref{Sd10}.

\section{Statement of Main Results}\label{ad18}
\setcounter{equation}{0}
Here in this section we announce the results concerning the bounds on $(A^{*},B^{\#})$. We assume $A^{\epsilon}$ and $B^{\epsilon}$
are governed with the two phase medium. 
More precisely let $A^{\epsilon}\in \mathcal{M}(a_1,a_2;\Omega)$ with $0 < a_1\leq a_2 <\infty$ and 
$B^{\epsilon}\in\mathcal{M}(b_1,b_2;\Omega)$ with $0< b_1\leq b_2 < \infty$ are governed with two phase medium :
\begin{equation}\label{ta}\begin{aligned}
A^{\epsilon}(x) = a^{\epsilon}(x)I\ \mbox{ where, }\ a^{\epsilon}(x) &= a_1\A +a_2 (1-\A)\ \mbox{ a.e. in }\ x\in\Omega \\
\A &\rightharpoonup \theta_A(x) \mbox{ in }L^{\infty}(\Omega)\mbox{  weak* topology }.
\end{aligned}\end{equation} 
\begin{equation}\label{tb}\begin{aligned}
B^{\epsilon}(x)= b^{\epsilon}(x)I\ \mbox{ where }\ b^{\epsilon}(x) &= b_1\B + b_2 (1-\B)\ \mbox{ a.e. in }\ x\in\Omega\\
\B &\rightharpoonup \theta_B(x)\mbox{ in } L^{\infty}(\Omega) \mbox{ weak* topology }.
\end{aligned}
\end{equation}
In this section, we state four bounds labeled as L1, L2, U1, U2 involving $(A^{*},B^{\#})$, $\{a_1,a_2,\theta_A\}$, $\{b_1,b_2,\theta_B\}$. They are shown to be valid in sub-domains $int(\Omega_{L1})$, $int(\Omega_{L2})$, $int(\Omega_{U1})$, $int(\Omega_{U2})$ which are as follows :
\begin{align*}
&\Omega_{L1}:=  \{x\in\Omega : \theta_A(x) \leq \theta_B(x)\},\ \ 
\ \ \ \ \  \Omega_{L2}:=  \{x\in\Omega : \theta_B(x) < \theta_A(x)\},\\
&\Omega_{U1}:=  \{x\in\Omega : \theta_A(x) + \theta_B(x)\leq 1\},\ \ 
\Omega_{U2}:=  \{x\in\Omega : \theta_B(x) + \theta_A(x)> 1\}.
\end{align*}
The bounds define four regions in the phase space denoted by $(Li,Uj)$, $i,j=1,2$. Physical sub-domain on which the region $(Li,Uj)$ is optimal is $int(\Omega_{(Li,Uj)})$, where 
$ \Omega_{(Li,Uj)} = \Omega_{Li} \cap \Omega_{Uj} \mbox{ for }i,j=1,2$.

Compare this situation with the classical case involving $A^{*}$ in which there is only one region in the phase space and it is optimal in the entire physical domain $\Omega$. 
Their proofs presented in Section \ref{ts} use a combination of translated inequality, $H$-measure techniques and
Compensated Compactness. 
We start by recalling the optimal bounds  on the homogenized matrices $A^{*}$ for two-phase medium. 
\paragraph{Optimal Bounds on $A^{*}(x)$ :} 
Let $\mathcal{G}_{\theta_A(x)}$ (known as the \textit{G-closure set}) be the set of all possible effective tensors $A^{*}(x)$ obtained by the homogenization
of two phases $a_1,a_2$ associated with the volume fraction $\theta_A(x)$ and $(1-\theta_A(x))$ 
respectively, which has the following pointwise characterization in terms of the trace inequalities (see \cite[Proposition 10]{MT1}) :
The set $\mathcal{G}_{\theta_A(x)}$ is the set of all symmetric matrices with eigenvalues $\lambda_1(x),..,\lambda_N(x)$ satisfying pointwise 
\begin{equation}\begin{aligned}\label{FL11}
\underline{a}(x) \leq \lambda_i(x)&\leq \overline{a}(x),\quad \forall 1\leq i\leq N\\
\mbox{(Lower Trace Bound) : }\quad \ \sum_{i=1}^N \frac{1}{\lambda_i(x)-a_1} &\leq \frac{1}{\underline{a}(x)-a_1} + \frac{N-1}{\overline{a}(x)-a_1}\\
\mbox{(Upper Trace Bound) : }\quad \ \sum_{i=1}^N \frac{1}{a_2 -\lambda_i(x)} &\leq \frac{1}{a_2-\underline{a}(x)} + \frac{N-1}{a_2-\overline{a}(x)},
\end{aligned}\end{equation}
where $\underline{a}(x), \overline{a}(x)$ are the harmonic and arithmetic means of $a_1,a_2$ respectively, defined as 
$\underline{a}(x) = (\frac{\theta_A(x)}{a_1} +\frac{1-\theta_A(x)}{a_2})^{-1}$ and $\overline{a}(x)=a_1\theta_A(x)+a_2(1-\theta_A(x))$.  \\
\\
$\mathcal{G}_{\theta_A(x)}$ is a convex region for fixed $x$ (see \cite[Remark 2.2.15]{A}). Let the boundaries of $\mathcal{G}_{\theta_A(x)}$ be $\partial\mathcal{G}^{L}_{\theta_A(x)}$ and $\partial\mathcal{G}^{U}_{\theta_A(x)}$ i.e. 
($\partial\mathcal{G}_{\theta_A(x)} = \partial\mathcal{G}^{L}_{\theta_A(x)} \cup \partial\mathcal{G}^{U}_{\theta_A(x)}$).
The set $\partial\mathcal{G}^{L}_{\theta_A(x)}$ (respectively, $\partial\mathcal{G}^{U}_{\theta_A(x)}$) represents the lower bound equality (respectively, the upper bound equality)
of \eqref{FL11}. \hfill\qed
\\
\\
We also introduce $\mathcal{K}_{\theta_A}\subset \mathbb{R}^N$ consisting of $(\lambda_1,..,\lambda_N)$ satisfying the above three inequalities \eqref{FL11}. 
Having the above characterization of the set $\mathcal{G}_{\theta_A(x)}$,  we will present our results concerning the trace bounds involving $(A^{*},B^{\#})$ in arbitrary dimension. It is divided into four cases and their optimality will be taken up in Section \ref{qw4} (see \eqref{qw2}).
\subsection{Optimal Trace Bounds : $A^{\epsilon}$ is governed by a two phase medium and $B^{\epsilon}$ is independent of $\epsilon$ }\label{FL7}
We consider $A^{\epsilon}\in\mathcal{M}(a_1,a_2;\Omega)$ is given by \eqref{ta} and $B^{\epsilon}(x)=\ b(x)I\in\mathcal{M}(b_1,b_2;\Omega)$ for some $b\in L^{\infty}(\Omega)$. That is, $B^\epsilon$ does not have microstructure. Then we have following bounds in terms of the trace of the matrices $A^{*}(x)$ and $B^{\#}(x)$ hold almost everywhere in $x\in\Omega$ :
\paragraph{Lower Trace Bound L :}
\begin{equation}\label{tw}
tr\ \{b(x)(a_2I-A^{*}(x))(a_2B^{\#}(x)-b(x)A^{*}(x))^{-1}(a_2I-A^{*}(x))\} \leq\ N\theta_A(x)(a_2-a_1).
\end{equation}  
\noindent
\textbf{Upper Trace Bound U :}
\begin{equation}\label{tq}
tr\ \{ b(x)(A^{*}(x) - a_1I)(b(x)A^{*}(x)-a_1 B^{\#}(x))^{-1}(A^{*}(x) - a_1I)\} \leq\ N(1-\theta_A(x))(a_2-a_1).
\end{equation}
In the sequel, when we talk about lower/upper trace bound, we always mean the above bounds supplemented with the bound \eqref{Sd3}.
The above bounds are saturated/optimal in the sense of
Theorem \ref{qw6} below. We will not write down explicitly the proof because more
complicated case of $B^\epsilon$ having  microstructures will be discussed in Section \ref{qw4}.\\
\\
We further discover (see Remark \ref{eg3}) that for $A^\epsilon(x)$ satisfying \eqref{ta}, the corresponding $H$-limit $A^\epsilon(x) \xrightarrow{H} A^{*}(x)$ in $\Omega$ and the relative limit $b(x)I\xrightarrow{A^\epsilon(x)} B^{\#}(x)$ in $\Omega$, commute with each other, i.e.
\begin{equation}
A^{*}(x)B^{\#}(x) = B^{\#}(x)A^{*}(x),\quad x\in\Omega\ \mbox{ a.e.}
\end{equation}
That means that they have a common set of eigenvector basis, say $\{u_1(x),\ldots,u_N(x)\}$, $x\in\Omega$. If $\lambda_i(x)$ and the $\mu_i(x)$ are the eigenvalues of $A^{*}(x)$ and $B^{\#}(x)$ corresponding to the common eigenvector $u_i(x)$ i.e.
\begin{equation} 
A^{*}(x)u_i(x) = \lambda_i(x) u_i(x) \mbox{ and } B^{\#}(x)u_i(x)=\mu_i(x) u_i(x) ,\ \ i=1,\ldots,N, \ x\in\Omega \mbox{ a.e.},
\end{equation}
then the above bounds \eqref{tw} and \eqref{tq} are also written as :
\begin{align}
&\textbf{Lower Trace Bound L :}\ \sum_{i=1}^N \frac{b(x)(a_2-\lambda_i(x))^2}{(a_2\lambda_i(x)-b(x)\mu_i(x))} \leq N\theta_A(x)(a_2-a_1).\label{eg6}\\
&\textbf{Upper Trace Bound U :}\ \sum_{i=1}^N \frac{b(x)(\lambda_i(x) - a_1)^2}{(b(x)\lambda_i(x)-a_1\mu_i(x))} \leq N(1-\theta_A(x))(a_2-a_1). \label{eg7}
\end{align}

\subsection{Optimal Trace Bounds : $A^{\epsilon}$ and $B^{\epsilon}$ are both governed by two phase medium}\label{Sd9}
Here we assume $A^{\epsilon}$ and $B^{\epsilon}$ are given by \eqref{ta} and \eqref{tb} respectively. Then we have the following optimal trace bounds that hold on various sub-domains of Omega specified in each case.
\paragraph{Lower Trace Bound  L1 :}
Let $A^{*}(x)\in\mathcal{G}_{\theta_A(x)}$. Then by the structure of the phase space of $A^{*}$, we know that there exists a unique $\theta(x)$ with $\theta(x)\leq \theta_A(x)$ almost everywhere $x\in\Omega$ such that $A^{*}(x)\in\partial\mathcal{G}_{\theta(x)}^L$ (see Figure 4) :
\begin{equation}\label{eib}
tr\ (A^{*}(x)-a_1I)^{-1} = tr\ (\overline{A}_{\theta}(x)-a_1I)^{-1}  + \frac{\theta(x)}{(1-\theta(x))a_1}
\end{equation}
where $\overline{A}_{\theta}(x) =(a_1\theta(x) +a_2(1-\theta(x))I$.\\
\\
Then the corresponding $(A^{*},B^{\#})$ with $A^{*}\in\mathcal{G}_{\theta_A}$ satisfies  
\begin{align}\label{tt}
tr\ (B^{\#}(x)-b_1I)(\overline{A}_{\theta}(x)-a_1I)^2(A^{*}(x)-a_1I)^{-2}\geq\ & N(b_2-b_1)(1-\theta_B(x))\notag\\
&\quad+\frac{b_1(a_2-a_1)^2}{a_1^2}\theta(x)(1-\theta(x)).
\end{align}
where $\theta(x)$ is given by \eqref{eib} in terms of $A^{*}(x).$\\
\\
By eliminating $\theta$ from \eqref{tt}, the above lower bound is equivalent to the following :
\begin{align}\label{ub6}
tr\ (B^{\#}(x)-b_1I)(A^{*}(x)-a_1I)^{-2}\geq\ & \frac{N(b_2-b_1)(1-\theta_B(x))\lb a_1tr\ (A^{*}(x)-a_1I)^{-1} +1\rb^2}{(a_2 +a_1(N-1))^2}\notag\\
                                        &\quad +\frac{b_1}{a_1}\frac{\lb(a_2-a_1)\hspace{2pt} tr\ (A^{*}(x)-a_1I)^{-1} - N\rb}{(a_2 +a_1(N-1))}.
\end{align}
The above lower bound is valid and optimal in the sense of Theorem \ref{qw6} in the sub-domain $int(\Omega_{L1})$. 
\paragraph{Lower Trace Bound  L2 :}
Let $A^{*}(x)\in\mathcal{G}_{\theta_A(x)}$. Then we know that
there exists a unique $\theta(x)$ with $\theta(x)\geq \theta_A(x)$ almost everywhere $x\in\Omega$ such that
$A^{*}\in\partial\mathcal{G}_{\theta(x)}^U$ (see Fig 2) :
\begin{equation}\label{eic}
tr\ ({A^{*}(x)}^{-1}-a_2^{-1}I)^{-1} =  tr\ ({\underline{A}_{\theta}}^{-1}(x) - a_2^{-1}I)^{-1}  + (N-1)\frac{(1-\theta(x))a_2}{\theta(x)}
\end{equation}
where $\underline{A}_{\theta}^{-1}(x) = (\frac{\theta(x)}{a_1}+\frac{(1-\theta(x))}{a_2})I.$\\
\\
Then the corresponding $(A^{*},B^{\#})$ with $A^{*}\in\mathcal{G}_{\theta_A}$ satisfies :\\
\\
\textbf{Case (a):\ when $\frac{b_2}{a_2^2}\leq \frac{b_1}{a_1^2}$}
\begin{align}\label{to}
tr\ \{({A^{*}}^{-1}(x)B^{\#}(x)&{A^{*}}^{-1}(x)- \frac{b_2}{a_2^2} I)(\underline{A}_{\theta}^{-1}(x) - a_2^{-1}I)^2({A^{*}}^{-1}(x)- a_2^{-1}I)^{-2}\} \notag\\
&\qquad\geq\ N(l(x)-\frac{b_2}{a_2^2})+\frac{b_2(a_2-a_1)^2}{(a_1a_2)^2}\theta(x)(1-\theta(x))(N-1)  . 
\end{align}
\textbf{Case (b):\ when $\frac{b_2}{a_2^2}\geq \frac{b_1}{a_1^2}$}
\begin{align}\label{tn}
&tr\ \{({A^{*}}^{-1}(x)B^{\#}(x){A^{*}}^{-1}(x)- \frac{b_1}{a_1^2} I)(\underline{A}_{\theta}^{-1}(x) - a_2^{-1}I)^2({A^{*}}^{-1}(x)- a_2^{-1}I)^{-2}\}\notag\\
&\geq  N(l(x)-\frac{b_1}{a_1^2}) +\{ \frac{b_1(a_2-a_1)^2}{a_1^4}\theta(x)+2(\frac{b_2}{a_2^2}-\frac{b_1}{a_1^2})\frac{(a_2-a_1)}{a_1}\}(1-\theta(x))(N-1) 
\end{align}
where,
\begin{equation*} l(x)= \ \frac{b_1}{a_1^2}\theta_B(x) + \frac{b_2}{a_1^2}(\theta(x) - \theta_B(x)) + \frac{b_2}{a_2^2}(1-\theta(x))\end{equation*}
and $\theta(x)$ is given by \eqref{eic} in terms of $A^{*}$.\\
\\
The above lower bound is valid and  optimal in the sense of Theorem \ref{qw6} in the sub-domain $int(\Omega_{L2})$.
\paragraph{Upper Trace Bound U1 :}
Let $A^{*}(x)\in\mathcal{G}_{\theta_A(x)}$. 
Then we know that there exists a unique $\theta(x)$ with $\theta(x)\leq \theta_A(x)$ almost everywhere $x\in\Omega$ such that
$A^{*}\in\partial\mathcal{G}_{\theta(x)}^L$, satisfying \eqref{eib}. Then the corresponding $(A^{*},B^{\#})$ with $A^{*}\in\mathcal{G}_{\theta_A}$ satisfies
\begin{align}\label{hsm}
tr\ \{(\frac{b_2}{a_1}A^{*}(x)-B^{\#}(x))(\overline{A}_{\theta}(x)-a_1I)^2(A^{*}(x)- a_1 I)^{-2}\} \geq\ &N(b_2-b_1)\theta_B(x) \notag\\
                                                                                            & + N\frac{b_2}{a_1}(a_2-a_1)(1-\theta(x)).
\end{align}
The above upper bound is valid and optimal in the sense of Theorem \ref{qw6} in the sub-domain $int(\Omega_{U1})$. 
\paragraph{Upper Trace Bound U2 :}
Let $A^{*}(x)\in\mathcal{G}_{\theta_A(x)}$. Then we know that there exists a unique $\theta(x)$ with $\theta(x)\geq \theta_A(x)$ almost everywhere $x\in\Omega$ such that
$A^{*}\in\partial\mathcal{G}_{\theta(x)}^U$,  satisfying \eqref{eic}. Then the corresponding $(A^{*},B^{\#})$ with $A^{*}\in\mathcal{G}_{\theta_A}$ satisfies
\begin{align}\label{tm}
&tr\ \{(\frac{b_2a_2}{a_1^2}{A^{*}}^{-1}(x)-{A^{*}}^{-1}(x)B^{\#}(x){A^{*}}^{-1}(x))(\underline{A}_{\theta}(x)^{-1} - a_2^{-1}I)^2({A^{*}(x)}^{-1} - a_2^{-1}I)^{-2}\}\notag\\
&\geq\  N(\frac{b_2}{a_1^2}-\theta^{*}(x))+ N\frac{b_2(a_2-a_1)}{a_1^3}(1-\theta(x))- 2\frac{(b_2-b_1)(a_2-a_1)}{a_1^3}(1-\theta(x))(N-1)
\end{align}
where, 
\begin{equation*}\theta^{*}(x)= \frac{b_2}{a_1^2} + \frac{(b_1-b_2)}{a_1^2}\theta_B(x) + (\frac{b_1}{a_2^2}-\frac{b_1}{a_1^2})(1-\theta(x))\end{equation*}
The above upper bound is valid and optimal in the sense of Theorem \ref{qw6} in the sub-domain $int(\Omega_{U2})$.\\
\\
Note that, all the above lower/upper trace bounds are supplemented with the bound \eqref{Sd3}.
\begin{remark}
As we have done in the case of  \eqref{ub6},  we can express other bounds also  by eliminating $\theta(x)$ in terms of $(a_1,a_2,A^{*},b_1,b_2,\theta_A,\theta_B)$.
\hfill\qed\end{remark}
\begin{remark}
We further discover (see Remark \ref{eg3}) that for any two sequences $A^\epsilon(x)$ and $B^\epsilon(x)$ satisfying \eqref{ta} and \eqref{tb} respectively, the corresponding $H$-limit $A^\epsilon(x) \xrightarrow{H} A^{*}(x)$ in $\Omega$ and the relative limit $B^\epsilon(x)\xrightarrow{A^\epsilon(x)} B^{\#}(x)$ in $\Omega$, commute with each other locally in $\Omega$, i.e. 
\begin{equation}
A^{*}(x)B^{\#}(x) = B^{\#}(x)A^{*}(x),\quad x \mbox{ a.e. in }\Omega.
\end{equation}
That means that they have a common set of eigenvector basis, say $\{u_1(x),\ldots,u_N(x)\}$, $x\in\Omega$. If $\lambda_i(x)$ and the $\mu_i(x)$ are the eigenvalues of $A^{*}(x)$ and $B^{\#}(x)$ respectively corresponding to the common eigenvector $u_i(x)$, i.e.
\begin{equation} 
A^{*}(x)u_i(x) = \lambda_i(x) u_i(x) \mbox{ and } B^{\#}(x)u_i(x)=\mu_i(x) u_i(x) ,\ \ i=1,\ldots,N, \  x \mbox{ a.e. in }\Omega, 
\end{equation}
then the above $L1,L2,U1,U2$ bounds can be expressed in terms of eigenvalues $\{\lambda_i\}_{1\leq i\leq N}$ and $\{\mu_i\}_{1\leq i \leq N}$. For example, the L1 bound \eqref{ub6} and the U1 bound \eqref{hsm} can be written as :
\begin{align}
&\textbf{Lower Trace Bound L1 :}\notag\\
&\sum_{i=1}^N \frac{(\mu_i(x)-b_1)}{(\lambda_i(x)-a_1)^{2}}\ \geq\  \frac{N(b_2-b_1)(1-\theta_B(x))\lb a_1\sum_{i=1}^N (\lambda_i(x)-a_1)^{-1} +1\rb^2}{(a_2 +a_1(N-1))^2}\notag\\
&\qquad\qquad\qquad\qquad\qquad\qquad\qquad\qquad +\frac{b_1}{a_1}\frac{\lb(a_2-a_1)\sum_{i=1}^N (\lambda_i(x)-a_1)^{-1} - N\rb}{(a_2 +a_1(N-1))}. \label{eg10}\\
&\textbf{Upper Trace Bound U1 :}\notag\\
&\sum_{i=1}^N \frac{(\frac{b_2}{a_1}\lambda_i(x)-\mu_i(x))}{(\lambda_i(x)- a_1)^{2}}\ \geq\  \frac{ N(b_2-b_1)\theta_B(x)\lb a_1\sum_{i=1}^N (\lambda_i(x)-a_1)^{-1} +1\rb^2}{(a_2 +a_1(N-1))^2} \notag\\
&\qquad\qquad\qquad\qquad\qquad\qquad\qquad\qquad\qquad\quad + N\frac{b_2}{a_1}\frac{\lb a_1\sum_{i=1}^N (\lambda_i(x)-a_1)^{-1} +1\rb}{(a_2 +a_1(N-1))}.\label{eg5}                                                                                      
\end{align}
\end{remark}
\begin{figure}[H]
 \begin{center}
  \includegraphics[width = 13cm]{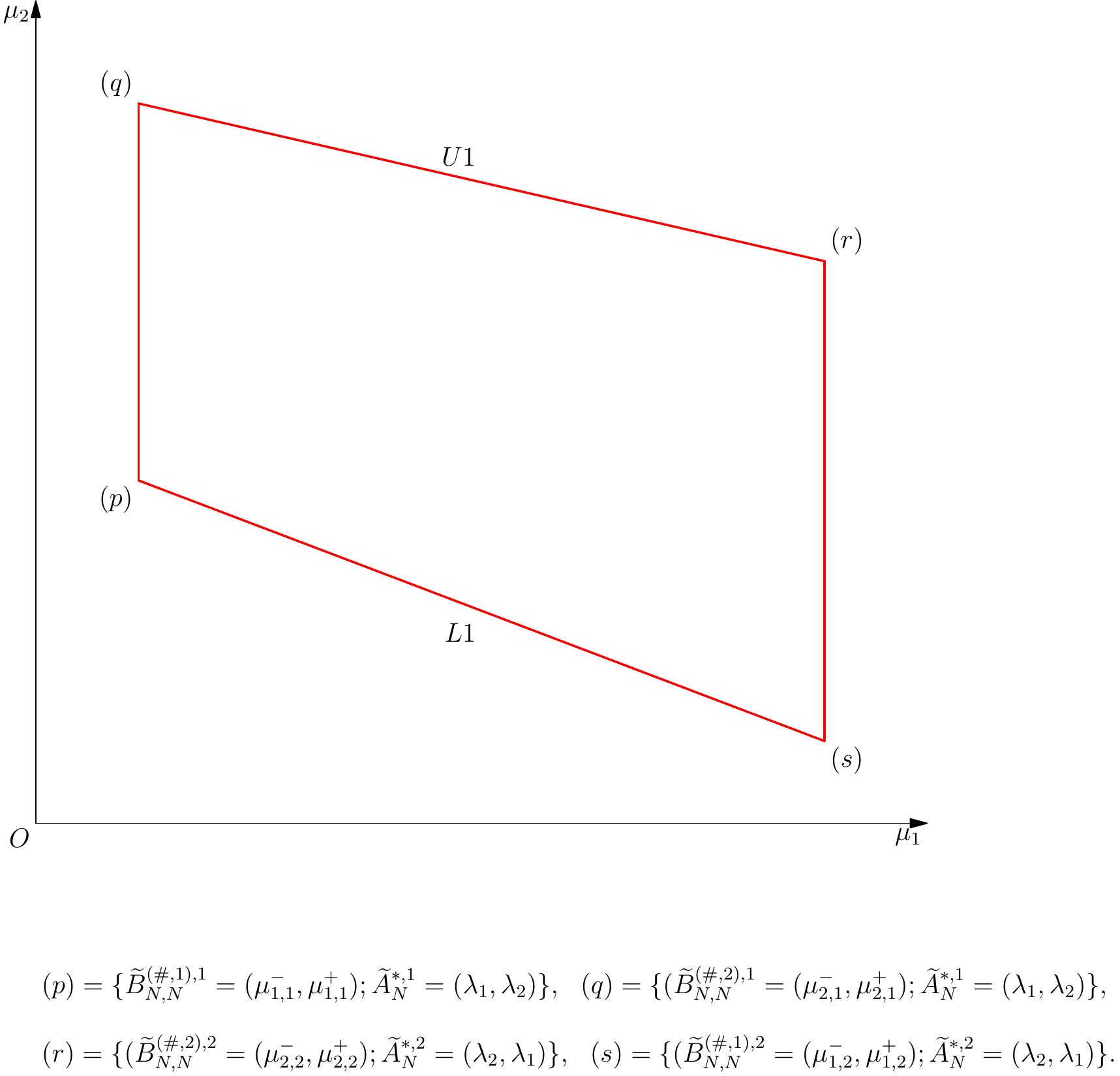}
 \end{center}
\caption{\textit{$N=2$ : $(L1,U1)$ bounds for $B^{\#}$ with eigenvalues $(\mu_1,\mu_2)$ when $(a_1,a_2,\theta_A)$, $(b_1,b_2,\theta_B)$ and $A^{*}$ with eigenvalues $(\lambda_1,\lambda_2)$  $(\lambda_1\leq\lambda_2)$ are given.}}  
\hfill\qed \end{figure}
\begin{remark}\label{eiu}
Let us consider the self-interacting case i.e. $B^\epsilon = A^\epsilon$, then we have $B^{\#}=A^{*}$ and the corresponding inequalities in lower bound L1 
(cf.\eqref{ub6}) and lower bound $L2(a)$ coincide with the classical optimal bounds
for $A^{*}$ given in \eqref{FL11}. 
On the other hand,  $U1,U2$ define two regions in the \textit{G-closure set} of $A^{*}$ 
whenever $\theta_A\leq \frac{1}{2}$ and $\theta_A> \frac{1}{2}$ respectively.
These regions do not seem to have any special significance in the classical phase diagram for $A^{*}$. For example  
the trace bound U1 says that the following inequality folds for all $A^{*}\in\underset{\theta_A\leq\frac{1}{2}}{\cup}\mathcal{G}_{\theta_A}$:
\begin{equation*}
 tr\ (A^{*}-a_1I)^{-1} + a_1tr\ (A^{*}-a_1I)^{-2}\geq \frac{(a_1tr\ (A^{*}-a_1I)^{-1} +1)}{(a_2 +a_1(N-1))}+ a_1\frac{(a_1tr\ (A^{*}-a_1I)^{-1} +1)^2}{(a_2 +a_1(N-1))^2}.
\end{equation*}
\hfill\qed\end{remark}
\begin{remark}
If $\theta_A \rightarrow 0 $ i.e. $A^\epsilon$ becomes homogeneous/ independent of $\epsilon$ then the bounds L1 and U1 imply $tr\ B^{\#} = tr\ \overline{B}$. Actually,
the matrix inequalities \eqref{bs10},\eqref{OP4} from which L1, U1 are deduced by taking 
trace, give $B^{\#}=\overline{B}$. We can see this result directly by our arguments in Section 2. 
Similar conclusions can be reached by taking $\theta_A\rightarrow 1$ in L2, U2. 
\hfill\qed\end{remark}
\begin{figure}[H]
 \begin{center}
  \includegraphics[width = 14cm]{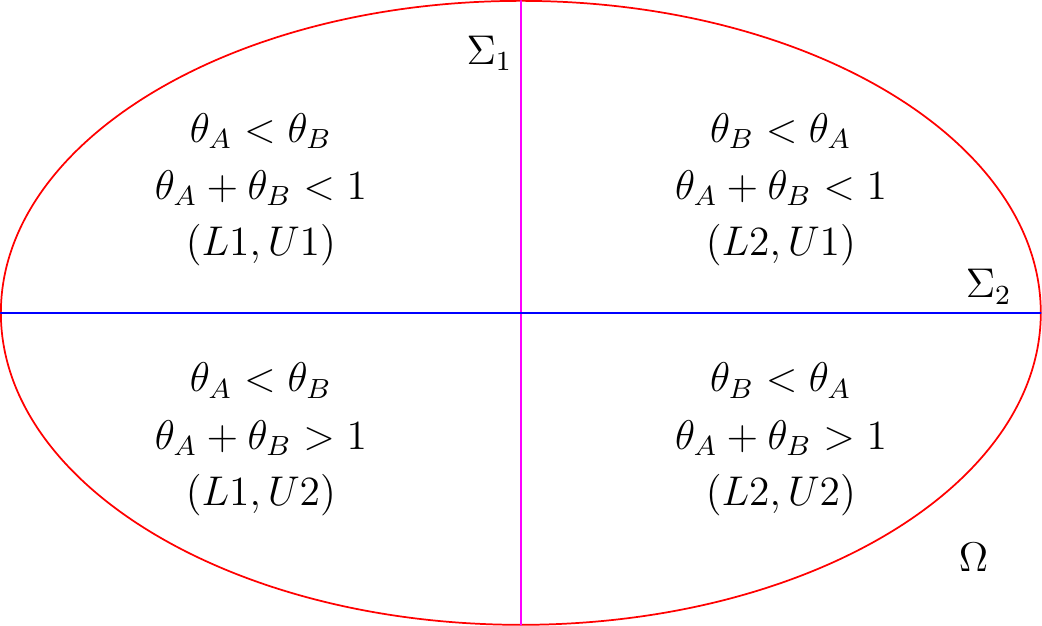}
 \caption{\textit{$N=2$ : $\Omega$ is partitioned into four disjoint sub domains by $\Sigma_1 :=\{x\in\Omega\ :\ \theta_A(x)=\theta_B(x)\}$ and $\Sigma_2 :=\{x\in\Omega\ :\ \theta_A(x)+\theta_B(x)=1\}$ which are assumed to be hypersurfaces transverse to each other as shown.}}
 \end{center}
We introduce the following four regions in the phase space of $(A^{*},B^{\#})$ :
 \begin{align*}
 (L1,U1) &:= \{(A^{*},B^{\#}) : A^{*}\in\mathcal{G}_{\theta_A} \mbox{ and } (A^{*},B^{\#})\mbox{ satisfies both L1, U1  with }\theta_A\leq\theta_B,\\ &\qquad\qquad\qquad\qquad\qquad\qquad\qquad\qquad\qquad\qquad\qquad\qquad\qquad\quad\theta_A+\theta_B\leq 1.\}\\
 (L1,U2) &:= \{(A^{*},B^{\#}) : A^{*}\in\mathcal{G}_{\theta_A} \mbox{ and } (A^{*},B^{\#})\mbox{ satisfies both L1, U2  with }\theta_A\leq\theta_B,\\ &\qquad\qquad\qquad\qquad\qquad\qquad\qquad\qquad\qquad\qquad\qquad\qquad\qquad\quad\theta_A+\theta_B> 1.\}\\
 (L2,U1) &:= \{(A^{*},B^{\#}) : A^{*}\in\mathcal{G}_{\theta_A} \mbox{ and } (A^{*},B^{\#})\mbox{ satisfies both L2, U1  with }\theta_B <\theta_A,\\ &\qquad\qquad\qquad\qquad\qquad\qquad\qquad\qquad\qquad\qquad\qquad\qquad\qquad\quad\theta_A+\theta_B\leq 1.\}\\
 (L2,U2) &:= \{(A^{*},B^{\#}) : A^{*}\in\mathcal{G}_{\theta_A} \mbox{ and } (A^{*},B^{\#})\mbox{ satisfies both L2, U2  with }\theta_B< \theta_A,\\ &\qquad\qquad\qquad\qquad\qquad\qquad\qquad\qquad\qquad\qquad\qquad\qquad\qquad\quad\theta_A+\theta_B> 1.\}
 \end{align*}
\end{figure}
\begin{remark}[Fibre-wise Convexity]\label{qw1}
We define the union of the four regions in the phase space corresponding to four quadrants in the physical domain shown in Figure 2 :  
\begin{align}\label{kab}
 \mathcal{K}_{(\theta_A,\theta_B)} =\{ (A^{*},B^{\#})\mbox{ constant matrices such that }A^{*}\in\mathcal{G}_{\theta_A} \mbox{ and } (A^{*},B^{\#}) \mbox{ is in the  }\quad &\notag\\
 \mbox{regions }(Li,Uj), \mbox{ for some }i,j=1,2
 \mbox{ with constant proportions }(\theta_A,\theta_B)\}&
\end{align}
and for $A^{*}\in\mathcal{G}_{\theta_A}$ we set 
\begin{align}\label{kfab}
 \mathcal{K}^{f}_{(\theta_A,\theta_B)}(A^{*}) =\{ B^{\#};\ (A^{*},B^{\#})\in \mathcal{K}_{(\theta_A,\theta_B)}\}.
 \end{align}
Note $\mathcal{K}^{f}_{(\theta_A,\theta_B)}(A^{*})$ is nothing but the fibre over $A^{*}$. It can be easily verified that the fibre is a closed convex set for fixed $A^{*}$ using the linearity of the bounds with respect to  $B^{\#}$.
It's not clear what sort of other convexity properties $\mathcal{K}_{(\theta_A,\theta_B)}$ possesses. Fortunately, we will be needing in the sequel only the convexity of  $\mathcal{K}^{f}_{(\theta_A,\theta_B)}(A^{*})$. 
\end{remark}
\begin{figure}[H]
 \begin{center}
  \includegraphics[width = 14cm]{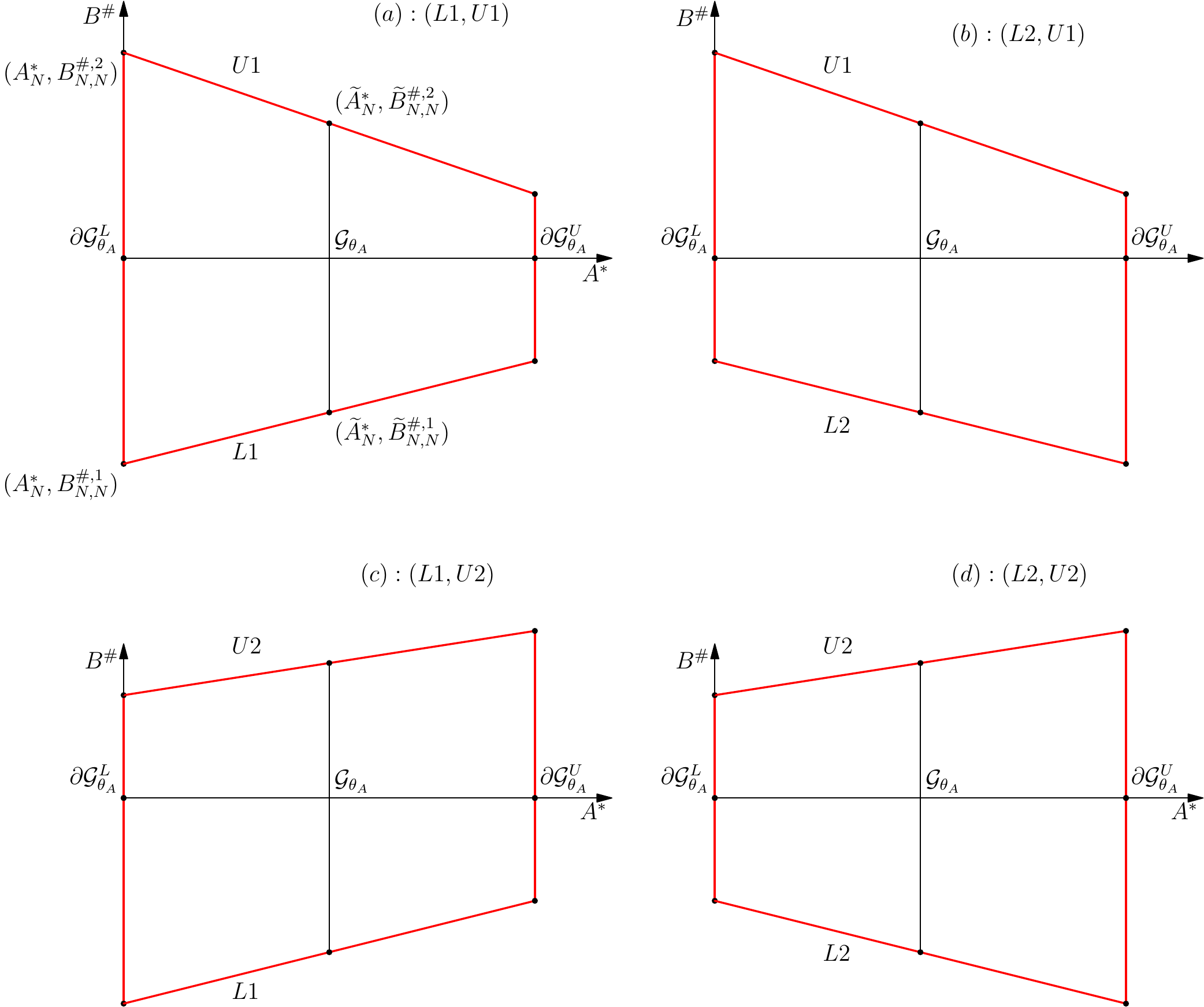}
\caption{\textit{Horizontal (resp.vertical) axis represents the phase space of 
$A^{*}$ (resp. $B^{\#}$). Some fibres standing on $A^{*}$ are shown.}
}
 \end{center}
\hfill\qed\end{figure}
\noindent
Let $\mathcal{G}_{(\theta_A(x),\theta_B(x))}$ be the set of all possible pairs of effective tensor fields $(A^{*}(x),B^{\#}(x))$ with $A^{*}(x)\in\mathcal{G}_{\theta_A(x)}$ obtained by the homogenization of two phases $\{a_1,a_2\}$ with volume fractions $\{\theta_A(x),(1-\theta_A(x))\}$ associated with $A^{*}(x)$, and $B^{\#}(x)$ the corresponding relative limit obtained with two phases $\{b_1,b_2\}$ with  volume fractions $\{\theta_B(x),(1-\theta_B(x))\}$ respectively. Then it is established in Section \ref{sil} that if  $(A^{*}(x),B^{\#}(x))\in \mathcal{G}_{(\theta_A(x),\theta_B(x))}$ then it lies in one of the regions $(Li,Uj)$ ($i,j=1,2$) pointwise, depending upon the values of $\theta_A(x)$ and $\theta_B(x)$, i.e. $\mathcal{G}_{(\theta_A(x),\theta_B(x))}\subseteq \mathcal{K}_{(\theta_A(x),\theta_B(x))}$, $x\in\Omega$ almost everywhere. The reverse inclusion is also true, which is known as optimality of the bounds. 
\begin{theorem}[Optimality]\label{qw6}
\noindent
\begin{enumerate}
\item Optimality of L1\ :\  Let $A^{*}(x)$, $B^{\#}(x)$ be symmetric positive definite matrices and $L^\infty$ functions $\theta_A(x)$, $\theta_B(x)$ be defined in domain $\Omega_1$, an open subset of $\Omega$. We assume that $A^{*}(x)\in\mathcal{G}_{\theta_A(x)}$ and the pair $(A^{*}(x),B^{\#}(x))$ satisfies equality of the bound L1 with \eqref{Sd3} for $x$ a.e. in $\Omega_1$. In addition, it is assumed that $0\leq \theta_A(x)\leq
\theta_B(x) \leq 1$ for $x$ a.e in $\Omega_1$. Then there exist sequences $A^\epsilon(x)$, $B^\epsilon(x)$, defined in $\Omega_1$ and measurable subsets  $\omega_{A^\epsilon}$, $\omega_{B^\epsilon}$  of $\Omega_1$ satisfying  \eqref{ta}, \eqref{tb} with $\omega_{A^\epsilon} \subset \omega_{B^\epsilon}$ and that $A^\epsilon\xrightarrow{H}A^{*}$ and $B^\epsilon\xrightarrow{A^\epsilon}B^{\#}$ in the domain $\Omega_1$.
\item  Statements analogous to the above one hold with regard to the optimality of other bounds L2, U1, U2.
\item  Optimality of the region $(L1,U1)$\ :\ 
 Let $A^{*}(x)$, $B^{\#}(x)$ be symmetric positive definite matrices and $L^\infty$ functions $\theta_A(x)$, $\theta_B(x)$ be defined in some domain $\Omega_{1,1}$, an open subset of $\Omega$. We assume that $A^{*}(x)\in\mathcal{G}_{\theta_A(x)}$ and the pair $(A^{*}(x),B^{\#}(x))$ satisfies the bounds L1 and U1 with \eqref{Sd3} for $x$ a.e. in $\Omega_{1,1}$. In addition, it is assumed that $0\leq \theta_A(x)\leq \theta_B(x) \leq 1$ and $0\leq \theta_A(x)+\theta_B(x)\leq 1$ for $x$ a.e in $\Omega_{1,1}$. Then there exist sequences $A^\epsilon(x)$, $B^\epsilon(x)= \widetilde{\beta_1}(x)B^\epsilon_1(x) +\widetilde{\beta_2}(x)B^\epsilon_2(x)$ with $\widetilde{\beta_1},\widetilde{\beta_2}\geq 0$ and $\widetilde{\beta_1}+\widetilde{\beta_2}=1$, defined in $\Omega_{1,1}$ and measurable subsets  $\omega_{A^\epsilon}$, $\omega_{B^\epsilon_1}$, $\omega_{B^\epsilon_2}$  of $\Omega_1$ satisfying  \eqref{ta}, \eqref{tb} with $\omega_{A^\epsilon} \subset \omega_{B^\epsilon_1}$ and $\omega_{A^\epsilon} \subset \omega^c_{B^\epsilon_2}$, and  that $A^\epsilon\xrightarrow{H}A^{*}$ and $B^\epsilon\xrightarrow{A^\epsilon}B^{\#}$ in the domain $\Omega_{1,1}$.
\item Statements analogous to the above one hold with regard to the optimality of the other regions $(L1,U2)$, $(L2,U1)$, $(L2,U2)$.
\item  Optimality of all regions taken together\ :\  
Let $A^{*}(x)$, $B^{\#}(x)$ be symmetric positive definite matrices and $L^\infty$ functions $\theta_A(x)$, $\theta_B(x)$ be defined in domain $\Omega_1$ an open subset of $\Omega$ with $0\leq \theta_A(x),\theta_B(x)\leq 1$. We assume that $A^{*}(x)\in\mathcal{G}_{\theta_A(x)}$ and the pair $(A^{*}(x),B^{\#}(x))\in \mathcal{K}_{(\theta_A(x),\theta_B(x))}$ $x$ a.e. with \eqref{Sd3} for $x$ a.e. in $\Omega_1$. Then there exist sequences $A^\epsilon(x)$, $B^\epsilon(x)$, defined in $\Omega_1$ and measurable subsets  $\omega_{A^\epsilon}$, $\omega_{B^\epsilon}$  of $\Omega_1$ satisfying  \eqref{ta}, \eqref{tb} and that $A^\epsilon\xrightarrow{H}A^{*}$ and $B^\epsilon\xrightarrow{A^\epsilon}B^{\#}$ in the domain $\Omega_1$.
\end{enumerate}
\end{theorem}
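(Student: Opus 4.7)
The plan is to prove the five statements in increasing order of generality, following the three-part strategy outlined in the introduction: macroscopically homogeneous cases first, then a piecewise-constant approximation, and finally the gluing of all four regions. Throughout I would lean on the explicit computations of $B^{\#}$ on rank-$N$ laminates carried out in Section \ref{ub12} and Section \ref{ts}, together with the Optimality-Commutativity Lemma \ref{zz14}, Lemma \ref{pol5} (convex combinations), Lemma \ref{pol6} (covariance), and Lemma \ref{zz15} (continuous dependence on microstructures).

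For parts (1) and (2), I would treat the macroscopically homogeneous case with constants $A^{*}$, $B^{\#}$, $\theta_A$, $\theta_B$. Given $A^{*}$ saturating L1, the classical theory represents $A^{*}$ as an $N$-rank sequential laminate $A^{*}_p$ with phases $(a_1,a_2)$ in some proportions $(\theta,1-\theta)$ satisfying \eqref{eib}, with lamination directions $\{e_i\}$ and proportions $\{m_i\}$. I would then construct $B^\epsilon$ on the same laminate microgeometry with phases $(b_1,b_2)$ in proportions $(\theta_B,1-\theta_B)$ and the inclusion $\omega_{A^\epsilon}\subset\omega_{B^\epsilon}$ (possible since $\theta_A\leq\theta_B$ on $\Omega_1$), and verify via the explicit formula \eqref{bs16} for $B^{\#}_{p,p}$ that equality in \eqref{tt} is attained. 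The three remaining bounds are handled analogously by choosing the inclusion relation appropriate to the bound: $\omega_{B^\epsilon}\subset\omega_{A^\epsilon}$ for L2, $\omega_{A^\epsilon}\cap\omega_{B^\epsilon}=\emptyset$ for U1 (possible when $\theta_A+\theta_B\leq 1$), and $\omega_{A^\epsilon}\cup\omega_{B^\epsilon}=\Omega_1$ for U2 (possible when $\theta_A+\theta_B\geq 1$).

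For parts (3) and (4), I would combine two saturating constructions by a convex combination. Given $(A^{*},B^{\#})$ in the region $(L1,U1)$, the Optimality-Commutativity Lemma \ref{zz14} ensures that $A^{*}$ and $B^{\#}$ commute, so they share a common eigenbasis; the same lemma permits choosing L1-saturating and U1-saturating pairs $(A^{*},B^{\#}_{L1})$ and $(A^{*},B^{\#}_{U1})$ over the same $A^{*}$ with this eigenbasis. By the fibre-wise convexity recorded in Remark \ref{qw1}, we can write $B^{\#}=\widetilde{\beta_1}B^{\#}_{L1}+\widetilde{\beta_2}B^{\#}_{U1}$ with $\widetilde{\beta_1},\widetilde{\beta_2}\geq 0$ and $\widetilde{\beta_1}+\widetilde{\beta_2}=1$. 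Partitioning $\Omega_{1,1}$ into two parts of proportions $\widetilde{\beta_1},\widetilde{\beta_2}$ and placing on each the appropriate saturating construction (with $\omega_{A^\epsilon}\subset\omega_{B^\epsilon_1}$ on the first, $\omega_{A^\epsilon}\subset\omega^c_{B^\epsilon_2}$ on the second), I would form $B^\epsilon=\widetilde{\beta_1}B^\epsilon_1+\widetilde{\beta_2}B^\epsilon_2$, adjusting the local proportions of $B^\epsilon_i$ so that the global proportion remains $\theta_B$. Lemma \ref{pol5} then yields the desired relative convergence. This is the construction producing minimizers as $N$-rank laminates with an interface across which the core-matrix values get switched, as advertised in the introduction.

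For part (5), I would approximate $(A^{*}(x),B^{\#}(x),\theta_A(x),\theta_B(x))$ by piecewise constant functions on a fine cubical partition $\{Q_k\}$ of $\Omega_1$. On each $Q_k$ the values are constant and fall in one of the four regions, so parts (1)--(4) provide sequences $A^{\epsilon,k},B^{\epsilon,k}$ on $Q_k$. Gluing these cube-by-cube and diagonally extracting, I obtain $A^\epsilon,B^\epsilon$ on $\Omega_1$, and the continuous-dependence estimates of Lemma \ref{zz15} allow passage to the limit as the partition is refined, yielding $A^\epsilon\xrightarrow{H}A^{*}$ and $B^\epsilon\xrightarrow{A^\epsilon}B^{\#}$ globally. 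The covariance Lemma \ref{pol6} handles the $x$-dependence of the common eigenbasis of $(A^{*}(x),B^{\#}(x))$ by locally rotating the lamination directions on each cube. The main obstacle I anticipate is the bookkeeping in parts (3)--(4): ensuring that the convex combination of two distinct microgeometries simultaneously realizes the prescribed global proportions $(\theta_A,\theta_B)$ and the prescribed $(A^{*},B^{\#})$, without corrupting the inclusion relations between $\omega_{A^\epsilon}$ and $\omega_{B^\epsilon_i}$. This is precisely where the Optimality-Commutativity Lemma together with fibre-wise convexity is indispensable, since otherwise $B^{\#}_{L1}$ and $B^{\#}_{U1}$ could fail to commute and no natural convex-combination construction would yield a matrix in the fibre.
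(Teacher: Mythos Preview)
Your overall three-part architecture matches the paper's, but there is a genuine gap that recurs in parts (1)--(4): you never explain why the relative limit of your constructed $B^\epsilon$ equals the \emph{given} $B^{\#}$. In parts (1)--(2) you construct $B^{\#}_{p,p}$ via \eqref{bs16} and verify it saturates L1, but many matrices on the fibre over $A^{*}$ could saturate the same trace inequality; you need the matrix inequality $B^{\#}\ge B^{\#}_{N,N}$ from Remark \ref{zz12}, combined with the trace equality and the ``only if'' part of Lemma \ref{zz14}, to force the spectra of $B^{\#}$ and $B^{\#}_{N,N}$ to coincide and to obtain the common diagonalizing rotation. You invoke Lemma \ref{zz14} only later, but it is already indispensable here.

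The more serious version of this gap is in parts (3)--(4). Your claim that fibre-wise convexity lets you write $B^{\#}=\widetilde{\beta_1}B^{\#}_{L1}+\widetilde{\beta_2}B^{\#}_{U1}$ with a \emph{single} pair of scalar weights is unjustified: convexity of the fibre does not put an arbitrary interior point on the segment joining two prescribed boundary points (take $N=2$ and think of diagonal matrices). The paper sidesteps this by a different mechanism: it chooses scalars $\beta_1,\beta_2\ge 0$ so that $B^{\#}-\beta_1 I$ and $B^{\#}+\beta_2 I$ saturate L1 and U1 respectively (equations \eqref{pol3}--\eqref{pol4}), and then argues, via the case (1)/(3) spectral identification, that $B^{\#,1}_{N,N}=B^{\#}-\beta_1 I$ and $B^{\#,2}_{N,N}=B^{\#}+\beta_2 I$ as diagonal matrices. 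Only because the saturators turn out to be \emph{scalar shifts} of $B^{\#}$ does the convex combination $\frac{\beta_2}{\beta_1+\beta_2}B^{\#,1}_{N,N}+\frac{\beta_1}{\beta_1+\beta_2}B^{\#,2}_{N,N}$ recover $B^{\#}$ exactly. Also, your sentence about ``partitioning $\Omega_{1,1}$ into two parts'' is at odds with the pointwise convex combination you invoke via Lemma \ref{pol5}; a spatial partition would produce a piecewise relative limit, not a homogeneous one, so drop the partition language and keep only the pointwise combination as in \eqref{ED6}--\eqref{ED7}.
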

\noindent
The proof  will be presented in Section \ref{qw4}.\hfill\qed
\begin{remark}\label{ED4}
In the context of point (5) of Theorem \ref{qw6}, we can impose the volume   proportion $\delta_A$ and $\delta_B$ of the $a_1$ material and $b_1$-material in $\Omega$, i.e.
\begin{equation}\label{ED2}
\frac{|\omega_{A^\epsilon}|}{|\Omega|}=\delta_A\mbox{ and }\frac{|\omega_{B^\epsilon}|}{|\Omega|} =\delta_B,\ \forall \epsilon.
\end{equation}
Consequently, the $L^\infty(\Omega)$ weak* limit of $\chi_{\omega_{A^\epsilon}}$ and $\chi_{\omega_{B^\epsilon}}$ are $\theta_A$ and $\theta_B$ respectively and they satisfy
\begin{equation}\label{ED3}
\frac{1}{|\Omega|}\int_\Omega \theta_A = \delta_A\mbox{ and  }\frac{1}{|\Omega|}\int_\Omega\theta_B =\delta_B. 
\end{equation}
Conversely, given $\{\theta_A,\theta_B,\delta_A,\delta_B\}$ satisfying \eqref{ED3}, we can choose $A^\epsilon, B^\epsilon$ as in point (5) of Theorem \ref{qw6} and in addition they satisfy the condition \eqref{ED2}.
\begin{proof}
The proof lies in the fact that, for a given $\delta_A>0$, and a sequence of measurable sets $\{\omega_{A^\epsilon}\}_\epsilon$, $\omega_{A^\epsilon}\subset \Omega$, such that $\frac{|\omega_{A^\epsilon}|}{|\Omega|}\rightarrow \delta_A$ as $\epsilon \rightarrow 0$, then there exist a sequence of measurable sets $\widetilde{\omega}_{A^\epsilon}\subset \Omega$ such that 
$\frac{|\widetilde{\omega}_{A^\epsilon}|}{|\Omega|}=\delta_A $, $\forall \epsilon$  and $|\widetilde{\omega}_{A^\epsilon}\smallsetminus \omega_{A^\epsilon} | \rightarrow 0 $ as $\epsilon \rightarrow 0$. \\
We construct $\widetilde{\omega}_{A^\epsilon}$ in the folllowing way :  We define for each $\epsilon$, 
$\mbox{if }\delta_A - \frac{|\omega_{A^\epsilon}|}{|\Omega|} >0, \mbox{ then }$
\begin{align*}  
\widetilde{\omega}_{A^\epsilon} = \omega_{A^\epsilon} \cup T_1^\epsilon
\mbox{ where, }  T_1^\epsilon \mbox{ is mesurable and }T^\epsilon_1  \subset (\Omega\smallsetminus \omega_{A^\epsilon}), \mbox{ with }|T_1^\epsilon| = \delta_A-\frac{|\omega_{A^\epsilon}|}{|\Omega|}>0,
\end{align*}
$\mbox{and if }\frac{|\omega_{A^\epsilon}|}{|\Omega|}-\delta_A>0, \mbox{ then }$
\begin{align*}
\widetilde{\omega}_{A^\epsilon} =  \omega_{A^\epsilon}\smallsetminus T_2^\epsilon 
\mbox{ where, }  T_2^\epsilon \mbox{ is mesurable and }T^\epsilon_2 \subset  \omega_{A^\epsilon} \mbox{ with }|T_2^\epsilon| = \frac{|\omega_{A^\epsilon}|}{|\Omega|}-\delta_A>0.
\end{align*} 
Thus we have $\frac{|\widetilde{\omega}_{A^\epsilon}|}{|\Omega|}=\delta_A $, $\forall \epsilon$  and $|\widetilde{\omega}_{A^\epsilon}\smallsetminus \omega_{A^\epsilon} | \rightarrow 0 $ as $\epsilon \rightarrow 0$.
Similar construction can be done for the sequence $\frac{|\omega_{B^\epsilon}|}{|\Omega|}\rightarrow\delta_B$, to have $\frac{|\widetilde{\omega}_{B^\epsilon}|}{|\Omega|}=\delta_B $, $\forall \epsilon$  and $|\widetilde{\omega}_{B^\epsilon}\smallsetminus \omega_{B^\epsilon} | \rightarrow 0 $ as $\epsilon \rightarrow 0$.\\
\\
Now for given $A^{*}\in \mathcal{G}_{\theta_A}$ and $B^{\#}\in \mathcal{K}^f_{(\theta_A,\theta_B)}(A^{*})$ with $\theta_A,\theta_B$ are satisfying \eqref{ED3}, then following (5) of Theorem \ref{qw6} there exist $A^\epsilon$ and $B^\epsilon$ satisfying \eqref{ta} and \eqref{tb} respectively such that $A^\epsilon\xrightarrow{H}A^{*}$ and $B^\epsilon\xrightarrow{A^\epsilon}B^{\#}$ in $\Omega$. Now we consider the sequences
$$ \widetilde{A}^\epsilon = \{a_1\chi_{\widetilde{\omega}_{A^\epsilon}}+a_2(1-\chi_{\widetilde{\omega}_{A^\epsilon}})\}I \mbox{ and }\widetilde{B}^\epsilon = \{b_1\chi_{\widetilde{\omega}_{B^\epsilon}}+b_2(1-\chi_{\widetilde{\omega}_{B^\epsilon}})\}I$$
where the respective microstructures satisfies \eqref{ED2}. And as we have from our construction  
$$ ||\widetilde{A}^\epsilon - A^\epsilon||_{L^1(\Omega)} \rightarrow 0  \mbox{ and } ||\widetilde{B}^\epsilon - B^\epsilon||_{L^1(\Omega)} \rightarrow 0 \mbox{ as }\epsilon\rightarrow 0. $$
Then using the Remark \ref{sii} we have,
$\widetilde{A}^\epsilon\xrightarrow{H}A^{*}$ and  $\widetilde{B}^\epsilon\xrightarrow{\widetilde{A}^\epsilon}B^{\#}$ in $\Omega$.\\
Alternatively, one can also exploit the fact that the construction in Section \ref{qw4} produces infact $B^\epsilon$ converging $B^{\#}$ in $L^1(\Omega)$. This is stronger than the relative convergence of $B^\epsilon\xrightarrow{A^\epsilon}B^{\#}$ in $\Omega$.
\hfill\end{proof}
\end{remark}
\noindent
The above Remark \ref{ED4} together with the Theorem \ref{qw6} of optimality, plays a crucial role in the applications of problems of Calculus of Variations  discussed in Section \ref{qw5}. 
\section{Optimal Microstructures}\label{ub12}
\setcounter{equation}{0}
Before establishing the above bounds L1, L2, U1, U2 we present the analysis of $A^{*}$ and $B^{\#}$
for two important class of microstructures known as laminates and Hashin-Shtrikman constructions. Recall that, these microstructures are optimal microstructures for providing optimality of the 
bounds in the classical case i.e. for $A^{*}$ bound. We will see that they are useful for the study of $B^{\#}$ as well.
\subsection{Laminated Microstructures : Simple Laminates}\label{bs12}
We begin with the laminates. The laminate microstructures are defined when the geometry of the problem varies only in a single direction that is the 
sequence of matrices $A^{\epsilon}$ depends on a single space variable, say $x_1$, $A^{\epsilon}(x) = A^{\epsilon}(x_1)$ 
and the homogenized composite is called laminate. If the component phases are stacked in slices orthogonal to the $e_{1}$ direction, 
in that case it is a generalization of the one-dimensional settings. In particular the $H$-convergence can be reduced to the usual weak convergence 
of some combinations of entries of the matrix $A^{\epsilon}$. In effect, this yields another type of explicit formula for the homogenized matrix as 
in the one-dimensional case. Let us recall this result : let $A^{\epsilon} \in \mathcal{M}(a_1,a_2,\Omega)$ satisfy the assumption $A^{\epsilon}(x) = A^{\epsilon}(x_1)$. 
Then $A^{\epsilon}$ $H$-converges to a homogenized matrix $A^{*}$ iff the following convergences hold in  $L^{\infty}(\Omega)$ weak* : (See \cite{A})
\begin{equation*}
\begin{aligned}
&\frac{1}{A^{\epsilon}_{11}} \rightharpoonup \frac{1}{A^{*}_{11}},\quad \frac{A^{\epsilon}_{1j}}{A^{\epsilon}_{11}} \rightharpoonup \frac{A^{*}_{1j}}{A^{*}_{11}} \mbox{ for }2\leq j \leq N,\quad \frac{A^{\epsilon}_{i1}}{A^{\epsilon}_{11}} \rightharpoonup \frac{A^{*}_{i1}}{A^{*}_{11}} \mbox{ for }2\leq i \leq N,\\
&(A^{\epsilon}_{ij}- \frac{A^{\epsilon}_{1j} A^{\epsilon}_{i1}}{A^{\epsilon}_{11}}) \rightharpoonup (A^{*}_{ij}- \frac{A^{*}_{1j} A^{*}_{i1}}{A^{*}_{11}}) \mbox{ for }i\neq j\mbox{, }2\leq i \leq N\mbox{, }1\leq j\leq N. 
\end{aligned}
\end{equation*}
where $(A^{\epsilon}_{ij})_{1\leq i,j\leq N}$ and $(A^{*}_{ij})_{1\leq i,j\leq N}$ denote the entries of $A^{\epsilon}$ and $A^{*}$, respectively.\\
The oscillating test functions matrix $X^{\epsilon}$ in \eqref{dc2} can also be explicitly written in the case of laminated structures. Indeed, it is easy to check that 
\begin{equation*}
\begin{aligned}
&X^{\epsilon}_{11} = \frac{A^{*}_{11}}{A^{\epsilon}_{11}},\quad X^{\epsilon}_{1j} = \frac{A^{*}_{1j} - A^{\epsilon}_{1j}}{A^{\epsilon}_{11}}\mbox{ for }2\leq j \leq N\\
& X^{\epsilon}_{ii} = 1 \mbox{ for }2\leq i \leq N,\quad X^{\epsilon}_{ij} = 0 \mbox{ for }i\neq j\mbox{, }2\leq i \leq N\mbox{, }1\leq j\leq N. 
\end{aligned}
\end{equation*}
Then using the convergence result \eqref{dc2}, we obtain $B^{\#}$ matrix explicitly : 
\begin{equation*}
\begin{aligned}
&B^{\#}_{11} =\ (A^{*}_{11})^2 \underset{\epsilon \rightarrow 0}{lim}\ \frac{B^{\epsilon}_{11}}{(A^{\epsilon}_{11})^2},\quad  B^{\#}_{1j} =\ \underset{\epsilon \rightarrow 0}{lim}\ {B^{\epsilon}_{1j}}(\frac{A^{*}_{1j} - A^{\epsilon}_{1j}}{A^{\epsilon}_{11}} )^2 \mbox{ for }2\leq j \leq N,\\
&B^{\#}_{ii} =\ \overline{B} \mbox{ for }2\leq i \leq N,\quad B^{\#}_{ij} =\ 0 \mbox{ for }i\neq j\mbox{, }2\leq i \leq N\mbox{, }1\leq j\leq N.
\end{aligned}
\end{equation*}
(The above limits have been taken in $L^{\infty}$ weak* sense.)\\

\noindent
Now from the above result we can deduce a number of special cases of particular interest.
\paragraph{Laminated microstructures for isotropic case, i.e. $A^\epsilon=a^\epsilon I$ and $B^\epsilon=b^\epsilon I$ are scalar matrices :}
Let us assume that the matrix $A^{\epsilon}(x)$ is isotropic and depends only on $x_1$, i.e. $ A^{\epsilon}(x) = a^{\epsilon}(x_1)I$
and $B^{\epsilon}(x)=b^{\epsilon}(x)I$ is also isotropic but unlike $A^{\epsilon}$ it may depend on other variables also $x=(x_1,..,x_N)$. Then, we see
\begin{equation}\begin{aligned}\label{FG17}
&A^{*} = diag\{\hspace{1pt}\underline{a}(x_1),\overline{a}(x_1),...,\overline{a}(x_1)\}\\
&B^{\#} = diag\{\hspace{1pt}(\underline{a}(x_1))^2\ \underset{\epsilon\rightarrow 0}{lim}\frac{b^{\epsilon}(x)}{(a^{\epsilon}(x_1))^2},\ \overline{b}(x),..,\overline{b}(x)\}
\end{aligned}\end{equation}
where, $(\underline{a})^{-1}$ is the $L^{\infty}(\Omega)$ weak* limit of $ (a^{\epsilon})^{-1}$ and $\overline{a}$ and $\overline{b}$ is the $L^{\infty}(\Omega)$ weak* limit of $a^{\epsilon}$ and
$b^{\epsilon}$ respectively.\\

\noindent
If $b^{\epsilon}(x)$ is independent of $\epsilon$ say $b^{\epsilon}(x)=b(x)$, then corresponding $B^{\#}$ will be
\begin{equation}B^{\#} = diag\{\hspace{1.5pt}b(x)(\underline{a}(x_1))^2\ \underset{\epsilon\rightarrow 0}{lim}(a^{\epsilon})^{-2}(x_1),\ b(x),..,b(x)\}.\end{equation}
Following the formula \eqref{FG17}, we would like to consider few more cases as follows.
\paragraph{(a): Laminated microstructures for $a^{\epsilon}$ and $b^{\epsilon}$ both in the same direction $e_1$:}
We consider $ a^{\epsilon}(x) = a^{\epsilon}(x_1)$ and $b^{\epsilon}(x) = b^{\epsilon}(x_1)$ then from \eqref{FG17} 
\begin{equation*}B^{\#}_{11}=\  (\underline{a}(x_1))^2\ \underset{\epsilon\rightarrow 0}{lim}\frac{b^{\epsilon}}{(a^{\epsilon})^2}(x_1)\end{equation*}
$-$ exactly the formula what we have derived in $1$-dim case, and $B^{\#}_{ii} = \overline{b}(x_1)$ for $2\leq i\leq N.$\\
\textbf{(b): Laminated microstructures for $a^{\epsilon}$ and $b^{\epsilon}$ in the mutually transverse direction :}
By that we mean if $ a^{\epsilon}(x) = a^{\epsilon}(x_1)$ then we are considering $b^{\epsilon}(x) = b^{\epsilon}(x_2,..,x_N)$, i.e.independent of $x_1$ variable. 
Then 
\begin{equation*}
B^{\#}_{11} =  (\underline{a}(x_1))^2\ \overline{b}(x_2,..,x_N)\ \underset{\epsilon\rightarrow 0}{lim}\frac{1}{(a^{\epsilon})^2}(x_1)
\ \ \mbox{and } \ B^{\#}_{ii} = \overline{b}(x_2,.,x_N)\ \mbox{ for }\ 2\leq i\leq N.
\end{equation*}
Now using the fact 
\begin{equation*}\underset{\epsilon\rightarrow 0}{lim}\ \frac{1}{(a^{\epsilon})^2}(x_1) \geq \frac{1}{(\underline{a}(x_1))^2}.\end{equation*} 
We get 
\begin{equation*} B^{\#} \geq \overline{b}I.\end{equation*} 
\begin{remark}\label{Sd8}
The above property is different from that of the usual homogenized limit $B^{*}$ which is always bound above by $\overline{b}I$.
It indicates that the upper bound of $B^{\#}$ is bigger than the upper bound of $B^{*}$.
\hfill\qed\end{remark}
\noindent\textbf{(c): $a^\epsilon$ and $b^\epsilon$ are governed by two phase medium :}
Let us consider,  
\begin{equation}\begin{aligned}\label{bs17}
a^{\epsilon}(x)&= a^\epsilon(x_1) = a_1\AC + a_2(1-\AC)\mbox{ with }(a_1 < a_2 )\\
\mbox{and }\ b^{\epsilon}(x) &= b_1\B + b_2(1-\B)\mbox{ with }(b_1 < b_2)
\end{aligned}\end{equation}
where
\begin{equation*}
\AC\rightharpoonup \theta_A(x_1) \mbox{ and } \B \rightharpoonup \theta_B(x) \quad\mbox{ in }L^{\infty}(\Omega)\mbox{ weak* limit. }
\end{equation*}
Notice that just like $\theta_A$, $\theta_B$ we need a new information on the microstructures, 
i.e. $\theta_{AB}$ = $L^{\infty}$ weak* limit of $\AC\B$ and it satisfies the bounds \eqref{FG2}.
Then using \eqref{FG17} we compute the simple laminates $B^{\#}=diag\{B^{\#}_{kk}\}_{1\leq k\leq N}$. 
Following the one-dimensional case computations (cf. Section \ref{Sd6}), we have 
\begin{equation}\label{Sd7}\begin{aligned}
B^{\#}_{11} &= (\underline{a}(x_1))^2 \underset{\epsilon\rightarrow 0}{lim} \frac{b^{\epsilon}(x)}{(a^{\epsilon}(x_1))^2}\\
&=(\underline{a})^{2}\{\frac{b_2}{a_2^2} + \frac{(b_1-b_2)}{a_2^2}\theta_B + (\frac{b_2}{a_1^2} -\frac{b_2}{a_2^2})\theta_A - {(b_2-b_1)}(\frac{1}{a_1^2} -\frac{1}{a_2^2})\theta_{AB}\}\\
B^{\#}_{kk} &= b_1\theta_B + b_2(1-\theta_B)\ \ \mbox{for  }k=2,..,N. 
\end{aligned}\end{equation}
Next by using the bounds \eqref{FG2} over \eqref{Sd7}, we get the following lower bounds as :
\begin{equation}\begin{aligned}\label{lb10}
\mbox{when $\theta_A \leq \theta_B$,  }\ B^{\#}_{11}\ \geq\ &\ (\underline{a})^{2}\{\frac{b_2}{a_2^2} + \frac{(b_1-b_2)}{a_2^2}\theta_B + {b_1}(\frac{1}{a_1^2} -\frac{1}{a_2^2})\theta_A \}= L^{\#}_1 \mbox{ (say)} \\
\mbox{when $\theta_B \leq \theta_A $, }\ B^{\#}_{11}\ \geq\ &\ (\underline{a})^{2}\{\frac{b_2}{a_2^2} + \frac{(b_1-b_2)}{a_1^2}\theta_B + {b_2}(\frac{1}{a_1^2} -\frac{1}{a_2^2})\theta_A\}= L^{\#}_2 \mbox{ (say)}.
\end{aligned}\end{equation}
Similarly, the upper bounds as :
\begin{equation}\begin{aligned}\label{tp}
\mbox{when $\theta_A + \theta_B \leq 1,$ }\ B^{\#}_{11}\ \leq\ &\ (\underline{a})^{2}\{\frac{b_2}{a_2^2} + \frac{(b_1-b_2)}{a_2^2}\theta_B + {b_2}(\frac{1}{a_1^2} -\frac{1}{a_2^2})\theta_A\} = U^{\#}_1 \mbox{ (say) }\\
\mbox{when $\theta_A + \theta_B \geq 1,$ }\ B^{\#}_{11}\ \leq\ &\ (\underline{a})^{2}\{\frac{(b_2-b_1)}{a_1^2} + \frac{b_1}{a_2^2} + \frac{(b_1-b_2)}{a_1^2}\theta_B + {b_1}(\frac{1}{a_1^2} -\frac{1}{a_2^2})\theta_A\}= U^{\#}_2 \mbox{ (say). }
\end{aligned}\end{equation}           
Moreover, a simple computations shows that
\begin{equation*}  max\ \{ L^{\#}_1, L^{\#}_2\}\ \leq \  min\ \{U^{\#}_1, U^{\#}_2\}.\end{equation*}
Thus in the class of simple laminations we have obtained the bounds for $B^{\#}$ as, 
\begin{align}\label{FL10}
& min\ \{L^{\#}_1, L^{\#}_2\} \leq B^{\#}_{11}\ \leq   max\ \{U^{\#}_1, U^{\#}_2\}\\
\mbox{ and }& B^{\#}_{kk} = b_1\theta_B + b_2(1-\theta_B) \mbox{ for }k=2,..,N.\notag
\end{align}
Following the Theorem \ref{Sd10} established in one-dimensional case, the above inequality \eqref{FL10} provides an optimal bound for simple laminates.  
\begin{remark}
If $B^{\epsilon}=b(x)I$ i.e. independent of $\epsilon$, then we obtain :
\begin{equation}\label{bs11} B^{\#}=\ diag\{b(x)(\underline{a}(x_1))^2(\frac{\theta_A(x_1)}{a_1^2}+\frac{1-\theta_A(x_1)}{a_2^2}),\ b(x),..,b(x)\}.\end{equation}
Similarly if 
\begin{equation*}B^{\epsilon}=b^\epsilon(x_2,..,x_N)I=\ \{b_1\chi_{\omega_{B^\epsilon}}(x_2,..,x_N)+ b_2(1-\chi_{\omega_{B^\epsilon}}(x_2,..,x_N))\}I\ \ \mbox{ (independent of $x_1$), }\end{equation*}
then as 
\begin{equation*}\chi_{\omega_{B^\epsilon}}(x_2,..,x_N)\chi_{\omega_{A^\epsilon}}(x_1) \rightharpoonup \theta_B(x_2,..,x_N)\theta_A(x_1) \ \ L^{\infty}(\Omega)\mbox{ weak* }; \end{equation*}
We obtain : 
\begin{equation}\begin{aligned}\label{lb6}
B^{\#}_{11}&=\ (\underline{a})^{2}\{\frac{b_2}{a_2^2} + \frac{(b_1-b_2)}{a_2^2}\theta_B + {b_2}(\frac{1}{a_1^2} -\frac{1}{a_2^2})\theta_A - {(b_2-b_1)}(\frac{1}{a_1^2} -\frac{1}{a_2^2})\theta_B\theta_A\}\\
\mbox{and }\ B^{\#}_{kk}&=\ b_1\theta_B + b_2(1-\theta_B),\ \ k=2,..,N. 
\end{aligned}
\end{equation}
Note that, $min \{L^{\#}_1,L^{\#}_2\}<B^{\#}_{11} < max \{U^{\#}_1,U^{\#}_2\}$. Thus, the inequality \eqref{FL10} are strict in this case. 
\hfill\qed
\end{remark}
\subsection{Sequential Laminates}\label{Sd19}
Let us mention a subclass of laminated homogenized tensor which is known as sequential laminates. (See \cite[Section 2.2]{A}).
\begin{example}[Rank-$p$ Sequential Laminates]
Let $\{e_i\}_{1\leq i \leq p}$ be a collection of unit vectors in $\mathbb{R}^{N}$ and $\{\theta_i\}_{1\leq i \leq p}$ 
the proportions at each stage of the lamination process, i.e. first we laminate $a_1$ and $a_2$ in $e_1$ direction 
with the proportion $\theta_1$ and $(1-\theta_1)$ respectively to get $A^{*}_1$ then 
we laminate $A^{*}_1$ and $a_2$ in $e_2$ direction with the proportion $\theta_2$ and $(1-\theta_2)$ 
respectively to get $A^{*}_2$ and this is repeated $p$ times.\\
\\
(a): We then have the following formulas from \cite{A} for a rank-p sequential laminate with matrix $a_2 I$ and core $a_1 I$.
\begin{equation*}(\prod_{j=1}^{p} \theta_j)( A^{*}_p - a_2 I)^{-1} = (a_1 - a_2)^{-1}I + \sum_{i=1}^{p}\lb (1-\theta_i)(\prod_{j=1}^{i-1} \theta_j)\rb\frac{(e_i\otimes e_i)}{a_2}.\end{equation*}
(b): For rank-p sequential laminate with matrix $a_1 I$ and core $a_2 I$, we have
\begin{equation*}(\prod_{j=1}^{p} (1-\theta_j))( A^{*}_{p} - a_1 I)^{-1} = (a_2 - a_1)^{-1}I + \sum_{i=1}^{p}\lb \theta_i(\prod_{j=1}^{i-1}(1-\theta_j)\rb\frac{(e_i\otimes e_i)}{a_1}.\end{equation*}
\hfill\qed\end{example}
\noindent
The following lemma from \cite{A} is important for proving the saturation / optimality of the bound of $A^{*}$. 
\begin{lemma}\cite{A}
Let  $\{e_i\}_{1\leq i \leq p}$ be a collection of unit vectors. Let $\theta_A \in (0,1)$. 
Now for any collection of non-negative real numbers  $\{m_i\}_{1\leq i \leq p}$ satisfying $\sum_{i=1}^{p} m_i =1 $, 
there exists a rank-$p$ sequential laminate $A^{*}_p$ with matrix $a_2 I$ and core $a_1 I$ in proportion $(1-\theta_A)$ 
and $\theta_A$ respectively and with lamination directions $\{e_i\}_{1\leq i \leq p}$ such that
\begin{equation}\label{OP3}
\theta_A( A^{*}_{p} - a_2 I)^{-1} = (a_1 - a_2)^{-1}I + (1-\theta_A)\sum_{i=1}^{p} m_i\frac{e_i\otimes e_i}{a_2 (e_i\cdot e_i)}.
\end{equation}
\noindent
An analogous result holds when the roles of $a_2$ and $a_1$ (in proportions $(1-\theta_A)$ and $\theta_A$ respectively) 
in the lemma above are switched. The formula above is replaced by
\begin{equation}\label{OP2}
(1-\theta_A)(A^{*}_p - a_1 I)^{-1} = (a_2 - a_1)^{-1}I + \theta_A\sum_{i=1}^{p} m_i\frac{e_i\otimes e_i}{a_1( e_i\cdot e_i)}.
\end{equation}
\hfill\qed\end{lemma}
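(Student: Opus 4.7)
The plan is to reduce the statement to an explicit, algebraic matching problem: given $\theta_A$ and $\{m_i\}$, I need to exhibit lamination proportions $\{\theta_i\}_{1\le i\le p}\subset(0,1]$ so that the general formula (a) for a rank-$p$ sequential laminate (with matrix $a_2I$, core $a_1I$) produces exactly the expression on the right-hand side of \eqref{OP3}. Since $(e_i\cdot e_i)=1$, matching \eqref{OP3} with formula (a) amounts to solving the two systems of equations
\begin{equation*}
\prod_{j=1}^{p}\theta_j \;=\;\theta_A,\qquad (1-\theta_i)\prod_{j=1}^{i-1}\theta_j\;=\;(1-\theta_A)\,m_i,\quad i=1,\ldots,p.
\end{equation*}

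Introducing the partial products $P_0:=1$ and $P_i:=\prod_{j=1}^{i}\theta_j$, these conditions become the telescoping recursion
\begin{equation*}
P_{i-1}-P_i \;=\;(1-\theta_A)\,m_i,\qquad P_p=\theta_A,
\end{equation*}
whose unique solution is $P_i = 1-(1-\theta_A)\sum_{j=1}^{i}m_j$. The compatibility $P_p=\theta_A$ is automatic because $\sum_{j=1}^{p}m_j=1$, and positivity $P_i>0$ follows because $P_i\ge P_p = \theta_A>0$. Hence $\theta_i:=P_i/P_{i-1}\in(0,1]$ is well-defined for each $i$ (with $\theta_i=1$ in the degenerate case $m_i=0$, corresponding to simply skipping the $i$-th lamination step); by construction $\prod_j\theta_j=\theta_A$ is automatic.

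With this choice of $\{\theta_i\}$, I plug back into formula (a): the factor $\prod_{j=1}^{p}\theta_j$ on the left becomes $\theta_A$, and in each summand on the right
\begin{equation*}
(1-\theta_i)\prod_{j=1}^{i-1}\theta_j \;=\; P_{i-1}-P_i \;=\; (1-\theta_A)\,m_i,
\end{equation*}
which gives exactly \eqref{OP3}. The statement for the case where the roles of $a_1$ and $a_2$ are switched (giving \eqref{OP2}) is proved by the identical argument, applied to formula (b) with $\theta_i$ replaced by $(1-\theta_i)$ and the partial products interpreted accordingly.

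I do not expect a serious obstacle here: the proof is essentially algebraic once the right matching is set up, and the only mildly delicate point is noting that non-negative $m_i$ (rather than strictly positive) still yields admissible $\theta_i\in(0,1]$, so that the sequential lamination construction in \cite{A} producing formula (a) continues to apply (with some directions carrying trivial weight).
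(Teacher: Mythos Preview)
Your proof is correct. The paper does not supply its own proof of this lemma---it is simply quoted from \cite{A} and closed with a \qed---so there is nothing to compare against in the paper itself; your argument (solving the telescoping system $P_{i-1}-P_i=(1-\theta_A)m_i$ for the partial products $P_i=\prod_{j\le i}\theta_j$) is precisely the standard derivation one finds in that reference.
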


In order to define the sequential laminates for $B^{\#}$, we assume that $A^{\epsilon}$ is governed with 
$p-$sequential laminate microstructures, (say $A^\epsilon_p$) and by considering any $B^{\epsilon}(x)$ independent of $\epsilon$
one defines the limit matrix say $B^{\#}_p$. We call it  a quasi-sequential laminate $B^{\#}_p$ which corresponds to the
sequential laminate $A^{*}_p$. Now we consider $B^\epsilon$ with two-phases and with a microgeometry with layer corresponding to each layer of $p-$sequential laminated microstructures of $A^\epsilon_p$ 
there is an associated layered microstructure for $B^\epsilon$ also (say $B^\epsilon_p$).  
Then we define the corresponding limit matrix as $B^\epsilon_p\xrightarrow{A^\epsilon_p} B^{\#}_{p,p}$, by saying $(p,p)-$ sequential laminates.\\
One possible way to get these $B^{\#}_p$  would be finding the corrector matrix $X^{\epsilon}$ corresponding to $A^\epsilon_p\xrightarrow{H} A^{*}_p$  
(i.e.$(X^{\epsilon})^t A^{\epsilon}_p X^{\epsilon} \rightharpoonup A^{*}_p $ in $\mathcal{D}^{\prime}(\Omega)$) 
and then apply the convergence result \eqref{dc2} to get the the limit matrix $B^{\#}_p$.
It is known that for $p\geq2$, getting corrector matrix $X^{\epsilon}$ is not easy. Briane \cite{B}
gave an iteration procedure to obtain $X^{\epsilon}$. Getting the
expression for $B^{\#}_p, B^{\#}_{p,p}$ is even more complicated.
Fortunately, one gets an explicit expression for $B^{\#}_{p,p}$ through $H$-measure techniques which can be  
carried out in four different cases which are optimal for our bounds L1, L2, U1, U2. We postponed the construction of $B^{\#}_p, B^{\#}_{p,p}$ to Section \ref{ts} because we need $H$-measure, a tool which we have not yet introduced. Here we merely give expression for these two matrices. 
\begin{example}
(a) Let us consider $B^{\epsilon}$ is independent of $\epsilon$ say $B^{\epsilon}= b(x)I$ with $b\in L^{\infty}(\Omega)$ and bounded below by a positive constant. 
In this case we define the $p$-sequential laminates $B^{\#}_p$, with matrix $a_1 I$ and core $a_2 I$ for $A^{*}_p$ (cf.\eqref{OP2})  as follows :
\begin{equation*}b(\overline{A}-A^{*}_p)(B^{\#}_p-bI)^{-1}(\overline{A}-A^{*}_p) =\  \theta_A(1-\theta_A)(a_2 -a_1)^2 (\sum_{i=1}^{p} m_i\frac{e_i\otimes e_i}{e_i.e_i}) ;\ \mbox{ with }\sum_{i=1}^p m_i =1.\end{equation*}
(b) By considering $B^{\epsilon}=b^{\epsilon}I$ defined in \eqref{bs17}, we define the
$(p,p)-$sequential laminates $B^{\#}_{p,p}$, whenever $\omega_{A^{\epsilon}},\omega_{B^{\epsilon}}$ be the $p$-sequential laminate
microstructures with $\omega_{A^{\epsilon}}\subseteq \omega_{B^{\epsilon}}$ in the same directions $\{e_i\}_{1\leq i\leq p}$ and with
matrix $a_1 I$ and core $a_2 I$ for $A^{*}_p$ (cf. \eqref{OP2}) as follows :
\begin{align}\label{bs16}
&\{\frac{(\overline{B}-b_1I)}{(\overline{A}-a_1I)}(A^{*}_p-a_1I)+\frac{b_1}{a_1}(\overline{A}-A^{*}_p)\}(B^{\#}_{p,p}-b_1I)^{-1}\{\frac{(\overline{B}-b_1I)}{(\overline{A}-a_1I)}(A^{*}_p-a_1I)+\frac{b_1}{a_1}(\overline{A}-A^{*}_p)\}\notag\\
&=\ (\overline{B}-b_1 I)+\frac{b_1(a_2-a_1)^2}{a_1^2}\theta_A(1-\theta_A)(\sum_{i=1}^{p} m_i\frac{e_i\otimes e_i}{e_i.e_i});\ \mbox{ with }\sum_{i=1}^p m_i =1.
\end{align}
\hfill\qed\end{example}
\subsection{Hashin-Shtrikman Constructions}\label{hsl}
In this section, sequences are indexed by $n$ (not by $\epsilon$) and $n\rightarrow\infty$. 
Before we move into finding bounds on $B^{\#}$, let us mention another important class of microstructures which is known as Hashin-Shritkman microstructures.
In the beginning of the theory of homogenization it played a very crucial role to provide the bounds on the two-phase isotropic medium without even development
of $H$-convergence and so on.
We follow \cite[chapter 25]{T} to define the Hashin-Shtrikman microstructures. 
\begin{definition}\label{lb14}
Let $\omega \subset \mathbb{R}^N$ be a bounded open with Lipschitz boundary.
Let $A_{\omega}(x)=[a^{\omega}_{kl}(x)]_{1\leq k,l\leq N} \in \mathcal{M}(a_1,a_2;\ \omega)$  
be such that after extending $A_{\omega}$ by $A_{\omega}(y) = M$ for $y \in \mathbb{R}^N\smallsetminus \omega$ where $M \in L_{+}(\mathbb{R}^N ; \mathbb{R}^N)$
(i.e. $M = [m_{kl}]_{1\leq k,l \leq N}$ is a constant positive definite $N\times N$ matrix), 
if for each $\lambda \in \mathbb{R}^N$ there exists a $w_{\lambda}\in  H^{1}_{loc}(\mathbb{R}^N)$ satisfies
\begin{equation}\label{hsw}
- div (A_{\omega}(y)\nabla w_{\lambda}(y)) = 0 \quad\mbox{in }\mathbb{R}^N,\quad  w_{\lambda}(y) = \lambda\cdot y \quad\mbox{in }\mathbb{R}^N \smallsetminus \omega;
\end{equation}
Then $A_{\omega}$ is said to be \textit{equivalent} to $M$.
\end{definition}
Then one uses a sequence of Vitali coverings of $\Omega$ by reduced copies of $\omega,$
\begin{equation}\label{hso}  meas(\Omega \smallsetminus \underset{p\in K}{\cup}(\epsilon_{p,n}\omega + y^{p,n}) = 0, \mbox{ with } \kappa_n = \underset{p\in K}{sup}\hspace{2pt} \epsilon_{p,n}\rightarrow 0\end{equation}
for a finite or countable $K$. These define the microstructures in $A^{n}$. One defines for almost everywhere $x\in \Omega$ 
\begin{equation}\label{tl} A^{n}_{\omega}(x) = A_{\omega}(\frac{x - y^{p,n}}{\epsilon_{p,n}}) \mbox{ in } \epsilon_{p,n}\omega + y^{p,n},\quad p\in K\end{equation}
which makes sense since for each $n$ the sets $\epsilon_{p,n}\omega + y^{p,n},\ p\in K$ are disjoint.
The above construction \eqref{tl} represents the so-called Hashin-Shtrikman microstructures.\\
\\
The $H$-limit of the entire sequence $A^{n}_{\omega}(x)$ exist and in particular we have (see \cite{T})
\begin{equation*}A^{n}_{\omega} \xrightarrow{H} M .\end{equation*}
It can be seen as follows: One defines $v^{n} \in H^1(\Omega)$ by
\begin{equation*}
v^{n}(x) = \epsilon_{p,n}w_{\lambda}(\frac{x-y^{p,n}}{\epsilon_{p,n}})+ \lambda\cdot y^{p,n},\quad\mbox{ in  }\epsilon_{p,n}\omega + y^{p,n}
\end{equation*}
which satisfies 
\begin{equation}\begin{aligned}\label{hsc}
& v^{n}(x) \rightharpoonup \lambda\cdot x \mbox{ weakly in }H^1(\Omega;\mathbb{R}^N)\\
\mbox{and}\ \ & A^{n}_{\omega}\nabla v^{n}\rightharpoonup M\lambda \mbox{ weakly in } L^2(\Omega;\mathbb{R}^N).
\end{aligned}\end{equation}
We have the following integral representation of the homogenized matrix $M$ 
\begin{equation*} Me_k\cdot e_l = \frac{1}{|\omega|}\int_{\omega} A_{\omega}(y)\nabla w_{e_k}(y)\cdot e_l\ dy  = \frac{1}{|\omega|}\int_{\omega} A_{\omega}(y)\nabla w_{e_k}\cdot\nabla w_{e_l}\ dy\end{equation*}
where $w_{e_k}, w_{e_l}$ are the solution of \eqref{hsw} for $\lambda= e_k$ and $\lambda=e_l$ respectively.
\begin{example}[Spherical Inclusions in two-phase medium]\label{tk}
If $\omega=B(0,1)$ a ball of radius one, and
\begin{equation}\begin{aligned}\label{sia}
A_{\omega}(y)= a_B(r)I &=\ a_1 I \quad\mbox{if } |y| \leq R\\
     &=\ a_2 I \quad\mbox{if }  R < |y| \leq 1
\end{aligned}
\end{equation}
with the volume proportion $\theta_A = R^N$ for $a_1I.$ In the literature, $\{a_1,a_2\}$
are called core-coating values respectively.\\
\\
Then $A_{\omega}$ is equivalent to $m I$ or $A^{n}_{\omega} \xrightarrow{H} m I$, where $A^{n}_{\omega}$ is defined as \eqref{tl} and $m$ satisfies \cite{HS} 
\begin{equation}\label{sim}
\frac{m - a_2}{m + (N-1)a_2} = \theta_A \frac{a_1 - a_2}{a_1 + (N-1)a_2}.
\end{equation} 
\hfill\qed\end{example}
\begin{example}[Elliptical Inclusions in two-phase medium]\label{tj}
For $m_1,..,m_N\in \mathbb{R}$, and $\rho + m_j >0 $ for $j=1,..,N$, the 
family of confocal ellipsoids $S_\rho$ of equation
\begin{equation*} \sum_{j=1}^N \frac{y^2_j}{\rho_2 + m_j} = 1,\end{equation*}
defines implicitly a real function $\rho$, outside a possibly degenerate ellipsoid in
a subspace of dimension $<$ $N$.\\
Now if we consider $\omega=E_{\rho_2+m_1,..,\rho_2+m_N}= \{ y\ | \ \sum_{j=1}^N \frac{y^2_j}{\rho_2 + m_j} \leq 1\},$ 
with $\rho_2 + \underset{j}{min}\ m_j >0 $  and  
\begin{align*}
 A_\omega(y)= a_E(\rho)I &=\ a_1 I \quad\mbox{if } \rho \leq \rho_1\\
     &=\ a_2 I \quad\mbox{if }  \rho_1 < \rho \leq \rho_2
\end{align*}
then $A_\omega$ is equivalent to a constant diagonal matrix $\Gamma= [\gamma_{jj}]_{1\leq j\leq N}$ satisfying
\begin{equation*}\sum_{j=1}^N \frac{1}{a_2 - \gamma_{jj}} =\ \frac{(1-\theta_A)a_1 + (N+\theta_A-1)\beta}{\theta_A a_2(a_2- a_1)}\ \mbox{ where }\theta_A = \underset{j}{\varPi}\ \sqrt{\frac{\rho_1 + m_j}{\rho_2 + m_j}}.\end{equation*}
\hfill\qed
\end{example}

Let us assume that $B^n_{\omega}$ is consistent with $A^n_{\omega}$ in the sense that both are defined w.r.t. the same Vitali covering of $\Omega$ (cf. \eqref{hso}).
We consider a $B_{\omega}=[b^{\omega}_{kl}]\in \mathcal{M}(b_1,b_2;\ \omega)$ and define a sequence $B^n_{\omega} \in \mathcal{M}(b_1,b_2;\ \Omega)$ by 
\begin{equation*} B^{n}_{\omega}(x) = B_{\omega}(\frac{x - y^{p,n}}{\epsilon_{p,n}}) \quad\mbox{ in } \epsilon_{p,n}\omega + y^{p,n}\end{equation*}
Then we want to find the corresponding $B^{\#}$ for this Hashin-Shtrikman microstructures.\\
As long as we have the corrector results \eqref{hsc} then by using the convergence result \eqref{dc2}
we define the $B^{\#}$ as follows : For each $\lambda\in \mathbb{R}^N$
\begin{equation*}
B^n_{\omega}\nabla v^{n}\cdot \nabla v^{n} \rightharpoonup B^{\#}\lambda\cdot\lambda \quad\mbox{in }\mathcal{D}^{\prime}(\Omega).
\end{equation*}
Moreover, by taking any $\varphi\in \mathcal{D}(\Omega)$
\begin{align*}
\frac{1}{|\Omega|}\int_{\Omega} B^{n}_{\omega}\nabla v^{n}\cdot \nabla v^{n}&\varphi(x)dx\\
=\ &\frac{1}{|\Omega|}\sum_{p\in K} \int_{\epsilon_{p,n}\omega +y^{p,n}} B_{\omega}(\frac{x - y^{p,n}}{\epsilon_{p,n}})\nabla w_{k}(\frac{x-y^{p,n}}{\epsilon_{p,n}})\cdot\nabla w_{l}(\frac{x-y^{p,n}}{\epsilon_{p,n}})\ \varphi(x)dx\\
=\ &\frac{1}{|\Omega|}\sum_{p\in K} \epsilon_{p,n}^N. \int_{\omega}B_{\omega}(y)\nabla w_{e_k}(y)\cdot\nabla w_{e_l}(y)\ \varphi(\epsilon_{p,n}y+ y^{p,n}) dy\\ 
\rightarrow\ & \lb\frac{1}{|\omega|}\int_{\omega}B_{\omega}(y)\nabla w_{e_k}(y)\cdot\nabla w_{e_l}(y) dy\rb.\lb\frac{1}{|\Omega|}\int_{\Omega}\varphi(x)  dx\rb.
\end{align*}
Thus we have the following integral representation of $B^{\#}$ :
\begin{equation}\label{hse}
 B^{\#}e_k\cdot e_l = \frac{1}{|\omega|}\int_{\omega} B_{\omega}(y)\nabla w_{e_k}(y)\cdot \nabla w_{e_l}(y)\ dy . 
\end{equation}
\begin{remark}
 Note that $B^{\#}$ is a constant matrix just as $A^{*}$ is. However, it is not clear whether $B^{\#}$ can be obtained using the idea of equivalence 
 in the sense of Definition \ref{lb14}.
\hfill\qed\end{remark}
Now we consider the previous Example \ref{tk} of spherical inclusion where we know $w_{e_l}$ explicitly and will find out $B^{\#}$ explicitly in various cases.
We seek the solution of \eqref{hsw} for the spherical inclusion i.e. when $A_\omega(y)=a_{B(0,1)}(y)I$ is given by \eqref{sia}, in the form of 
\begin{equation}\label{sis} w_{e_l}(y) =\ y_lf(r),\ y\in B(0,1);\end{equation}
where, $f(r)$ is of the form of 
\begin{equation}\begin{aligned}\label{sif}
f(r) &=\ \widetilde{b_1} \ \mbox{ if }r< R,\\
 &=\ \widetilde{b_2} + \frac{\widetilde{c}}{r^N} \ \mbox{ if }R < r < 1\\
 &=\ 1 \ \mbox{ if }1 < r.
\end{aligned}\end{equation}
In order to keep the solution $w_{e_l}(y)$ and flux $a(r)(f(r)+rf^{\prime}(r))$ to be continuous across
the inner boundary $(r=R)$ and the outer boundary $(r=1)$
we have these following conditions to satisfy
\begin{equation}\begin{aligned}\label{ad5}
\widetilde{b_1} =\ \widetilde{b_2} + \frac{\widetilde{c}}{r_1^N}, &\ \mbox{ and }\ a_1\widetilde{b_1} =\ a_2(\widetilde{b_2} + \frac{(1-N)\widetilde{c}}{r_1^N} )\\
\widetilde{b_2} + \widetilde{c} =\ 1, &\ \mbox{ and }\ a_2(\widetilde{b_2} + (1-N)\widetilde{c}) =\ m. \\
\end{aligned}\end{equation}
Then solving $(\widetilde{b_1},\widetilde{b_2},\widetilde{c})$ in terms of $(a_1, a_2, \theta_A)$ from the first three equation of \eqref{ad5},  we have
\begin{equation}\label{sic}
\widetilde{b_1} = \ \frac{Na_2}{(1-\theta_A)a_1 + (N+\theta_A -1)a_2},\quad \widetilde{b_2} = \ \frac{(1-\widetilde{b_1}\theta_A)}{(1-\theta_A)} \quad\mbox{and}\quad \widetilde{c} = \frac{(\widetilde{b_1} -1)\theta_A}{(1-\theta_A)}
\end{equation}
and finally putting it into the fourth equation of \eqref{ad5}, $`m$' can be written as in \eqref{sim}.\\
\\
Based on this, next we derive the expression of $B^{\#}$ for various cases.
\paragraph{(1): $B_{B(0,1)}(x)I = b I$ for some constant $b >0$ : }
In this case $B^{n}_{B(0,1)}$ becomes independent of $n$ and equal to $bI$.
 $A_\omega= a_{B(0,1)}I$ defined as in \eqref{sia}, i.e. considering $a_1$ as a core and $a_2$ as a coating. 
Then from \eqref{hse} it follows that $B^{\#} = b^{\#}I$ with
\begin{equation*} b^{\#} =  b\int_{B(0,1)} \nabla w_{e_l}(x)\cdot \nabla w_{e_l}(x) dx. \end{equation*}
Now as we see, 
\begin{align*}
m &=\ \int_{B(0,1)} a_{B(0,1)}(x)\nabla w_{e_l}(x)\cdot\nabla w_{e_l}(x) dx \\
  &=\ a_2 \int_{B(0,1)} \nabla w_{e_l}(x)\cdot\nabla w_{e_l}(x) dx + (a_1 -a_2) \int_{B(0,R)} \nabla w_{e_l}(x)\cdot\nabla w_{e_l}(x) dx
  \end{align*}
or, by using \eqref{sif} one gets
\begin{equation}\label{siw} \int_{B(0,1)} \nabla w_{e_l}(x)\cdot\nabla w_{e_l}(x) dx  =\ \frac{m - (a_1 -a_2){\widetilde{b_1}}^2 \theta_A}{a_2}. \end{equation}
Plugging this, in the expression of $b^{\#}$ and using \eqref{sim} and \eqref{sic}, finally one gets
\begin{equation}\label{hsb} b^{\#} =\ b\ [\ 1 + \frac{N\theta_A(1-\theta_A)(a_2-a_1)^2}{((1-\theta_A)a_1 + (N+\theta_A-1)a_2)^2}\ ]. \end{equation}                           
Similarly, changing the role of $a_1$ and $a_2$ (make $a_2$ as core and coated with $a_1$) while keeping the volume fraction $\theta_A$ fixed for $a_1$, and  
say $m_{*}$ be the new homogenized coefficient for $A^{n}$ : 
\begin{equation}\label{FL9} \frac{m_{*} - a_1}{m_{*} + (N-1)a_1} = (1-\theta_A) \frac{a_2 - a_1}{a_2 + (N-1)a_1}.\end{equation}
In this case with core $a_2 I$ and matrix $a_1 I$, the expression for $B^{\#}= b^{\#}I$ becomes  
\begin{equation}\label{hsd} b^{\#} =\ b\ [\ 1 + \frac{N\theta_A(1-\theta_A)(a_2-a_1)^2}{(\theta_Aa_2 + (N-\theta_A)a_1)^2}\ ]. \end{equation}          
It is well known that both of these formula \eqref{sim} and \eqref{FL9} provide the saturation / optimality of the upper bound and lower bound respectively
for the two-phase $(a_1, a_2, \theta_A)$ homogenization. In a same spirit we will show the formula \eqref{hsb} and \eqref{hsd} provide the 
saturation/optimality of the lower bound and upper bound for $B^{\#}$ respectively when $B^{\epsilon}=bI$ independent of $\epsilon$.
\paragraph{(2): $B_{B(0,1)}$ is governed with two-phase medium :} 
Let us consider, for any measurable set $\omega_B\subset B(0,1)$ 
\begin{equation}\begin{aligned}\label{sib}
B_{B(0,1)}(y)= b_B(y)I &=\ b_1 I \quad\mbox{if } y\in\omega_B \\
     &=\ b_2 I \quad\mbox{if }y\in B(0,1)\smallsetminus \omega_B  
\end{aligned}
\end{equation}
with the volume proportion $\theta_B = |\omega_B|$ for $b_1I.$\\
\\
Next we consider few straight forward cases only with $\omega_A =B(0,R)$ with $0<R<1$ (spherical inclusion, cf.\eqref{sia}),
which are indeed very useful for showing the saturation / optimality of the bounds $(A^{*},B^{\#})$ announced in Section \ref{Sd9}.
\paragraph{(2a): Core $a_1 I$ with coating $a_2 I$ for $A_{B(0,1)}$ and core $b_1 I$ with coating $b_2 I$ for $B_{B(0,1)}$, with $\omega_B\subseteq \omega_A$ :}
Using the integral representation \eqref{hse} for $B^{\#} = b^{\#}I$ we get
\begin{align*}
b^{\#} &=\  \int_{B(0,1)} \{b_1\chi_{\omega_B}(x) + b_2(1-\chi_{\omega_B}(x))\}\ \nabla w_{e_l}(x)\cdot \nabla w_{e_l}(x) dx \\
       &=\   b_2\int_{B(0,1)} \nabla w_{e_l}(x)\cdot \nabla w_{e_l}(x) dx  + (b_1-b_2)\int_{\omega_B}\nabla w_{e_l}(x)\cdot\nabla w_{e_l}(x) dx\\
       &=\ \frac{b_2}{a_2}\{m- (a_1-a_2){\widetilde{b_1}}^2\theta_A\} + (b_1 - b_2) {\widetilde{b_1}}^2\theta_B,\ \ \mbox{ as $\omega_B\subseteq\omega_A$; with using }\eqref{sif} \mbox{ and } \eqref{siw}.
\end{align*}
Finally by using the expression of $m$ (cf. \eqref{sim}) and $\widetilde{b_1}$ (cf. \eqref{sic}) we get,
\begin{equation}\label{FG6}
b^{\#} =\  b_2\ [\ 1 + \frac{N\theta_A(1-\theta_A)(a_2-a_1)^2}{((1-\theta_A)a_1 + (N+\theta_A-1)a_2)^2}\ ] - \frac{(b_2-b_1)(Na_2)^2\theta_B}{((1-\theta_A)a_1 + (N+\theta_A-1)a_2)^2}.                          
\end{equation}
First we notice that for $N=1$ the above formula becomes identical with the lower bound $l^{\#}_2$ for $\theta_B\leq\theta_A$ case in \eqref{FL20}. 
It goes same for higher dimension also.\\
Similar formula can be derived by changing the core and coating for $A_{B(0,1)}$ or $B_{B(0,1)}$ or both, while keeping
the volume fractions $\theta_A,\theta_B$ same as before.\\
\\
\textbf{(2b): Core $a_2 I$ with coating $a_1 I$ for $A_{B(0,1)}$ and core $b_2 I$ with coating $b_1 I$ for $B_{B(0,1)}$, with $\omega_A \subseteq \omega_B$ :}
Compared to the previous case $(2a)$, here we are changing the role of $a_1$ and $a_2$  while keeping the volume fraction $\theta_A$ fixed for $a_1$,
also the role of $b_1$ ,$b_2 $ by keeping the volume fraction $\theta_B$ fixed for $b_1$,  with $\omega_A \subseteq\omega_B$ (or, $\omega_B^c \subseteq \omega_A^c$).  
Then doing the above mentioned changes in the above formulation \eqref{FG6} we simply get,
\begin{equation}\label{ED1}
b^{\#} =\  b_1\ [\ 1 + \frac{N\theta_A(1-\theta_A)(a_2-a_1)^2}{(\theta_A a_2 + (N-\theta_A)a_1)^2}\ ] - \frac{(b_1-b_2)(Na_1)^2(1-\theta_B)}{(\theta_A a_2 + (N-\theta_A)a_1)^2}.                          
\end{equation}
It can be seen that for $N=1$ the above formula becomes identical with the lower bound $l^1_{\#}$ for $\theta_A\leq\theta_B$ case in \eqref{FL20}. 
It goes same for higher dimension too.
\begin{remark}
As an important comparison between two cases $(2a)$ and $(2b)$, we see both are giving saturation / optimality of the lower bound of $B^{\#}$
whenever $\theta_B \leq \theta_A$ and $\theta_A \leq \theta_B$ respectively 
by switching the roles of $a_1$, $a_2$ and $b_1$, $b_2$ both. 
\hfill\qed\end{remark}
\noindent\textbf{(2c): Core $a_2 I$ with coating $a_1 I$ for $A_{B(0,1)}$ and core $b_1 I$ with coating $b_2 I$ for $B_{B(0,1)}$, with $\omega_A \subseteq \omega_B^c$ :}
Compared to the previous case $(2b)$, here we are only changing the role of $b_1$ and $b_2$ 
while keeping the volume fraction $\theta_B$ fixed for $b_1$, with $\omega_A \subseteq \omega_B^c$.
Then making this change in the above formulation \eqref{ED1} we simply get,
\begin{equation}\label{FG7}
b^{\#} =\  b_2\ [\ 1 + \frac{N\theta_A(1-\theta_A)(a_2-a_1)^2}{(\theta_A a_2 + (N-\theta_A)a_1)^2}\ ] - \frac{(b_2-b_1)(Na_1)^2\theta_B}{(\theta_A a_2 + (N-\theta_A)a_1)^2}.                          
\end{equation}
It provides the saturation / optimality of the upper bound of $B^{\#}$  for $\theta_B + \theta_A \leq 1$ case. 
For $N=1$ the above formula becomes identical with the upper bound $u^{\#}_1$ in \eqref{FL17}. \\
\\
\textbf{(2d): Core $a_1 I$ with coating $a_2 I$ for $A_{B(0,1)}$ and core $b_2 I$ with coating $b_1 I$ for $B_{B(0,1)}$, with $\omega_A^c \subseteq \omega_B$ :}
Here we show one more variation. Compared to the previous case $(2c)$, here we are changing the role of $b_1$, $b_2$ and 
$a_1$, $a_2$ both under the condition $\omega_A^c \subseteq \omega_B$.
Then making these changes in the above formulation \eqref{FG7} we get,
\begin{equation}\label{bs19}
b^{\#} =\  b_1\ [\ 1 + \frac{N\theta_A(1-\theta_A)(a_2-a_1)^2}{((1-\theta_A) a_1 + (N +\theta_A-1)a_2)^2}\ ] - \frac{(b_1-b_2)(Na_2)^2(1-\theta_B)}{((1-\theta_A)a_1 + (N+\theta_A-1)a_2)^2}.                          
\end{equation}
It provides the saturation / optimality of the upper bound of $B^{\#}$  for $\theta_B + \theta_A \geq 1$ case. 
For $N=1$ the above formula becomes identical with the upper bound $u^{\#}_2$ in \eqref{FL17}. 
\begin{remark}
 Note that, in the above two cases $(2c)$ and $(2d)$, the core and coating combination has switched as a pair 
 for both $A_{B(0,1)}$ and $B_{B(0,1)}$, in order to give the saturation / optimality of the upper bound of $B^{\#}$. 
\hfill\qed\end{remark}
\begin{remark}
One can also consider the above core and coating combinations with different inclusion conditions but for those construction the corresponding
$B^{\#}$ will not stand as an optimal one. It remains inside in between optimal lower bound and optimal upper bound. 
\hfill\qed\end{remark}
\begin{remark}\label{ad14}
We also notice that, for a given fixed laminate microstructures or Hashin-Shtrikman constructions of $A^{\epsilon}$
providing the saturation / optimality for $A^{*}$ bound there are plenty of microstructures for $B^{\epsilon}$ are available
with satisfying the required inclusion conditions such that the resulting $(A^{*},B^{\#})$ is unique and provides the 
saturation / optimality of its bound (see Section \ref{qw4}).
\hfill\qed\end{remark}
\noindent
Now we move into the final part to establish the optimal bounds on $(A^{*},B^{\#})$ announced in Section \ref{Sd9}.

\section{Proof of Optimal Bounds :}\label{ts}
\setcounter{equation}{0}
Here we are going to establish the optimal trace bounds announced in Section \ref{ad18}.
We shall begin with recalling few key components of our main tool namely the $H$-measure
techniques and its application to compensated compactness theory. 
\subsection{$H$-measure}\label{hsn}
The notion of $H$-measure has been introduced by Gerard and Tartar. It is a defect measure which quantifies
the lack of compactness of weakly converging sequences in $L^2(\mathbb{R}^N)$ in the phase space. 
More specifically, it indicates where in the physical space, and at which frequency in the Fourier Space are the obstructions to strong convergence \cite{AM,PG2,T1,T2}.
Let $v_\epsilon$ be a sequence of functions defined in $\mathbb{R}^N$ with values in $\mathbb{R}^P$. The components of the vector valued function 
$v_{\epsilon}$ are denoted by $({v_{\epsilon}}_i)_{1\leq i\leq P}$. We assume that $v_\epsilon $ converges weakly to $0$ in $L^2(\mathbb{R}^N)^P$.
Then there exist a subsequence (still denoted by $\epsilon $) and a family of complex-valued Random measures $(\mu_{ij}(x,\xi))_{1\leq i,j\leq P}$ 
on $\mathbb{R}^N \times \mathbb{S}^{N-1} $ such that, for test functions $ \varphi_1(x),\varphi_2(x)$ in $C_0(\mathbb{R}^N)$ (the space of continuous functions 
vanishes at infinity), and $\psi(\xi)$ in $C(\mathbb{S}^{N-1})$ (the space of continuous functions over a $N$ dimensional sphere in $\mathbb{R}^N$), it satisfies 
\begin{equation}\label{dc6}
\int_{\mathbb{R}^N} \int_{\mathbb{S}^{N-1}} \varphi_1(x)\overline{\varphi_2(x)}\psi(\frac{\xi}{|\xi|})(\mu_{ij}(dx,d\xi)) 
=\ \underset{\epsilon\rightarrow 0}{\textrm{lim}}\ \int_{\mathbb{R}^N} \widehat{F}(\varphi_1(x){v_{\epsilon}}_i)(\xi)\overline{\widehat{F}(\varphi_2(x){v_{\epsilon}}_j)}\psi(\frac{\xi}{|\xi|})\ d\xi
\end{equation}  
where $\widehat{F}$ is the usual Fourier transform operator defined in $L^2(\mathbb{R}^N)$ by 
\begin{equation*}(\widehat{F}\varphi)(\xi) = \int_{\mathbb{R}^N} \varphi(x) e^{-2i\pi(x\cdot \xi)}\ dx.\end{equation*}
\noindent
The matrix of measures $\mu = (\mu_{ij}$) is called the $H$-measure of the subsequence $v_{\epsilon}$. 
It takes its values in the set of hermitian and non-negative matrices 
\begin{equation*}\mu_{ij} = \overline{\mu}_{ji},\quad \sum_{i,j =1}^P \lambda_i \overline{\lambda_j}\hspace{2pt}\mu_{ij}\geq 0 \ \ \ \forall \lambda \in \mathbb{C}^P.\end{equation*}
In \eqref{dc6} the role of the test functions $\varphi_1$ and $\varphi_2$ are to localize in space and $\psi$ is to localize in the directions of oscillations.
When we take $\psi =1$, we recover the usual defect measure in the physical space, i.e.      
\begin{center}
$\int_{\mathbb{S}^{N-1}}d\mu_{ij}(x,d\xi)$ is just the weak* limit measure of the sequence 
$v_{\epsilon}^i\overline{v_{\epsilon}^j}$, 
\end{center}
which is bounded in $L^1(\mathbb{R}^N)$.
Thus, the $H$-measure gives 
a more precise representation of the lack of compactness, by taking into account oscillation directions.
An important property of the $H$-measure is its so-called localization principle which is indeed a generalization
of the compensated compactness theory of Murat and Tartar.
\begin{theorem}[Localization principle]\label{dc9}
Let $v_{\epsilon}$ be a sequence which converges weakly to $0$ in $L^2(\mathbb{R}^N)^P$ and defines a $H$-measure $\mu $.
If $v_{\epsilon}$ is such that for $ 1\leq m\leq m_0,$ 
\begin{equation}\label{dc7}
\sum_{j=1}^{P} \sum_{k=1}^{N} \frac{\partial}{\partial x_k} ( A_{jk}^{m}(x){v_{\epsilon}}_j) \longrightarrow 0  \quad\mbox{in } H^{-1}_{\textrm{loc}} (\Omega)\mbox{ strongly }
\end{equation}
where the coefficients $ A_{jk}^m $ are continuous in an open set $\Omega $ of $\mathbb{R}^N$ then the $H$-measures satisfies 
\begin{equation}\label{dc8}
\sum_{j=1}^{P} \sum_{k=1}^{N} A_{jk}^{m}(x) \xi_k \mu_{ji}=\ 0 \quad\mbox{in } \Omega\times \mathbb{S}^{N-1} \ \ \forall i,m \quad\mbox{satisfying } 1\leq i\leq P ,\ \  1\leq m \leq m_0. 
\end{equation}
\hfill\qed\end{theorem}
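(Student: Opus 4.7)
The idea is to pair the hypothesis \eqref{dc7}, which provides $H^{-1}_{\mathrm{loc}}$-strong convergence of the divergence $E^m v_\epsilon:=\sum_{j,k}\partial_k(A^m_{jk}v^j_\epsilon)$, with an auxiliary sequence bounded in $H^1_{\mathrm{loc}}$ built from $v^i_\epsilon$ via a pseudo-differential operator whose principal symbol encodes the test symbol $\psi(\xi/|\xi|)$. Fix $\varphi_1\in C_c(\Omega)$, $\varphi_2\in C_c^\infty(\Omega)$ with $\varphi_2\equiv 1$ on $\mathrm{supp}(\varphi_1)$, and $\psi\in C^\infty(\mathbb{S}^{N-1})$. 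Choose $\chi\in C^\infty(\mathbb{R}^N)$ which vanishes near $0$ and equals $1$ outside a bounded neighbourhood of $0$, set $\tilde\psi(\xi):=\chi(\xi)\psi(\xi/|\xi|)$, and let $T$ be the Fourier multiplier with symbol $\tilde\psi(\xi)/|\xi|$; this $T$ is of order $-1$. Define $g_\epsilon:=\varphi_1\,T(\varphi_2 v^i_\epsilon)$, which is bounded in $H^1(\mathbb{R}^N)$ and supported in a fixed compact set $K\subset\Omega$, hence lies in $H^1_0(K)$.

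Testing $E^m v_\epsilon\to 0$ in $H^{-1}_{\mathrm{loc}}$ against $g_\epsilon$ and integrating by parts gives
$$\sum_{j,k}\int A^m_{jk}(x)\,v^j_\epsilon(x)\,\partial_k g_\epsilon(x)\,dx\ \longrightarrow\ 0.$$
Write $\partial_k g_\epsilon=(\partial_k\varphi_1)\,T(\varphi_2 v^i_\epsilon)+\varphi_1\,(\partial_k T)(\varphi_2 v^i_\epsilon)$. The first contribution vanishes because $T$ is compact from $L^2$ into $L^2_{\mathrm{loc}}$ (order $-1$ followed by Rellich), so $T(\varphi_2 v^i_\epsilon)\to 0$ strongly in $L^2_{\mathrm{loc}}$ while $A^m_{jk}v^j_\epsilon$ stays bounded in $L^2$. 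The second contribution involves the zeroth-order Fourier multiplier $\partial_k T$ with symbol $i\xi_k\tilde\psi(\xi)/|\xi|$, whose restriction to $\mathbb{S}^{N-1}$ is $i\xi_k\psi(\xi)$; by the defining relation \eqref{dc6} of the $H$-measure applied with physical test functions $\varphi_1 A^m_{jk}$ and $\varphi_2$ and spherical symbol $\xi_k\psi(\xi)$, this second contribution converges (up to the nonzero factor $\pm i$) to
$$\sum_{j,k}\int_{\Omega\times\mathbb{S}^{N-1}}\varphi_1(x)\,A^m_{jk}(x)\,\xi_k\,\psi(\xi)\,d\mu_{ji}(x,\xi).$$
Consequently this integral equals zero, and since $\varphi_1\in C_c(\Omega)$ and $\psi\in C(\mathbb{S}^{N-1})$ are arbitrary and tensor products are dense in $C_0(\Omega\times\mathbb{S}^{N-1})$, the claimed identity \eqref{dc8} follows for every $m$ and $i$.

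The delicate step will be the last convergence: since $A^m_{jk}$ is only continuous, it is not a smooth symbol and I cannot invoke standard smooth pseudo-differential calculus to pull it through the zeroth-order operator $\partial_k T$. Instead I rely on Tartar's first commutation lemma: for $a\in C_0(\mathbb{R}^N)$ and a Fourier multiplier $S$ with bounded symbol homogeneous of degree $0$ outside a neighbourhood of the origin, the commutator $[M_a,S]$ is compact on $L^2(\mathbb{R}^N)$. This compactness converts the otherwise oscillatory pairing of the two weakly null sequences $A^m_{jk}\varphi_1 v^j_\epsilon$ and $(\partial_k T)(\varphi_2 v^i_\epsilon)$ into the $H$-measure pairing specified in \eqref{dc6}, the commutator remainder contributing zero because it sends weakly null sequences to strongly null ones. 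Once this step is in hand, the rest of the proof is bookkeeping.
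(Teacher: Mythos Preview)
The paper does not actually prove this theorem: it is stated as a known result from the literature on $H$-measures (the \texttt{\textbackslash qed} at the end of the statement, with no proof environment following, signals that it is being quoted from Tartar \cite{T1,T2} and G\'erard \cite{PG2}). So there is no ``paper's own proof'' to compare against.

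That said, your proposal is essentially the standard Tartar argument and is correct in outline. The key ingredients are exactly the ones you identify: build an $H^1_{\mathrm{loc}}$-bounded sequence $g_\epsilon$ from $v^i_\epsilon$ by applying an order $-1$ Fourier multiplier whose principal symbol carries $\psi(\xi/|\xi|)$; pair it with the $H^{-1}_{\mathrm{loc}}$-strongly convergent sequence $E^m v_\epsilon$ and integrate by parts; discard the commutator-type remainders using compactness (Rellich for the order $-1$ piece, Tartar's first commutation lemma for the continuous coefficient $A^m_{jk}$ against the zeroth-order multiplier); and read off the $H$-measure pairing via the defining relation \eqref{dc6}. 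Your handling of the only genuinely delicate point --- that $A^m_{jk}$ is merely continuous, so one cannot move it through the pseudo-differential operator by smooth symbolic calculus but must instead invoke the compactness of the commutator $[M_a,S]$ on $L^2$ --- is exactly right and is the heart of Tartar's original proof. One small cosmetic remark: it is cleaner to take $\psi\in C^\infty(\mathbb{S}^{N-1})$ first and then pass to general $\psi\in C(\mathbb{S}^{N-1})$ by density at the very end, together with the density of $C_c^\infty$ in $C_c$ for $\varphi_1$; you do this implicitly but it could be said explicitly.
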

\noindent
As a consequence of the localization principle we state an another important result which associates with the quadratic form.
Let us define a standard pseudo-differential operator $q$ which is defined through its symbol
$(q_{ij}(x,\xi))_{1\leq i,j\leq P} \in C^{\infty}(\mathbb{R}^N \times \mathbb{R}^N)$ by 
\begin{equation*} (qv)_i(x)= \sum_{j=1}^P \textit{F}^{-1}\{q_{ij}(x,.)\textit{F}v_j(.)\}(x) \end{equation*}
for any smooth and compactly supported function $v$.
Here in particular we are concerned with the so called poly-homogeneous pseudo-differential 
operator of order $0$, i.e. whose principal symbol is homogeneous of degree $0$ in $\xi$ and 
with compact support in $x$. We borrow the following remark from \cite{AM}. 
\begin{remark}
Being homogeneous of degree $0$, the symbol 
$(q_{ij}(x,\xi))_{1\leq i,j\leq P}$ is not smooth at $\xi=0$. 
This is not a problem since any
regularization by a smooth cut-off at the origin gives rise to the same pseudo-
differential operator up to the addition of a smoothing operator. 
\hfill\qed\end{remark}
\noindent
Then we have the following theorem.
\begin{theorem}[Compensated Compactness]\label{dc10}
(a) Let $v_{\epsilon}$ be a sequence which converges weakly to $0$ in  
$L^2(\mathbb{R}^N)^P$. Then there exist a subsequence and a $H$-measure $\mu$ 
such that, for any poly-homogeneous pseudo-differential operator $q$ of 
order $0$ with principal symbol $(q_{ij}(x,\xi))_{1\leq i,j\leq P} $, 
we have \begin{equation*}
\underset{\epsilon \rightarrow 0}{\textrm{lim}} \int_{\mathbb{R}^N}q(v_{\epsilon})\cdot\overline{v_{\epsilon}} dx = \int_{\mathbb{R}^N}\int_{\mathbb{S}^{N-1}}\sum_{i,j=1}^P q_{ij}(x,\xi)\mu_{ij}(dx,d\xi). 
\end{equation*}
(b) Suppose the sequence $v_{\epsilon}$ also satisfies the differential constraints given by \eqref{dc7}. 
So, the corresponding $H$-measure $\mu$ is satisfying \eqref{dc8}.\\
Introduce the wave cone
\begin{equation*}
\Lambda = \{\ (x,\xi, V)\in \mathbb{R}^N \times \mathbb{S}^{N-1} \times \mathbb{C}^P\mbox{ s.t. }\sum_{j=1}^{P} \sum_{k=1}^{N} A_{jk}^{m}(x) \xi_k V_{j}=0, 1\leq m\leq m_0\ \}.
\end{equation*}
Now 
\begin{equation*} \mbox{if }\ \sum_{i,j=1}^P q_{ij}(x,\xi)V_i\overline{V_j} \geq 0 \quad\mbox{for any } (x,\xi, V)\in \Lambda, \mbox{ then } \sum_{i,j=1}^P q_{ij}\mu _{ij}\geq 0. \end{equation*}
\hfill\qed\end{theorem}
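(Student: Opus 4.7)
My plan is to first establish (a) by reducing to ``separable'' symbols handled directly by the definition of $H$-measure, and then deduce (b) by a projection argument that exploits the localization principle \eqref{dc8}.

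For part (a), the first step is to approximate the principal symbol $q_{ij}(x,\xi)$, which is continuous on $\mathbb{R}^N\times\mathbb{S}^{N-1}$ and compactly supported in $x$, uniformly by finite sums of hermitian rank-one tensors
$$q_{ij}^\delta(x,\xi)=\sum_k \varphi_{k,i}(x)\,\overline{\varphi_{k,j}(x)}\,\psi_k(\xi/|\xi|)$$
with $\varphi_{k,i}\in C_0(\mathbb{R}^N)$ and $\psi_k\in C(\mathbb{S}^{N-1})$. Such density follows from Stone--Weierstrass applied in $x$ after one-point compactification, combined with the linear algebraic fact that hermitian matrices are spanned by rank-one hermitian projectors. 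For each such elementary operator, Parseval's identity converts $\int q^\delta(v_\epsilon)\cdot\overline{v_\epsilon}\,dx$ into a finite sum of expressions of the exact shape $\int \widehat{F}(\varphi_{k,i}v_\epsilon^i)\,\overline{\widehat{F}(\varphi_{k,j}v_\epsilon^j)}\,\psi_k\,d\xi$ appearing in \eqref{dc6}, which by definition converges to $\int\!\int q_{ij}^\delta\,\mu_{ij}(dx,d\xi)$. Lower-order terms in the symbol expansion of $q$ give rise to compact operators on $L^2$ and therefore contribute $o(1)$ when tested against $v_\epsilon\rightharpoonup 0$. Passing $\delta\to 0$, using the uniform $L^2$-bound on $v_\epsilon$ on the one side and the finite total mass of $|\mu_{ij}|$ on $\mathrm{supp}(q)\times\mathbb{S}^{N-1}$ on the other, completes (a).

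For part (b), the localization principle \eqref{dc8} tells us that, at each phase-space point $(x,\xi)$, the range of the hermitian non-negative matrix measure $(\mu_{ij})$ lies in the wave-cone subspace $W(x,\xi)=\{V\in\mathbb{C}^P:(x,\xi,V)\in\Lambda\}$. Letting $\pi(x,\xi)$ denote the orthogonal projector onto $W(x,\xi)$, this yields the formal identity $\mu=\pi\mu\pi$, whence
$$\int\!\int \sum_{i,j}q_{ij}\,\mu_{ij}=\int\!\int \operatorname{tr}(q\,\mu)=\int\!\int \operatorname{tr}\bigl((\pi q\pi)\,\mu\bigr).$$
By hypothesis, $q(x,\xi)$ is non-negative on $W(x,\xi)$, so the hermitian matrix $\pi q\pi$ is positive semi-definite at every $(x,\xi)$. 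Since the trace of the product of two positive semi-definite hermitian matrices is non-negative, and $\mu$ is itself a non-negative hermitian matrix measure, the total integral is $\geq 0$, which is the desired conclusion.

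The main obstacle is to give a rigorous meaning to $\mu=\pi\mu\pi$, since $\pi(x,\xi)$ is typically discontinuous where the dimension of $W(x,\xi)$ jumps, whereas \eqref{dc8} is only meaningful when tested against continuous symbols. I would bypass this by testing directly: for a continuous hermitian symbol $q$ with $q|_W\geq 0$, decompose $q=\pi q\pi + r$ where $r$ has the property that $r_{ij}V_i\overline{V_j}$ vanishes whenever $V\in W(x,\xi)$; algebraically, $r$ lies in the linear span of the constraint symbols $\xi_k A^m_{jk}(x)$ from \eqref{dc7}, so applying \eqref{dc8} shows that $\sum r_{ij}\mu_{ij}=0$. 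Approximating $\pi q\pi$, which need not be continuous, by a continuous non-negative symbol via a partition of unity subordinate to a stratification of phase space on which $W(x,\xi)$ has locally constant dimension, and applying part (a) to this regularization, yields non-negativity of $\int\!\int\pi q\pi\,d\mu$ and hence of the full integral.
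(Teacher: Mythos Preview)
The paper does not prove this theorem; it is quoted from the $H$-measure literature (Tartar, G\'erard) and closed immediately after the statement. So there is no paper-proof to compare against, and your task was really to reconstruct a standard argument.

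Your outline of (a) is the standard one and is essentially correct: reduce to separable symbols via Stone--Weierstrass, match each piece to \eqref{dc6} through Parseval, and discard lower-order terms as compact operators tested against a weakly null sequence. One small quibble: your rank-one parametrization $\sum_k \varphi_{k}(x)\varphi_{k}(x)^{*}\psi_k(\xi)$ forces each summand to be non-negative in $x$; allow instead $\sum_k a_k(x)\psi_k(\xi)\,v_kv_k^{*}$ with real scalar $a_k\in C_0(\mathbb{R}^N)$ and constant $v_k\in\mathbb{C}^P$, which spans all hermitian symbols and still reduces to \eqref{dc6}.

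For (b) your projection idea is right, but you manufacture a difficulty and then over-engineer its cure. The clean argument avoids continuity of $\pi$ altogether: disintegrate the non-negative hermitian matrix measure as $\mu = M(x,\xi)\,d\nu$ with $\nu=\operatorname{tr}\mu\geq 0$ a scalar Radon measure and $M$ a $\nu$-measurable density valued in non-negative hermitian matrices with $\operatorname{tr}M=1$. The localization identity \eqref{dc8}, extended from continuous to bounded Borel test functions by dominated convergence, gives $P^m(x,\xi)M(x,\xi)=0$ for $\nu$-a.e.\ $(x,\xi)$, i.e.\ $\operatorname{range}M\subseteq W$, hence $M=\pi M\pi$ $\nu$-a.e. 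Then
\[
\int\!\!\int \sum_{i,j} q_{ij}\,d\mu_{ij}=\int \operatorname{tr}\bigl(qM\bigr)\,d\nu=\int \operatorname{tr}\bigl(\pi q\pi\,M\bigr)\,d\nu\ \geq\ 0,
\]
since $\pi q\pi\geq 0$ (this is exactly the hypothesis) and $M\geq 0$. You never need to regard $\pi q\pi$ as a continuous symbol or re-invoke part (a). Your final stratification step is therefore unnecessary; moreover the intermediate claim that $r=q-\pi q\pi$ lies in the span of the constraint symbols with \emph{continuous} coefficients is false in general, because those coefficients involve a Moore--Penrose inverse of the Gram matrix of the $P^m$, which jumps precisely where $\dim W$ does. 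That route would thus require the same measurable-extension argument that the Radon--Nikodym approach handles in one line.
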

\noindent
As a matter of consequence of the previous theorem we state the result of compensated compactness with variable coefficients which will be the main technical ingredient of this section.
Let $v_{\epsilon}$ be a sequence converging weakly to $v$ in $L^2(\mathbb{R}^N)^P$. Assume that, for $1\leq m \leq m_0,$ $v_{\epsilon}$ satisfies 
\begin{equation*}
\sum_{j=1}^P\sum_{k=1}^N\frac{\partial}{\partial x_k}(A_{jk}^{m}(x){v_{\epsilon}}_j)\longrightarrow \sum_{j=1}^P\sum_{k=1}^N\frac{\partial}{\partial x_k}(A_{jk}^{m}(x)v_j) \mbox{ in }H^{-1}_{loc}(\Omega)\mbox{ strongly. }
\end{equation*}
where the coefficients $A_{jk}^m$ are continuous in an open set $\Omega$ of $\mathbb{R}^N$. Let $\Lambda$ be the Characteristic set defined by
\begin{equation*}
\Lambda =\{\ (x,\xi,V)\in \mathbb{R}^N \times \mathbb{S}^{N-1} \times \mathbb{C}^P\mbox{ such that }\sum_{j=1}^P\sum_{k=1}^N A_{jk}^{m}\xi_{k}V_j=0; 1\leq m \leq m_0 \ \}.
\end{equation*}  
Let $q$ be a poly-homogeneous pseudo-differential operator of order $0$ with hermitian principal symbol $ (q_{ij}(x,\xi))_{1\leq i,j \leq p}$ such that
\begin{equation*}
\sum_{i,j=1}^{P} (q_{ij}(x,\xi))\overline{V_i}V_j \geq 0\mbox{ for any }(x,\xi,V)\in \Lambda.
\end{equation*}
Then for any non-negative smooth $\varphi$ with compact support in $\Omega$ 
\begin{equation*}
\underset{\epsilon \rightarrow 0}{\textrm{lim\hspace{1.5pt}inf}}\ \int_{\mathbb{R}^N} \varphi(x)q(v_{\epsilon})\cdot\overline{v_{\epsilon}}\ dx \geq \int_{\mathbb{R}^N} \varphi(x)q(v)\cdot\overline{v}\ dx.
\end{equation*}
\hfill\qed
\subsection{Bounds : $B^{\epsilon}$ is independent of $\epsilon$ }\label{os9} 
Let us begin with the case when the sequence $B^{\epsilon}$ is independent of $\epsilon$. 
We assume $B^{\epsilon}(x) = b(x)I\in \mathcal{M}(b_1,b_2;\Omega)$. To start with we recall from \eqref{dc1}, 
\begin{equation}\label{FG3} b(x)\nabla u^{\epsilon}\cdot\nabla u^{\epsilon} \rightharpoonup B^{\#}(x)\nabla u\cdot\nabla u \quad\mbox{ in }\mathcal{D}^{\prime}(\Omega).\end{equation}  
Let us remark that $B^{\#} = b(x)I^{\#}(x)$. We have the constraints on $\nabla u^\epsilon $ : 
\begin{equation}\label{abc2}
 \nabla u^\epsilon \rightharpoonup\nabla u \mbox{ weakly in } L^2(\Omega),\ \ -div(A^\epsilon\nabla u^\epsilon) \mbox{ is }H^{-1}(\Omega)\mbox{ convergent,}
\end{equation}
and by homogenization theory, we have the following limit :
\begin{equation}\label{ot15}A^{\epsilon}\nabla u^{\epsilon} \rightharpoonup A^{*}\nabla u \mbox{ in }L^2(\Omega)\mbox{ weak.}\end{equation}
We derive here the optimal lower bound \eqref{tw} and optimal  upper bound \eqref{tq} on $(A^{*},B^{\#})$ respectively.
\paragraph{Lower Bound :}
Let us introduce the constant vector $\eta\in\mathbb{R}^N$ and start with the so-called translated inequality for $A^\epsilon\in\mathcal{M}(a_1,a_2;\Omega)$ :
\begin{equation*} (a_2I-A^{\epsilon})(\nabla u^{\epsilon} + \eta)\cdot(\nabla u^{\epsilon} + \eta)\ \geq\ 0 \quad\mbox{a.e. in }\Omega.\end{equation*}
Then multiplying by $b(x)$ on both sides and expanding it, we get 
\begin{equation*} a_2\hspace{2pt} b\nabla u^{\epsilon}\cdot\nabla u^{\epsilon}-bA^{\epsilon}\nabla u^{\epsilon}\cdot\nabla u^{\epsilon} + 2b(a_2\nabla u^\epsilon\cdot\eta -A^{\epsilon}\nabla u^{\epsilon}\cdot\eta) + b(a_2I-A^{\epsilon})\eta\cdot\eta\ \geq\ 0. \end{equation*}
By passing to the limit as $\epsilon\rightarrow 0$ in the above inequality, we simply get 
\begin{equation}\label{ot13} a_2B^{\#}\nabla u\cdot\nabla u - bA^{*}\nabla u\cdot\nabla u + 2b(a_2I-A^{*})\nabla u\cdot\eta + b(a_2I-\overline{A})\eta\cdot\eta\ \geq\ 0. 
\end{equation}
To get lower bound on $B^{\#}$, we  minimize w.r.t. $\nabla u$. It is recalled
that any fixed vector in $\mathbb{R}^N$ can be realized as $\nabla u$ with $u^\epsilon$ satisfying the usual constraints \eqref{ad13}. Hence we have 
\begin{equation}\label{ot12}
 b\ (a_2I-A^{*})(a_2B^{\#}-bA^{*})^{-1}(a_2I-A^{*})\eta\cdot\eta \leq\ \theta_A(a_2-a_1)\eta\cdot\eta.
\end{equation}
Note that, $(a_2B^{\#}-bA^{*})$ is positive definite matrix (cf. \eqref{Sd3}). Now, by taking trace on both sides, \eqref{tw} follows.
\hfill\qed
\paragraph{Upper Bound :}
Here we consider another translated inequality as
\begin{equation}\label{bs7}(A^{\epsilon} - a_1I)(\nabla u^{\epsilon} + \eta)\cdot(\nabla u^{\epsilon} + \eta)\ \geq\ 0 \quad\mbox{a.e. in }\Omega,\ \mbox{ with constant vector }\eta \in \mathbb{R}^N. \end{equation}
Then by multiplying $b(x)$ on both sides and expanding it, we get  
\begin{equation*}bA^{\epsilon}\nabla u^{\epsilon}\cdot\nabla u^{\epsilon} - a_1 b\nabla u^{\epsilon}\cdot\nabla u^{\epsilon} + 2b(A^{\epsilon}-a_1I)\nabla u^{\epsilon}\cdot\eta + b(A^{\epsilon}-a_1 I)\eta\cdot\eta\ \geq\ 0. \end{equation*}
Now by passing to the limit in the above inequality, we simply get 
\begin{equation}\label{ot17} bA^{*}\nabla u\cdot\nabla u - a_1 B^{\#}\nabla u\cdot\nabla u + 2b(A^{*}-a_1I)\nabla u\cdot\eta + b(\overline{A}-a_1 I)\eta\cdot\eta\ \geq\ 0. \end{equation}
Finally minimizing with respect to $\nabla u$, we get 
\begin{equation}\label{bs2}
 b\ (A^{*} - a_1I)(bA^{*}-a_1 B^{\#})^{-1}(A^{*} - a_1I)\eta\cdot\eta \leq\ (1-\theta_A)(a_2-a_1)\eta\cdot\eta.
\end{equation}
Note that, $(bA^{*}-a_1B^{\#})$ is positive definite matrix (cf. \eqref{Sd3}). Now, by taking trace on both sides, \eqref{tq} follows. 
\hfill\qed
\begin{remark}
Let us note the reverse : for deriving lower bound (upper
bound) on $(A^{*},B^{\#})$, we used the translated inequality usually meant for
deriving upper bound (lower bound) for $A^{*}$.
\hfill\qed\end{remark}
\begin{remark}[Pointwise bounds on energy density $B^{\#}\nabla u\cdot\nabla u$]\label{hsg}
Having found the bounds \eqref{ot12},\eqref{bs2} on the matrix $B^{\#}$ itself, we now find the bounds on energy density $B^{\#}\nabla u\cdot\nabla u$, which is useful in solving optimal design problems, as shown in Section \ref{qw5}. We consider the inequalities obtained in \eqref{ot13} and \eqref{ot17} and minimize them with respect to $\eta\in\mathbb{R}^N$ to
obtain the lower bound and upper bound respectively as follows :
\begin{align}
(i):\mbox{(Lower bound)}\ \ \ B^{\#}\nabla u\cdot\nabla u \geq \frac{b}{a_2}\{A^{*} + (a_2I-\overline{A})^{-1}(a_2I-A^{*})^2\}\nabla u\cdot \nabla u. \label{bs1}\\
(ii):\mbox{(Upper bound)}\ \ \ B^{\#}\nabla u\cdot\nabla u \leq \frac{b}{a_1}\{A^{*} + (\overline{A}-a_1I)^{-1}(A^{*}-a_1I)^2\}\nabla u\cdot \nabla u. \label{Sd11}
\end{align}
The corresponding $A^{*}$ and $B^{\#}$ (constructed in Section \ref{ub12}) achieve equality in the above bounds. This shows the saturation/optimality property of these special structures.
\hfill\qed\end{remark}
\noindent
\textbf{Saturation/Optimality of the Bounds :}
A simple calculation shows that, the simple laminates that we have defined in \eqref{bs11} provides the equality of both bounds \eqref{ot12},\eqref{bs2} (and hence equality in \eqref{tw},\eqref{tq})  for all lamination directions $e_1,e_2,..,e_N$. They are the points of intersections the lower trace bound \eqref{tw} and the upper trace bound \eqref{tq} as in the case of $\mathcal{G}_{\theta_A}$ (see Figure 4).\\
Similarly, the equality of the lower bound \eqref{ot12} of $(A^{*},B^{\#})$
is achieved by the composite based on Hashin-Shtrikman construction with core $a_1I$ and coating $a_2I$
given in \eqref{hsb} for $B^{\#}$ and \eqref{sim} for $A^{*}$. 
Similarly, the equality in the upper bound \eqref{bs2} of $(A^{*},B^{\#})$ is achieved by the 
Hashin-Shtrikman construction with core $a_2I$ and coating $a_1I$ described in \eqref{hsd} for $B^{\#}$ and \eqref{FL9} for $A^{*}$.
Notice that, the Hashin-Shtrikman microstructure which gave the upper bound (or, lower bound )
equality for $A^{*}$ gives the lower bound (or, upper bound) equality for $(A^{*},B^{\#})$. 
\hfill\qed

\noindent
Having mentioned optimal properties of simple laminates and Hashin-Shtrikman constructions, we introduce and discuss the saturation / optimality of rank $N$ laminates. 
\paragraph{Construction of Sequential Laminates :}
Here we construct the $p$-rank or sequential laminates formula for $B^{\#}$ (denoted by $B^{\#}_p$) using $H$-measure techniques. We do it in three steps. The crucial step is to obtain a relation between $B^{\#}$, $A^{*}$ and the $H$-measure term (see \eqref{ad3} below). Subsequently we exploit it to define the laminate $B^{\#}_p$. We now provide details.  
\paragraph{Step 1 :}
We start with recalling the optimal lower bound on $A^{*}$.
If a sequence of characteristic function $\A \rightharpoonup \theta_A $ in $L^{\infty}(\Omega)$ weak*, with
\begin{equation*}A^{\epsilon}=\ a^{\epsilon}I=\ (a_1\A+a_2(1-\A)I \mbox{ $H$-convergence to } A^{*}\mbox{ in }\Omega,\quad (a_1 < a_2).\end{equation*}
Then we know that the optimal lower bound on $A^{*}$ is as follows \cite{T} :
\begin{equation}\label{lb3}
tr\ (A^{*}-a_1 I)^{-1} \leq \frac{N}{(1-\theta_A)(a_2 -a_1)} + \frac{\theta_A}{(1-\theta_A)a_1}.
\end{equation}
The $N$-sequential laminate \eqref{OP2} (with $p=N$) with matrix $a_1 I$ 
and core $a_2 I$ achieves the lower bound \eqref{lb3} with equality. Conversely,
any point on the lower bound equation \eqref{lb3} with equality can be achieved through $N$-sequential laminates \eqref{OP2}, see  \cite{A}.\\
\\
Let us recall the translated inequality \eqref{bs7}, which implies,
\begin{equation}\label{lb1}
A^{\epsilon}\nabla u^{\epsilon}\cdot\nabla u^{\epsilon} + (A^{\epsilon}-a_1 I)\eta \cdot \eta - 2a_1 \nabla u^{\epsilon}\cdot \eta\ \geq\ a_1\nabla u^{\epsilon}\cdot\nabla u^{\epsilon}- 2A^{\epsilon}\nabla u^{\epsilon}\cdot \eta.
\end{equation}
One derives the optimal lower bound \eqref{lb3} by passing to the limit in this inequality. \\
\\
Using \eqref{abc2} and \eqref{ot15}, passing to the limit on the left hand side of \eqref{lb1} is rather easy.  We focus our attention on the right hand side of \eqref{lb1}. By multiplying it by $b(x)$, we get  
\begin{equation}\label{ad1}
 b(x)a_1\nabla u^{\epsilon}\cdot\nabla u^{\epsilon} - 2b(x)A^{\epsilon}\nabla u^{\epsilon}\cdot \eta  
\end{equation}
and we pass to the limit in the above equation \eqref{ad1} in two different ways.\\
\\
Firstly, by using \eqref{FG3} and \eqref{ot15} we get
\begin{equation}\label{ad2}
 a_1 B^{\#}\nabla u\cdot\nabla u - 2bA^{*}\nabla u\cdot \eta .
\end{equation}
Secondly, we use the notion of $H$-measure in order to pass to the limit in \eqref{ad1} or in the right hand side of \eqref{lb1}.\\
\\
To this end, introducing a coupled variable $V_{\epsilon}=(\nabla u^{\epsilon}, A^{\epsilon}\eta)$, we write the right 
hand side of \eqref{lb1} in a quadratic form $q(V_{\epsilon})\cdot V_{\epsilon} $. 
Here $q$ is the following linear map :
\begin{equation*} q:\mathbb{R}^N \times \mathbb{R}^N \longmapsto \mathbb{R}^N \times \mathbb{R}^N\ \mbox{ defined by }\ q(\nabla u^\epsilon,A^\epsilon\eta)=\ (a_1\nabla u^\epsilon-A^\epsilon\eta, -\nabla u^\epsilon ). \end{equation*}
Introducing $V_0 =(\nabla u,\overline{A}\eta)$, we have 
\begin{equation*} q(V_{\epsilon})\cdot V_{\epsilon}=2q(V_{\epsilon})\cdot V_0 -q(V_0)\cdot V_0 + q(V_{\epsilon}- V_0)\cdot(V_{\epsilon}- V_0).\end{equation*}
Let $\varPi_V$ denote the $H$-measure of $(V_{\epsilon}- V_0)$. Its $x$-projection is a Radon measure known as defect measure. However, due to Remark \ref{abc3}, it is in fact a $L^1(\Omega)$ function. This will be useful below. We pass to the limit in \eqref{ad1} by virtue of Theorem \ref{dc10} to get
\begin{equation}\label{lb2}
b(x) a_1\nabla u\cdot\nabla u- 2b(x)\overline{A}\nabla u\cdot \eta + b(x) X, 
\end{equation}
where $X$ is the $H$-measure correction term defined by 
\begin{equation*} X = \underset{\epsilon \rightarrow 0}{\textrm{lim}}\ q(V_{\epsilon}- V_0)\cdot(V_{\epsilon}- V_0) = \int_{\mathbb{S}^{N-1}} tr\ (q\varPi_V (x,d\xi)) \end{equation*}
or equivalently,
\begin{equation}\label{eid} X = \langle\langle \varPi_V, Q(U,A\eta)\rangle\rangle  \quad\mbox{with } Q(U,A\eta)=q(U,A\eta)\cdot(U,A\eta). \end{equation}
Here we are using the angular brackets to denote the directional average, a notation introduced in \cite{T}.\\
\\
Equating the limit \eqref{lb2} with \eqref{ad2}, we get
\begin{equation*}
a_1 B^{\#}\nabla u\cdot\nabla u - 2b\hspace{1.5pt}A^{*}\nabla u\cdot \eta = b\lb a_1\nabla u\cdot\nabla u-2\overline{A}\nabla u\cdot \eta + X \rb.
\end{equation*}
Now minimizing with respect to $\nabla u$, we obtain a useful relation linking $B^{\#}$ with $A^{*}$ with $H$-measure term : 
\begin{equation}\label{ad3}
b(\overline{A}-A^{*})(B^{\#}-bI)^{-1}(\overline{A}-A^{*})\eta\cdot\eta= a_1X.
\end{equation}
On the other hand by passing to the limit in \eqref{lb1}, it yields
\begin{equation}\label{ot14}
A^{*}\nabla u\cdot\nabla u + (\overline{A}-a_1 I)\eta \cdot \eta - 2a_1\nabla u\cdot \eta\ \geq\ a_1\nabla u\cdot\nabla u -2\overline{A}\nabla u\cdot \eta + X .
\end{equation}
\textbf{Step 2 :}
Next, the idea is now to use Theorem \ref{dc10} in order to get a lower bound on $X$. 
However, the quadratic form $q$, which is defined by 
\begin{equation*} q(V)\cdot V= a_1 |U|^2 - 2U\cdot A\eta \mbox{ with } V=(U,A\eta) \in \mathbb{R}^N \times \mathbb{R}^N \end{equation*}
is not coercive with respect to the variable $A\eta$. 
So there is no hope to prove that $q$ is non-negative on wave cone, as required by Theorem \ref{dc10}. 
Here, we apply this result to a slightly different quadratic form. The following arguments were inspired from \cite{AM}.\\
As the gradient $\nabla u^{\epsilon}$ satisfies $curl(\nabla u^{\epsilon}) =0 $, 
in the spirit of Theorem \ref{dc9}, we introduce the oscillation variety 
\begin{equation}\label{os16}
 \vartheta=\ \{\ (\xi,U,A\eta) \in \mathbb{S}^{N-1} \times \mathbb{R}^N\times \mathbb{R}^N;\ \xi_{i}U_j -\xi_{j}U_i = 0\ \forall i,j \}
\end{equation}
and its projection (wave cone)
\begin{align*}
\Lambda &=\ \{(U,A\eta) \in \mathbb{R}^N\times \mathbb{R}^N;\ \exists\ \xi\in \mathbb{S}^{N-1}  \mbox{ such that }(\xi,U,A\eta) \in \vartheta \}\\
        &=\ \{(U,A\eta) \in \mathbb{R}^N\times \mathbb{R}^N;\ U \parallel \xi,\mbox{ for some }\xi\in \mathbb{S}^{N-1}\}.
\end{align*}
We define $\Lambda_{\xi}\subset \Lambda$, $\xi\in \mathbb{R}^N\smallsetminus \{0\}$ as 
\begin{equation}\label{sir} \Lambda_{\xi}=\ \{(U,A\eta) \in \mathbb{R}^N\times \mathbb{R}^N;\ U= t\hspace{1.5pt}\xi,\ t\in \mathbb{R} \}=\ \mathbb{R}\xi;\quad\mbox{So, }  \underset{\xi\neq 0}{\cup} \Lambda_{\xi}=\Lambda.\end{equation}
We introduce a new linear form $q^{\prime}_{\xi}$, whose associated quadratic form defined as 
\begin{equation}\label{sit} Q^\prime_\xi(U,A\eta):= q^{\prime}_{\xi}(U,A\eta)\cdot(U,A\eta)=\ q(U,A\eta)\cdot(U,A\eta)-\underset{U \in \Lambda_{\xi}}{min}\  q(U,A\eta)\cdot(U,A\eta). \end{equation}
The above quadratic form is non-negative on the wave cone $\Lambda_\xi$. Thus applying Theorem \ref{dc10} we get 
\begin{equation}\label{eiy} trace\ (q^{\prime}_{\xi}\varPi_V) \geq 0 \end{equation}
which implies that 
\begin{equation}\label{eie} X =\ \langle\langle\varPi_V, Q(U,A\eta)\rangle\rangle \geq  \langle\langle\varPi_V,\underset{U \in \Lambda_{\xi}}{min}\ Q(U,A\eta)\rangle\rangle. \end{equation}
Introducing $\varPi_A $, the $H$-measure of $ (A^{\epsilon} - \overline{A})\eta$, since $ \underset{U \in \Lambda}{min}\ Q(U,A\eta) $ depends only on $A\eta$, to obtain :
\begin{equation*} X \geq\ \langle\langle\varPi_V,\underset{U \in \Lambda_{\xi}}{min}\ Q(U,A\eta)\rangle\rangle = \langle\langle\varPi_{A},\underset{U \in \Lambda_{\xi}}{min}\ Q(U,A\eta)\rangle\rangle .\end{equation*}
It remains to compute the right hand side of the above inequality. We simply have
\begin{equation}\label{sij} \underset{U \in \Lambda_\xi}{min}\ Q(U,A\eta)=\ -\frac{(A\eta\cdot \xi)^2}{a_1 |\xi|^2} \end{equation}
with the minimizers $U_{min}$ in $\Lambda_\xi$ :
\begin{equation}\label{sio} U_{min} :=\ \frac{(A\eta\cdot\xi)}{a_1|\xi|^2}\hspace{1.5pt}\xi.\end{equation}
Since $(A^{\epsilon}-\overline{A})\eta =(a_1 -a_2)(\chi_{\epsilon} -\theta_A)\eta $, the $H$-measure $\varPi_A$ reduces to 
\begin{equation*}
(\varPi_A)_{ij} = (a_2 -a_1)^2(\nu_A)\eta_i\eta_j  \ \ \forall i,j =1,..,N
\end{equation*}
where $\nu_A$ is a $H$-measure of the sequence $(\A-\theta_A)$ with
\begin{equation}\label{hsz}  \nu_A(x,d\xi)\geq 0 \ \mbox{ and }\ \int_{\mathbb{S}^{N-1}} \nu_A (x,d\xi) = \theta_A(x)(1-\theta_A(x)).\end{equation}
We finally obtain 
\begin{equation}\label{os10} \langle\langle\varPi_A, \frac{(A\eta\cdot\xi)^2}{|\xi|^2} \rangle\rangle =\ (a_2 -a_1)^2\int_{\mathbb{S}^{N-1}} \frac{(\eta \cdot \xi)^2}{|\xi|^2}\nu_A (x,d\xi). \end{equation}
Introducing a matrix $M_A$ defined by 
\begin{equation}\label{os19} M_A = \frac{1}{(1-\theta_A)\theta_A}\int_{\mathbb{S}^{N-1}} \xi \otimes \xi\ \nu_A (x,d\xi) \end{equation}
which is non-negative and has unit trace.\\
\\
Therefore 
\begin{equation}\label{FL5} X\ \geq\ - \frac{\theta_A(1-\theta_A)(a_2 -a_1)^2}{a_1} M_A\eta\cdot \eta\ =: X_{min}   \end{equation}
Clearly, $X$ achieves its minimum $X_{min}$ with $U=U_{min}$ defined in \eqref{sio}.\\
\\
Thus from \eqref{ot14} we obtain 
\begin{equation}\label{lb4}
(A^{*}-a_1 I)\nabla u\cdot\nabla u + (\overline{A}-a_1 I)\eta \cdot \eta + 2(\overline{A}-a_1 I)\nabla u\cdot \eta\ \geq\ \frac{-\theta_A(1-\theta_A)(a_2 -a_1)^2}{a_1} M_A\eta\cdot \eta .
\end{equation}
Above inequality is pointwise which allows us to localize the problem of estimation. To do so, let us take the point $x=x_0$, where $A^{*}(x_0)$ is defined as a constant homogenized matrix with the proportion $\theta_A(x_0)$. 
Construction of classical oscillatory test function shows that $\nabla u$ is an arbitrary vector in $\mathbb{R}^N$ :
\begin{equation}\label{zz11}
(i)\ \ \nabla u^\epsilon \rightharpoonup \zeta\in\mathbb{R}^N \mbox{ arbitrary and }(ii)\ \    div(A^\epsilon\nabla u^\epsilon) \mbox{ converges }H^{-1}(\Omega) \mbox{ strong.}
\end{equation}
We minimize \eqref{lb4} with respect to $\nabla u=\zeta$ with its minimizer 
\begin{equation}\label{abc1}
\zeta = - (\overline{A}(x_0)-a_1I)(A^{*}(x_0)- a_1 I)^{-1}\eta ;
\end{equation}
so \eqref{lb4} yields the following estimate at $x_0$ : 
\begin{equation}\label{lb5}
(A^{*}-a_1 I)^{-1}\eta\cdot \eta\ \leq\ \frac{|\eta|^2}{(1-\theta_A)(a_2 -a_1)} + \frac{\theta_A}{(1-\theta_A)a_1}M_A\eta\cdot\eta. 
\end{equation}
Since $x_0$ is arbitrary, the matrix bound \eqref{lb5} is pointwise bound for $x$ almost everywhere.  
Since $\eta $ is an arbitrary vector, by taking trace we get the well known optimal lower bound \eqref{lb3} for $A^{*}$.
We will use later the same minimizer $\zeta = - (\overline{A}-a_1I)(A^{*}- a_1 I)^{-1}\eta$ later for our purposes.
\paragraph{Step 3 :}
Note that the expression \eqref{OP2} for the
$p$-sequential laminates $A^{*}_p$ with matrix $a_1I$ and core $a_2I$ can be written as
\begin{equation}\label{UD10} 
(1-\theta_A)( A^{*}_{p} - a_1 I)^{-1} = (a_2 - a_1)^{-1}I + \frac{\theta_A}{a_1} \int_{\mathbb{S}^{N-1}} \frac{e\otimes e}{e\cdot e}d\nu_A(e), 
\end{equation}
for the probability measure $\nu_A$ on the unit sphere $\mathbb{S}^{N-1}$ given by $\nu_A = \sum_{j=1}^p m_j\delta_{e_j}$
with, $\sum_{j=1}^p m_j=1$. See \cite{T}. Now going back to \eqref{os19},\eqref{FL5}, one sees the above integrals over $\mathbb{S}^{N-1}$ appears naturally through the $H$-measure techniques and the equality takes place with $X=X_{min}$. \\
\\
It naturally enables us to define the corresponding $p$-sequential laminate for $B^{\#}$ as follows : We recall \eqref{ad3} with $A^{*}=A^{*}_p$ (cf.\eqref{UD10}) and $X=X_{min}$ (cf.\eqref{FL5}) with  $M_A=\sum_{i=1}^p m_i\frac{e_i\otimes e_i}{e_i\cdot e_i}$ where $\{e_i\}$ are the canonical basis vectors and $\sum_{i=1}^p m_i =1$. 
Then the resulting $B^{\#}$ defines the $p$-sequential laminate say $B^{\#}_p$ with matrix $a_1 I$ and core $a_2 I$ satisfying, 
\begin{equation}\label{FG4}
b(B^{\#}_p-bI)^{-1}(\overline{A}-A^{*}_p)^2 =\  \theta_A(1-\theta_A)(a_2 -a_1)^2\ \lb\sum_{i=1}^{p} m_i\frac{e_i\otimes e_i}{e_i\cdot e_i}\rb,\ \  \mbox{ with }\ \sum_{i=1}^p m_i =1.
\end{equation}
Note that $B^{\#}_p$ is diagonal, since $A^{*}_p$ is diagonal. \\
\\
Similarly, by changing the role of $a_1$,$a_2$, one defines the $p$-sequential laminate $B^{\#}_p$ with matrix $a_2 I$ and core $a_1 I$ as 
\begin{equation}\label{bs13}
b(B^{\#}_p-bI)^{-1}(\overline{A}-A^{*}_p)^2 =\  \theta_A(1-\theta_A)(a_2 -a_1)^2\ \lb\sum_{i=1}^{p} m_i\frac{e_i\otimes e_i}{e_i\cdot e_i}\rb,\ \  \mbox{ with }\ \sum_{i=1}^p m_i =1.
\end{equation}
where $A^{*}_p$ is the $p$-sequential laminates \eqref{OP3} with matrix $a_2 I$ and core $a_1 I$.\hfill\qed
\begin{remark}[Uniqueness of $B^{\#}$]\label{ad19}
As we see from the relation \eqref{ad3}, if $A^{*}$ and $X$ are fixed then
$B^{\#}$ is also fixed. Now suppose for two different microstructures if we have same 
homogenized limit $A^{*}\in \partial\mathcal{G}_{\theta_A}$ then associated matrix $M_A$ is also the same (because equality holds in \eqref{lb5}). Hence both the microstructures possess the same $X=X_{min}$ (cf.\eqref{FL5}).
Therefore, both microstructures lead to the same macro relative limit $B^{\#}$ with $A^{*}\in\partial\mathcal{G}_{\theta_A}$. 
If $A^{*}\in int\ \mathcal{G}_{\theta_A}$, it can be realized as $A^{*}\in\partial\mathcal{G}_{\widetilde{\theta_A}}$
for some $\widetilde{\theta_A}$ uniquely determined by $\theta_A,A^{*}$ (see Figure 4). Then using the previous
arguments in Step $2$ once again, replacing $\theta_A$ by $\widetilde{\theta_A}$ everywhere we conclude that
$B^{\#}$ is uniquely determined. 
\hfill\qed\end{remark}
\begin{remark}[Optimality]\label{ot18}
\noindent
$(1)\ :\ $ The $N$-sequential laminates $(A^{*}_N,B^{\#}_N)$ given in \eqref{OP3},\eqref{bs13} and \eqref{OP2},\eqref{FG4} achieve the equality in the lower bound \eqref{tw} and the upper
bound \eqref{tq} respectively. Conversely, it is known that any point on $\partial\mathcal{G}_{\theta_A}^L$ or $\partial\mathcal{G}_{\theta_A}^U$ can be achieved by $N$-sequential laminates $A^{*}_N$. 
Further, the relative limit $B^{\#}_N$ constructed above along with $A^{*}_N$ achieves the equality in the upper bound \eqref{tq} or the lower bound \eqref{tw} respectively. \\
\noindent
$(2)\ :\ $ Given $A^{*}$, $B^{\#}$ satisfying both inequalities \eqref{tq},\eqref{tw}, there exists $A^\epsilon$ satisfying \eqref{ta} and $b(x)$ with $b_1\leq b(x)\leq b_2$ such that $A^\epsilon \xrightarrow{H} A^{*}$ and $b\xrightarrow{A^\epsilon} B^{\#}$. We will not prove such assertions here. We will deal with more complicated cases of two-phase $\{A^\epsilon, B^\epsilon\}$ with $B^\epsilon$ depending on $\epsilon$. See Theorem \ref{qw6} and its proof in Section \ref{qw4}.
\hfill\qed\end{remark}

\subsection{Bounds : $B^{\epsilon}(x)$ is governed by two-phase medium.}\label{sil} 
Now we move to the case when $B^{\epsilon}(x)$ is governed with the two-phase medium. 
We consider the sequences $\A \rightharpoonup \theta_A $ and $\B \rightharpoonup \theta_B $ in $L^{\infty}(\Omega)$ weak*, with
\begin{equation*}A^{\epsilon}=\ a^{\epsilon}I=\ (a_1\A+a_2(1-\A)I \mbox{ $H$-converges to } A^{*}\mbox{ in }\Omega,
\end{equation*}
and
\begin{equation*}
B^{\epsilon} =\ b^{\epsilon}I=\ (b_1\B + b_2(1-\B))I \ \mbox{converges to }B^{\#} \mbox{ relative to }A^\epsilon \mbox{ in }\Omega.
\end{equation*}
We assume $(0<a_1<a_2<\infty)$ and $(0<b_1<b_2<\infty)$.\\
\\
Let us begin with by considering two particular cases namely, when $\omega_{B^\epsilon}=\omega_{A^\epsilon}$ and $\omega_{B^\epsilon}=\omega^c_{A^\epsilon}$ respectively.
Let us first derive the lower bound when $\omega_{B^\epsilon}=\omega_{A^\epsilon}$.
\paragraph{1. Lower Bound : when $\omega_{B^\epsilon}=\omega_{A^\epsilon}$ :}
We introduce the constant vector $\eta$ in $\mathbb{R}^N$ and consider the simple translated inequality for $B^\epsilon$ with oscillation field $\nabla u^\epsilon$ associated to $A^\epsilon$ :
\begin{equation}\label{FL8}
(B^{\epsilon}-b_1 I)(\nabla u^{\epsilon} +\eta)\cdot(\nabla u^{\epsilon} +\eta)\geq 0 \quad\mbox{a.e. in }\Omega. 
\end{equation}
which is rewritten as
\begin{equation}\label{Sd18}
B^{\epsilon}\nabla u^{\epsilon}\cdot\nabla u^{\epsilon} + (B^{\epsilon}- b_1 I)\eta \cdot \eta + 2(B^\epsilon-b_1 I) \nabla u^{\epsilon}\cdot \eta\ \geq\ b_1\nabla u^{\epsilon}\cdot\nabla u^{\epsilon}.
\end{equation}
We impose the constraints on $\nabla u^\epsilon $ : 
\begin{equation}\label{six}
 \nabla u^\epsilon \rightharpoonup\nabla u \mbox{ weakly in } L^2(\Omega),\ \ -div(A^\epsilon\nabla u^\epsilon) \mbox{ is }H^{-1}(\Omega)\mbox{ convergent. }
\end{equation}
Thanks to the distributional convergence \eqref{dc1}, we know
\begin{equation*}B^{\epsilon}\nabla u^{\epsilon}\cdot\nabla u^{\epsilon}\ \rightharpoonup\ B^{\#}(x)\nabla u\cdot\nabla u\ \mbox{ in }\ \mathcal{D}^{\prime}(\Omega).\end{equation*}
Passing to the limit in the left hand side of \eqref{Sd18} is rather easy. It is enough to use the relation between the two fluxes and we have : 
\begin{equation}\label{sik} (B^\epsilon-b_1I)\nabla u^\epsilon = \frac{(b_2-b_1)}{(a_2-a_1)}(A^\epsilon-a_1I)\nabla u^\epsilon\ \rightharpoonup\ \frac{(b_2-b_1)}{(a_2-a_1)}(A^{*}-a_1I)\nabla u\ \mbox{ in } L^2(\Omega).\end{equation}
On the other hand, in order to pass to the limit in the right hand side of \eqref{Sd18} we use 
the $H$-measure techniques. Recalling that the limit of right hand side of \eqref{lb1} obtained
in the previous Section \ref{os9} as 
\begin{equation}\label{siy} a_1\nabla u^{\epsilon}\cdot\nabla u^{\epsilon}- 2A^{\epsilon}\nabla u^{\epsilon}\cdot \eta \rightharpoonup a_1\nabla u\cdot\nabla u- 2\overline{A}\nabla u\cdot \eta + X\ \mbox{ in }\ \mathcal{D}^{\prime}(\Omega)\end{equation}
where, $X$ is the $H$-measure corrector term. Hence, 
\begin{equation*} b_1\nabla u^{\epsilon}\cdot\nabla u^{\epsilon} \rightharpoonup b_1\nabla u\cdot\nabla u + 2\frac{b_1}{a_1}(A^{*}-\overline{A})\nabla u\cdot \eta + \frac{b_1}{a_1}X \ \mbox{ in }\ \mathcal{D}^{\prime}(\Omega).\end{equation*}
Therefore passing to the limit in \eqref{Sd18} we simply get, 
\begin{equation*}B^{\#}\nabla u\cdot\nabla u + (\overline{B}-b_1 I)\eta \cdot \eta + 2\frac{(b_2-b_1)}{(a_2-a_1)}(A^{*}-a_1I)\nabla u\cdot \eta\ \geq\ b_1\nabla u\cdot\nabla u + 2\frac{b_1}{a_1}(A^{*}-\overline{A})\nabla u\cdot \eta + \frac{b_1}{a_1}X.\end{equation*}
Next, by using the lower bound on $X$ (cf.\eqref{FL5}) we obtain 
\begin{align*}
(B^{\#}-b_1I)\nabla u\cdot\nabla u + (\overline{B}-b_1 I)\eta \cdot \eta + 2\{\frac{(b_2-b_1)}{(a_2-a_1)}(A^{*}-a_1I)+\frac{b_1}{a_1}(\overline{A}-A^{*})\}\nabla u\cdot \eta\ \qquad\qquad &\\
+\frac{b_1}{a_1^2}(a_2-a_1)^2\theta_A(1-\theta_A)M_A\eta\cdot\eta\ \geq\ 0. &
\end{align*}
Finally, minimizing over $\nabla u$ we obtain the lower bound as
\begin{align}
&\{\frac{(b_2-b_1)}{(a_2-a_1)}(A^{*}-a_1I)+\frac{b_1}{a_1}(\overline{A}-A^{*})\}(B^{\#}-b_1I)^{-1}\{\frac{(b_2-b_1)}{(a_2-a_1)}(A^{*}-a_1I)+\frac{b_1}{a_1}(\overline{A}-A^{*})\}\eta\cdot\eta\notag\\
&\quad\qquad\qquad\qquad\qquad\qquad\quad\leq\ (\overline{B}-b_1 I)\eta \cdot \eta  +\frac{b_1}{a_1^2}(a_2-a_1)^2\theta_A(1-\theta_A)M_A\eta\cdot\eta. \label{siq}
\end{align}
Since $\eta $ is an arbitrary vector in $\mathbb{R}^N$, by taking trace on both sides of \eqref{siq} and using the fact that the matrix $M_A$ has unit trace, 
one obtains the following lower trace bound for $(A^{*},B^{\#})$ whenever the two corresponding microstructures $\omega_{A^\epsilon}=\omega_{B^\epsilon}$.
\begin{align}
tr\ \{[\frac{(b_2-b_1)}{(a_2-a_1)}(A^{*}-a_1I)+\frac{b_1}{a_1}(\overline{A}-A^{*})]^2(B^{\#}-b_1I)^{-1}\}\leq\ &N(b_2-b_1)(1-\theta_A)\notag \\
&+\frac{b_1}{a_1^2}(a_2-a_1)^2\theta_A(1-\theta_A). \label{bs20}
\end{align}
\textbf{2. Lower Bound : when $\omega_{B^\epsilon}=\omega^c_{A^\epsilon}$ : }
In this case we have the following flux convergence :
\begin{equation*}(B^\epsilon-b_1I)\nabla u^\epsilon = \frac{(b_2-b_1)}{(a_2-a_1)}(a_2I-A^\epsilon)\nabla u^\epsilon\ \rightharpoonup\ \frac{(b_2-b_1)}{(a_2-a_1)}(a_2I-A^{*})\nabla u\ \mbox{ in } L^2(\Omega).
\end{equation*}
Starting with the above information,one follows the procedure
of the previous case to obtain the following inequality : 
\begin{align}
&\{\frac{(b_2-b_1)}{(a_2-a_1)}(a_2I-A^{*})+\frac{b_1}{a_1}(\overline{A}-A^{*})\}(B^{\#}-b_1I)^{-1}\{\frac{(b_2-b_1)}{(a_2-a_1)}(a_2I-A^{*})+\frac{b_1}{a_1}(\overline{A}-A^{*})\}\eta\cdot\eta\notag\\
&\quad\qquad\qquad\qquad\qquad\qquad\quad\leq\ (\overline{B}-b_1 I)\eta \cdot \eta  +\frac{b_1}{a_1^2}(a_2-a_1)^2\theta_A(1-\theta_A)M_A\eta\cdot\eta. \label{eia}
\end{align}
Then by taking trace on both sides of \eqref{eia}, we simply obtain 
\begin{align}\label{FL15}
tr\ \{[\frac{(b_2-b_1)}{(a_2-a_1)}(a_2I-A^{*})+\frac{b_1}{a_1}(\overline{A}-A^{*})]^2(B^{\#}-b_1I)^{-1}\}\leq\ &N(b_2-b_1)(1-\theta_B)\notag\\
&+\frac{b_1}{a_1^2}(a_2-a_1)^2\theta_A(1-\theta_A). 
\end{align}
\textbf{Saturation/Optimality of the above lower bounds : }
Simple calculation shows that the equality of the above lower bound \eqref{bs20} of $(A^{*},B^{\#})$ 
is achieved by the simple laminates, Hashin-Shtrikman construction given in the Section \ref{bs12} and Section \ref{hsl}
respectively.

However, in the case $\omega_{B^\epsilon}=\omega_{A^\epsilon}^c$, the equality of the above lower bound \eqref{FL15} 
is achieved by the simple laminates, Hashin-Shtrikman construction when $\theta_A \leq \frac{1}{2}$
(i.e. when $\theta_B =(1-\theta_A) \geq \theta_A$). 
Later we will see in order to find the optimal bound when $\theta_A >\frac{1}{2}$, one needs to start with the dual version of \eqref{FL8} given below in \eqref{tc}.

With regard to $N$-sequential laminates, the construction of $A^{*}_N$ is
classical. The construction of the corresponding relative limit $B^{\#}_{N,N}$ can be based on a relation linking $A^{*}_N$  and  $B^{\#}_{N,N}$  which is analogous to \eqref{FG4}. This relation is given in \eqref{hsa}  for the case  $\omega_{A^\epsilon} = \omega_{B^\epsilon}$. Similar thing can be done for $\omega_{B^\epsilon}=\omega_{A^\epsilon}^c$. These structures
$A^{*}_N,B^{\#}_{N,N}$ provide the saturation / optimality of the above bounds.
A useful observation is that $A^{*}$ belongs to boundary of $\mathcal{G}_{\theta_A}$ in all three cases above.
\hfill\qed
\paragraph{General Case :}
Although our ultimate goal is to find bounds over the arbitrary microstructures $\omega_{A^\epsilon},\omega_{B^\epsilon}$,
it is a difficult task to perform. The source of difficulty lies in the fact that  we do not have convergence result for the flux $B^\epsilon\nabla u^\epsilon$ as in  \eqref{sik} for arbitrary microstructures  $\omega_{A^\epsilon},\omega_{B^\epsilon}$. We see the products involving $B^{\epsilon}$ and $\nabla u^{\epsilon}$, the oscillations in $\nabla u^{\epsilon}$ is controlled by $A^{\epsilon}$ through the state equation governed with its microstructures.
In order to obtain the optimal bounds, one has to take into account of this fact. In the previous cases, such convergences (cf.\eqref{sik})
are used crucially in order to obtain optimal bounds. However, from these particular cases 
we make an useful observation that the optimal structures of $A^{*}$ (i.e. $A^{*}\in\partial\mathcal{G}_{\theta_A}=\partial\mathcal{G}^L_{\theta_A}\cup \partial\mathcal{G}^U_{\theta_A}$) 
provides the saturation / optimality of the $(A^{*},B^{\#})$ bounds. Motivated by this, in order to treat the general case we would like to take the following strategy : 
First we will be finding the bounds on $B^{\#}$ with arbitrary $\omega_{B^\epsilon}$, whereas $\omega_{A^\epsilon}$ 
be the corresponding optimal microstructures for the homogenized tensor i.e. $A^{*}\in\partial\mathcal{G}_{\theta_A}$. Then in the  second step, we will treat the case $A^{*}\in int(\mathcal{G}_{\theta_A})$. 
\paragraph{Lower Bound :}
Let us establish the lower bound L1 on $(A^{*},B^{\#})$ for arbitrary microstructures  $\omega_{A^\epsilon},\omega_{B^\epsilon}$. 
We break it into two steps as mentioned above : first we treat $A^{*}\in\partial\mathcal{G}_{\theta_A}^L$ (and $A^{*}\in\partial\mathcal{G}_{\theta_A}^U$  can be dealt with analogously); next we
consider $A^{*}\in int(\mathcal{G}_{\theta_A})$.   
\paragraph{Step 1a : (Compactness property of $(\nabla u^\epsilon,A^\epsilon\eta)$)}
In this step, assuming $A^{*}\in \partial\mathcal{G}^L_{\theta_A}$, we state and prove a compactness property of
$(\nabla u^\epsilon,A^\epsilon\eta)$. We formulate it (cf.\eqref{eiz}) in such a way that it can be used to study 
the inflated system \eqref{eik} to derive the required lower trace bound L1.  
Recall the field $\nabla u^\epsilon$ possesses natural constraints \eqref{six}, i.e. 
\begin{equation*} \nabla u^\epsilon \rightharpoonup\nabla u \mbox{ weakly in } L^2(\Omega),\ \ -div(A^\epsilon\nabla u^\epsilon) \mbox{ is }H^{-1}(\Omega)\mbox{ convergent. }\end{equation*}
Now,  we impose one more constraint on the fields $\nabla u^\epsilon$ by restricting them inside the class of fields corresponding to microstructures providing  the saturation/optimality of the lower bound \eqref{lb5}. In order to define such class of optimal fields, we go back to the right hand side of \eqref{lb1} as well as its limit :
\begin{equation}\label{lb20} a_1\nabla u^{\epsilon}\cdot\nabla u^{\epsilon}- 2A^{\epsilon}\nabla u^{\epsilon}\cdot \eta \rightharpoonup a_1\nabla u\cdot\nabla u- 2\overline{A}\nabla u\cdot \eta + X\ \mbox{ in }\ \mathcal{D}^{\prime}(\Omega)\end{equation}
where $X$ is the $H$-measure correction term defined as in \eqref{eid} :
\begin{equation*} X = \langle\langle \varPi_V, Q(U,A\eta)\rangle\rangle  \quad\mbox{with } Q(U,A\eta)= a_1|U|^2-2AU\cdot\eta. \end{equation*}
Following \eqref{eie},\eqref{FL5} we had the lower bound on $X$ as,
\begin{equation*} X \geq\  \langle\langle\varPi_V,\underset{U \in \Lambda_{\xi}}{min}\ Q(U,A\eta)\rangle\rangle = - \frac{\theta_A(1-\theta_A)(a_2 -a_1)^2}{a_1} M_A\eta\cdot\eta\ = X_{min}\end{equation*}
It has been also noted that the equality of this above bound provides the saturation / optimality of the lower bound of $A^{*}$. 
In the following result, we will investigate the compactness property of $\nabla u^\epsilon$ satisfying \eqref{lb20}
with $X=X_{min}$.
\begin{theorem}[Compactness]\label{ub8}
Let us consider the following constrained oscillatory system :
\begin{equation}\label{eiw}
V_\epsilon=\ (\nabla u^\epsilon,A^\epsilon\eta) \rightharpoonup (\nabla u,\overline{A}\eta)=V_0\ \mbox{ in }L^2(\Omega)^{2N}\mbox{ weak }, \eta\in\mathbb{R}^N\smallsetminus\{0\},\end{equation}
\begin{equation}\begin{cases}\label{eiz}
a_1\Delta U_\epsilon - \nabla( div(A^\epsilon\eta) ) \in H^{-2}_{loc}(\Omega)\mbox{ convergent, } \\
\ U_\epsilon = \nabla u^\epsilon.
\end{cases}\end{equation}
Then 
\begin{equation}\label{eil} 
a_1\nabla u^{\epsilon}\cdot\nabla u^{\epsilon}- 2A^{\epsilon}\nabla u^{\epsilon}\cdot \eta \rightharpoonup a_1\nabla u\cdot\nabla u- 2\overline{A}\nabla u\cdot \eta + X_{min} \mbox{ in }\ \mathcal{D}^{\prime}(\Omega)
\end{equation}
where, $ X_{min}$ is defined above (cf.\eqref{FL5}). \\
\\
Conversely, if the sequence satisfying \eqref{eiw} possesses the property \eqref{eil}, then \eqref{eiz} must hold.
\end{theorem}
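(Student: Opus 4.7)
The plan is to combine the localization principle of $H$-measures with the compensated compactness arguments already developed in Step 2 of Section \ref{os9}. Let $\Pi_V$ denote the $H$-measure of the sequence $V_\epsilon - V_0 = (\nabla u^\epsilon - \nabla u,\ (A^\epsilon - \overline{A})\eta)$, which has weak limit zero in $L^2(\Omega)^{2N}$. Recall the quadratic form $Q(U,A\eta) = a_1|U|^2 - 2 U\cdot A\eta$, its constrained minimizer $U_{min}(\xi,A\eta) = (A\eta\cdot\xi)\xi/(a_1|\xi|^2)$ with $Q(U_{min},A\eta) = -(A\eta\cdot\xi)^2/(a_1|\xi|^2)$, and the correction term $X = \langle\langle \Pi_V, Q\rangle\rangle$.

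For the direct implication, first observe that $a_1\Delta U_\epsilon - \nabla(\mathrm{div}(A^\epsilon\eta)) = \nabla(\mathrm{div}(a_1 U_\epsilon - A^\epsilon \eta))$, so \eqref{eiz} is equivalent to the first-order constraint that $\mathrm{div}(a_1 U_\epsilon - A^\epsilon\eta)$ is $H^{-1}_{loc}(\Omega)$-convergent. Applying Theorem \ref{dc9} to this constraint, together with the curl-free constraint on $\nabla u^\epsilon$, confines $\mathrm{supp}(\Pi_V)$ to the set where both $a_1(U\cdot\xi) = A\eta\cdot\xi$ and $U\in\Lambda_\xi = \mathbb{R}\xi$ hold; these two conditions together force $U = U_{min}(\xi,A\eta)$ on $\mathrm{supp}(\Pi_V)$. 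Substituting back, $Q$ reduces on the support to $-(A\eta\cdot\xi)^2/(a_1|\xi|^2)$, which depends only on $(\xi, A\eta)$; hence $X$ factors through the $H$-measure of $(A^\epsilon-\overline{A})\eta$ and the scalar $H$-measure $\nu_A$ of $(\A - \theta_A)$. The computation in \eqref{os10}--\eqref{FL5} then yields $X = X_{min}$, which is precisely \eqref{eil}.

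For the converse, introduce $Q'_\xi(U,A\eta) = Q(U,A\eta) - Q(U_{min}(\xi,A\eta), A\eta)$. A direct calculation shows $Q'_\xi(U,A\eta) = a_1|U - U_{min}(\xi,A\eta)|^2 \geq 0$ on the wave cone $\Lambda_\xi$. By Theorem \ref{dc10}(b) applied using only the curl-free constraint, $\langle\langle \Pi_V, Q'_\xi\rangle\rangle \geq 0$, and since $\langle\langle\Pi_V, Q(U_{min},A\eta)\rangle\rangle = X_{min}$ holds unconditionally (the integrand depends only on $(A\eta,\xi)$), we obtain the a priori lower bound $X \geq X_{min}$. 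The hypothesis $X = X_{min}$ then forces $\langle\langle\Pi_V, Q'_\xi\rangle\rangle = 0$; together with the non-negativity of $Q'_\xi$ on $\Lambda_\xi$, this forces $\Pi_V$ to concentrate on the locus $\{U = U_{min}(\xi,A\eta)\}$, i.e.\ on the zero set of the symbol $a_1(U\cdot\xi) - A\eta\cdot\xi$. Inverting the localization principle through a parametrix of the divergence, this support equation translates back to $\mathrm{div}(a_1 U_\epsilon - A^\epsilon \eta)$ being $H^{-1}_{loc}$-compact, which is \eqref{eiz}.

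The main obstacle is the converse step: inverting the localization principle to recover a differential constraint on the sequence from a support property of $\Pi_V$. The standard Theorem \ref{dc9} runs only in one direction, so the reverse implication is not automatic. What makes the inversion possible here is the perfect-square structure $Q'_\xi = a_1|U - U_{min}|^2$ on $\Lambda_\xi$: the identity $\langle\langle \Pi_V, Q'_\xi\rangle\rangle = 0$ then encodes strong $L^2_{loc}$-convergence of the linear combination $a_1 v_\epsilon - W_\epsilon$ modulo divergence-free perturbations, which is exactly the content of $H^{-1}_{loc}$-compactness of $\mathrm{div}(a_1 U_\epsilon - A^\epsilon \eta)$. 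A second technical point is the careful verification, using only curl-freeness (not \eqref{eiz}), that $Q'_\xi \geq 0$ on the relevant wave cone, which is needed to legitimately invoke Theorem \ref{dc10}(b) in the unconditional direction.
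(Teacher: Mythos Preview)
Your forward direction is essentially the paper's argument, phrased in terms of the localization principle rather than the vanishing of the shifted quadratic form on the wave cone; these are equivalent. Your first-order reformulation of \eqref{eiz} as $H^{-1}_{loc}$-compactness of $\mathrm{div}(a_1 U_\epsilon - A^\epsilon\eta)$, obtained by writing $a_1\Delta U_\epsilon - \nabla(\mathrm{div}(A^\epsilon\eta)) = \nabla\bigl(\mathrm{div}(a_1 U_\epsilon - A^\epsilon\eta)\bigr)$ using $U_\epsilon=\nabla u^\epsilon$, is a clean observation.

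The converse has a genuine gap. You correctly arrive at $\langle\langle \Pi_V, Q'_\xi\rangle\rangle = 0$, but the passage from there to \eqref{eiz} is not executed. The phrase ``forces $\Pi_V$ to concentrate on the locus $\{U = U_{min}(\xi,A\eta)\}$'' is not meaningful as stated: $\Pi_V$ is a Hermitian-matrix--valued measure on $\Omega\times\mathbb{S}^{N-1}$, and $(U,A\eta)$ are dummy variables in the quadratic form, not coordinates on which $\Pi_V$ could have support. What the vanishing of $\mathrm{trace}(q'_\xi\Pi_V)$ actually yields is an additional linear relation among the \emph{entries} of $\Pi_V$, beyond those coming from curl-freeness. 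Likewise, the sentence ``encodes strong $L^2_{loc}$-convergence of the linear combination $a_1 v_\epsilon - W_\epsilon$ modulo divergence-free perturbations'' introduces undefined symbols and does not constitute a proof. Finally, ``inverting the localization principle through a parametrix'' names the right tool but does not apply it.

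The paper carries out exactly this missing step. It localizes by a cutoff $\phi\in C^1_c(\Omega)$, forms the explicit $L^2$-sequence
\[
\omega^\epsilon_k \;=\; a_1\sum_j R_jR_j\,\phi(U_\epsilon-U)_k \;-\; R_k\sum_j R_j\,\phi(A^\epsilon\eta-\overline{A}\eta)_j
\]
using Riesz transforms, and computes the $H$-measure $\Pi$ of $\{\omega^\epsilon_k\}_k$ directly in terms of the entries of $\Pi_V$. Using the curl-free and $\mathrm{div}(A^\epsilon\nabla u^\epsilon)$ relations (the identities \eqref{sin}, \eqref{lb16}) one checks that $\mathrm{trace}\,\Pi = a_1|\phi|^2\,\mathrm{trace}(q'_\xi\Pi_V)=0$; since $\Pi\ge 0$ this forces $\Pi=0$, hence $\omega_\epsilon\to 0$ strongly in $L^2$, which is precisely \eqref{eiz} after unwinding the Riesz transforms. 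Your perfect-square observation $Q'_\xi = a_1|U-U_{min}|^2$ on $\Lambda_\xi$ is the heuristic behind why this works, but the argument requires this concrete construction (or an equivalent one) to be complete.
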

\begin{proof}
The oscillation variety $\vartheta_1$ of the above differential system \eqref{eiw},\eqref{eiz} is
\begin{equation*}
\vartheta_1=\ \{\ (\xi,U,A\eta) \in \mathbb{S}^{N-1}\times\mathbb{R}^N\times\mathbb{R}^N\ : \ U= \frac{(A\eta\cdot\xi)}{a_1|\xi|^2}\xi\ \}.
\end{equation*}
Note, both $\vartheta$ (see \eqref{os16}) and $\vartheta_1$ have some common constraints, namely $\xi_{i}U_j -\xi_{j}U_i = 0\ \forall i,j.$
The corresponding wave cone $\Lambda_1$ is as follows :
\begin{equation*}
\Lambda_1 =\ \{ (U,A\eta)\in\mathbb{R}^N\times\mathbb{R}^N;\ \exists\ \xi\in \mathbb{S}^{N-1}  \mbox{ such that } U= \frac{(A\eta\cdot\xi)}{a_1|\xi|^2}\xi \}.
\end{equation*}
Next we define $\Lambda_{1,\xi}\subset \Lambda_1$, $\xi\in \mathbb{S}^{N-1}$,
\begin{equation*}
\Lambda_{1,\xi}=\ \{(U,A\eta) \in \mathbb{R}^N\times \mathbb{R}^N;\ U= \frac{(A\eta\cdot\xi)}{a_1|\xi|^2}\xi \};\ \mbox{ So, } \underset{\xi\in\mathbb{S}^{N-1}}{\cup}\Lambda_{1,\xi}=\Lambda_1
\end{equation*}
Following that, we introduce a new linear map $q^{\prime\prime}_{1,\xi}:\mathbb{R}^N \times \mathbb{R}^{N} \longmapsto \mathbb{R}^N \times \mathbb{R}^{N}$, whose 
associated quadratic form is given by
\begin{equation*}
Q_{1,\xi}^{\prime\prime}(U,A\eta) :=\ q_{1,\xi}^{\prime\prime}(U,A\eta)\cdot(U,A\eta) =\ Q(U,A\eta)- \underset{U\in \Lambda_{1,\xi}}{min}Q(U,A\eta)
\end{equation*}
where, $Q(U,A\eta)=a_1|U|^2-2U\cdot A\eta$.\\
Since the above quadratic form $Q_{1,\xi}^{\prime\prime}(U,A\eta)$ is zero on the wave cone $\Lambda_{1,\xi}$, we get 
by applying Theorem \ref{dc10},
\begin{equation*} 
\langle\langle \varPi_V,Q_{1,\xi}^{\prime\prime}\rangle\rangle = trace\ (q_{1,\xi}^{\prime\prime}\varPi_V) = 0
\end{equation*}
where $\varPi_V$ is the $H$-measure of the sequence $(V_\epsilon-V_0)$. Thus we get using \eqref{sij}
\begin{align*}
X=\ \langle\langle\varPi_{V}, Q(U,A\eta)\rangle\rangle\ =\  \langle\langle\varPi_{V},\underset{U\in \Lambda_{1,\xi}}{min}Q(U,A\eta) \rangle\rangle &=\   \langle\langle\varPi_A, Q(\frac{(A\eta\cdot\xi)}{a_1|\xi|^2}\xi,A\eta) \rangle\rangle \notag\\ 
&=\ \langle\langle\varPi_A, -\frac{(A\eta\cdot\xi)^2}{a_1|\xi|^2} \rangle\rangle = X_{min}.
\end{align*}
Hence, under the oscillatory system \eqref{eiz}, the limit \eqref{eil} holds by Theorem \ref{dc10}.\\
\\ 
\textbf{Converse part : } The proof is inspired from  \cite[Chapter 28]{T}. Let us recall \eqref{eiy}, for $V_\epsilon \rightharpoonup V_0$ in $L^2(\Omega)$ weak, we had shown that : 
\begin{equation}\label{lb19} trace\ (q^{\prime}_{\xi}\varPi_V) \geq 0, \end{equation}
where the matrix $q^\prime_\xi$ introduced in \eqref{sit} is given by
\begin{equation}\label{eit}
q^\prime_\xi \in \mathcal{L}_{x,\xi}(\mathbb{R}^{2N};\mathbb{R}^{2N});\ \ q^\prime_\xi = 
\begin{bmatrix}
a_1I_{N\times N} & -I_{N\times N}\\
-I_{N\times N} & a_1^{-1}B 
\end{bmatrix}_{2N\times 2N}
\end{equation}
where, $\mathcal{L}_{x,\xi}(\mathbb{R}^{N_1};\mathbb{R}^{N_2})$ denotes the space of $N_1\times N_2$ matrix whose coefficients may depend on $(x,\xi)\in \mathbb{R}^N\times \mathbb{S}^{N-1}$. 
\begin{equation*} B\in \mathcal{L}_{x,\xi}(\mathbb{R}^{N};\mathbb{R}^{N});\ B= \begin{bmatrix}
\xi_1^2 & \xi_1\xi_2 & .. & \xi_1\xi_N \\
\xi_1\xi_2 & \xi_2^2 & .. & \xi_2\xi_N \\
..    &  ..     & .. &  ..         \\
\xi_1\xi_N & \xi_2\xi_N & .. & \xi_N^2
\end{bmatrix}_{N\times N}.
\end{equation*}
Notice that, $B^2=B$ as $\xi\in \mathbb{S}^{N-1}$.\\
\\
Here $\varPi_V\in \mathcal{M}(\Omega\times\mathbb{S}^{N-1};\mathbb{R}^{2N\times 2N})$ ($\varPi_V =\varPi_V^{*}$, Hermitian) is associated $H$-measure of the sequence $(V_\epsilon-V_0)$.
Since $V^\epsilon=(\nabla u^\epsilon, A^\epsilon\eta)$, where  $A^\epsilon = a^\epsilon I$ satisfies the constraints
\begin{equation*}
curl\ (\nabla u^\epsilon) = 0, \mbox{ and } -div\lb (A^\epsilon\eta)_j\nabla u^\epsilon\rb  \mbox{ converges strongly in } H^{-1}(\Omega), \mbox{ for each }j=1,..,N,
\end{equation*}
we have by the localization principle (cf. Theorem \ref{dc9}), $H$-measure $\varPi_V$ satisfies the following relations :\\
\\
(a): The sub-matrix $\{(\varPi_V)_{jk}\}_{1\leq j,k\leq N} $ of $\varPi_V$ satisfies : 
\begin{equation}\label{sin}
\xi_k(\varPi_V)_{jl} - \xi_j(\varPi_V)_{kl} = 0 \mbox{ for }j,k,l=1,..,N. 
\end{equation}
(b): The sub-matrix $\{(\varPi_V)_{jk}\}_{1\leq k\leq N, N+1\leq j\leq 2N}$ of $\varPi_V$ satisfies 
\begin{equation}\label{lb16}
\sum_{k=1}^N\xi_k(\varPi_V)_{jk} = 0 \mbox{ for }j=N+1,..,2N.
\end{equation} 
Now \eqref{eil} holds only if the equality holds in \eqref{lb19}, i.e.
\begin{equation}\label{eix} trace\ (q^{\prime}_{\xi}\varPi_V) = 0.\end{equation}
Now we will show \eqref{eix} implies \eqref{eiz}.\\
\\
In order to do that, we begin with localizing the sequence $V_\epsilon-V_0$ 
by $\phi(V_\epsilon-V_0)$, $\phi\in C_c^1(\Omega)$.
We will be using $\frac{\partial}{\partial x_j}=(-\Delta)^{\frac{1}{2}}R_j$ and
$\frac{{\partial}^2}{\partial x_j\partial x_k}=(-\Delta)R_jR_k$ and
where, $R_j$  is the Riesz operator (see \cite{STN}). Let us define the sequence
\begin{equation*}
\omega^\epsilon_k = a_1\sum_{j=1}^N R_jR_j \phi(U_\epsilon-U)_k - R_k\sum_{j=1}^N R_j \phi(A^\epsilon\eta-\overline{A}\eta)_j ; \ \ k=1,..,N.
\end{equation*}
We know 
\begin{equation*}
 \omega_\epsilon =\{\omega^\epsilon_k\}_{1\leq k\leq N}\ \rightharpoonup 0 \mbox{ in }L^2(\mathbb{R}^N)^N \mbox{ weak. } 
\end{equation*}
To prove \eqref{eiz}, it is enough to show that the sequence
\begin{equation}\label{lb15}
 \omega_\epsilon =\{\omega^\epsilon_k\}_{1\leq k\leq N}\ \rightarrow 0 \ \mbox{ in }L^2(\mathbb{R}^N)^N\mbox{ strong.}
\end{equation}
Let  $\varPi\in \mathcal{M}(\Omega\times\mathbb{S}^{N-1};\mathbb{R}^{N\times N})$ be $H$-measure associated to $\{\omega_k^\epsilon\}_{1\leq k\leq N}$
then we compute trace of $\varPi$ :
\begin{equation*}
 trace\ \varPi = -\sum_{k=1}^N a_1^2|\phi|^2(\varPi_V)_{kk} -2\sum_{k=1}^N\sum_{l=1}^{N} \xi_k|\phi|^2(\varPi_V)_{k,l+N}  -\sum_{k=1}^N\sum_{l=1}^N\xi_k\xi_l|\phi|^2(\varPi_V)_{k+N,l+N}.
\end{equation*}
The middle term of right hand side  vanishes because of \eqref{lb16}.  
Another simple computation using \eqref{eit} shows that the right hand side is equal to 
$a_1|\phi|^2trace\ (q^\prime_\xi\varPi_V))$ and hence  $trace\ \varPi =0$ because of \eqref{eix}. 
So \eqref{lb15} follows. Consequently, \eqref{eiz} follows.   
\hfill\end{proof}
\noindent
\textbf{Step 1b : ($H$-measure term) :}
Here onwards we will find the lower bounds by choosing  the field $\nabla u^\epsilon$ satisfying \eqref{six} and \eqref{eiz} or equivalently 
$A^{*}\in\partial\mathcal{G}_{\theta_A}^L$ and using them in the translated inequality \eqref{FL8} :
\begin{equation*}(B^{\epsilon}-b_1 I)(\nabla u^{\epsilon} +\eta)\cdot(\nabla u^{\epsilon} +\eta)\geq 0 \quad\mbox{a.e. in }\Omega.\end{equation*}
We expand the translated inequality to write,
\begin{equation}\label{lb8}B^{\epsilon}\nabla u^{\epsilon}\cdot\nabla u^{\epsilon} + (B^{\epsilon}- b_1 I)\eta \cdot \eta  -2b_1\nabla u^{\epsilon}\cdot\eta\ \geq\ b_1\nabla u^{\epsilon}\cdot\nabla u^{\epsilon} -2B^\epsilon\nabla u^{\epsilon}\cdot \eta\end{equation}
Passing to the limit on the left hand side is well known. On the other hand in order to pass to the limit in the right hand side, 
we use the $H$-measure techniques. Introducing a coupled variable $W_{\epsilon}=(\nabla u^{\epsilon}, B^{\epsilon}\eta)$, we write the right 
hand side of \eqref{lb8} in a quadratic form $q_1(W_{\epsilon})\cdot W_{\epsilon} $. 
Here $q_1$ is a linear map :
\begin{equation*} q_1:\mathbb{R}^N \times \mathbb{R}^{N} \longmapsto \mathbb{R}^N \times \mathbb{R}^{N}\ \mbox{ defined by }\ q_1(\nabla u^\epsilon,B^\epsilon\eta)=\ (b_1\nabla u^\epsilon-B^\epsilon\eta, -\nabla u^\epsilon ). \end{equation*} 
Introducing $W_0 =(\nabla u,\overline{B}\eta)$, as before we have 
\begin{equation*} q_1(W_{\epsilon})\cdot W_{\epsilon}\ =\ 2q_1(W_{\epsilon})\cdot W_0 -q_1(W_0)\cdot W_0 + q_1(W_{\epsilon}- W_0)\cdot(W_{\epsilon}- W_0).\end{equation*}
Denoting by $\varPi_W$ the $H$-measure of $(W_{\epsilon}- W_0)$, we thus pass to the limit in \eqref{lb8} by virtue of Theorem \ref{dc10} :
\begin{equation}\label{lb9}
B^{\#}\nabla u\cdot\nabla u + (\overline{B}-b_1 I)\eta \cdot \eta - 2b_1 \nabla u\cdot \eta\ \geq\ b_1\nabla u\cdot\nabla u- 2\overline{B}\nabla u\cdot \eta + Y  
\end{equation}
where $Y$ is the $H$-measure correction term defined by 
\begin{equation*}
Y = \underset{\epsilon \rightarrow 0}{\textrm{lim}}\ q_1(W_{\epsilon}- W_0)\cdot(W_{\epsilon}- W_0) = \int_{\mathbb{S}^{N-1}} trace\ (q_1\varPi_W (x,d\xi))
\end{equation*}
or equivalently, denoting the average w.r.t. directions $\xi\in\mathbb{S}^{N-1}$  by double angular bracket, we write with
$W = (U, B\eta) \in \mathbb{R}^N \times \mathbb{R}^N,$ we can write 
\begin{equation}\label{dc11}
Y=\langle\langle \varPi_W, Q_1(U,B\eta)\rangle\rangle, \mbox{ with }Q_1(U,B\eta) := b_1U\cdot U-2BU\cdot\eta. 
\end{equation}
The aim is to find tight lower bound for $H$-measure term $Y$. (Earlier we did
this job for $X$ (see \eqref{FL5}). As before, any such lower bound  must result
Theorem \ref{dc10} applied to an appropriate oscillatory system. This idea is carried out in
the next step.\\

\noindent\textbf{Step 1c : (Lower bound) :}
To take care of the interaction between microstructures $A^\epsilon$,$B^\epsilon$, we need to work with the following inflated oscillatory system with constraints : 
\begin{equation}\label{eik}
\begin{cases}
 W_\epsilon^\prime = (\nabla u^\epsilon, B^\epsilon\eta, A^\epsilon\eta ) \rightharpoonup  W_0^\prime= ( \nabla u, \overline{B}\eta, \overline{A}\eta )\ \mbox{ weakly in } L^2(\Omega)^{3N},\\ 
a_1\Delta U_\epsilon - \nabla( div(A^\epsilon\eta) ) \in H^{-2}_{loc}(\Omega)\mbox{ convergent, }\\
  U_\epsilon = \nabla u^\epsilon.
 \end{cases}
\end{equation}
We proceed in a fashion analogous to Step $2$ of Section \ref{os9}. 
We introduce a new linear form $q^{\prime}_{1,\xi}$, whose associated quadratic form is defined as 
\begin{equation}\label{eis} 
Q^\prime_{1,\xi}(U,B\eta,A\eta)= q^{\prime}_{1,\xi}(U,B\eta,A\eta)\cdot(U,B\eta,A\eta) := Q_1(U,B\eta)-\underset{U \in \Lambda_{1,\xi}}{min}\  Q_1(U,B\eta), 
\end{equation}
where $\Lambda_{1,\xi}$ is the corresponding wave cone for the inflated system \eqref{eik} :
\begin{equation*}\Lambda_{1,\xi}=\ \{(U,A\eta) \in \mathbb{R}^N\times \mathbb{R}^N;\ U= \frac{(A\eta\cdot\xi)}{a_1|\xi|^2}\xi \}.\end{equation*}
So \eqref{eis} becomes,
\begin{equation*} 
Q^\prime_{1,\xi}(U,B\eta,A\eta)= Q_1(U,B\eta)- Q_1(\frac{(A\eta\cdot\xi)}{a_1|\xi|^2}\xi ,B\eta).
\end{equation*}
Note that, $Q^{\prime}_{1,\xi}$ is zero on the wave cone $\Lambda_{1,\xi}$. Thus applying Theorem \ref{dc10} we get 
\begin{equation*} trace\ (q^{\prime}_{1,\xi}\varPi_{W^\prime}) = 0 \end{equation*}
which implies that, $Y$ (cf.\eqref{dc11}) 
\begin{align}
Y =\langle\langle\varPi_{W^\prime},  Q_1(\frac{(A\eta\cdot\xi)}{a_1|\xi|^2}\xi,B\eta)\rangle\rangle &= \langle\langle\varPi_{W^\prime},\lb b_1\frac{(A\eta\cdot\xi)^2}{a_1^2|\xi|^2}-2\frac{(A\eta\cdot\xi)(B\eta\cdot\xi)}{a_1|\xi|^2}\rb\rangle\rangle\notag\\
&= \langle\langle \varPi_{W^\prime}, b_1\lb \frac{(A\eta\cdot\xi)}{a_1|\xi|} - \frac{(B\eta\cdot\xi)}{b_1|\xi|}\rb^2\rangle\rangle -\langle\langle\varPi_{W^\prime},\frac{(B\eta\cdot\xi)^2}{b_1|\xi|^2}\rangle\rangle\notag\\    
&= \frac{1}{a_1^2b_1}\langle\langle\varPi_{AB}, \frac{((b_1A-a_1B)\eta\cdot\xi)^2}{|\xi|^2}  \rangle\rangle -\frac{1}{b_1}\langle\langle \varPi_B, \frac{(B\eta\cdot\xi)^2}{|\xi|^2}\rangle\rangle\notag \\
&=: R_1 + R_2 \mbox{ (say),}\label{FL6}
\end{align}
where $\varPi_{AB}\in \mathcal{M}(\Omega\times \mathbb{S}^{N-1};\mathbb{R}^{N\times N}) $ is $H$-measure of the sequence
$\{(b_1A^\epsilon-a_1B^\epsilon) -(b_1\overline{A}-a_1\overline{B})\}\eta$ and
$\varPi_B\in \mathcal{M}(\Omega\times \mathbb{S}^{N-1};\mathbb{R}^{N\times N})$ 
is $H$-measure of the sequence $(B^\epsilon-\overline{B})\eta$.\\
\\
Since, $\{(b_1A^\epsilon-a_1B^\epsilon) -(b_1\overline{A}-a_1\overline{B})\}\eta = \{b_1(a_1-a_2)(\A-\theta_A(x))-a_1(b_1-b_2)(\B-\theta_B(x))\}\eta$, 
the $H$-measure $\varPi_{AB}$ reduces to 
\begin{equation*}
(\varPi_{AB})_{ij} = (\nu_{AB})\eta_i\eta_j  \ \ \forall i,j =1,..,N
\end{equation*}
where, $\nu_{AB}$ is  $H$-measure of the scalar sequence $\{b_1(a_1-a_2)(\A-\theta_A(x))-a_1(b_1-b_2)(\B-\theta_B(x))\}$ with
\begin{equation*}
\nu_{AB}(x,d\xi)\geq 0\ \mbox{ and }\ \int_{\mathbb{S}^{N-1}}\nu_{AB}(x,d\xi) = L_{AB}
\end{equation*}
with 
\begin{align*}
L_{AB} & := L^\infty(\Omega)\mbox{ weak* limit of }\{b_1(a_1-a_2)(\A-\theta_A)-a_1(b_1-b_2)(\B-\theta_B)\}^2\notag\\
&= b_1^2(a_2-a_1)^2\theta_A(1-\theta_A) +a_1^2(b_2-b_1)^2\theta_B(1-\theta_B)-2b_1a_1(b_2-b_1)(a_2-a_1)(\theta_{AB}-\theta_A\theta_B)\notag\\
\end{align*}
where $\theta_{AB}$ is the $L^{\infty}(\Omega)$ weak* limit of $(\AB)$. Moreover, using \eqref{FG2} we have
\begin{align}\label{dc18} 
L_{AB} \geq\ & \  b_1^2(a_2-a_1)^2\theta_A(1-\theta_A) +a_1^2(b_2-b_1)^2\theta_B(1-\theta_B)\notag\\
             &\quad\qquad\qquad\qquad\qquad\qquad\qquad\qquad-2b_1a_1(b_2-b_1)(a_2-a_1)(\mbox{min}\{\theta_A,\theta_B\}-\theta_A\theta_B)\notag\\
             & =: L^0_{AB} \mbox{ (say).}
\end{align}
Thus
\begin{align*}
R_1 = \frac{1}{a_1^2b_1}\langle\langle\varPi_{AB}, \frac{((b_1A-a_1B)\eta\cdot\xi)^2}{|\xi|^2}  \rangle\rangle &= \frac{1}{a_1^2b_1} \int_{\mathbb{S}^{N-1}} \frac{(\eta \cdot \xi)^2}{|\xi|^2}\nu_{AB} (d\xi)
                                                             = \frac{1}{a_1^2b_1}L_{AB}\ M_{AB}\eta\cdot\eta
\end{align*}
where $M_{AB}$ is a non-negative matrix with unit trace defined by 
\begin{equation}\label{siv}
 M_{AB} = \frac{1}{L_{AB}}\int_{\mathbb{S}^{N-1}} \xi \otimes \xi\ \nu_{AB} (d\xi). 
\end{equation}
Hence, using \eqref{dc18} and \eqref{siv} we have  
\begin{align}\label{dc14}
R_1\ \geq\ &\{ \frac{b_1}{a_1^2}(a_2-a_1)^2\theta_A(1-\theta_A) +\frac{1}{b_1}(b_2-b_1)^2\theta_B(1-\theta_B)\notag\\
             &\qquad\qquad\qquad\qquad-\frac{2}{a_1}(b_2-b_1)(a_2-a_1)(\mbox{min}\{\theta_A,\theta_B\}-\theta_A\theta_B)\}M_{AB}\eta\cdot\eta.
\end{align}

We compute $R_2$ next. Since $(B^\epsilon-\overline{B})\eta=(b_2-b_1)(\B-\theta_B(x))\eta$, the $H$-measure
$\varPi_B$ reduces to 
\begin{equation*}
 (\varPi_B)_{ij} = (b_2-b_1)^2(\nu_B)\eta_i\eta_j  \ \ \forall i,j =1,..,N
\end{equation*}
where $\nu_B$ is the $H$-measure of the scalar sequence $(\B-\theta_B)$ satisfying 
\begin{equation*}
\nu_{AB}(x,d\xi)\geq 0\ \mbox{ and }\int_{\mathbb{S}^{N-1}}\nu_{AB}(x,d\xi) =\theta_B(1-\theta_B). 
\end{equation*}
Thus 
\begin{align}\label{dc16}
R_2 = -\frac{1}{b_1}\langle\langle\varPi_B, \frac{((B\eta\cdot\xi)^2}{|\xi|^2}  \rangle\rangle &= -\frac{(b_2-b_1)^2}{b_1} \int_{\mathbb{S}^{N-1}} \frac{(\eta \cdot \xi)^2}{|\xi|^2}\nu_B (d\xi)\notag\\
     &=- \frac{\theta_B(1-\theta_B)(b_2-b_1)^2}{b_1}M_B\eta\cdot\eta
\end{align}
where $M_B$ is the non-negative matrix with unit trace defined as 
\begin{equation}\label{dc19}
 M_B = \frac{1}{(1-\theta_B)\theta_B}\int_{\mathbb{S}^{N-1}} \xi\otimes\xi\ \nu_B(x,d\xi).
\end{equation}
Therefore from \eqref{FL6} and \eqref{dc14},\eqref{dc16} we have 
\begin{align}\label{dc13}
Y  &=  - \frac{\theta_B(1-\theta_B)(b_2-b_1)^2}{b_1}M_B\eta\cdot\eta + \frac{1}{a_1^2b_1}{L_{AB}}\ M_{AB}\eta\cdot\eta \notag\\
   &\geq - \frac{\theta_B(1-\theta_B)(b_2-b_1)^2}{b_1}M_B\eta\cdot\eta +\{\frac{b_1}{a_1^2}(a_2-a_1)^2\theta_A(1-\theta_A) +\frac{1}{b_1}(b_2-b_1)^2\theta_B(1-\theta_B)\notag\\
          &\qquad\qquad\qquad\qquad\qquad\qquad -\frac{2(b_2-b_1)(a_2-a_1)}{a_1}(\mbox{min}\{\theta_A,\theta_B\}-\theta_A\theta_B)\}M_{AB}\eta\cdot\eta\notag\\ 
&\ \ \ \ =: Y_{min} 
\end{align}
with 
\begin{equation}\label{dc17}
 trace\ Y\ \geq\ \frac{b_1}{a_1^2}(a_2 - a_1)^2\theta_A(1-\theta_A) - \frac{2}{a_1}(b_2-b_1)(a_2-a_1)(\mbox{min}\{\theta_A,\theta_B\}-\theta_A\theta_B).
\end{equation}
Rewriting \eqref{lb9}, we get
\begin{equation}\label{ub15}(B^{\#}- b_1 I)\nabla u\cdot\nabla u + (\overline{B}-b_1 I)\eta \cdot \eta + 2 (\overline{B}-b_1I)\nabla u\cdot \eta \geq\ Y\end{equation}
\textbf{Choice of $\nabla u$ : }
Let us take the point $x=x_0$, where $A^{*}(x_0)$, $B^{\#}(x_0)$ are defined as constant matrices with the proportion $\theta_A(x_0)$ and $\theta_B(x_0)$. We consider the oscillatory test function as in \eqref{zz11} namely,
\begin{equation*}
(i)\ \ \nabla u^\epsilon \rightharpoonup \zeta\in\mathbb{R}^N \mbox{ arbitrary and }(ii)\ \    div(A^\epsilon\nabla u^\epsilon) \mbox{ converges }H^{-1}(\Omega) \mbox{ strong.}
\end{equation*}
In the present context, we have additional restriction \eqref{eik}. According to  Theorem \ref{ub8} this restriction is satisfied by $u^\epsilon$ because $A^{*}\in\partial\mathcal{G}_{\theta_A}^L$.
We choose $\nabla u=\zeta$ to be the minimizer of left hand side of \eqref{lb4} as : 
\begin{equation}\label{ub4}
\zeta = -(\overline{A}(x_0)-a_1I)(A^{*}(x_0)- a_1 I)^{-1}\eta.
\end{equation}
\noindent\textbf{Matrix lower bound :}
Thus from \eqref{ub15} by using \eqref{ub4} and \eqref{dc13} we obtain the following estimate at $x_0$ :
\begin{align}\label{bs10}
&(B^{\#}- b_1 I)(\overline{A}-a_1I)^2(A^{*}- a_1 I)^{-2}\eta\cdot\eta  - 2 (\overline{B}-b_1I)(\overline{A}-a_1I)(A^{*}- a_1 I)^{-1}\eta\cdot\eta + (\overline{B}-b_1 I)\notag\\
&\eta\cdot\eta \geq - \frac{\theta_B(1-\theta_B)(b_2-b_1)^2}{b_1}M_B\eta\cdot\eta + \{\frac{b_1}{a_1^2}(a_2-a_1)^2\theta_A(1-\theta_A) +\frac{1}{b_1}(b_2-b_1)^2\theta_B(1-\theta_B)\notag\\
&\qquad\qquad\qquad\qquad\qquad\qquad\quad  -\frac{2(b_2-b_1)(a_2-a_1)}{a_1}(\mbox{min}\{\theta_A,\theta_B\}-\theta_A\theta_B)\}M_{AB}\eta\cdot\eta 
\end{align}
where, $A^{*}$ satisfies 
\begin{equation}\label{eif} 
(A^{*}-a_1I)^{-1}\eta\cdot\eta =\ (\overline{A}-a_1I)^{-1}\eta\cdot\eta + \frac{\theta_A}{(1-\theta_A)a_1}M_A\eta\cdot\eta. 
\end{equation}
Since $x_0$ is arbitrary, the matrix bound \eqref{bs10} is pointwise bound for $x$ almost everywhere. 
\noindent\textbf{Trace bound L1 : : $\theta_A\leq\theta_B$ almost everywhere in $x$ :}
Having obtained the matrix inequality \eqref{bs10}, we now prove trace bound L1. We simplify the above bound \eqref{bs10}, using \eqref{eif}, \eqref{dc17} to  obtain :
\begin{equation}\label{eig}
tr\ (B^{\#}-b_1I)(\overline{A}-a_1I)^2(A^{*}-a_1I)^{-2}\geq\ tr\ (\overline{B}-b_1I) +\frac{b_1}{a_1^2}(a_2-a_1)^2\theta_A(1-\theta_A).
\end{equation}
Hence the trace bound \eqref{tt} follows whenever $A^{*}\in\partial\mathcal{G}^L_{\theta_A}$.\hfill\qed\\
\\
Next, we give the examples of microstructures which possess the property  $\omega_{A^\epsilon}\subseteq\omega_{B^\epsilon}$ and such that
equality holds in \eqref{bs10} and hence in \eqref{eig}.
\begin{example}[Saturation/Optimality]\label{ot5}
The equality of this lower bound is achieved by the simple laminated materials $L^{\#}_1$ (cf. \eqref{lb10}),  at the
composites based on Hashin-Shtrikman construction given in \eqref{ED1} and at sequential laminates of $N$-rank.
\bpr   
The simple laminate, say in $e_1$ direction with $\theta_A\leq \theta_B$ is given by  
\begin{equation*}B^{\#} = diag\ (L^{\#}_1, \overline{b},..,\overline{b}\ )\ \mbox{ (cf.\eqref{lb10}) with }\ A^{*}= diag\ (\underline{a},\overline{a},..,\overline{a}).\end{equation*} 
It achieves equality in \eqref{bs10} with $M_B = M_{AB}= diag\ ( 1,0,..,0)$.
Indeed, 
\begin{align*}
(L^{\#}_1- b_1 )\frac{(\overline{a}-a_1)^2}{(\underline{a}- a_1 )^2}&=\ (L^{\#}_1- b_1 )\frac{(\theta_A(a_2 -a_1) +a_1)^2}{a_1^2}\\
&=\ \{\frac{b_2}{a_2^2} + \frac{(b_1-b_2)}{a_2^2}\theta_B + {b_1}(\frac{1}{a_1^2} -\frac{1}{a_2^2})\theta_A \} a_2^2 - \frac{b_1}{a_1^2}(\theta_A(a_2 -a_1) +a_1)^2 \\
&=\ (b_2-b_1)(1-\theta_B) + \frac{b_1}{a_1^2}(a_2-a_1)^2\theta_A(1-\theta_A).
\end{align*}
Similarly, the Hashin-Shtrikman construction given in \eqref{ED1}
i.e. with core $a_2 I$ and coating $a_1 I$ for $A_{B(0,1)}$ and core $b_2 I$ with coating $b_1 I$ for $B_{B(0,1)}$, with $\theta_A \leq \theta_B$
\begin{align*}
a^{*}&=\ a_1 + Na_1\frac{(1-\theta_A)(a_2-a_1)}{(N-\theta_A)a_1 +\theta_Aa_2}\\
b^{\#} &=\  b_1\ [\ 1 + \frac{N\theta_A(1-\theta_A)(a_2-a_1)^2}{(\theta_A a_2 + (N-\theta_A)a_1)^2}\ ] - \frac{(b_1-b_2)(Na_1)^2(1-\theta_B)}{(\theta_A a_2 + (N-\theta_A)a_1)^2}.                         
\end{align*}
achieves the equality in \eqref{bs10} with $M_B = M_{AB}= diag\ (\frac{1}{N},\frac{1}{N},..,\frac{1}{N})$.
\hfill\qed
\paragraph{Construction of Sequential Laminates with ${\omega}_{A^{\epsilon}}\subseteq {\omega}_{B^{\epsilon}} $ :}
First we are going to write down relations characterizing the $(p,p)$-sequential laminates $B^{\#}_{p,p}$
whenever $\omega_{A^{\epsilon}_p},\omega_{B^{\epsilon}_p}$ correspond to the $p$-sequential laminate
microstructures with $\omega_{A^{\epsilon}_p}\subseteq \omega_{B^{\epsilon}_p}$ in the same directions $\{e_i\}_{1\leq i\leq p}$.
Following the arguments presented  in Section \ref{os9}, by taking $A^{*}=A^{*}_p$ with matrix $a_1 I$ and core $a_2 I$
defined in \eqref{OP2} and considering the inequality \eqref{bs10}, it is natural to define $B^{\#}_{p,p}$ via the relation (namely equality of \eqref{bs10})
\begin{align*}
&(B^{\#}_{p,p}- b_1 I)(\overline{A}-a_1I)^2(A^{*}_p - a_1 I)^{-2} 
+ (\overline{B}-b_1 I)- 2 (\overline{B}-b_1I)(\overline{A}-a_1I)(A^{*}_p- a_1 I)^{-1}\notag\\
&=\{\frac{b_1(a_2 - a_1)^2}{a_1^2}\theta_A(1-\theta_A)- \frac{2(b_2-b_1)(a_2-a_1)}{a_1}(1-\theta_B)\theta_A\}(\sum_{i=1}^{p} m_i\frac{e_i\otimes e_i}{e_i.e_i}) \mbox{ with }\sum_{i=1}^{p} m_i =1.
\end{align*}
or using \eqref{OP2}, we may write
\begin{equation}\label{sie}
 (B^{\#}_{p,p}- b_1 I)(\overline{A}-a_1I)^2(A^{*}_p - a_1 I)^{-2} = (\overline{B}-b_1 I)+\frac{b_1(a_2-a_1)^2}{a_1^2}\theta_A(1-\theta_A)(\sum_{i=1}^{p} m_i\frac{e_i\otimes e_i}{e_i.e_i}).
\end{equation}
This can be also written as, (in an inverse form):
\begin{align}\label{hsa}
&\{\frac{(\overline{B}-b_1I)}{(\overline{A}-a_1I)}(A^{*}_p-a_1I)+\frac{b_1}{a_1}(\overline{A}-A^{*}_p)\}(B^{\#}_{p,p}-b_1I)^{-1}\{\frac{(\overline{B}-b_1I)}{(\overline{A}-a_1I)}(A^{*}_p-a_1I)+\frac{b_1}{a_1}(\overline{A}-A^{*}_p)\}\notag\\
&=\ (\overline{B}-b_1 I)+\frac{b_1(a_2-a_1)^2}{a_1^2}\theta_A(1-\theta_A)(\sum_{i=1}^{p} m_i\frac{e_i\otimes e_i}{e_i.e_i}).
\end{align}

We need to justify the above definition because it's not a priori clear that $B^{\#}_{p,p}$ defined in the above manner is indeed the relative limit of $B^\epsilon_p$. To this end, we begin with considering the sequence $A^\epsilon_p$ containing $p$-sequential laminate microstructures with matrix $a_1I$ and core $a_2I$ such that $A^\epsilon_p\xrightarrow{H}A^{*}_p\in \partial\mathcal{G}_{\theta_A}^L$. Then by considering the 
oscillatory test sequence $\nabla u^\epsilon$ satisfying \eqref{zz11} i.e. 
\begin{align*}
&(i)\ \nabla u^\epsilon \rightharpoonup \nabla u= -(\overline{A}-a_1I)(A^{*}_p- a_1 I)^{-1}\eta \mbox{ in }L^2(\Omega) \mbox{ weak }\\
\mbox{ and }\ \ &(ii)\    div(A^\epsilon_p\nabla u^\epsilon) \mbox{ converges }H^{-1}(\Omega) \mbox{ strong};
\end{align*}
and by using \eqref{FL5}, \eqref{UD10} we have 
\begin{align}\label{pol17}
(A^\epsilon_p-&a_1I)(\nabla u^\epsilon +\eta)\cdot(\nabla u^\epsilon +\eta)\notag\\
 &\rightharpoonup (A^{*}_p-a_1I)\nabla u\cdot\nabla u +2(\overline{A}-a_1I)\nabla u\cdot\eta +(\overline{A}-a_1I)\eta\cdot\eta - X_{min}= 0 \ \ \mbox{in }\mathcal{D}^\prime(\Omega)
\end{align}
where we recall, 
\begin{equation*}
X_{min}= - \frac{\theta_A(1-\theta_A)(a_2-a_1)^2}{a_1}(\sum_{i=1}^{p} m_i\frac{e_i\otimes e_i}{e_i.e_i})\ \mbox{ with }\sum_{i=1}^{p} m_i =1.
\end{equation*}
Next, by using $(A^\epsilon_p -a_1I) = (a_2-a_1)(1-\chi_{\omega_{A^{\epsilon}_p}})I$,  it follows that from \eqref{pol17} that 
\begin{equation}\label{pol11}
 \underset{\epsilon\rightarrow 0}{lim}\  |(1-\chi_{\omega_{A^{\epsilon}_p}})(\nabla u^\epsilon +\eta)|^2 =0. 
\end{equation}
Now, under the hypothesis $\omega_{A^{\epsilon}_p}\subseteq\omega_{B^{\epsilon}_p}$, from \eqref{pol11} it also follows that,
\begin{equation*}
0\ \leq\ \underset{\epsilon\rightarrow 0}{lim}\  |(1-\chi_{\omega_{B^{\epsilon}_p}})(\nabla u^\epsilon +\eta)|^2\ \leq\  \underset{\epsilon\rightarrow 0}{lim}\  |(1-\chi_{\omega_{A^{\epsilon}_p}})(\nabla u^\epsilon +\eta)|^2. 
\end{equation*}
Therefore, 
\begin{equation}\label{pol7}
 \underset{\epsilon\rightarrow 0}{lim}\  |(1-\chi_{\omega_{B^{\epsilon}_p}})(\nabla u^\epsilon +\eta)|^2 = 0.  
\end{equation}
Consequently, by using $(B^\epsilon_p -b_1I) =(b_2-b_1)(1-\chi_{\omega_{B^\epsilon_p}})I$, from \eqref{pol7} we have 
\begin{equation}\label{pol18}
 (B^\epsilon_p-b_1I)(\nabla u^\epsilon +\eta)\cdot(\nabla u^\epsilon +\eta)\rightharpoonup 0 \mbox{ in }\mathcal{D}^\prime(\Omega). 
\end{equation}
On the other hand, computations made in \textbf{Step 1c} show directly that for microstructures $(A^\epsilon_p$, $ B^\epsilon_p)$, we have $Y=Y_{min}$ with 
$M_A =M_{AB} = (\sum_{i=1}^{p} m_i\frac{e_i\otimes e_i}{e_i.e_i})$ with $\sum_{i=1}^{p} m_i =1$ such that,
\begin{equation*}
Y_{min} =\{\frac{b_1}{a_1^2}(a_2 - a_1)^2\theta_A(1-\theta_A) - \frac{2}{a_1}(b_2-b_1)(a_2-a_1)\theta_A(1-\theta_B)\}
(\sum_{i=1}^{p} m_i\frac{e_i\otimes e_i}{e_i.e_i}) \mbox{ with}\sum_{i=1}^{p} m_i =1.
\end{equation*}
Consequently, following \textbf{Step 1b}, we  have (cf. \eqref{lb9}) 
\begin{align}\label{pol19}
(B^\epsilon_p&-b_1I)(\nabla u^\epsilon +\eta)\cdot(\nabla u^\epsilon +\eta)\notag\\ 
&\rightharpoonup (B^{\#}_{p,p}-b_1I)\nabla u\cdot\nabla u +(\overline{B}-b_1I)\eta\cdot\eta + 2(\overline{B}-b_1I)\nabla u\cdot\eta - Y_{min}\ \mbox{ in }\ \mathcal{D}^\prime(\Omega). 
\end{align}
Next by comparing \eqref{pol18} and \eqref{pol19}, and finally using $\nabla u =  -(\overline{A}-a_1I)(A^{*}_p- a_1 I)^{-1}\eta$ we obtain 
that $B^{\#}_{p,p}$ satisfies the relation \eqref{sie}.\\ 
With $p=N$, the above defined $(N,N)$- sequential laminate $B^{\#}_{N,N}$ give the saturation / optimality of the lower trace bound \eqref{tt}. This follows from the very defining relation of $B^{\#}_{N,N}$ (cf.\eqref{sie}).
\hfill\epr
\end{example}
\begin{remark}
The difference between the present case with the one treated in Section \ref{os9} lies in the fact that 
we have now two phases $(b_1,b_2)$ with local volume $(\theta_B,1-\theta_B)$. In Section \ref{os9}, 
we had a single phase and so $\theta_B=1$ in \eqref{hsa}.
The effect of this difference is seen in the expression \eqref{FG4} and \eqref{hsa}, 
where the $p$-sequential laminate structure $A^{*}_p$ remains the same in both cases. 
\hfill\qed\end{remark}
\begin{remark}
In Section \ref{qw4}, we prove that the above lower bound \eqref{eig} is optimal in the sub-domain $\theta_A \leq \theta_B$. In general, it need not be optimal if $\theta_B < \theta_A$. 

\hfill\qed
\end{remark}
\noindent\textbf{Trace Bound L2 : $\theta_B(x) < \theta_A(x)$ almost everywhere $x$ : }
In this case we will be finding lower bounds on $(A^{*},B^{\#})$ over arbitrary $\omega_{B^\epsilon}$
and those optimal microstructures $\omega_{A^\epsilon}$ for $A^{*}$ which provides equality in  the optimal upper bound \eqref{FL11}.  This lower bound is also optimal under the same condition $\theta_B(x) < \theta_A(x)$ almost everywhere $x\in\Omega$.
\paragraph{Step 1d : ($H$-measure term)}
Instead of the translated inequality \eqref{FL8}, 
we consider the following one for the heat flux, which coincides with the known dual inequality for $A^\epsilon$ in the self-interacting case. As in the case of L1, our plan is to pass to the limit in this inequality using $H$-limit, relative limit and $H$-measure. The resulting inequality  \eqref{os12} will involve a new $H$-measure term. Let us now provide details. Introducing the constant vector $\eta$ in $\mathbb{R}^N$ and suitable constant $c$ to be chosen later and sequence $\sigma^\epsilon$ such that
\begin{equation}\label{tc}
((A^{\epsilon})^{-1}B^{\epsilon}(A^{\epsilon})^{-1} - c I)(\sigma^{\epsilon} + \eta)\cdot(\sigma^{\epsilon} + \eta)\ \geq\ 0 \quad\mbox{a.e. in }\Omega.\end{equation} 
where,
\begin{equation*}\sigma^{\epsilon} = A^{\epsilon}\nabla u^{\epsilon} \rightharpoonup \sigma\  (\ = A^{*}\nabla u )\mbox{ in }L^2(\Omega)\mbox{ and }-div\ (\sigma^{\epsilon}) \in H^{-1}(\Omega)\mbox{ convergent. }\end{equation*}
Now by expanding the above inequality we get, 
\begin{equation}\label{os11}
(A^{\epsilon})^{-1}B^{\epsilon}(A^{\epsilon})^{-1}\sigma^{\epsilon}\cdot\sigma^{\epsilon} +  ((A^{\epsilon})^{-1}B^{\epsilon}(A^{\epsilon})^{-1} -c I)\eta\cdot \eta -2c\sigma^{\epsilon}\cdot \eta\ \geq\ c\ \sigma^{\epsilon}\cdot\sigma^{\epsilon} - 2(A^{\epsilon})^{-1}B^{\epsilon}(A^{\epsilon})^{-1}\sigma^{\epsilon}\cdot\eta.  
\end{equation}
The choice of the constant $c$ will depend upon  $a_1,a_2,b_1,b_2$ and we will fix it later.\\ 
\\
It is easy to pass to the limit in the first term of the left hand side of \eqref{os11}, as
\begin{equation*} (A^{\epsilon})^{-1}B^{\epsilon}(A^{\epsilon})^{-1}\sigma^{\epsilon}\cdot\sigma^{\epsilon}= B^\epsilon\nabla u^\epsilon\cdot\nabla u^\epsilon \rightharpoonup  B^{\#}\nabla u\cdot \nabla u =\ {A^{*}}^{-1}B^{\#}{A^{*}}^{-1}\sigma\cdot \sigma\ \mbox{ in }\mathcal{D}^\prime(\Omega).\end{equation*}
Regarding the second term, we write 
\begin{align}\label{zz5}
(A^{\epsilon})^{-1}B^{\epsilon}(A^{\epsilon})^{-1} =& \{\frac{b_1}{a_1^2}\AB + \frac{b_1}{a_2^2}(\B - \AB) + \frac{b_2}{a_1^2}(\A - \AB)\notag\\
&\qquad+ \frac{b_2}{a_2^2}(1-\A - \B + \AB)\}I \notag\\
&\rightharpoonup \widetilde{L} \mbox{ (say),\ in }L^\infty(\Omega)\ \mbox{weak*.} 
\end{align}
Using the fact, that $\theta_{AB}\leq min\{\theta_A, \theta_B\}=\theta_B$ (in this case), where $\theta_{AB}$ is the 
$L^\infty(\Omega)$ weak* limit of the sequence $\AB$, we find
\begin{equation}\label{bs4}
 \widetilde{L}\ \geq\ \{\frac{b_2}{a_2^2} + (\frac{b_2}{a_1^2}-\frac{b_2}{a_2^2})\theta_A(x) - \frac{(b_2-b_1)}{a_1^2}\theta_B(x)\}I =: L =l(x)\hspace{1.4pt} I \mbox{ (say) }.
\end{equation}
It shows that, the $L^\infty(\Omega)$ weak* limit of $\{(A^{\epsilon})^{-1}B^{\epsilon}(A^{\epsilon})^{-1}\}\geq L$ along any convergent subsequence 
and it is equal to L for the choice $\omega_{B^\epsilon}\subset \omega_{A^\epsilon}$ (in which case $\theta_{AB}=\theta_B$). \\
\\
The optimal choice of the translated amount $c$ is 
\begin{equation}\label{os15} c = min\ \{ \frac{b_1}{a_1^2}, \frac{b_2}{a_2^2} \},\end{equation}
because then we will have $(\widetilde{L}-cI)\geq 0$ as shown by the following inequalities :
\begin{align*}
l=\ \frac{b_2}{a_2^2} + (\frac{b_2}{a_1^2}-\frac{b_2}{a_2^2})\theta_A - \frac{(b_2-b_1)}{a_1^2}\theta_B 
&\geq\ \frac{b_2}{a_2^2} + (\frac{b_1}{a_1^2}-\frac{b_2}{a_2^2})\theta_B\ \geq\ \frac{b_2}{a_2^2}, \mbox{ whenever } \frac{b_1}{a_1^2}\geq\frac{b_2}{a_2^2}\\
&\geq\ \frac{b_1}{a_1^2} + (\frac{b_2}{a_2^2}-\frac{b_1}{a_1^2})(1-\theta_B) \geq \frac{b_1}{a_1^2},  \mbox{ whenever } \frac{b_2}{a_2^2}\geq\frac{b_1}{a_1^2}.
\end{align*}
Thus, it is straight forward to pass to the limit in left hand side of \eqref{os11} to get   
\begin{equation*}{A^{*}}^{-1}B^{\#}{A^{*}}^{-1}\sigma\cdot \sigma +  (\widetilde{L} - c I)\eta\cdot \eta -2c\sigma\cdot \eta\geq\ \mbox{ limit of R.H.S. of }\eqref{os11} \end{equation*}
where $\widetilde{L}$ and $c$ are defined above. \\
\\
We use the notion of $H$-measure in order to pass to the limit in the right hand side of \eqref{os11}. 
Introducing a coupled variable $W^{\prime\prime}_{\epsilon}=(\sigma^{\epsilon}, (A^{\epsilon})^{-1}B^{\epsilon}(A^{\epsilon})^{-1}\eta)$, we write the right 
hand side of \eqref{os11} in a quadratic form $q_2(W^{\prime\prime}_{\epsilon})\cdot W^{\prime\prime}_{\epsilon} $. 
Here $q_2$ is a linear map 
\begin{equation*} q_2:\mathbb{R}^N \times \mathbb{R}^N \mapsto \mathbb{R}^N \times \mathbb{R}^N \mbox{ defined by } q_2(\sigma^\epsilon, (A^\epsilon)^{-1}B^\epsilon(A^\epsilon)^{-1}\eta)= (c\sigma^\epsilon-(A^\epsilon)^{-1}B^\epsilon(A^\epsilon)^{-1}\eta, -\sigma^\epsilon). \end{equation*} 
Introducing $W^{\prime\prime}_0 =(\sigma, \widetilde{L}\eta )$, we have 
\begin{equation*}q_2(W^{\prime\prime}_{\epsilon})\cdot W^{\prime\prime}_{\epsilon}=\ 2q_2(W^{\prime\prime}_{\epsilon})\cdot W^{\prime\prime}_0 -q_2(W^{\prime\prime}_0)\cdot W^{\prime\prime}_0 + q_2(W^{\prime\prime}_{\epsilon}- W^{\prime\prime}_0)\cdot(W^{\prime\prime}_{\epsilon}- W^{\prime\prime}_0).\end{equation*}
Denoting by $\varPi_{W^{\prime\prime}}$  $H$-measure of $(W^{\prime\prime}_{\epsilon}- W^{\prime\prime}_0)$, we  pass to the limit in \eqref{os11} by virtue of Theorem \ref{dc10} :
\begin{equation}\label{os12}
{A^{*}}^{-1}B^{\#}{A^{*}}^{-1}\sigma\cdot\sigma + (\widetilde{L} - c I)\eta\cdot \eta - 2c\ \sigma\cdot \eta\ \geq\ c\ \sigma\cdot\sigma - 2\widetilde{L}\sigma\cdot \eta + Y^\prime  
\end{equation}
where $Y^\prime$ is the $H$-measure correction term defined by 
\begin{equation*}Y^\prime =\ \underset{\epsilon \rightarrow 0}{\textrm{lim}}\ q_2(W^{\prime\prime}_{\epsilon}- W^{\prime\prime}_0)\cdot(W^{\prime\prime}_{\epsilon}- W^{\prime\prime}_0) = \int_{\mathbb{S}^{N-1}} trace\ (q_2\varPi_{W^{\prime\prime}} (x,d\xi))\end{equation*} 
or equivalently, denoting the average w.r.t. directions $\xi\in\mathbb{S}^{N-1}$  by double angular bracket, we write with $W^{\prime\prime} = (\varSigma, A^{-1}BA^{-1}\eta) \in \mathbb{R}^N \times \mathbb{R}^N$
\begin{equation}\label{OP10}
Y^\prime =\ \langle\langle \varPi_{W^{\prime\prime}}, Q_2(\varSigma,A^{-1}BA^{-1}\eta) \rangle\rangle\ \ \mbox{with }\ Q_2(\varSigma,A^{-1}BA^{-1}\eta)=\ c|\varSigma|^2 - 2A^{-1}BA^{-1}\varSigma\cdot \eta. 
\end{equation}
Now our objective is to find the lower bound on $Y^\prime$ in order to get the lower bound on  $B^{\#}$ or equivalently on ${A^{*}}^{-1}B^{\#}{A^{*}}^{-1}$
whenever $A^{*}\in\partial\mathcal{G}_{\theta_A}^U$. 
\paragraph{Step 1e :}
\paragraph{Special case :} $A^{\epsilon}= tB^{\epsilon}$ for some scalar $t>0$.\\
\\
In this case $c= (ta_2)^{-1}$ and so \eqref{os11} simply becomes 
\begin{equation}\label{FG8}
 (A^{\epsilon})^{-1}\sigma^{\epsilon}\cdot\sigma^{\epsilon} + ( (A^{\epsilon})^{-1} - a_2^{-1}I )\eta\cdot \eta - 2a_2^{-1}\sigma^{\epsilon}\cdot \eta \geq\ a_2^{-1}\hspace{1.5pt}\sigma^{\epsilon} \cdot \sigma^{\epsilon} - 2(A^{\epsilon})^{-1}\sigma^{\epsilon}\cdot \eta. 
\end{equation}
Since $A^{*}=tB^{\#}$, the task of proving L2 simply turns out to finding optimal lower bound on ${A^{*}}^{-1}$, which is known from classical results :
\begin{equation}\label{os20}
tr\ ({A^{*}}^{-1}-a_2^{-1} I)^{-1} \leq\  tr\ (\underline{A} ^{-1} - a_2^{-1}I)^{-1} + \frac{(1-\theta_A)a_2}{\theta_A}(N-1).
\end{equation}
Before considering the general case, it is appropriate to present some elements of the proof of \eqref{os20}.\\
\\
We begin with \eqref{FG8}. We pass to the limit in the left hand side of \eqref{FG8} by using homogenization theory and on the right hand side by using $H$-measures.
Let us introduce the linear map $q_3$ whose associated quadratic form is as follows :
\begin{equation*} q_3(V^\prime)\cdot V^\prime =\ a_2^{-1}\varSigma\cdot \varSigma - 2 A^{-1}\varSigma\cdot\eta,\  \mbox{ with }V^\prime =(\varSigma, A^{-1}\eta).\end{equation*}
The limit of \eqref{FG8} can be written as
\begin{equation}\label{FG11}
{A^{*}}^{-1}\sigma\cdot\sigma + ( \underline{A}^{-1} - a_2^{-1} )\eta\cdot \eta - 2a_2^{-1}\sigma\cdot \eta\ \geq\ a_2^{-1}\sigma\cdot\sigma - 2\underline{A}^{-1}\sigma\cdot \eta + X^\prime  
\end{equation}
where $\underline{A}^{-1}$ is the $L^{\infty}$ weak * limit of $(A^{\epsilon})^{-1}$ and $X^\prime$ is a new $H$-measure correction term defined by
\begin{equation*} X^\prime =\ \int_{\mathbb{S}^{N-1}} trace\ (q_3\varPi_{V^\prime}(x,d\xi)) =\ \langle\langle \varPi_{V^\prime} , Q_3(V^{\prime}) \rangle\rangle,\ \ \mbox{where } Q_3(V^{\prime})= q_3(V^{\prime})\cdot V^{\prime} \end{equation*} 
and $\varPi_{V^\prime}$ is the $H$-measure of $V^\prime_\epsilon -V^\prime = (\sigma^{\epsilon} - \sigma ,\ ((A^{\epsilon})^{-1} - \underline{A} ^{-1})\eta ).$\\
\\
Since $div\ \sigma^{\epsilon}$ is convergent in $H^{-1}(\Omega)$, we consider the oscillation variety which is as follows :
\begin{equation*}\vartheta^\prime =\ \{(\xi, \varSigma, A^{-1}\eta) \in \mathbb{S}^{N-1} \times \mathbb{R}^{N}\times\mathbb{R}^{N};\ \sum_{i=1}^N \xi_i \varSigma_i = 0\}  \end{equation*}
with its projection (wave cone) 
\begin{equation*}\Lambda^\prime = \{( \varSigma, A^{-1}\eta) \in \mathbb{R}^{N}\times\mathbb{R}^{N};\ \exists\ \xi\in\mathbb{S}^{N-1}\mbox{ such that }  (\xi, \varSigma,A) \in \vartheta^\prime\}.\end{equation*}
We define $\Lambda^\prime_{\xi}\subset \Lambda^\prime$, $\xi\in\mathbb{R}^N\smallsetminus\{0\}$ as
\begin{equation*}\Lambda^\prime_\xi = \{\ ( \varSigma,A^{-1}\eta) \in \mathbb{R}^{N}\times\mathbb{R}^{N};\  \sum_{i=1}^N \xi_i\varSigma_i=0 \};\ \mbox{So, }\ \underset{\xi\neq 0}{\cup}{\Lambda^\prime_\xi}=\Lambda^\prime.\end{equation*}
Based on this, one defines a new linear map $q_{3,\xi}^{\prime}$, whose associated quadratic form is 
\begin{equation*}q_{3,\xi}^\prime(\varSigma,A^{-1}\eta)\cdot(\varSigma, A^{-1}\eta ) =\ q_3(\varSigma,A^{-1}\eta)\cdot(\varSigma,A^{-1}\eta) - \underset{\varSigma \in \Lambda^\prime_{\xi} }{min}\ q_3(\varSigma,A^{-1}\eta)\cdot(\varSigma,A^{-1}\eta).\end{equation*}
As $q_{3,\xi}^\prime$ is non-negative on $\Lambda_\xi^\prime$, so by applying Theorem \ref{dc10}, we  get 
\begin{equation*}trace\ (q_{3,\xi}^\prime\varPi_{V^\prime}) \geq 0 .\end{equation*}
Finally introducing $\varPi^\prime_A, $  $H$-measure of $((A^{\epsilon})^{-1} - (\underline{A})^{-1})\eta$, we get 
\begin{equation}\label{FG9}
 X^\prime\ \geq\ \langle\langle \varPi_{V^\prime},\underset{\varSigma \in \Lambda^\prime_\xi}{min}\ q_3(\varSigma,A^{-1}\eta)\cdot(\varSigma,A^{-1}\eta)\rangle\rangle =\ \langle\langle \varPi^\prime_A, \underset{\varSigma \in \Lambda^\prime_\xi}{min}\ q_3(\varSigma,A^{-1}\eta)\cdot(\varSigma,A^{-1}\eta)\rangle\rangle.
\end{equation}
Now by introducing Lagrange multiplier corresponding to the constraint in $\Lambda^\prime$, it is straight forward to compute 
\begin{equation}\label{FG10}
\underset{\varSigma \in \Lambda^\prime_\xi}{min}\ q_3(\varSigma,A^{-1}\eta)\cdot(\varSigma,A^{-1}\eta) =\  -a_2(I-\frac{\xi\otimes \xi}{|\xi|^2}) A^{-1}\eta\cdot A^{-1}\eta
\end{equation}
with the minimizer $\varSigma_{min}$ in $\Lambda^\prime$ :
\begin{equation}\label{eih}\varSigma_{min} =\ a_2(I-\frac{\xi\otimes \xi}{|\xi|^2})A^{-1}\eta.\end{equation}
On the other hand, since $((A^{\epsilon})^{-1} - \underline{A}^{-1})\eta = (a_1 ^{-1} - a_2 ^{-1} )(\A - \theta_A)\eta $ , the $H$-measure $\varPi^\prime_A$ becomes 
\begin{equation}\label{os2}(\varPi^\prime_A)_{ij} = \frac{(a_2 - a_1)^{2}}{(a_1 a_2)^{2}}(\nu_A)\eta_i\eta_j  \ \ \forall i,j =1,..,N\end{equation}
where $\nu_A$ is  $H$-measure of $(\A-\theta_A)$ satisfying \eqref{hsz}. 
Then with the help of the matrix $M_A$ defined in \eqref{os19}, we obtain from \eqref{FG9} together with \eqref{FG10} and \eqref{os2} 
\begin{equation}\label{os5} X^\prime \geq\  -\langle\langle\varPi^\prime_A, a_2(I-\frac{\xi\otimes \xi}{|\xi|^2})A^{-1}\eta\cdot A^{-1}\eta \rangle\rangle = -\theta_A(1-\theta_A)a_2\frac{(a_2 - a_1 )^{2}}{(a_1a_2)^{2}}(I - M_A)\eta\cdot \eta :=X^\prime_{min}\end{equation}
Thus, 
\begin{equation}\begin{aligned}\label{ub2}
a_2^{-1}\sigma^\epsilon\cdot\sigma^\epsilon - 2(A^\epsilon)^{-1}\sigma^\epsilon\cdot \eta \rightharpoonup& \ a_2^{-1}\sigma\cdot\sigma - 2\underline{A}^{-1}\sigma\cdot \eta +X^\prime\ \mbox{ weakly in }\mathcal{D}^\prime(\Omega)\\
 &\geq\ a_2^{-1}\sigma\cdot\sigma - 2\underline{A}^{-1}\sigma\cdot \eta  -\theta_A(1-\theta_A)a_2\frac{(a_2 - a_1 )^{2}}{(a_1a_2)^{2}}(I - M_A)\eta\cdot \eta.
\end{aligned}\end{equation}
Using the above lower bound \eqref{ub2} in \eqref{FG11} one obtains  
\begin{equation*} {A^{*}}^{-1}\sigma\cdot\sigma + ( \underline{A}^{-1} - a_2^{-1} )\eta\cdot \eta - 2a_2^{-1}\sigma\cdot \eta\ \geq\ a_2^{-1}\sigma\cdot\sigma - 2\underline{A}^{-1}\sigma\cdot \eta  -\theta_A(1-\theta_A)a_2\frac{(a_2 - a_1 )^{2}}{(a_1a_2)^{2}}(I - M_A)\eta\cdot \eta.\end{equation*}
And then minimizing with respect to $\sigma$ with the minimizer 
$\sigma = -(\underline{A} ^{-1} - a_2^{-1}I)({A^{*}} ^{-1}- a_2^{-1}I)^{-1}\eta$ we get 
\begin{equation}\label{sig}
({A^{*}} ^{-1}- a_2^{-1}I)^{-1}\eta\cdot \eta \leq\  (\underline{A} ^{-1} - a_2^{-1}I)^{-1}\eta\cdot \eta +\frac{(1-\theta_A)}{\theta_A}a_2(I- M_A)\eta\cdot \eta.
\end{equation}
Finally, by taking the trace, \eqref{sig} yields the optimal lower bound \eqref{os20} on ${A^{*}}^{-1}$.
\hfill\qed
\paragraph{General Case :}
Having seen the special case, let us now treat the general case of
$(A^\epsilon,B^\epsilon)$. We go back to the equation \eqref{OP10}, in which we need to estimate the H-measure term $Y^{\prime}$ from below
optimally. As for L1, we will be needing compactness property for this
purpose. Following result which can be proved in a fashion similar
to Theorem \ref{ub8} states this property precisely :
\begin{theorem}[Compactness]\label{ad20}
Let us consider the following constrained oscillatory system :
\begin{align}
&V^\prime_\epsilon=\ (\sigma^\epsilon,(A^\epsilon)^{-1}\eta) \rightharpoonup (\sigma,\underline{A}^{-1}\eta)=V^\prime_0\ \mbox{ in }L^2(\Omega)^{2N}\mbox{ weak }, \eta\in\mathbb{R}^N\smallsetminus\{0\}, \label{ub9}\\
&\Delta( \sigma^\epsilon - a_2(A^\epsilon)^{-1}\eta) - a_2\nabla(div((A^\epsilon)^{-1}\eta) ) \in H^{-2}_{loc}(\Omega)\mbox{ convergent. }\label{ub10}
\end{align}
Then 
\begin{equation}\label{bs18}
a_2^{-1}\sigma^{\epsilon}\cdot\sigma^{\epsilon}- 2(A^{\epsilon})^{-1}\sigma^{\epsilon}\cdot\eta \rightharpoonup a_2^{-1} \sigma\cdot\sigma- 2\underline{A}^{-1}\sigma\cdot \eta + X^{\prime}_{min} \mbox{ in }\ \mathcal{D}^{\prime}(\Omega),
\end{equation}
where $ X^\prime_{min}$ is defined in \eqref{os5}. \\
\\
Conversely, if the sequence with \eqref{ub9} satisfies \eqref{bs18} then \eqref{ub10} must hold.
\hfill\qed\end{theorem}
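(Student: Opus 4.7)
The plan is to mirror, step by step, the proof of Theorem \ref{ub8} in the dual framework of fluxes. In Theorem \ref{ub8} the natural constraint $\mathrm{curl}(\nabla u^\epsilon)=0$ was supplemented by \eqref{eiz} in order to pin the $H$-measure of $(\nabla u^\epsilon, A^\epsilon\eta)$ onto the minimizer of $Q(U,A\eta)$ over the wave-cone slice $\Lambda_{1,\xi}$. Here the roles are exchanged: the natural constraint is $\mathrm{div}(\sigma^\epsilon)\in H^{-1}_{loc}(\Omega)$ convergent, which produces the cone $\Lambda^\prime_\xi=\{(\Sigma,A^{-1}\eta):\sum_i\xi_i\Sigma_i=0\}$ already used in Step 1d, and \eqref{ub10} will play the analogous role of pinning the $H$-measure of $V^\prime_\epsilon=(\sigma^\epsilon,(A^\epsilon)^{-1}\eta)$ onto the minimizer $\Sigma_{min}$ of $Q_3(\Sigma,A^{-1}\eta)=a_2^{-1}|\Sigma|^2-2A^{-1}\Sigma\cdot\eta$ given by \eqref{eih}.

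The first step will be to observe that \eqref{ub10} is precisely the Fourier-space condition forcing $\sigma^\epsilon-\Sigma_{min}(V^\prime_\epsilon)$ to be strongly compact in $L^2_{loc}$. Writing out the symbol of $\Delta(\sigma^\epsilon-a_2(A^\epsilon)^{-1}\eta)-a_2\nabla(\mathrm{div}((A^\epsilon)^{-1}\eta))$ componentwise one finds
\begin{equation*}
-|\xi|^2\Bigl[\hat\sigma_k^\epsilon-a_2\bigl(I-\tfrac{\xi\otimes\xi}{|\xi|^2}\bigr)_{kj}\widehat{(A^\epsilon)^{-1}\eta}_j\Bigr],
\end{equation*}
which is exactly $-|\xi|^2$ times the $k$-th component of the projection of $V^\prime_\epsilon$ away from $\Sigma_{min}$. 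Applying $(-\Delta)^{-1}$ to \eqref{ub10} therefore translates it into the $L^2_{loc}$ strong convergence of $\sigma^\epsilon_k-a_2(A^\epsilon)^{-1}\eta_k+a_2\sum_{j}R_k R_j(A^\epsilon)^{-1}\eta_j$ to its weak limit, and this forces $\varPi_{V^\prime}$ to be supported on the ray $\Sigma=\Sigma_{min}$, recovering the value \eqref{FG10}.

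For the direct implication, the idea is to introduce the symbol $Q^\prime_{3,\xi}:=Q_3-\min_{\Sigma\in\Lambda^\prime_\xi}Q_3$, which is non-negative on $\Lambda^\prime_\xi$ and vanishes on the full oscillation variety attached to \eqref{ub9}--\eqref{ub10}. Theorem \ref{dc10} will then give $\mathrm{trace}(q^\prime_{3,\xi}\varPi_{V^\prime})=0$, so that
\begin{equation*}
X^\prime=\langle\langle\varPi_{V^\prime},Q_3\rangle\rangle=\bigl\langle\bigl\langle\varPi_{V^\prime},\min_{\Sigma\in\Lambda^\prime_\xi}Q_3\bigr\rangle\bigr\rangle.
\end{equation*}
Since the latter minimum depends only on $A^{-1}\eta$, the measure $\varPi_{V^\prime}$ may be replaced by the simpler $H$-measure $\varPi^\prime_A$ of $((A^\epsilon)^{-1}-\underline{A}^{-1})\eta$, and the identities \eqref{os2}, \eqref{os19} finish the computation to yield $X^\prime=X^\prime_{min}$, which is exactly \eqref{bs18}.

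For the converse, the argument is the dual of the localization step used at the end of the proof of Theorem \ref{ub8}. Given $\phi\in C^1_c(\Omega)$, I set
\begin{equation*}
\omega_k^\epsilon:=\phi(\sigma^\epsilon_k-\sigma_k)-a_2\phi\bigl((A^\epsilon)^{-1}-\underline{A}^{-1}\bigr)\eta_k+a_2\sum_{j=1}^N R_kR_j\bigl[\phi\bigl((A^\epsilon)^{-1}-\underline{A}^{-1}\bigr)\eta_j\bigr],
\end{equation*}
which is bounded in $L^2(\mathbb{R}^N)^N$ and converges weakly to $0$. A direct computation of $\mathrm{trace}(\varPi_\omega)$, using the two localization relations for $\varPi_{V^\prime}$ inherited from $\mathrm{div}(\sigma^\epsilon)\in H^{-1}_{loc}$ convergent, will show that it equals $a_2|\phi|^2\mathrm{trace}(q^\prime_{3,\xi}\varPi_{V^\prime})$; the hypothesis that \eqref{bs18} holds forces $X^\prime=X^\prime_{min}$ and hence $\mathrm{trace}(q^\prime_{3,\xi}\varPi_{V^\prime})=0$, so that $\omega^\epsilon\to 0$ strongly in $L^2(\mathbb{R}^N)^N$. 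Undoing the Riesz-transform manipulation then yields exactly \eqref{ub10}. The one delicate point, identical to the one handled in Theorem \ref{ub8}, is the non-smoothness of the symbol $\xi_k\xi_j/|\xi|^2$ at the origin; this is absorbed by a smooth cutoff up to a smoothing operator and therefore has no effect on the $H$-measure analysis.
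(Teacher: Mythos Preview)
Your proposal is correct and follows exactly the route the paper indicates: the paper gives no detailed proof of Theorem~\ref{ad20}, merely stating that it ``can be proved in a fashion similar to Theorem~\ref{ub8}'', and you have carried out precisely this dualization---identifying the restricted wave cone $\{\Sigma=\Sigma_{min}\}$, showing $Q'_{3,\xi}$ vanishes there so that $X'=X'_{min}$, and for the converse building the Riesz-transform sequence $\omega^\epsilon_k$ whose $H$-measure trace equals $a_2|\phi|^2\,\mathrm{trace}(q'_{3,\xi}\varPi_{V'})$. One small caution on bookkeeping: the literal principal symbol of \eqref{ub10} is $-|\xi|^2\Sigma_k+a_2|\xi|^2(A^{-1}\eta)_k+a_2\xi_k(\xi\cdot A^{-1}\eta)$, which differs from your displayed expression by the sign of the last term; this discrepancy (and the corresponding one in your formula for $\omega_k^\epsilon$) is harmless once you invoke the localization relation $\sum_k\xi_k(\varPi_{V'})_{kl}=0$ inherited from the $H^{-1}_{loc}$ convergence of $\mathrm{div}(\sigma^\epsilon)$, exactly as \eqref{lb16} was used in the primal proof, so the identity $\mathrm{trace}(\varPi_\omega)=a_2|\phi|^2\,\mathrm{trace}(q'_{3,\xi}\varPi_{V'})$ still holds.
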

\paragraph{Step 1f : (Lower bound)}
Taking this into account, in the translated inequality \eqref{os11} we restrict $\sigma^\epsilon$ 
to satisfy the following oscillatory system to work with  
\begin{equation}\label{os3}
\begin{cases}
W_\epsilon^{\prime\prime\prime} = (\sigma^\epsilon, (A^\epsilon)^{-1}B^\epsilon (A^\epsilon)^{-1}\eta, (A^\epsilon)^{-1}\eta ) \rightharpoonup \ W_0^{\prime\prime\prime} = ( \sigma , \widetilde{L}\eta, \underline{A}^{-1}\eta )\ \mbox{ weakly in }L^2(\Omega)^{3N},\\ 
\Delta( \sigma^\epsilon - a_2(A^\epsilon)^{-1}\eta) - a_2\nabla(div((A^\epsilon)^{-1}\eta) ) \in H^{-2}_{loc}(\Omega)\mbox{ convergent. }
\end{cases}
\end{equation}
Proceeding in a same as we did in Step 1c, here we compute $Y^\prime$ (cf.\eqref{OP10}) with $\Sigma=\Sigma_{min}$ given in \eqref{eih}, as follows :  
\begin{align}
Y^\prime &= \langle\langle \varPi_{W^{\prime\prime\prime}}, Q_2( a_2(I-\frac{\xi\otimes \xi}{|\xi|^2})A^{-1}\eta,A^{-1}BA^{-1}\eta\cdot\eta)\rangle\rangle\notag\\
         &= \langle\langle \varPi_{W^{\prime\prime\prime}},  c a_2^2(I-\frac{\xi\otimes \xi}{|\xi|^2})A^{-1}\eta\cdot A^{-1}\eta -2a_2A^{-1}BA^{-1}(I-\frac{\xi\otimes \xi}{|\xi|^2})A^{-1}\eta\cdot\eta \rangle\rangle\notag\\
         &= \langle\langle \varPi_{W^{\prime\prime\prime}}, c (I-\frac{\xi\otimes \xi}{|\xi|^2})(\frac{1}{c}A^{-1}BA^{-1}\eta-a_2A^{-1}\eta)\cdot(\frac{1}{c}A^{-1}BA^{-1}\eta-a_2A^{-1}\eta)\rangle\rangle \notag\\
         &\qquad-\frac{1}{c}\langle\langle \varPi_{W^{\prime\prime\prime}},(I-\frac{\xi\otimes \xi}{|\xi|^2})A^{-1}BA^{-1}\eta\cdot A^{-1}BA^{-1}\eta  \rangle\rangle \notag \\
         &= \langle\langle \varPi^\prime_{AB}, c (I-\frac{\xi\otimes \xi}{|\xi|^2})(\frac{1}{c}A^{-1}BA^{-1}-a_2A^{-1})\eta\cdot(\frac{1}{c}A^{-1}BA^{-1}-a_2A^{-1})\eta\rangle\rangle \notag\\
         &\qquad  -\frac{1}{c}\langle\langle \varPi^{\prime\prime}_{AB},(I-\frac{\xi\otimes \xi}{|\xi|^2})A^{-1}BA^{-1}\eta\cdot A^{-1}BA^{-1}\eta  \rangle\rangle, \notag \\
         &=: R^\prime_1 + R^\prime_2 \mbox{ (say)}\label{os6}
\end{align}
(where in the second line we have used $|(I-\frac{\xi\otimes \xi}{|\xi|^2})v|^2=\langle(I-\frac{\xi\otimes \xi}{|\xi|^2})v,v\rangle, v\in\mathbb{R}^N$).\\ 

\noindent
$\varPi^\prime_{AB}\in \mathcal{M}(\Omega\times \mathbb{S}^{N-1};\mathbb{R}^{N\times N}) $ is $H$-measure of the sequence
$\{\frac{1}{c}((A^\epsilon)^{-1}B^\epsilon(A^\epsilon)^{-1}-\widetilde{L})-a_2((A^\epsilon)^{-1}-(\underline{A})^{-1})\}\eta$,
and $\varPi^{\prime\prime}_{AB}\in \mathcal{M}(\Omega\times \mathbb{S}^{N-1};\mathbb{R}^{N\times N})$ is $H$-measure of the sequence
$\{(A^\epsilon)^{-1}B^\epsilon(A^\epsilon)^{-1}-\widetilde{L}\}\eta$; where $\widetilde{L}$ is the $L^\infty(\Omega)$ weak* limit of
$(A^\epsilon)^{-1}B^\epsilon (A^\epsilon)^{-1}$ (cf.\eqref{zz5}).\\
\\
Now both of these above sequences are scalar, as
\begin{align*}
((A^\epsilon)^{-1}B^\epsilon (A^\epsilon)^{-1} -\widetilde{L}) &=: f^\epsilon I \mbox{ (say);} \ \quad (f^\epsilon \rightharpoonup 0 \mbox{ in }L^\infty(\Omega)\mbox{ weak*})\\
\mbox{ and }\ ((A^\epsilon)^{-1}-(\underline{A})^{-1}) &= (\frac{1}{a_1}-\frac{1}{a_2})(\A-\theta_A)I\ \ ( \rightharpoonup 0 \mbox{ in }L^\infty(\Omega)\mbox{ weak*}).
\end{align*}
Then $H$-measure $\varPi^\prime_{AB}$ reduces to 
\begin{equation*}
(\varPi^\prime_{AB})_{ij} = (\nu^\prime_{AB})\eta_i\eta_j \ \ \forall i,j =1,..,N
\end{equation*}
where $\nu^\prime_{AB}$ is  $H$-measure of the scalar sequence $\{\frac{1}{c}f^\epsilon- a_2(\frac{1}{a_1}-\frac{1}{a_2})(\A-\theta_A)\}$ with
\begin{align}\label{zz6}
&\nu^\prime_{AB}(x,\xi)\geq 0\  \mbox{ and }\notag\\
&\int_{\mathbb{S}^{N-1}}\nu^\prime_{AB}(x,d\xi) = L^\infty(\Omega)\mbox{ weak* limit of }\{\frac{1}{c}f^\epsilon- a_2(\frac{1}{a_1}-\frac{1}{a_2})(\A-\theta_A)\}^2 := L^\prime_{AB} \mbox{ (say)}.
\end{align}
Thus
\begin{align}\label{dc15}
R^\prime_1 &= \langle\langle \varPi^\prime_{AB}, c (I-\frac{\xi\otimes \xi}{|\xi|^2})(a_2A^{-1}-\frac{1}{c}A^{-1}BA^{-1})\eta\cdot(a_2A^{-1}-\frac{1}{c}A^{-1}BA^{-1})\eta\rangle\rangle\notag\\ 
           &= cL^\prime_{AB}(I-M^\prime_{AB})\eta\cdot\eta
\end{align}
where $M^\prime_{AB}$ is a non-negative matrix with unit trace defined by 
\begin{equation}\label{zz9}
 M^\prime_{AB} = \frac{1}{L^\prime_{AB}}\int_{\mathbb{S}^{N-1}} \xi \otimes \xi\ \nu^\prime_{AB} (d\xi). 
\end{equation}
We compute $R_2^\prime$ next. We write
\begin{equation*}
 (\varPi^{\prime\prime}_B)_{ij} = (\nu^{\prime\prime}_B)\eta_i\eta_j  \ \ \forall i,j =1,..,N
\end{equation*}
where $\nu^{\prime\prime}_B$ is the $H$-measure of the scalar sequence $f^\epsilon$ satisfying 
\begin{equation}\label{zz7}
\nu^{\prime\prime}_{AB}(x,\xi)\geq 0 \mbox{ and }\int_{\mathbb{S}^{N-1}}\nu_{AB}(x,d\xi) = L^\infty(\Omega)\mbox{ weak* limit of }(f^\epsilon)^2 =:L^{\prime\prime}_{AB} \mbox{ (say)}.
\end{equation}
Thus 
\begin{align}\label{dc20}
 R_2^\prime &= -\frac{1}{c}\langle\langle \varPi^{\prime\prime}_{AB},(I-\frac{\xi\otimes \xi}{|\xi|^2})A^{-1}BA^{-1}\eta\cdot A^{-1}BA^{-1}\eta  \rangle\rangle\notag\\
            &= -\frac{1}{c}L^{\prime\prime}_{AB}(I-M^{\prime\prime}_{AB})\eta\cdot\eta 
\end{align} 
where $M^{\prime\prime}_{AB}$ is the non-negative matrix with unit trace defined as 
\begin{equation}\label{zz8}
 M^{\prime\prime}_{AB} = \frac{1}{I^{\prime\prime}_{AB}}\int_{\mathbb{S}^{N-1}} \xi\otimes\xi\ \nu^{\prime\prime}_B(x,d\xi).
\end{equation}
Therefore from \eqref{os6} and \eqref{dc15},\eqref{dc20} we have 
\begin{equation}\label{FG12}
Y^\prime =  cL^\prime_{AB}(I-M^\prime_{AB})\eta\cdot\eta-  \frac{1}{c}L^{\prime\prime}_{AB}(I-M^{\prime\prime}_{AB})\eta\cdot\eta 
\end{equation}
with 
\begin{equation}\label{zz2}
trace\ Y^\prime = \{cL^\prime_{AB} -\frac{1}{c}L^{\prime\prime}_{AB}\}(N-1).
\end{equation}
Recall that,
\begin{align}\label{zz1}
\{cL^\prime_{AB} -\frac{1}{c}L^{\prime\prime}_{AB}\}=&\ c\{\mbox{$L^\infty(\Omega)$ weak* limit of }\{\frac{1}{c}f^\epsilon- a_2(\frac{1}{a_1}-\frac{1}{a_2})(\A-\theta_A)\}^2\}\notag\\
                                                     &\qquad-\frac{1}{c}\{\mbox{$L^\infty(\Omega)$ weak* limit of }(f^\epsilon)^2\}.  
\end{align}
We want to find the lower bound the above quantity \eqref{zz1}. As usual the above quantity involves the parameter $\theta_{AB}(x)$, 
the $L^{\infty}(\Omega)$ weak* limit of $(\AB)$. Now keeping the estimate \eqref{bs4} in mind, we minimize $L^\prime_{AB}$ and according to that the other quantity $L^{\prime\prime}_{AB}$ also gets determined. For that, it is enough to make the choice $\omega_{B^\epsilon}\subset \omega_{A^\epsilon}$ which is possible in this present case ($\theta_B<\theta_A$), and we bound $trace\ Y^\prime$ from below as following :
\begin{align}\label{zz3}
trace\ Y^\prime\ \geq\ & L^\infty(\Omega) \mbox{ weak* limit of } \{ca_2^2(\frac{1}{a_1}-\frac{1}{a_2})^2(\A-\theta_A)^2-2a_2(\frac{1}{a_1}-\frac{1}{a_2})(\frac{b_1}{a_1^2}-\frac{b_2}{a_1^2})\notag\\
&  (\B-\theta_B)(\A-\theta_A)-2a_2(\frac{1}{a_1}-\frac{1}{a_2})(\frac{b_2}{a_1^2}-\frac{b_2}{a_2^2})(\A - \theta_A)^2\}(N-1)\notag\\
&=\{ca_2^2(\frac{1}{a_1}-\frac{1}{a_2})^2\theta_A(1-\theta_A)-2a_2(\frac{1}{a_1}-\frac{1}{a_2})\frac{(b_1-b_2)}{a_1^2}\theta_B(1-\theta_A)\notag\\
&\qquad\quad\quad\quad\qquad\qquad\qquad\qquad\qquad-2a_2(\frac{1}{a_1}-\frac{1}{a_2})(\frac{b_2}{a_1^2}-\frac{b_2}{a_2^2})\theta_A(1-\theta_A)\}(N-1).
\end{align}
Rewriting \eqref{os12}, we get
\begin{equation}\label{sip}
({A^{*}}^{-1}B^{\#}{A^{*}}^{-1}-cI)\sigma\cdot\sigma + (\widetilde{L} - c I)\eta\cdot \eta - 2(\widetilde{L}-cI)\ \sigma\cdot \eta\ \geq\ Y^\prime  
\end{equation}
\textbf{Choice of $\sigma$ :}
Now in order to obtain the final expression of the lower bound by eliminating $\sigma$ from \eqref{sip},
we choose $\sigma(x)$ (locally which can take any value in $\mathbb{R}^N$) as the minimizer of \eqref{sig} i.e.
\begin{equation}\label{ub5} \sigma = - (\underline{A}^{-1} - a_2^{-1}I)({A^{*}}^{-1}- a_2^{-1}I)^{-1}\eta\end{equation}
\textbf{Matrix lower bound :}
Then \eqref{sip} yields as
\begin{align}
&({A^{*}}^{-1}B^{\#}{A^{*}}^{-1}- c I)(\underline{A} ^{-1} - a_2^{-1}I)^2({A^{*}} ^{-1}- a_2^{-1}I)^{-2}\eta\cdot \eta + (\widetilde{L}-c I)\eta\cdot \eta\notag\\
&\quad\quad\qquad\qquad\qquad\qquad- 2(\widetilde{L}-c I)(\underline{A} ^{-1} - a_2^{-1}I)({A^{*}} ^{-1}- a_2^{-1}I)^{-1}\eta\cdot \eta\ \geq\ Y^\prime.\label{os14}
\end{align}
Now by using  $\widetilde{L}\geq L$ (cf.\eqref{bs4}) together with $A^{*}\eta\cdot\eta\geq \underline{A}\eta\cdot\eta$ we can replace $\widetilde{L}$ by L in \eqref{os14} to obtain :
\begin{align}
&({A^{*}}^{-1}B^{\#}{A^{*}}^{-1}- c I)(\underline{A} ^{-1} - a_2^{-1}I)^2({A^{*}} ^{-1}- a_2^{-1}I)^{-2}\eta\cdot \eta + (L-c I)\eta\cdot \eta\notag\\
&\quad\quad\qquad\qquad\qquad\qquad- 2(L-c I)(\underline{A} ^{-1} - a_2^{-1}I)({A^{*}} ^{-1}- a_2^{-1}I)^{-1}\eta\cdot \eta\ \geq\ Y^\prime \label{bs9} 
\end{align}
where, $A^{*}$ satisfies 
\begin{equation}\label{eij}
({A^{*}}^{-1}-a_2^{-1} I)^{-1}\eta\cdot\eta =\  (\underline{A} ^{-1} - a_2^{-1}I)^{-1}\eta\cdot\eta + \frac{(1-\theta_A)a_2}{\theta_A}(I-M_A)\eta\cdot\eta.
\end{equation}
We simplify the bound \eqref{bs9} with using \eqref{eij} and by taking trace using \eqref{zz3} one obtains :
\paragraph{Trace bound L2(a)\ when $c=\frac{b_2}{a_2^2}$ :}
\begin{align}\label{ub1}
tr\ \{({A^{*}}^{-1}B^{\#}{A^{*}}^{-1}- \frac{b_2}{a_2^2} I)(\underline{A} ^{-1} - a_2^{-1}I)^2({A^{*}} ^{-1}- a_2^{-1}I)^{-2}\} \geq\ &\frac{b_2}{a_2^2}\frac{(a_2-a_1)^2}{a_1^2}\theta_A(1-\theta_A)(N-1) \notag\\
                                                                                                                              &\qquad\quad + N(l-\frac{b_2}{a_2^2}). 
\end{align}
\textbf{Trace bound L2(b)\ when $c=\frac{b_1}{a_1^2}$ :}
\begin{align}\label{ub3}
&tr\ \{({A^{*}}^{-1}B^{\#}{A^{*}}^{-1}- \frac{b_1}{a_1^2} I)(\underline{A} ^{-1} - a_2^{-1}I)^2({A^{*}} ^{-1}- a_2^{-1}I)^{-2}\}\notag\\
&\geq\ \frac{b_1}{a_1^2}\frac{(a_2-a_1)^2}{a_1^2}\theta_A(1-\theta_A)(N-1)+2(\frac{b_2}{a_2^2}-\frac{b_1}{a_1^2})\frac{(a_2-a_1)}{a_1}(1-\theta_A)(N-1) + N(l-\frac{b_1}{a_1^2}). 
\end{align}
Hence, the trace bounds \eqref{to},\eqref{tn} follow whenever $A^{*}\in\partial\mathcal{G}_{\theta_A}^U$.
\hfill\qed
\\

Next, we give examples of microstructures which possess the property  $\omega_{B^\epsilon}\subset \omega_{A^\epsilon}$ and achieves the
equality in \eqref{bs9} as mentioned below.
\begin{example}[Saturation/Optimality]\label{Sd20}
The equality of this lower bound is achieved by the laminated materials $L^{\#}_2$ (cf. \eqref{lb10}),
at composites based on Hashin-Shtrikman construction given in \eqref{FG6} and at sequential laminates of $N$-rank to be constructed below.
\bpr
We consider the simple laminate say in $e_1$ direction with $\theta_B \leq \theta_A$, then 
\begin{equation*}B^{\#} = diag\ (L^{\#}_2, \overline{b},..,\overline{b})\ \mbox{(cf. \eqref{lb10})\ and }\ A^{*}=diag\ (\underline{a},\overline{a},..,\overline{a})\end{equation*} 
achieves the equality in \eqref{bs9} with $M^\prime_{AB} =M^{\prime\prime}_{AB}= diag\ ( 1,0,..,0)$. \\
\\
Similarly, the Hashin-Shtrikman construction given in \eqref{FG6} i.e. with core $a_1 I$ and coating $a_2 I$ for $A_{B(0,1)}$ and core $b_1 I$ with coating $b_2 I$ for $B_{B(0,1)}$, with $\theta_B \leq \theta_A$ 
\begin{align*}
a^{*}&=\ a_2 + N a_2 \frac{\theta_A(a_1-a_2)}{(1-\theta_A)a_1 + (N +\theta_A -1)a_2}\\
b^{\#} &=\  b_2\ [\ 1 + \frac{N\theta_A(1-\theta_A)(a_2-a_1)^2}{((1-\theta_A)a_1 + (N+\theta_A-1)a_2)^2}\ ] - \frac{(b_2-b_1)(Na_2)^2\theta_B}{((1-\theta_A)a_1 + (N+\theta_A-1)a_2)^2}.
\end{align*}
achieves the equality in \eqref{bs9} with $M^\prime_{AB} =M^{\prime\prime}_{AB}= diag\ (\frac{1}{N},\frac{1}{N},..,\frac{1}{N}\ )$.
\paragraph{Sequential Laminates when ${\omega}_{B^{\epsilon}}\subset {\omega}_{A^{\epsilon}} $ :}
Here we construct the $(p,p)$-sequential laminates $B^{\#}_{p,p}$ whenever ${\omega}_{B^{\epsilon}}\subset {\omega}_{A^{\epsilon}}$
and with having the same number of $p$ layers in the same directions $\{e_i\}_{1\leq i\leq p}$ as of
the sequential Laminates $A^{*}_p$ with matrix $a_1I$ and core $a_2I$ defined in \eqref{OP3}.
Similar to the Example \ref{ot5}, by considering the equality in \eqref{bs9}
with the above hypothesis, we define $B^{\#}_{p,p}$ through the following expression : 
\begin{align}\label{ot16}
&({A^{*}_p}^{-1}B^{\#}_{p,p}{A^{*}_p}^{-1}- c I)(\underline{A} ^{-1} - a_2^{-1}I)^2({A^{*}_p} ^{-1}- a_2^{-1}I)^{-2}\notag \\
&= (L-cI) + \{c\frac{(a_2-a_1)^2}{a_1^2}\theta_A(1-\theta_A) + 2(\frac{b_2}{a_2^2} -c)\frac{(a_2 - a_1)}{a_1}(1-\theta_A)\}(I-\sum_{i=1}^{p} m_i\frac{e_i\otimes e_i}{e_i.e_i}).
\end{align}
where $m_i\geq 0$ and $\sum_{i=1}^{p} m_i=1$.
This can be also written as, (in an inverse form):
\paragraph{Case L2(a): \ when $c=\frac{b_2}{a_2^2}$ :}
\begin{align*}
\{ \frac{(L-\frac{b_2}{a_2^2}I)}{(\underline{A} ^{-1} - a_2^{-1}I)}({A^{*}_p}^{-1} &- a_2^{-1}I) +\frac{b_2}{a_2}(\underline{A}^{-1}-{A^{*}_p}^{-1})\}^2({A^{*}_p}^{-1}B^{\#}_{p,p}{A^{*}_p}^{-1}- \frac{b_2}{a_2^2}I)^{-1}\notag\\
&=\ (L-\frac{b_2}{a_2^2}I) + \frac{b_2}{a_2^2}\theta_A(1-\theta_A)\frac{(a_2-a_1)^2}{a_1^2}(I-\sum_{i=1}^{p} m_i\frac{e_i\otimes e_i}{e_i.e_i}).
\end{align*}
\textbf{Case L2(b): \ when $c=\frac{b_1}{a_1^2}$ :}
\begin{align*}
&\{ \frac{(L-\frac{b_1}{a_1^2}I)}{(\underline{A} ^{-1} - a_2^{-1}I)}({A^{*}_p}^{-1} - a_2^{-1}I) +\frac{b_1}{a_1^2}a_2(\underline{A}^{-1}-{A^{*}_p}^{-1}) + 2(\frac{b_2}{a_2^2}-\frac{b_1}{a_1^2})\frac{(\underline{A}^{-1} - {A^{*}_p}^{-1})}{(\underline{A}^{-1}- a_2^{-1}I)}\}^2\notag\\
&\ .({A^{*}_p}^{-1}B^{\#}_{p,p}{A^{*}_p}^{-1}- \frac{b_1}{a_1^2}I)^{-1}\ = (L-\frac{b_1}{a_1^2}I) + \frac{b_1}{a_1^2}\theta_A(1-\theta_A)\frac{(a_2-a_1)^2}{a_1^2}(I-\sum_{i=1}^{p} m_i\frac{e_i\otimes e_i}{e_i.e_i})\notag\\
&\qquad\qquad\qquad\qquad\qquad\qquad\qquad+ 2(\frac{b_2}{a_2^2}-\frac{b_1}{a_1^2})(1-\theta_A)\frac{(a_2-a_1)}{a_1}(I-\sum_{i=1}^{p} m_i\frac{e_i\otimes e_i}{e_i.e_i}).
\end{align*}
The above defined $(N,N)$- sequential laminates give the saturation / optimality of the lower trace bound \eqref{to},\eqref{tn} respectively. This property is incorporated in the very definition of $B^{\#}_{N,N}$ (see \eqref{ot16}).
\hfill\epr
\end{example}
\paragraph{Step 2 : $A^{*}\in\mathcal{G}_{\theta_A}$ arbitrary :}
Here we will establish the trace bound L1 and trace bound L2 obtained in Step $1$, are also satisfied by any pair $(A^{*},B^{\#})$ corresponding to the arbitrary microstructures $\omega_{A^\epsilon},\omega_{B^\epsilon}$; i.e. need not be satisfying the oscillatory systems \eqref{eik},\eqref{os3}.  

We recall that the set $\mathcal{G}_{\theta_A(x)}$ consists of all $A^{*}(x)$ governed with two phase 
medium $(a_1,a_2)$ with its volume proportion $(\theta_A(x),1-\theta_A(x))$, and characterized by the inequalities given in \eqref{FL11}. We also recall that $\mathcal{K}_{\theta_A(x)}$ consists of all eigenvalues of $A^{*}(x)\in\mathcal{G}_{\theta_A(x)}$.   
In the sequel, we will interchangeably use $\mathcal{G}_{\theta_A(x)}$ and $\mathcal{K}_{\theta_A(x)}$ to mean the same set.   
$\mathcal{G}_{\theta_A(x)}$ is a convex region for fixed $x$.  The boundaries of $\mathcal{G}_{\theta_A(x)}$ are given by $\partial\mathcal{G}^{L}_{\theta_A(x)}$ and $\partial\mathcal{G}^{U}_{\theta_A(x)}$. The
$\partial\mathcal{G}^{L}_{\theta_A(x)}$ represents the set of all symmetric matrices with 
eigenvalues $\lambda_1(x),..,\lambda_N(x)$ satisfying $\underline{a}(x) \leq \lambda_i(x)\leq \overline{a}(x) \ \ \forall 1\leq i\leq N$ and the lower bound equality
\begin{equation*}\sum_{i=1}^N \frac{1}{\lambda_i(x)-a_1} = \frac{1}{\underline{a}(x)-a_1} + \frac{N-1}{\overline{a}(x)-a_1}\end{equation*}
and similarly, the $\partial\mathcal{G}^{U}_{\theta_A(x)}$ represents the set which satisfies the upper bound equality  
\begin{equation*}\sum_{i=1}^N \frac{1}{a_2 -\lambda_i(x)} = \frac{1}{a_2-\underline{a}(x)} + \frac{N-1}{a_2-\overline{a}(x)}.\end{equation*}
We have $\mathcal{G}_{\{\theta_A=0\}} = \{a_2\}$ and  $\mathcal{G}_{\{\theta_A=1\}} = \{a_1\}$. 
The family $\{\mathcal{G}_{\theta_A(x)}\}_{\theta_A(x)\in[0,1]}$ gives a continuum of convex sets for almost everywhere $x$, whose union is also a convex set \cite{CHA,MT1}. 
Moreover, for any $\theta_A(x)\in (0,1)$, there  exists an interval $[\widetilde{\theta_A}(x),\widetilde{\widetilde{\theta_A}}(x)]$ for $x$ almost everywhere such that, 
\begin{equation*}\lb \mathcal{G}_{\theta(x)} \cap \mathcal{G}_{\theta_A(x)} \rb = \emptyset \mbox{ for } \theta(x)\notin [\widetilde{\theta_A}(x),\widetilde{\widetilde{\theta_A}}(x)]
\mbox{ and } \underset{\theta(x)\in[\widetilde{\theta_A}(x),\widetilde{\widetilde{\theta_A}}(x)]}{\cup}\lb \mathcal{G}_{\theta(x)} \cap \mathcal{G}_{\theta_A(x)} \rb = \mathcal{G}_{\theta_A(x)}, x \mbox{ a.e.}\end{equation*}
The set can be also expressed through the continuum of boundaries $\partial\mathcal{G}_{\theta(x)}$ as follows :
\begin{equation}\label{eii} \mathcal{G}_{\theta_A(x)}\subset \underset{\theta\in[\widetilde{\theta_A},\theta_A]}{\cup}\partial\mathcal{G}^{L}_{\theta(x)}\ \mbox{ and }\ \mathcal{G}_{\theta_A(x)}\subset\underset{\theta\in[\theta_A,\widetilde{\widetilde{\theta_A}}]}{\cup}\partial\mathcal{G}^{U}_{\theta(x)}, \ x \mbox{ a.e. } \end{equation}
We will use the above notation to obtain our desired results.
\begin{figure}[H]
\begin{center}
 \includegraphics[width = 14cm]{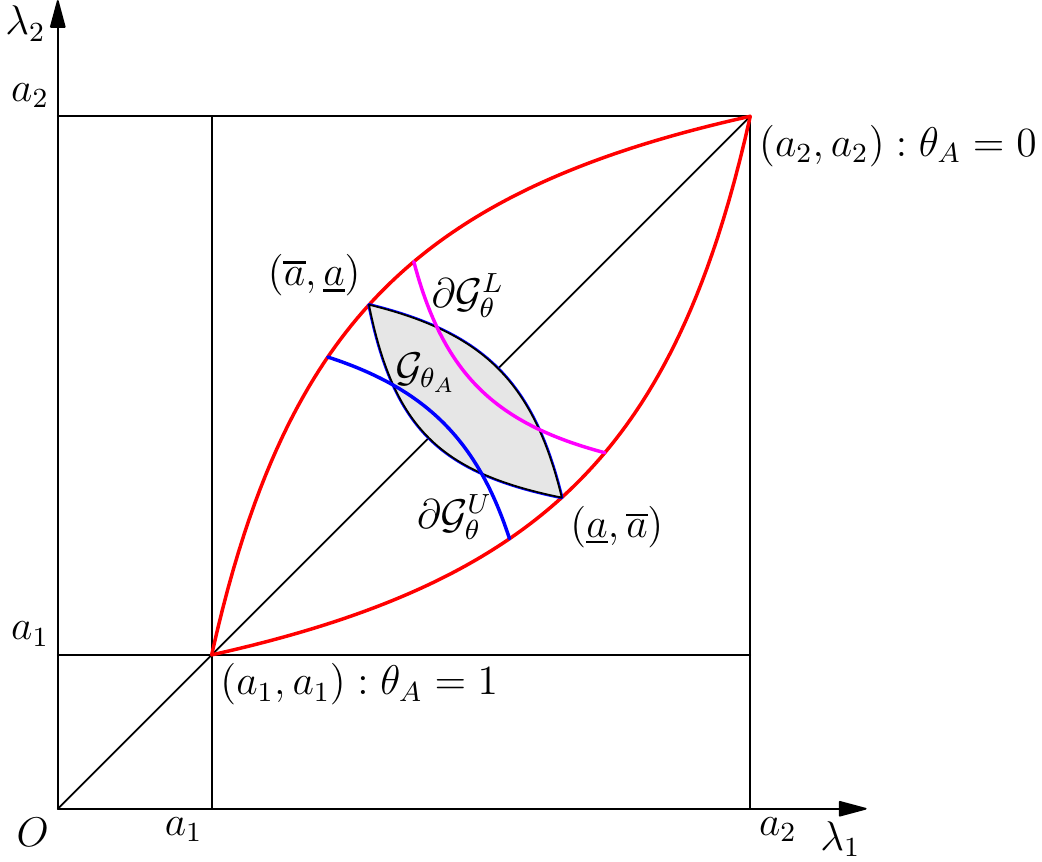}
 \caption{\textit{$N=2$ : The $\mathcal{G}_{\theta_A}$ set (gray region) can be seen as a continuum of the upper bound curve (in blue) $\partial\mathcal{G}^U_{\theta}$s and the lower bound curve (in magenta) $\partial\mathcal{G}^L_{\theta}$s, as the $\theta$ varies
 in some interval $[\widetilde{\theta_A},\widetilde{\widetilde{\theta_A}}]$ depending upon $\theta_A$.}}  
 \end{center}
\end{figure}
\noindent
Let us consider the interior points of $\mathcal{G}_{\theta_A(x)}$, or equivalently the continuum of the boundaries
$\{\partial\mathcal{G}^{L}_\theta(x)\}_{ \theta(x)\in[\widetilde{\theta_A}(x),\theta_A(x)]}$, $x$ a.e.; i.e. it says 
for a given $A^{*}(x)\in \mathcal{G}_{\theta_A(x)}$, there exist a unique $\theta(x)$ with  $\theta(x) < \theta_A(x)$, $x$ a.e. such that 
$A^{*}(x)\in \partial\mathcal{G}^{L}_{\theta(x)}$. So, it satisfies trace of \eqref{eif} where $\theta_A$ is replaced by $\theta$, i.e.
\begin{equation}\label{eio}
 tr\ (A^{*}(x)-a_1I)^{-1} = \frac{N}{(a_2-a_1)(1-\theta(x))} + \frac{\theta(x)}{(1-\theta(x))a_1}, \ \ A^{*}\in\mathcal{G}_{\theta_A(x)}\cap  \partial\mathcal{G}^{L}_{\theta(x)}.
\end{equation}
\noindent
Now in Step $1$ we have obtained the optimal `lower trace bound L1' on the pair $(A^{*},B^{\#})$ with $A^{*}\in\partial\mathcal{G}_{\theta(x)}^L$, 
which is nothing but considering the lower bound expression \eqref{eig} by replacing $\theta_A(x)$ by $\theta(x)$, i.e.
\begin{equation}\label{eip}
tr\ (B^{\#}-b_1I)(\overline{A}_\theta-a_1I)^2(A^{*}-a_1I)^{-2}\geq\ tr\ (\overline{B}-b_1I) +\frac{b_1}{a_1^2}(a_2-a_1)^2\theta(1-\theta).
\end{equation}
where $\overline{A}_\theta = \{a_1\theta + a_2(1-\theta)\}I$.\\
\\
Thus for any $A^{*}\in\mathcal{G}_{\theta_A}$, the pair $(A^{*},B^{\#})\in\mathcal{G}_{(\theta_A,\theta_B)}$ satisfies the following optimal lower trace bound whenever $\theta\leq\theta_A\leq\theta_B$
\begin{equation}\label{eim}
tr\ (B^{\#}-b_1I)(A^{*}-a_1I)^{-2}\geq \frac{N(b_2-b_1)(1-\theta_B)}{(a_2-a_1)^2(1-\theta(x))^2} +\frac{b_1}{a_1^2}\frac{\theta(x)}{(1-\theta(x))}
\end{equation}
Next one can find $\theta$ explicitly from \eqref{eio} in terms of $trace\ (A^{*}-a_1I)^{-1}$ where $A^{*}\in\mathcal{G}_{\theta_A}\cap \partial\mathcal{G}^{L}_{\theta}$ and substitute it
in \eqref{eip}  to eliminate $\theta$. We obtain :
\begin{equation}\label{FG15}
\theta = \frac{ a_1\{(a_2-a_1)\hspace{2pt} tr\ (A^{*}-a_1I)^{-1} - N\}}{(a_2-a_1)\{a_1tr\ (A^{*}-a_1I)^{-1} +1\}}\ \mbox{ and }\
(1-\theta) = \frac{a_2 + a_1(N-1)}{(a_2-a_1)\{a_1tr\ (A^{*}-a_1I)^{-1} +1\}}.
\end{equation}
Now putting $\theta$ and $(1-\theta)$ in \eqref{eip}, we get for all $A^{*}\in \mathcal{G}_{\theta_A}$ and $(A^{*},B^{\#})\in\mathcal{G}_{(\theta_A,\theta_B)}$ :
\begin{align}\label{eiq}
tr\ (B^{\#}-b_1I)(A^{*}-a_1I)^{-2}\geq\ & N(b_2-b_1)(1-\theta_B)\frac{(a_1tr\ (A^{*}-a_1I)^{-1} +1)^2}{(a_2 +a_1(N-1))^2}\notag\\
                                        &\qquad\qquad\qquad+\frac{b_1}{a_1}\frac{((a_2-a_1)\hspace{2pt} tr\ (A^{*}-a_1I)^{-1} - N)}{(a_2 +a_1(N-1))}.
\end{align}
\noindent
The above inequality coincides with \eqref{ub6} which is now proved.

Similarly, by using the inclusion $ \mathcal{G}_{\theta_A}\subset\underset{\theta\in[\theta_A,\widetilde{\widetilde{\theta_A}}]}{\cup}\partial\mathcal{G}^{U}_{\theta},$
one performs the same analysis with the lower trace bound L2 given in \eqref{ub1},\eqref{ub3} to generalize it for all  $A^{*}\in\mathcal{G}_{\theta_A}$
and establishes \eqref{eic},\eqref{to} and \eqref{tn} which provide the optimal lower bound whenever $\theta_B <\theta_A$.

\noindent
This completes the proof of establishing the lower trace bound L1, L2 announced in Section \ref{Sd9} and their saturation / optimality.
\hfill\qed
\begin{remark}[Pointwise bounds on energy densities]\label{siz}
This remark is analogous to Remark \ref{hsg} and here we find the bounds on energy densities $B^{\#}\nabla u\cdot\nabla u$ and
${A^{*}}^{-1}B^{\#}{A^{*}}^{-1}\sigma\cdot\sigma$ corresponding
to `trace bound L1' and `trace bound L2' respectively, which are useful in solving optimal oscillation-dissipation problem, as shown in Section \ref{qw5}. 
First we find the bound on $B^{\#}\nabla u\cdot\nabla u$ for $A^{*}\in\partial\mathcal{G}^{L}_{\theta_A}$ and $B^{\#}$ being associated to $A^{*}$. 
We consider the inequality obtained in \eqref{ub15} in Step $1c$ and using \eqref{ub4} for $A^{*}\in\partial\mathcal{G}^{L}_{\theta_A}$, we eliminate
$\eta$ to obtain the following inequality :
\begin{align}\label{tv}
B^{\#}\nabla u\cdot\nabla u \geq \{ b_1I  + 2 &(\overline{B}-b_1I)(\overline{A}-a_1I)^{-1}(A^{*}-a_1I)\notag\\
&\qquad+(Y- (\overline{B}-b_1 I))(\overline{A}-a_1I)^{-2}(A^{*}-a_1I)^2\}\nabla u\cdot\nabla u
\end{align}
where $Y$ is given in \eqref{dc13}.\\
(Previously, in Step $1c$, we used \eqref{ub4} to eliminate $\nabla u$ to obtain the matrix inequality \eqref{bs10}.) \\
\noindent
Then following the arguments presented in Step $2$ we generalize the above inequality \eqref{tv} for all $A^{*}\in\mathcal{G}_{\theta_A}$ and
the pair $(A^{*},B^{\#})\in\mathcal{G}_{(\theta_A,\theta_B)}$ by using the inclusion 
$\mathcal{G}_{\theta_A}\subset \underset{\theta\in[\widetilde{\theta_A},\theta_A]}{\cup}\partial\mathcal{G}^{L}_{\theta}$:
\begin{align}\label{FG19}
B^{\#}\nabla u\cdot\nabla u \geq \{ b_1I  + 2 &(\overline{B}-b_1I)(\overline{A}_{\theta}-a_1I)^{-1}(A^{*}-a_1I)\notag\\
&\qquad+(Y_{\theta}- (\overline{B}-b_1 I))(\overline{A}_{\theta}-a_1I)^{-2}(A^{*}-a_1I)^2\}\nabla u\cdot\nabla u
\end{align}
where $\theta_A$ in \eqref{tv} is replaced by $\theta$ which is explicitly given in \eqref{FG15}. 

The above bound is optimal whenever $\theta_A(x)\leq\theta_B(x)$, $x$ a.e. 
As it is shown above, for each $A^{*}\in\mathcal{G}_{\theta_A}$ there
exist a $B^{\#}$ such that the corresponding $(A^{*},B^{\#})$ achieves the equality
in the above bound \eqref{FG19}.
\hfill\qed\end{remark}
\begin{remark}\label{siz2}
Similarly, we consider the inequalities obtained in \eqref{sip} in Step $1f$ and using \eqref{ub5} 
for $A^{*}\in\partial\mathcal{G}_{\theta_A}^U$, we eliminate $\eta$ to obtain :
\begin{align}\label{ty}
{A^{*}}^{-1}B^{\#}{A^{*}}^{-1}\sigma\cdot\sigma \geq \{cI  + &2(L-cI)(\underline{A}^{-1} - a_2^{-1}I)^{-1}({A^{*}}^{-1}- a_2^{-1}I) +(Y^\prime- (L-cI))\notag\\
&\qquad\qquad\qquad\qquad (\underline{A}^{-1} - a_2^{-1}I)^{-2}({A^{*}}^{-1}- a_2^{-1}I)^{2}\}\sigma\cdot\sigma
\end{align}
where 
$c, L, Y^\prime$ are given in \eqref{os15},\eqref{bs4} and in \eqref{FG12} respectively.\\
\noindent
Then following the arguments presented in Step $2$ we generalize the above inequality \eqref{ty} 
for all $A^{*}\in\mathcal{G}_{\theta_A}$ and the pair $(A^{*},B^{\#})\in\mathcal{G}_{(\theta_A,\theta_B)}$
by using the inclusion $\mathcal{G}_{\theta_A}\subset\underset{\theta\in[\theta_A,\widetilde{\widetilde{\theta_A}}]}{\cup}\partial\mathcal{G}^{U}_{\theta}$:
\begin{align}\label{siu}
{A^{*}}^{-1}B^{\#}{A^{*}}^{-1}\sigma\cdot\sigma \geq \{cI  + &2(L-cI)(\underline{A}_\theta^{-1} - a_2^{-1}I)^{-1}({A^{*}}^{-1}- a_2^{-1}I) +(Y_\theta^\prime- (L_\theta-cI))\notag\\
&\qquad\qquad\qquad\qquad (\underline{A}_\theta^{-1} - a_2^{-1}I)^{-2}({A^{*}}^{-1}- a_2^{-1}I)^{2}\}\sigma\cdot\sigma
\end{align}
where $\theta_A$ in \eqref{tv} is replaced by $\theta$ which is uniquely determined by \eqref{FG15}. 
Moreover, the above bound is optimal whenever $\theta_B(x) < \theta_A(x)$, $x$ a.e. 
\hfill\qed\end{remark}
\begin{remark}\label{zz12}
By \eqref{zz11} we can take in \eqref{tv} $\nabla u=\zeta$ an arbitrary vector in $\mathbb{R}^N$. We apply \eqref{tv} with $A^{*}=A^{*}_N$ (cf.\eqref{OP2}) and $B^{\#}=B^{\#}_{N,N}$ (cf. Example \ref{ot5}) in which case, we get equality in \eqref{tv}. On the other hand, we apply \eqref{tv} with $A^{*}=A^{*}_N$ and $B^{\#}$ any relative limit corresponding to $A^{*}$. We obtain 
\begin{equation*} B^{\#}\zeta\cdot\zeta\ \geq\ B^{\#}_{N,N}\zeta\cdot\zeta.
\end{equation*}
\hfill\qed\end{remark}

\paragraph{Upper Bound : } 
In the final part of this section, we derive the upper bounds U1,U2 on
$(A^{*},B^{\#})$ introduced in Section \ref{ad18}. In deriving upper bound, we follow the same strategy as in lower bounds L1, L2. In the Step $1$ we will be finding bounds when $A^{*}\in\partial\mathcal{G}_{\theta_A}$, 
and then in Step $2$ we will generalize it for arbitrary $A^{*}\in \mathcal{G}_{\theta_A}$. 
\paragraph{Step 1a : ($H$-Measure term) }
Let us take $\eta\in\mathbb{R}^N$ arbitrary and consider the simple translated inequality 
\begin{equation*}(b_2I - B^{\epsilon})(\nabla u^{\epsilon} +\eta)\cdot(\nabla u^{\epsilon} +\eta)\geq 0 \quad\mbox{a.e. in }\Omega \end{equation*}
or equivalently, 
\begin{equation}\label{os13}
b_2\nabla u^{\epsilon}\cdot\nabla u^{\epsilon}-2B^\epsilon\nabla u^\epsilon\cdot\eta \geq\ B^{\epsilon}\nabla u^{\epsilon}\cdot\nabla u^\epsilon -(b_2I-B^\epsilon)\eta\cdot\eta-2b_2\nabla u^\epsilon\cdot\eta \ \ \mbox{a.e. in }\Omega.
\end{equation}
Let us pass to the limit in the above inequality. Passing to the limit in the right hand side is straight forward. 
The limit of left hand side of \eqref{os13} is 
\begin{equation*} lim\ \{b_2\nabla u^\epsilon\cdot\nabla u^\epsilon- 2B^\epsilon\nabla u^\epsilon\cdot\eta\} = b_2\nabla u\cdot\nabla u -2\overline{B}\nabla u\cdot\eta + Z \end{equation*} 
where $Z$ is a $H$-measure correction term. Combining these two we get  
\begin{equation}\label{OP1}
(b_2I - B^{\#})\nabla u\cdot\nabla u +2(b_2I-\overline{B})\nabla u\cdot\eta +(b_2I-\overline{B})\eta\cdot\eta + Z\geq\ 0.
\end{equation}
Now, $Z$ is given by
\begin{equation*} Z =\ \underset{\epsilon\rightarrow 0}{lim}\ q_4(\nabla u^\epsilon-\nabla u,(B^\epsilon-\overline{B})\eta)\cdot (\nabla u^\epsilon-\nabla u,(B^\epsilon-\overline{B})\eta)=\ \langle\langle\varPi_W, Q_4(U,B\eta)\rangle\rangle\end{equation*}
with, 
\begin{equation*}Q_4(W) =\ q_4(W)\cdot W=\ b_2|U|^2-2BU\cdot\eta,\ \ W=(U,B\eta)\in \mathbb{R}^N\times\mathbb{R}^N\end{equation*}
and $\varPi_{W}$ is the $H$-measure of the sequence $W_\epsilon-W_0= (\nabla u^\epsilon,B^\epsilon\eta)-(\nabla u,\overline{B}\eta)$. 
\paragraph{Step 1b : (Upper bound) :}
Here onwards we will find the upper bounds by choosing the field $\nabla u^\epsilon$ satisfying \eqref{six} and \eqref{eiz} or equivalently 
$A^{*}\in\partial\mathcal{G}_{\theta_A}^L$. Under the oscillatory system \eqref{eik} for $W_\epsilon^\prime$ where $W^\prime_\epsilon= (\nabla u^\epsilon,B^\epsilon\eta, A^\epsilon\eta)  \rightharpoonup (\nabla u,\overline{B}\eta,\overline{A}\eta)= W^\prime_0$ weakly in $L^2(\Omega)^{3N}$ and  $\varPi_{W^\prime}$ is the $H$-measure of the sequence $W^\prime_\epsilon-W^\prime_0$. Then following the very similar arguments 
presented in Step $1c$ for computing $Y$ in order to deduce the `lower trace bound L1', here we compute $Z$.
\begin{align}\label{os7}
Z= \langle\langle\varPi_{W}, Q_4(U,B\eta)\rangle\rangle &=  \langle\langle\varPi_{W^{\prime}}, Q_4(\frac{(A\eta\cdot\xi)}{a_1|\xi|^2}\xi,B\eta) \rangle\rangle \notag\\
&= \langle\langle \varPi_{W^\prime}, b_2\lb \frac{(A\eta\cdot\xi)}{a_1|\xi|} - \frac{(B\eta\cdot\xi)}{b_2|\xi|}\rb^2\rangle\rangle -\langle\langle\varPi_{W^\prime},\frac{(B\eta\cdot\xi)^2}{b_2|\xi|^2}\rangle\rangle\notag\\    
&= \frac{1}{a_1^2b_2}\langle\langle\widetilde{\varPi}_{AB}, \frac{((b_2A-a_1B)\eta\cdot\xi)^2}{|\xi|^2}  \rangle\rangle -\frac{1}{b_2}\langle\langle \varPi_B, \frac{(B\eta\cdot\xi)^2}{|\xi|^2}\rangle\rangle\notag \\
&= \widetilde{R_1} - \frac{\theta_B(1-\theta_B)(b_2-b_1)^2}{b_2}M_B\eta\cdot\eta
\end{align}
where $\widetilde{\varPi}_{AB}\in \mathcal{M}(\Omega\times \mathbb{S}^{N-1};\mathbb{R}^{N\times N}) $ is $H$-measure of the sequence
$\{(b_2A^\epsilon-a_1B^\epsilon) -(b_2\overline{A}-a_1\overline{B})\}\eta$ and $M_B$ 
is the non-negative matrix with unit trace defined in \eqref{dc19}.\\
The $H$-measure $\widetilde{\varPi}_{AB}$ reduces to 
\begin{equation*}
(\widetilde{\varPi}_{AB})_{ij} = (\widetilde{\nu}_{AB})\eta_i\eta_j  \ \ \forall i,j =1,..,N
\end{equation*}
where, $\widetilde{\nu}_{AB}$ is  $H$-measure of the scalar sequence $\{b_2(a_1-a_2)(\A-\theta_A(x))-a_1(b_1-b_2)(\B-\theta_B(x))\}$ with
\begin{equation*}
\widetilde{\nu}_{AB}(x,d\xi)\geq 0\ \mbox{ and }\ \int_{\mathbb{S}^{N-1}}\widetilde{\nu}_{AB}(x,d\xi) = U_{AB}
\end{equation*}
with 
\begin{align*}
U_{AB} & := L^\infty(\Omega)\mbox{ weak* limit of }\{b_2(a_1-a_2)(\A-\theta_A)-a_1(b_1-b_2)(\B-\theta_B)\}^2\notag\\
&= b_2^2(a_2-a_1)^2\theta_A(1-\theta_A) +a_1^2(b_2-b_1)^2\theta_B(1-\theta_B)-2b_2a_1(b_2-b_1)(a_2-a_1)(\theta_{AB}-\theta_A\theta_B)\notag\\
&\leq\  b_2^2(a_2-a_1)^2\theta_A(1-\theta_A) +a_1^2(b_2-b_1)^2\theta_B(1-\theta_B) +2b_2a_1(b_2-b_1)(a_2-a_1)\theta_A\theta_B\notag\\
&\ \ \ \ =: U^0_{AB} \mbox{ (say)}. 
\end{align*}
The above inequality $U_{AB}\leq U^{0}_{AB}$ follows by simply using $\theta_{AB}\geq 0$ and it becomes equal when $\theta_{AB}=0\ $ (e.g. $\omega_{A^\epsilon}\cap \omega_{B^\epsilon} =\emptyset$). 
Thus
\begin{align*}
\widetilde{R_1} = \frac{1}{a_1^2b_2}\langle\langle\widetilde{\varPi}_{AB}, \frac{((b_2A-a_1B)\eta\cdot\xi)^2}{|\xi|^2}  \rangle\rangle &= \frac{1}{a_1^2b_2} \int_{\mathbb{S}^{N-1}} \frac{(\eta \cdot \xi)^2}{|\xi|^2}\widetilde{\nu}_{AB} (d\xi)\notag\\
                                                             &= \frac{1}{a_1^2b_2}U_{AB}\ \widetilde{M}_{AB}\eta\cdot\eta \leq \frac{1}{a_1^2b_2}U^{0}_{AB}\ \widetilde{M}_{AB}\eta\cdot\eta
\end{align*}
where $\widetilde{M}_{AB}$ is a non-negative matrix with unit trace defined by 
\begin{equation*}
 \widetilde{M}_{AB} = \frac{1}{U_{AB}}\int_{\mathbb{S}^{N-1}} \xi \otimes \xi\ \widetilde{\nu}_{AB} (d\xi). 
\end{equation*}
Therefore from \eqref{os7} we have 
\begin{align}\label{zz10}
Z \leq\ &\{\frac{b_2}{a_1^2}(a_2-a_1)^2\theta_A(1-\theta_A) +\frac{(b_2-b_1)^2}{b_2}\theta_B(1-\theta_B)+\frac{2}{a_1}(b_2-b_1)(a_2-a_1)\theta_A\theta_B\}\widetilde{M}_{AB}\eta\cdot\eta\notag\\
        &\quad - \frac{\theta_B(1-\theta_B)(b_2-b_1)^2}{b_2}M_B\eta\cdot\eta.
\end{align}
with 
\begin{equation}\label{FG14}
 trace\ Z\ \leq\ \frac{b_2}{a_1^2}(a_2 - a_1)^2\theta_A(1-\theta_A) + \frac{2}{a_1}(b_2-b_1)(a_2-a_1)\theta_A\theta_B.
\end{equation}
\textbf{Matrix upper bound :}
Next we consider \eqref{OP1} with the upper bound \eqref{zz10} on $Z$ and choosing $\nabla u = - (\overline{A}-a_1I)(A^{*}- a_1 I)^{-1}\eta$ (cf.\eqref{ub4}) to obtain : 
\begin{align}
&(b_2I-B^{\#})(\overline{A}-a_1I)^2(A^{*}- a_1 I)^{-2}\eta\cdot\eta - 2 (b_2I-\overline{B})(\overline{A}-a_1I)(A^{*}- a_1 I)^{-1}\eta\cdot \eta \notag\\
&\qquad\quad+(b_2I-\overline{B})\eta\cdot\eta - \frac{\theta_B(1-\theta_B)(b_2-b_1)^2}{b_2}M_B\eta\cdot\eta+\{\frac{b_2}{a_1^2}(a_2-a_1)^2\theta_A(1-\theta_A)\notag\\
&\qquad\qquad\qquad+\frac{(b_2-b_1)^2}{b_2}\theta_B(1-\theta_B)+\frac{2}{a_1}(b_2-b_1)(a_2-a_1)\theta_A\theta_B\}\widetilde{M}_{AB}\eta\cdot\eta \geq 0 \label{OP4}
\end{align}
\textbf{Trace bound U1 : : $\theta_A+\theta_B\leq 1$ almost everywhere in $x$ :}
We simplify the above bound with using \eqref{eif} as $A^{*}\in\partial\mathcal{G}^L_{\theta_A}$
and then by taking trace using \eqref{FG14} to obtain :
\begin{equation}\label{FL2}
tr\ (b_2I-B^{\#})(\overline{A}-a_1I)^2(A^{*}-a_1I)^{-2}\geq\ tr\ (b_2I-\overline{B}) -\frac{b_2}{a_1^2}(a_2-a_1)^2\theta_A(1-\theta_A).
\end{equation}
However $(b_2I-B^{\#})$ need not be positive definite (cf. \eqref{Sd3}), so we slightly modify \eqref{FL2} by using $A^{*}\in \partial\mathcal{G}^L_{\theta_A}$ to obtain : 
\begin{equation}\label{lb18}
tr\ (\frac{b_2}{a_1}A^{*}-B^{\#})(\overline{A}-a_1I)^2(A^{*}(x)- a_1 I)^{-2} \geq\ tr\ (b_2I-\overline{B})+ tr\ \frac{b_2}{a_1}(\overline{A}-a_1I).
\end{equation}
Thus \eqref{hsm} follows. Note that, $(\frac{b_2}{a_1}A^{*}-B^{\#})$ is positive definite matrix (cf. \eqref{Sd3}), that's why the above inequality turns out to be an upper bound.
 \hfill\qed
\\

Next we give examples of microstructures which possess the property  $\omega_{A^\epsilon}\cap\omega_{B^\epsilon}=\emptyset$ and such that
equality holds in \eqref{OP4} as shown below.
\begin{example}[Saturation/Optimality]\label{FG5}
The equality of this lower bound is achieved by the laminated materials $U^{\#}_1$ (cf. \eqref{tp}),
at composites based on Hashin-Shtrikman construction given in \eqref{FG7} and at sequential laminates of higher rank.
\bpr
The composite based on Hashin-Shtrikman construction given in \eqref{FG7} i.e.
with core $a_2 I$ and coating $a_1 I$ for $A_{B(0,1)}$ and core $b_1 I$ with coating $b_2 I$ for $B_{B(0,1)}$, with $\theta_A + \theta_B\leq 1$ 
\begin{align*}
a^{*}&=\ a_1 + N a_1 \frac{(1-\theta_A)(a_2-a_1)}{(N-\theta_A)a_1 + \theta_A a_2}\\
b^{\#} &=\  b_2\ [\ 1 + \frac{N\theta_A(1-\theta_A)(a_2-a_1)^2}{(\theta_Aa_2 + (N-\theta_A)a_1)^2}\ ] - \frac{(b_2-b_1)(Na_1)^2\theta_B}{(\theta_Aa_2 + (N-\theta_A)a_1)^2}.
\end{align*}
achieves the equality in \eqref{OP4} with $M_B=\widetilde{M}_{AB} = diag\ (\frac{1}{N},\frac{1}{N},..,\frac{1}{N}\ )$.\\
\\
Similarly, the simple laminates say in $e_1$ direction with $\theta_A + \theta_B\leq 1$
\begin{equation*}B^{\#} = diag\ (U^\#_1, \overline{b},..,\overline{b})\ \mbox{ (cf.\eqref{tp}) and }\ A^{*}=diag\ (\underline{a},\overline{a},..,\overline{a})\end{equation*} 
achieves the equality in \eqref{OP4} with $M_B = \widetilde{M}_{AB}= diag\ ( 1,0,..,0)$. 
\paragraph{Sequential Laminates when ${\omega}_{A^{\epsilon}}\cap {\omega}_{B^{\epsilon}}=\emptyset $ :}
Here we define $(p,p)$-sequential laminates $B^{\#}_{p,p}$ whenever ${\omega}_{A^{\epsilon}}\cap {\omega}_{B^{\epsilon}}=\emptyset$
and with having the same number of $p$ layers in the same directions $\{e_i\}_{1\leq i\leq p}$.
Following the arguments presented before, by taking $A^{*}=A^{*}_p$ with matrix $a_1 I$ and core $a_2 I$ defined in \eqref{OP2}
and considering the equality in \eqref{OP4} we have
\begin{equation*}
(b_2I-B^{\#}_{p,p})(\overline{A}-a_1I)^{2}(A^{*}_p - a_1 I)^{-2} = (b_2I-\overline{B})-\frac{b_2(a_2 - a_1)^2}{a_1^2}\theta_A(1-\theta_A)(\sum_{i=1}^{p} m_i\frac{e_i\otimes e_i}{e_i.e_i});
\end{equation*}
or,
\begin{align*}
\{\frac{(b_2I-\overline{B})}{(\overline{A}-a_1I)}(A^{*}_p-a_1I)+\frac{b_2}{a_1}(A^{*}_p-a_1I)\}^2(\frac{b_2}{a_1}A^{*}_p-B^{\#}_{p,p})^{-1} = \frac{b_2}{a_1}(\overline{A}-a_1I) + (b_2I-\overline{B}).
\end{align*}
The above defined $(N,N)$- sequential laminates give the saturation / optimality of the upper trace bound \eqref{hsm}.
\hfill\epr
\end{example}
\begin{remark}
The above upper bound \eqref{lb18} need not be an optimal bound whenever $\theta_A + \theta_B > 1$. 

\hfill\qed\end{remark}
\noindent
\textbf{Trace bound U2 : $\theta_A(x) +\theta_B(x) > 1$ almost everywhere in $x$: } In this case, we establish the optimal upper bound by proceeding in a similar
way as we did in the case of proving lower trace bound L2. 
\paragraph{Step 1c : ($H$-Measure term) :}
We begin by considering the translated inequality :
\begin{equation*}(\frac{b_2}{a_1^2}I-(A^{\epsilon})^{-1}B^{\epsilon}(A^{\epsilon})^{-1})(\sigma^{\epsilon} + \eta)\cdot(\sigma^{\epsilon} + \eta)\ \geq\ 0 \quad\mbox{a.e. in }\Omega,\ \ \ \eta\in \mathbb{R}^N.\end{equation*} 
Note that here the optimal translated amount is $\frac{b_2}{a_1^2}$. By passing to the limit in the  of the above inequality we write  
\begin{equation}\label{OP5}
(\frac{b_2}{a_1^2}I - {A^{*}}^{-1}B^{\#}{A^{*}}^{-1})\sigma \cdot\sigma  + 2(\frac{b_2}{a_1^2}I-\widetilde{L})\sigma \cdot\eta +(b_2I-\widetilde{L}) + Z^\prime \geq\ 0
\end{equation}
where, $\widetilde{L}$ is the $L^{\infty}$ weak* limit of $(A^{\epsilon})^{-1}B^{\epsilon}(A^{\epsilon})^{-1}$ with having the following upper bound in this case $\theta_A +\theta_B>1$ as :
\begin{equation}\label{OP6}
 \widetilde{L}\ \leq\ \Theta^{*} := \{\frac{b_2}{a_1^2}+\frac{(b_1-b_2)}{a_1^2}\theta_B + b_1(\frac{1}{a_2^2}-\frac{1}{a_1^2})(1-\theta_A)\}I.
\end{equation}
Note that, by making the choice $\omega_{A^\epsilon}, \omega_{B^\epsilon}$ such that  $\omega_{B^\epsilon}^c\subset \omega_{A^\epsilon}$ (which is possible in the
present case), we obtain equality in the above inequality.\\
\\
In \eqref{OP5}, $Z^\prime$ is $H$-measure correction term defined as :
\begin{equation*}
lim\ \{\frac{b_2}{a_1^2}\sigma^\epsilon\cdot\sigma^\epsilon- 2(A^\epsilon)^{-1}B^\epsilon(A^\epsilon)^{-1}\sigma^\epsilon\cdot\eta\} = \frac{b_2}{a_1^2}\sigma\cdot\sigma -2\Theta^{*}\sigma\cdot\eta + Z^\prime \end{equation*} 
with 
\begin{equation*}
Z^\prime= \underset{\epsilon\rightarrow 0}{lim}\ q_5(\sigma^\epsilon-\sigma,((A^\epsilon)^{-1}B^\epsilon(A^\epsilon)^{-1}-\widetilde{L})\eta)\cdot (\sigma^\epsilon-\sigma,((A^\epsilon)^{-1}B^\epsilon(A^\epsilon)^{-1}-\widetilde{L})\eta)=\langle\langle\varPi_{W^{\prime\prime}}, Q_5(W^{\prime\prime})\rangle\rangle
\end{equation*}
where, 
\begin{equation*}Q_5(W^{\prime\prime}) =\ q_5(W^{\prime\prime})\cdot W^{\prime\prime}=\ \frac{b_2}{a_1^2}|\varSigma|^2-2A^{-1}BA^{-1}\varSigma\cdot\eta,\ \ W^{\prime\prime}= (\varSigma,A^{-1}BA^{-1})\in \mathbb{R}^N\times\mathbb{R}^N\end{equation*}
and $\varPi_{W^{\prime\prime}}$ is the $H$-measure of the sequence $W^{\prime\prime}_\epsilon-W^{\prime\prime}_0= (\sigma^\epsilon,(A^\epsilon)^{-1}B^\epsilon(A^\epsilon)^{-1}\eta)-(\sigma,\widetilde{L}\eta)$.
\paragraph{Step 1d : (Upper bound) :}
Here onwards we will find the upper bounds by choosing the field $\sigma^\epsilon$ satisfying \eqref{ub9},\eqref{ub10} or equivalently 
$A^{*}\in\partial\mathcal{G}_{\theta_A}^U$. Under the oscillatory system \eqref{os3} for $W_\epsilon^{\prime\prime\prime}$ where $W_\epsilon^{\prime\prime\prime} = (\sigma^\epsilon, (A^\epsilon)^{-1}B^\epsilon (A^\epsilon)^{-1}\eta, (A^\epsilon)^{-1}\eta ) \rightharpoonup  ( \sigma , \widetilde{L}\eta, \underline{A}^{-1}\eta )= W_0^{\prime\prime\prime}$ weakly in $L^2(\Omega)^{3N}$ and 
 $\varPi_{W^{\prime\prime\prime}}$ is the $H$-measure of the sequence $W^{\prime\prime\prime}_\epsilon-W^{\prime\prime\prime}_0$. Then following the very similar arguments presented in Step $1f$ for computing $Y^\prime$ in order to deduce the `lower trace bound L2', here we compute $Z^\prime$.
\begin{align}\label{OP7}
Z^\prime\ &=\ \langle\langle\varPi_{W^{\prime\prime}}, Q_5(\varSigma,A^{-1}BA^{-1}\eta)\rangle\rangle\ =\  \langle\langle\varPi_{W^{\prime\prime\prime}}, Q_5( a_2(I-\frac{\xi\otimes \xi}{|\xi|^2})A^{-1}\eta,A^{-1}BA^{-1}) \rangle\rangle \notag\\
          &=\ \langle\langle \widetilde{\varPi}^\prime_{AB}, \frac{b_2}{a_1^2} (I-\frac{\xi\otimes \xi}{|\xi|^2})(\frac{a_1^2}{b_2}A^{-1}BA^{-1}-a_2A^{-1})\eta\cdot(\frac{a_1^2}{b_2}A^{-1}BA^{-1}-a_2A^{-1})\eta\rangle\rangle \notag\\
          &\qquad  -\frac{a_1^2}{b_2}\langle\langle \widetilde{\varPi}^{\prime\prime}_{AB},(I-\frac{\xi\otimes \xi}{|\xi|^2})A^{-1}BA^{-1}\eta\cdot A^{-1}BA^{-1}\eta  \rangle\rangle, \notag \\
          &= \widetilde{R}^\prime_1 -\frac{a_1^2}{b_2}L^{\prime\prime}_{AB}(I-M^{\prime\prime}_{AB})\eta\cdot\eta
\end{align}
where $\widetilde{\varPi}^\prime_{AB}\in \mathcal{M}(\Omega\times \mathbb{S}^{N-1};\mathbb{R}^{N\times N}) $ is $H$-measure of the sequence
$\{\frac{a_1^2}{b_2}((A^\epsilon)^{-1}B^\epsilon(A^\epsilon)^{-1}-\widetilde{L})-a_2((A^\epsilon)^{-1}-(\underline{A})^{-1})\}\eta$
and the weak* limit $L^{\prime\prime}_{AB}$ and the non-negative matrix  $M^{\prime\prime}_{AB}$ with unit trace
are defined in \eqref{zz7} and \eqref{zz8} respectively. \\
Then $H$-measure $\widetilde{\varPi}^\prime_{AB}$ reduces to 
\begin{equation*}
(\widetilde{\varPi}^\prime_{AB})_{ij} = (\widetilde{\nu}^\prime_{AB})\eta_i\eta_j \ \ \forall i,j =1,..,N
\end{equation*}
where $\widetilde{\nu}^\prime_{AB}$ is  $H$-measure of the scalar sequence $\{\frac{a_1^2}{b_2}f^\epsilon- a_2(\frac{1}{a_1}-\frac{1}{a_2})(\A-\theta_A)\}$ with
\begin{align}
&\widetilde{\nu}^\prime_{AB}(x,d\xi)\geq 0\  \mbox{ and }\notag\\
&\int_{\mathbb{S}^{N-1}}\widetilde{\nu}^\prime_{AB}(x,d\xi) = L^\infty(\Omega)\mbox{ weak* limit of }\{\frac{a_1^2}{b_2}f^\epsilon- a_2(\frac{1}{a_1}-\frac{1}{a_2})(\A-\theta_A)\}^2 := \widetilde{L}^\prime_{AB}.
\end{align}
So
\begin{equation*}
\widetilde{R}^\prime_1 = \frac{b_2}{a_1^2}\widetilde{L}^\prime_{AB}\ (I-\widetilde{M}^\prime_{AB})\eta\cdot\eta
\end{equation*}
where $\widetilde{M}^\prime_{AB}$ is a non-negative matrix with unit trace defined by 
\begin{equation*}
\widetilde{M}^\prime_{AB} = \frac{1}{\widetilde{L}^\prime_{AB}}\int_{\mathbb{S}^{N-1}} \xi \otimes \xi\ \widetilde{\nu}^\prime_{AB} (d\xi). 
\end{equation*}
Therefore from \eqref{OP7} we have 
\begin{equation}\label{os8}
Z^\prime =  \frac{b_2}{a_1^2}\widetilde{L}^\prime_{AB}(I-\widetilde{M}^\prime_{AB})\eta\cdot\eta-  \frac{a_1^2}{b_2}L^{\prime\prime}_{AB}(I-M^{\prime\prime}_{AB})\eta\cdot\eta 
\end{equation}
with 
\begin{equation}\label{FL4}
trace\ Z^\prime = \{\frac{b_2}{a_1^2}\widetilde{L}^\prime_{AB} -\frac{a_1^2}{b_2}L^{\prime\prime}_{AB}\}(N-1).
\end{equation}
Recall that, 
\begin{align}\label{FG20}
\{\frac{b_2}{a_1^2}\widetilde{L}^\prime_{AB} -\frac{a_1^2}{b_2}L^{\prime\prime}_{AB}\}=&\ \frac{b_2}{a_1^2}\{\mbox{$L^\infty(\Omega)$ weak* limit of }\{\frac{a_1^2}{b_2}f^\epsilon- a_2(\frac{1}{a_1}-\frac{1}{a_2})(\A-\theta_A)\}^2\}\notag\\
                                                     &\qquad-\frac{a_1^2}{b_2}\{\mbox{$L^\infty(\Omega)$ weak* limit of }(f^\epsilon)^2\}.  
\end{align}
We want to find the upper bound the above quantity \eqref{FG20}. As usual the above quantity involves the parameter $\theta_{AB}(x)$, the $L^{\infty}(\Omega)$ weak* limit of $(\AB)$. 
Now keeping the estimate \eqref{OP6} in mind, we maximize $\widetilde{L}^\prime_{AB}$ and according to that the other quantity $L^{\prime\prime}_{AB}$ also gets determined. For that, it is enough to make the choice  $\omega_{B^\epsilon}^c\subset \omega_{A^\epsilon}$ which is possible in this present case ($\theta_A +\theta_B>1$), we bound $trace\ Z^\prime$ from above as following :
\begin{align}\label{os4}
&trace\  Z^\prime\ \leq \ L^\infty(\Omega) \mbox{ weak* limit of }\{\frac{b_2}{a_1^2}a_2^2(\frac{1}{a_1}-\frac{1}{a_2})^2(\A-\theta_A)^2-2a_2(\frac{1}{a_1}-\frac{1}{a_2})(\frac{b_1}{a_1^2}-\frac{b_1}{a_2^2})\notag\\
&\quad\ (\A-\theta_A)^2 + 2a_2(\frac{1}{a_1}-\frac{1}{a_2})\frac{(b_1-b_2)}{a_1^2}((1-\B)-(1-\theta_B))(\A-\theta_A)\}(N-1)\notag\\
&\qquad= \{\frac{b_2}{a_1^2}a_2^2(\frac{1}{a_1}-\frac{1}{a_2})^2\theta_A(1-\theta_A)-2a_2(\frac{1}{a_1} - \frac{1}{a_2})(\frac{b_1}{a_1^2} -\frac{b_1}{a_2^2})\theta_A(1-\theta_A)\notag\\
&\quad\qquad\qquad\qquad\qquad\qquad\qquad+2a_2(\frac{1}{a_1}-\frac{1}{a_2})\frac{(b_1-b_2)}{a_1^2}(1-\theta_B)(1-\theta_A)\}(N-1).
\end{align}
\textbf{Matrix upper bound :}
Next we consider \eqref{OP5} with $Z^\prime$ in \eqref{os8} and choosing 
$\sigma = - (\underline{A}^{-1} - a_2^{-1}I)({A^{*}}^{-1}- a_2^{-1}I)^{-1}\eta$ 
(cf.\eqref{ub5}) and using $\widetilde{L}\leq \Theta^{*}$ (cf.\eqref{OP6}) we obtain 
\begin{align}\label{OP9}
&(\frac{b_2}{a_1^2}I-{A^{*}}^{-1}B^{\#}{A^{*}}^{-1})(\underline{A}^{-1} - a_2^{-1}I)^2({A^{*}} ^{-1}- a_2^{-1}I)^{-2}\eta\cdot\eta + (\frac{b_2}{a_1^2}I-\Theta^{*})\eta\cdot\eta\notag\\
&\qquad\qquad\qquad\qquad\qquad- 2(\frac{b_2}{a_1^2}I-\Theta^{*})(\underline{A}^{-1} - a_2^{-1}I)({A^{*}}^{-1}- a_2^{-1}I)^{-1}\eta\cdot\eta + Z^{\prime}  \geq\ 0 
\end{align} 
\textbf{Trace bound U2 : $\theta_A+\theta_B>1$ almost everywhere in $x$ :} 
We simplify the above bound with using \eqref{eij} as $A^{*}\in\partial\mathcal{G}_{\theta_A}^U$ 
and then by taking trace using \eqref{os4} to obtain : 
\begin{align}\label{lb17}
tr \ &(\frac{b_2a_2}{a_1^2}{A^{*}}^{-1}-{A^{*}}^{-1}B^{\#}{A^{*}}^{-1})(\underline{A}^{-1} - a_2^{-1}I)^2({A^{*}}^{-1} - a_2^{-1}I)^{-2}\notag\\
&\geq\  tr\ (\frac{b_2}{a_1^2}I-\Theta^{*})+ tr\ \frac{b_2a_2}{a_1^2}(\underline{A}^{-1}-a_2^{-1}I)- 2\frac{(b_2-b_1)(a_2-a_1)}{a_1^3}(1-\theta_A(x))(N-1). 
\end{align}
Thus \eqref{tm} follows. Note that $(\frac{b_2a_2}{a_1}{A^{*}}^{-1}-{A^{*}}^{-1}B^{\#}{A^{*}}^{-1})$ is a positive definite matrix (cf.\eqref{Sd3}),
that is why the above inequality turns out to be an upper bound.
\hfill\qed
\begin{example}[Saturation/Optimality]
The equality of the above upper bound is achieved by the laminated materials $U^{\#}_2$ (cf. \eqref{tp}), at the
composites based on Hashin-Shtrikman construction given in \eqref{bs19} and at the sequential laminates of higher rank.
\paragraph{Sequential Laminates when $\omega_{B^\epsilon}^c \subset \omega_{A^\epsilon}$ :}
Here we define the $(p,p)$-sequential laminates $B^{\#}_{p,p}$ whenever $ ({\omega}_{B^{\epsilon}})^c \subset\omega_{A^\epsilon}$
and with having the same number of $p$ layers in the same directions $\{e_i\}_{1\leq i\leq p}$.
By taking $A^{*}=A^{*}_p$ with matrix $a_1 I$ and core $a_2 I$ defined in \eqref{OP3}
and considering the equality in \eqref{OP9} we have 
\begin{align*}
&(\frac{b_2a_2}{a_1^2}{A^{*}_p}^{-1}-{A^{*}_p}^{-1}B^{\#}_{p,p}{A^{*}_p}^{-1})(\underline{A}^{-1} - a_2^{-1}I)^2({A^{*}_p} ^{-1}- a_2^{-1}I)^{-2} \\
&\ = (\frac{b_2a_2}{a_1^2}\underline{A}^{-1}-\Theta^{*})- 2\frac{(b_2-b_1)(a_2-a_1)}{a_1^3}(1-\theta_A)(I-\sum_{i=1}^{p} m_i\frac{e_i\otimes e_i}{e_i.e_i}); \mbox{ with }\sum_{i=1}^p m_i =1.
\end{align*}
The above defined $(N,N)$- sequential laminates give the saturation / optimality of the upper trace bound \eqref{tm}.
\hfill\qed
\end{example}
\noindent\textbf{Step 2 :} Proceeding in a exactly same way as we have done for the lower trace
bounds L1 and L2, the upper trace bounds U1 and U2 obtained in the above Step $1$ 
with the hypothesis $A^{*}\in\partial\mathcal{G}_{\theta_A}$ can be generalized  
for any pair of $(A^{*},B^{\#})\in\mathcal{G}_{(\theta_A,\theta_B)}$ with $A^{*}\in\mathcal{G}_{\theta_A}$. \\
\\
This completes our discussion of proof of the main results announced in Section \ref{Sd9}.
\hfill\qed
\begin{remark}\label{OP8}
Results analogous to Remark \ref{siz},\ref{siz2} and Remark \ref{zz12} are valid for U1, U2 also. In particular, for
$A^{*}=A^{*}_N$ (cf.\eqref{OP2}) and $B^{\#}$ any relative limit corresponding to $A^{*}$, we will have for $\zeta\in\mathbb{R}^N$ :
\begin{equation*} B^{\#}\zeta\cdot\zeta\ \leq\ B^{\#}_{N,N}\zeta\cdot\zeta \end{equation*}
where, $B^{\#}_{N,N}$ is given in Example \ref{FG5}. 
\hfill\qed\end{remark}

\section{Optimality of Regions}\label{qw4}
\setcounter{equation}{0}
This section is devoted to the proof of  Theorem \ref{qw6}. Let us recall the sets $\mathcal{K}_{(\theta_A,\theta_B)}$ and $\mathcal{K}^{f}_{(\theta_A,\theta_B)}(A^{*})$ introduced in \eqref{kab} and \eqref{kfab} respectively to begin with the following result :
\begin{lemma}
 Recall that the Hausdorff distance $d_H$ between two compact sets in a metric space is 
\begin{equation}
 d_H\lb K_1,K_2\rb = \underset{x_1\in K_1}{max}\ \underset{x_2\in K_2}{min}\ d(x_1,x_2) + \underset{x_2\in K_2}{max}\ \underset{x_1\in K_1}{min}\ d(x_2,x_1).
\end{equation}
There exist positive constants $C>0,\delta_A>0,\delta_B>0$ such that, for any $\theta_{A^1},\theta_{A^2}\in [0,1]$ and  $\theta_{B^1},\theta_{B^2}\in [0,1]$, we have
\begin{align}
d_H\lb\mathcal{G}_{\theta_{A^1}},\mathcal{G}_{\theta_{A^2}}\rb &\leq\ C|\theta_{A^1} -\theta_{A^2}|^{\delta_A};\label{qw9}\\
d_H\lb\mathcal{K}^{f}_{(\theta_{A^1},\theta_{B^1})}(A^{*,1}),
\mathcal{K}^{f}_{(\theta_{A^2},\theta_{B^2})}(A^{*,2})\rb
&\leq\ C\lb|\theta_{A^1} -\theta_{A^2}|^{\delta_A} + |\theta_{B^1} -\theta_{B^2}|^{\delta_B}\rb. \label{qw8}
\end{align}
Here $\mathcal{G}_{\theta_{A}}$ stands for the \textit{G-closure set} and $\mathcal{K}^f_{(\theta_A,\theta_B)}(A^{*})$ stands for the fibre over $A^{*}$ and it is defined in Remark \ref{qw1}.
\end{lemma}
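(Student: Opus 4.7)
The plan is to reduce both Hausdorff estimates to the quantitative continuity result Lemma~\ref{zz15}. Concretely, for each point $A^{*,1}\in\mathcal{G}_{\theta_{A^1}}$ (respectively $B^{\#,1}$ in the fibre over $A^{*,1}$), I will construct a companion point $A^{*,2}\in\mathcal{G}_{\theta_{A^2}}$ (respectively $B^{\#,2}$ in the corresponding fibre) whose distance to the first is controlled by the $L^1$-difference of the characteristic functions of the underlying microstructures, and that $L^1$-difference will be designed to be at most $|\theta_{A^1}-\theta_{A^2}||\Omega|$ (and similarly for $B$). Swapping the roles of the indices $1,2$ and using the definition of $d_H$ then yields both estimates.

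First, I realize $A^{*,1}$ and $B^{\#,1}$ as constant homogenized/relative limits. By point~(5) of Theorem~\ref{qw6} together with Remark~\ref{ED4}, there exist two-phase sequences $A^{\epsilon,1},B^{\epsilon,1}$ satisfying \eqref{ta}--\eqref{tb} with exact volume fractions $|\omega_{A^{\epsilon,1}}|=\theta_{A^1}|\Omega|$ and $|\omega_{B^{\epsilon,1}}|=\theta_{B^1}|\Omega|$, such that $A^{\epsilon,1}\xrightarrow{H}A^{*,1}$ and $B^{\epsilon,1}\xrightarrow{A^{\epsilon,1}}B^{\#,1}$. Moreover, using the $N$-rank sequential laminate constructions of Section~\ref{Sd19} (possibly with interfaces, as in the proof of Theorem~\ref{qw6}) or the Hashin--Shtrikman coated-ball constructions of Section~\ref{hsl}, one can arrange the realization so that $A^{*,1},B^{\#,1}$ are \emph{constant} matrices; this is essential, since Lemma~\ref{zz15} is stated for constant limits.

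Next, I perturb the proportions while keeping the structural skeleton of the realization. Following Remark~\ref{ED4}, I construct $\omega_{A^{\epsilon,2}}$ from $\omega_{A^{\epsilon,1}}$ by adding or removing a measurable subset of $\Omega$ of total measure exactly $|\theta_{A^1}-\theta_{A^2}||\Omega|$, the modification being performed inside the reference cell (or the reference coated ball) of the chosen realization, so that $A^{\epsilon,2}$ keeps the same periodic or Vitali-covering structure and hence $H$-converges (up to a subsequence) to a constant matrix $A^{*,2}\in\mathcal{G}_{\theta_{A^2}}$. By construction,
\[
\|\chi_{\omega_{A^{\epsilon,1}}}-\chi_{\omega_{A^{\epsilon,2}}}\|_{L^1(\Omega)}\ \leq\ |\theta_{A^1}-\theta_{A^2}||\Omega|.
\]
An analogous perturbation of $\omega_{B^{\epsilon,1}}$ yields $B^{\epsilon,2}\xrightarrow{A^{\epsilon,2}}B^{\#,2}$ constant, with $B^{\#,2}\in\mathcal{K}^{f}_{(\theta_{A^2},\theta_{B^2})}(A^{*,2})$ and the analogous $L^1$-bound. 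Applying \eqref{FL16} and \eqref{FL14} of Lemma~\ref{zz15} then gives
\[
\|A^{*,1}-A^{*,2}\|\ \leq\ C|\theta_{A^1}-\theta_{A^2}|^{\delta_A},\qquad \|B^{\#,1}-B^{\#,2}\|\ \leq\ C\bigl(|\theta_{A^1}-\theta_{A^2}|^{\delta_A}+|\theta_{B^1}-\theta_{B^2}|^{\delta_B}\bigr),
\]
where $|\Omega|^{\delta_A}$ and $|\Omega|^{\delta_B}$ have been absorbed into $C$. Swapping the indices delivers the two Hausdorff estimates.

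The main obstacle lies in the perturbation step: one has to show that \emph{every} element of $\mathcal{G}_{\theta_{A^1}}$ and of the fibre $\mathcal{K}^{f}_{(\theta_{A^1},\theta_{B^1})}(A^{*,1})$ can be realized by microstructures that admit a volume-preserving modification whose $H$-limit (and relative limit) is again a constant matrix of the required proportion. The rank-$N$ laminate and Hashin--Shtrikman families constructed in Sections~\ref{Sd19} and \ref{hsl} and exploited in the proof of Theorem~\ref{qw6} give precisely such families, parametrized continuously by $(\theta_A,\theta_B)$ through the formulas \eqref{OP2}--\eqref{bs16}, so that small changes in layer proportions (or in core radii) transport the realization continuously between different $\mathcal{G}_{\theta_A}$'s and fibres, making the reduction to Lemma~\ref{zz15} effective.
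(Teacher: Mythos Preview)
Your overall strategy --- realize the given point by a two-phase microstructure, perturb the volume fractions, and apply the quantitative stability estimates \eqref{FL16}--\eqref{FL14} of Lemma~\ref{zz15} --- is exactly the strategy of the paper. The difference lies in how the realization and the perturbation are carried out, and here your proposal has a logical problem.

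You invoke point~(5) of Theorem~\ref{qw6} and Remark~\ref{ED4} to produce the realizing sequences $A^{\epsilon,1},B^{\epsilon,1}$. But the present lemma is placed \emph{before} the proof of Theorem~\ref{qw6} in the paper, and the Second Part of that proof (the reduction from variable to piecewise-constant $(\theta_A,\theta_B,A^{*},B^{\#})$) explicitly uses the Hausdorff estimates \eqref{qw9}--\eqref{qw8} you are trying to establish. Citing Theorem~\ref{qw6}(5) here is therefore circular. The fix is easy: since $\theta_{A^i},\theta_{B^i}$ are constants and you want constant macro limits, only the First Part of the proof of Theorem~\ref{qw6} (the macroscopically homogeneous case, handled via rank-$N$ laminates and change of basis) is needed for the realization, and that part does not depend on the present lemma.

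The paper sidesteps the structured-construction route with a more elementary device. Rather than perturbing layer proportions or core radii in a laminate or coated-ball family, it fixes an arbitrary realization $\chi_{\omega_{A^{\epsilon,1}}}\rightharpoonup\theta_{A^1}$ and, assuming without loss of generality $\theta_{A^1}\geq\theta_{A^2}$, invokes a Lyapunov-type result to produce a \emph{nested} sequence $\chi_{\omega_{A^{\epsilon,2}}}\leq\chi_{\omega_{A^{\epsilon,1}}}$ with $\chi_{\omega_{A^{\epsilon,2}}}\rightharpoonup\theta_{A^2}$; the pointwise inequality gives directly $\|\chi_{\omega_{A^{\epsilon,1}}}-\chi_{\omega_{A^{\epsilon,2}}}\|_{L^1}\to|\theta_{A^1}-\theta_{A^2}|\,|\Omega|$, and Lemma~\ref{zz15} finishes. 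The same nesting is done independently for the $B$-microstructure to obtain \eqref{qw8}. Compared with your approach, this avoids any appeal to the optimality theorem or to explicit optimal microstructures; on the other hand, your parametrized-family idea makes the constancy of the perturbed $H$-limit and relative limit transparent, a point the paper's argument leaves implicit.
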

\begin{proof}
A proof of \eqref{qw9} is found in \cite{A}, who uses periodic microstructures. Here we argue directly without using them. 
It is enough to prove that, for any sequence of characteristic functions $\chi_{\omega_{A^{\epsilon,1}}}$ with $\chi_{\omega_{A^{\epsilon,1}}}\rightharpoonup \theta_{A^1}$ in $L^\infty(\Omega)$ weak*, there exists another sequence of characteristic functions $\chi_{\omega_{A^{\epsilon,2}}}$  
with $\chi_{\omega_{A^{\epsilon,2}}}\rightharpoonup \theta_{A^2}$ in $L^\infty(\Omega)$ weak* such that $|A^{*,1}-A^{*,2}|\leq C|\theta_{A^1}-\theta_{A^2}|^{\delta_A}$ where $A^{*,1}$ and $A^{*,2}$ defined as $A^{\epsilon,i}\xrightarrow{H} A^{*,i}$ for $i=1,2$ with $A^{\epsilon,i}
= \{a_1\chi_{\omega_A^{\epsilon,i}} + a_2(1-\chi_{\omega_A^{\epsilon,i}})\}I$ for $i=1,2$. 
Without loss of generality we assume that, $\theta_{A^1}\geq \theta_{A^2}$, and use the following Lypunov type result :
If there is a sequence $\chi_{\omega_{A^{\epsilon,1}}}\rightharpoonup \theta_{A^1}$ in $L^\infty(\Omega)$ weak* with  $\theta_{A^1}\geq \theta_{A^2}$, then there exists a sequence $\chi_{\omega_{A^{\epsilon,2}}}\rightharpoonup \theta_{A^2}$ in $L^\infty(\Omega)$ weak* such that $(\chi_{\omega_{A^{\epsilon,1}}}-\chi_{\omega_{A^{\epsilon,2}}})\geq 0$ for $x\in\Omega$ a.e. Then by using \eqref{FL16} one obtains $|A^{*,1}-A^{*,2}|\leq C|\theta_{A^1}-\theta_{A^2}|^{\delta_A}$ and consequently \eqref{qw9}. 

Similarly, one shows for any sequence of characteristic functions $\chi_{\omega_{B^{\epsilon,1}}},$ with $\chi_{\omega_{B^{\epsilon,1}}}\rightharpoonup \theta_{B^1}$ in $L^\infty(\Omega)$ weak*, there exists another sequence of characteristic function $\chi_{\omega_{B^{\epsilon,2}}}$  with 
with $\chi_{\omega_{B^{\epsilon,2}}}\rightharpoonup \theta_{B^2}$ in $L^\infty(\Omega)$ weak* such that together with the above mentioned $\chi_{\omega_{A^{\epsilon,1}}},\chi_{\omega_{A^{\epsilon,2}}}$, one gets $|B^{\#,1}-B^{\#,2}|\leq C\lb
|\theta_{A^1}-\theta_{A^2}|^{\delta_A} + |\theta_{B^1}-\theta_{B^2}|^{\delta_B}\rb $ from \eqref{FL14} and it gives \eqref{qw8}. 
\hfill\end{proof}
\noindent
\paragraph{Characterization of $\mathcal{G}_{(\theta_A,\theta_B)}$/Optimality of the regions $(Li,Uj)$ :}
Proving optimality of the regions is equivalent to obtaining   
the following characterization of $\mathcal{G}_{(\theta_A(x),\theta_B(x))}$, $x$ almost everywhere in terms of $\mathcal{K}_{(\theta_A,\theta_B)}$ which was defined in Remark \ref{qw1} :
\begin{align}\label{qw2}
\mathcal{G}_{(\theta_A(x),\theta_B(x))}= \{ (A^{*}(x),B^{\#}(x))\in \mathcal{M}(a_1,a_2;\Omega)&\times  \mathcal{M}(b_1,\widetilde{b_2};\Omega)\ |\ \notag\\ 
\  ( A^{*}(x)&,  B^{\#}(x))\in \mathcal{K}_{(\theta_A(x),\theta_B(x))}\mbox{ a.e. }x\in \Omega\}.
\end{align}
\bpr[Proof of \eqref{qw2} :]
Similar pointwise characterization of $\mathcal{G}_{\theta_A(x)}$ in terms of bounds on $A^{*}$ is a celebrated theorem in this subject \cite{TF,MT,A}.
The above result describes its extension by including relative limits $B^{\#}$ too. Let us underline some new features : there are four possible phase regions in the place of a  single region for $A^{*}$.  Secondly, due to non-commutativity of $A^{*},B^{\#}$, the bounds are formulated in terms of traces of product of matrices instead of their individual spectra. The proof is presented in three parts.
\paragraph{First part : Macroscopically homogeneous case i.e. $(A^{*},B^{\#})$, $(\theta_A,\theta_B)$ are constants :}
Here we assume that $A^{*}, B^{\#}$ are constant matrices and $\theta_A,\theta_B$ are constant functions.  Examples of microstructures  which are macroscopically homogeneous include periodic ones and Hashin-Shtrikman structures.
Depending upon the values of $\theta_A$ and $\theta_B$, there are four sets of possible lower trace bounds and upper trace bounds $\{Li,Uj\}$, $i,j=1,2$ introduced in Section \ref{ad18}. Each pair defines a region denoted as $(Li,Uj)$. We consider, for instance, the region $(L1,U1)$, where 
$\theta_A\leq\theta_B$ and $\theta_A+\theta_B\leq 1$. Now let us take $(A^{*},B^{\#})$ lying in this region. Of course  $A^{*}\in\mathcal{G}_{\theta_A}$, and $B^{\#}$ satisfies \eqref{Sd3}. We want to show that there exists $(A^\epsilon(x),B^\epsilon(x))$ satisfying \eqref{ta},\eqref{tb} such that $A^\epsilon(x) \xrightarrow{H} A^{*}$ and $B^\epsilon(x)\xrightarrow{A^\epsilon(x)} B^{\#}$ in $\Omega$. 
(Recall that in Section \ref{sil}, the converse of the above assertion was shown). \\

\noindent
We divide the proof into several sub-cases. First we treat matrices $A^{*}$ similar to $A^{*}_N= diag(\lambda_1,..,\lambda_N)$ with $\lambda_1\geq\ldots\geq\lambda_N$. Next, we show how the general case can be reduced to the above case. \\

\noindent
\textit{Case (1) :}
Let us consider $A^{*}\in\partial\mathcal{G}_{\theta_A}^L$ and any pair $(A^{*},B^{\#})$ satisfying the  
equality in L1 bound. Let's say spectrum of $A^{*} = \{\lambda_1,..,\lambda_N\}$. Without loss of generality, we assume the ordering
$\lambda_1\geq\ldots\geq\lambda_i\geq\ldots\geq\lambda_N$.
Then there exists an orthogonal matrix $P$ such that $A^{*}= P(diag\ (\lambda_1,..,\lambda_N))P^{-1}$. The required $A^\epsilon$ can be constructed as follows by change of variables technique. 
Let us denote $\Omega_0  = P^{-1}(\Omega)$. It is classical to construct (see \cite{A}) $A^\epsilon_N(x_0)$, $x_0\in\Omega_0$ 
given by $N$-sequential laminated microstructures such that $A^\epsilon_N(x_0)$ $H$-converges to $A^{*}_N$ in $\Omega_0$ where $A^{*}_N = diag(\lambda_1,\ldots,\lambda_N)$. 
Next we define $A^\epsilon(x)\stackrel{def}{=} PA^\epsilon_N(x_0)P^{-1}$, $x\in\Omega$, $x_0\in\Omega_0$ with $x_0=P^{-1}x$.
Then by the covariance property of $H$-convergence \cite[Lemma 21.1]{T},
$A^\epsilon(x)$ $H$-converges to $PA^{*}_NP^{-1} = A^{*}$\ \ in $P(\Omega_0)=\Omega$.

We constructed (cf. Example \ref{ot5}) $B^\epsilon_{N,N}(x_0)$, $x_0\in\Omega_0$ and diagonal matrix $B^{\#}_{N,N}$ given by $N$-sequential laminated microstructures with $\omega_{A^\epsilon_N}\subseteq \omega_{B^\epsilon_{N,N}}$ (possible since $\theta_A\leq \theta_B$) such that $B^\epsilon_{N,N}(x_0)$ converges to $B^{\#}_{N,N}$ relative to $A^{\epsilon}_N(x_0)$ in $\Omega_0$ and the pair $(A^{*}_N,B^{\#}_{N,N})$ satisfies the equality in L1 bound. 

Having treated diagonal matrices representing $N$-rank laminates, let us now consider other matrices $(A^{*},B^{\#})$.
Since the right hand side of L1 remains same for both $A^{*}$ and $A^{*}_N$ and since we are considering
equality in L1 bound, we have obviously
\begin{equation}\label{zz20}
 tr \ \{(A^{*}-a_1I)^{-1}(B^{\#}-b_1I)(A^{*}-a_1I)^{-1}\} =  tr \ \{(A^{*}_N-a_1I)^{-1}(B^{\#}_{N,N}-b_1I)(A^{*}_N-a_1I)^{-1}\}.
\end{equation}
We now show that  $B^{\#}$ and $B^{\#}_{N,N}$ have the same spectra and that $A^{*}$, $B^{\#}$ commute. For that, we prove the following result stated as :
\begin{lemma}[Optimality-Commutativity]\label{zz14}
For $A^{*}\in\mathcal{M}(a_1,a_2;\Omega)$ and $B^{\#}\in\mathcal{M}(b_1,\widetilde{b_2};\Omega)$, we have
\begin{align}\label{zz13}
&tr \ \{(A^{*}-a_1I)^{-1}(B^{\#}-b_1I)(A^{*}-a_1I)^{-1} \}\notag\\
&\geq   tr \{ (diag\ (\lambda_1-a_1,..,\lambda_N-a_1))^{-1}(diag\ (\mu_1-b_1,..,\mu_N-b_1))(diag\ (\lambda_1-a_1,..,\lambda_N-a_1))^{-1}\}
\end{align}
where spectrum of $A^{*}=\{\lambda_i\}_{i=1}^N$ with $\lambda_1\geq..\geq\lambda_i\geq..\geq\lambda_N$ 
and spectrum of $B^{\#}=\{\mu_i\}_{i=1}^N$ with $\mu_1\geq..\geq\mu_i\geq..\geq\mu_N$.\\
The equality in the above inequality \eqref{zz13} takes place if and only if $A^{*}$ commutes with $B^{\#}$.
\end{lemma}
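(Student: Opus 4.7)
\noindent
The plan is to reduce the trace inequality to a classical rearrangement statement for doubly stochastic matrices. Since $A^{*}-a_1 I$ and $B^{\#}-b_1 I$ are symmetric and positive definite (by \eqref{Sd3}), I would write
\begin{equation*}
A^{*}-a_1 I = U D_\lambda U^{T}, \qquad B^{\#}-b_1 I = V D_\mu V^{T},
\end{equation*}
where $D_\lambda = \mathrm{diag}(\lambda_1-a_1,\ldots,\lambda_N-a_1)$ and $D_\mu = \mathrm{diag}(\mu_1-b_1,\ldots,\mu_N-b_1)$ have strictly positive entries arranged in the prescribed decreasing orders, and $U,V\in O(N)$. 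Setting $W = U^{T}V = (w_{ij})$ and using cyclicity of the trace,
\begin{equation*}
tr\bigl\{(A^{*}-a_1I)^{-1}(B^{\#}-b_1I)(A^{*}-a_1I)^{-1}\bigr\}
 = tr\bigl\{D_\lambda^{-2}\, W D_\mu W^{T}\bigr\}
 = \sum_{i,j=1}^{N} \frac{w_{ij}^{2}(\mu_j-b_1)}{(\lambda_i-a_1)^{2}}.
\end{equation*}
Since $W$ is orthogonal, the matrix $S = (w_{ij}^{2})$ is doubly stochastic.

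\noindent
Introducing $\alpha_i := (\lambda_i-a_1)^{-2}$ and $\beta_j := \mu_j - b_1$, the claim \eqref{zz13} is exactly
\begin{equation*}
\sum_{i,j}\alpha_i\, s_{ij}\, \beta_j \;\geq\; \sum_i \alpha_i\,\beta_i.
\end{equation*}
The sequence $\{\alpha_i\}$ is increasing (since $\{\lambda_i - a_1\}$ is positive and decreasing) while $\{\beta_j\}$ is decreasing, so the two sequences are oppositely ordered. By the Birkhoff--von Neumann theorem, $S = \sum_k p_k P_{\sigma_k}$ is a convex combination of permutation matrices, whence
$\sum_{i,j}\alpha_i s_{ij}\beta_j = \sum_k p_k \sum_i \alpha_i\,\beta_{\sigma_k(i)}$. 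The rearrangement inequality applied to each $\sigma_k$ gives $\sum_i \alpha_i \beta_{\sigma_k(i)} \geq \sum_i \alpha_i\beta_i$ (since pairing small $\alpha$'s with large $\beta$'s is the minimizing arrangement), and averaging with weights $p_k$ yields \eqref{zz13}.

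\noindent
For the equality statement, the inequality is strict for every $\sigma_k$ that is not order-preserving, so $p_k = 0$ for all such $k$; only permutations that shuffle indices inside blocks on which $\alpha_i$ (equivalently $\lambda_i$) and $\beta_j$ (equivalently $\mu_j$) are constant may have positive weight. This forces $S$ to be block-diagonal with respect to the partition of $\{1,\ldots,N\}$ induced by the common multiplicities, and hence $W$ itself is block-orthogonal with each block acting within one such common eigenspace. Consequently $U$ and $V$ yield the same orthogonal direct-sum decomposition of $\mathbb{R}^N$ into joint eigenspaces of $A^{*}$ and $B^{\#}$, so the two matrices commute. Conversely, simultaneous diagonalization in the sorted order produces $W$ in precisely the block form above and hence equality. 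The most delicate point I expect is the equality analysis when eigenvalues repeat: one must argue at the level of the Birkhoff decomposition, rather than merely via the weak rearrangement bound, to conclude that $W$ (not just $S$) has the requisite block-orthogonal structure.
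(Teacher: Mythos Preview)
Your reduction to $\mathrm{tr}(D_\lambda^{-2}\,W D_\mu W^{T})=\sum_{i,j}\alpha_i\,w_{ij}^{2}\,\beta_j$ with $S=(w_{ij}^{2})$ doubly stochastic, followed by Birkhoff--von~Neumann and the rearrangement inequality, is exactly the argument behind the trace inequality the paper quotes from \cite{BT}; the paper simply applies that result as a black box with $E=(A^{*}-a_1I)^{-2}$ and $F=B^{\#}-b_1I$. So for the inequality itself your proof and the paper's coincide, yours being self-contained rather than a citation.

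The genuine gap is in your equality analysis. Your assertion that the only permutations achieving equality in the rearrangement step are those that shuffle indices within blocks on which \emph{both} $\alpha$ and $\beta$ are constant is false once the repeated values of $\alpha$ and of $\beta$ occupy non-aligned positions. For instance, with $\alpha=(1,1,2)$ and $\beta=(3,1,1)$ the $3$-cycle $\sigma=(1\,3\,2)$ gives $\alpha_1\beta_3+\alpha_2\beta_1+\alpha_3\beta_2=1+3+2=6=\sum_i\alpha_i\beta_i$, yet $\sigma$ respects no nontrivial block partition. Hence from ``$p_k>0\Rightarrow\sigma_k$ achieves equality'' you cannot infer that $S$, let alone $W$, is block-diagonal for the common-multiplicity partition. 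What actually forces commutation is the \emph{orthogonality} of $W$, which is strictly stronger than double stochasticity of $(w_{ij}^{2})$: in the example above, equality forces $w_{31}=0$, and then the row-orthogonality relations $w_{12}w_{32}+w_{13}w_{33}=0$ and $w_{22}w_{32}+w_{23}w_{33}=0$ (invisible at the level of $S$) give $F_{13}=F_{23}=0$ for $F=WD_\mu W^{T}$, yielding the block structure. So your closing remark points in the wrong direction: the Birkhoff decomposition is not enough, and the orthogonality of $W$ must be used directly. (A minor separate point, shared with the paper's formulation: commutation alone is \emph{not} sufficient for equality --- take $A^{*}-a_1I=\mathrm{diag}(1,1/\sqrt{2})$ and $B^{\#}-b_1I=\mathrm{diag}(1,3)$ --- so your converse sentence, which presumes simultaneous diagonalization in the two prescribed sorted orders, needs that extra hypothesis made explicit.)
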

\bpr
The above inequality \eqref{zz13} is a simple consequence of the following result from \cite{BT}, which says for any two positive semi-definite matrices $E$ and $F$ :
\begin{center}
 $tr\ EF \geq \sum_{i=1}^N \sigma_i(E)\sigma_{N-i+1}(F)$
\end{center}
where $\sigma_1(.)\leq..\leq\sigma_i(.)\leq..\leq\sigma_N(.)$ are the eigenvalues in increasing order. Further the equality holds only if $EF=FE$ in which case there exists an orthogonal matrix $Q$ satisfying
$Q^{-1}EQ =diag(\sigma_1(E),..,\sigma_N(E))$ and $Q^{-1}FQ =diag(\sigma_1(F),..,\sigma_N(F))$.
We apply the above result with the choice $E= (A^{*}-a_1I)^{-2}$ and $F=(B^{\#}-b_1I).$
\hfill\epr
\noindent
In our case, using successively \eqref{zz13}, the inequality $B^{\#}\geq B^{\#}_{N,N}$ (cf. Remark \ref{zz12}) and finally \eqref{zz20} in that order, we see that equality holds in \eqref{zz13}. As a consequence, it follows that
$B^{\#}$ and $B^{\#}_{N,N}$ have the same spectra and that $A^{*}$, $B^{\#}$ commute. Therefore, there exists an orthogonal matrix $Q$ diagonalizing $A^{*}$ and $B^{\#}$ simultaneously to give 
$A^{*}= Q(diag\ (\lambda_1,..,\lambda_N))Q^{-1}$ and $B^{\#}= Q(diag\ (\mu_1,..,\mu_N))Q^{-1}$. Now we can use as before change of variables technique using $Q$ instead of $P$. More precisely, we define $\Omega^\prime = Q^{-1}(\Omega)$ and 
$A^\epsilon(x) = QA^\epsilon_N(x^\prime)Q^{-1}$, 
$B^\epsilon(x) = QB^\epsilon_{N,N}(x^\prime)Q^{-1}$, $x\in\Omega$, $x^\prime\in\Omega^\prime$, with $x^\prime = Q^{-1}x$.
Applying Lemma \ref{pol6} it follows that $A^\epsilon \xrightarrow{H} A^{*}$ and  $B^\epsilon(x)\xrightarrow{A^\epsilon} QB^{\#}_{N,N}Q^{-1}=B^{\#}$ in $\Omega$.
\\

\noindent
\textit{Case (2) :}
Now let us consider $A^{*}\in int(\mathcal{G}_{\theta_A})$ arbitrary and continue to assume the equality in L1 bound.  This implies  $A^{*}\in \partial\mathcal{G}_{\theta}^L$ for some $\theta<\theta_A$.
Recall the spectrum of $A^{*}$ is denoted $\{\lambda_i\}_{i=1}^N$ with $\lambda_1\geq\ldots\geq\lambda_i\geq\ldots\geq\lambda_N$. In this case also, there exists $\widetilde{A}^\epsilon_N(\widetilde{x})$ with $\widetilde{x}\in\widetilde{\Omega}$ constructed through  
sequential laminated microstructures  such that $\widetilde{A}^\epsilon_N \xrightarrow{H} \widetilde{A}^{*}_N$
in $\widetilde{\Omega}$ with $\widetilde{A}^{*}_N= diag(\lambda_1,..,\lambda_N)$ with local proportion $\theta_A$ (see \cite[Page no. 125]{A}). Using this $\widetilde{A}^\epsilon_N$, we construct $\widetilde{B}^\epsilon_{N,N}$ (see Example \ref{ot5}) such that $\widetilde{B}^\epsilon_{N,N}\xrightarrow{\widetilde{A}^\epsilon_N}\widetilde{B}_{N,N}^{\#}$ in $\widetilde{\Omega}$ 
and the pair $(\widetilde{A}^{*}_N, \widetilde{B}^{\#}_{N,N})$ achieves the equality in the L1 bound. Then as in the previous case, we apply Remark \ref{zz12} over the fibre on $A^{*}\in \partial\mathcal{G}_{\theta}^L$ and Lemma \ref{zz14} to deduce that the spectra of $\widetilde{B}^{\#}_{N,N}$ and $B^{\#}$ is same and consequently the only if part of the lemma gives that there exists an orthogonal matrix $R$ such that 
$R\widetilde{A}^{*}_NR^{-1}=A^{*}$ and $R\widetilde{B}^{\#}_{N,N}R^{-1}=B^{\#}$ in $\Omega$. Now by following the change of variables techniques $R: \widetilde{\Omega} \mapsto \Omega$ as above, we obtain the pair $(A^\epsilon,B^\epsilon)$ satisfying \eqref{ta},\eqref{tb} such that $A^\epsilon\xrightarrow{H} A^{*}$ and $B^\epsilon\xrightarrow{A^\epsilon} B^{\#}$ in $\Omega$.
Let us remark that the hypothesis $\theta_A\leq\theta_B$ is needed in the construction of $\widetilde{B}^{\epsilon}_{N,N}$ with $\omega_{\widetilde{A}^\epsilon_N}\subseteq \omega_{\widetilde{B}^\epsilon_{N,N}}$.\\

\noindent
\textit{Case (3) :}
The two cases above complete the proof of the saturation / optimality of the L1 bound.  Similarly one can establish the saturation / optimality of the bound U1 using the hypothesis that $\theta_A +\theta_B\leq 1$. \\

\noindent
\textit{Case (4) :}
Here we prove the optimality of the region $(L1,U1)$, assuming of course that $\theta_A\leq \theta_B$ and $\theta_A+\theta_B\leq 1$.  Take $(A^{*},B^{\#})$ lying in the region (L1,U1). Of course we will have  $A^{*}\in\mathcal{G}_{\theta_A}$. We treat the special case $A^{*}\in\partial\mathcal{G}_{\theta_A}^L$ 
(for $A^{*}\in int(\mathcal{G}_{\theta_A})$, we follow the arguments of the Case (2) above).  
As before, let $A^{*}_N = diag(\lambda_1,\ldots,\lambda_N)$, where $\{\lambda_i\}_{1\leq i\leq N}=$ spectrum of $A^{*}$. It is classical to  construct $N$-rank laminate $A^\epsilon_N$ (core $a_2$ and matrix $a_1$) such that $A^\epsilon_N \xrightarrow{H} A^{*}_N$ in $\Omega$. 
Using the saturation / optimality of the L1 and U1 individually (case (3)), we can construct as before $B^{\epsilon,1}_{N,N}$ and $B^{\epsilon,2}_{N,N}$ which converge in the relative sense with respect to $A^\epsilon_N$  where $\omega_{A^\epsilon_N}\subseteq \omega_{B^{\epsilon,1}_{N,N}}$ and $\omega_{A^\epsilon_N}\subseteq \omega^c_{B^{\epsilon,2}_{N,N}}$, (i.e. underline $A^\epsilon_N$ is common for both $B^{\epsilon,1}_{N,N}$ and $B^{\epsilon,2}_{N,N}$) to, say, $B^{\#,1}_{N,N}$ and $B^{\#,2}_{N,N}$ respectively, which are diagonal, such that $(A^{*}_N,B^{\#,1}_{N,N})$ and $(A^{*}_N,B^{\#,2}_{N,N})$ achieve the equality in L1 and U1 trace bounds respectively. 

\noindent
As $(A^{*},B^{\#})$ with $A^{*}\in\partial\mathcal{G}^L_{\theta_A}$ satisfies  both trace bounds L1 (cf.\eqref{eig}) and U1 (cf.\eqref{FL2}), i.e. 
\begin{align}
&tr\ \{(B^{\#}-b_1I)(A^{*}-a_1I)^{-2}\} = \alpha_1 \ (\mbox{say})\ \geq \ tr\ \{(B^{\#,1}_{N,N}-b_1I)(A^{*}_N-a_1I)^{-2}\} =\widetilde{\alpha}_1 \ (\mbox{say}) \label{FL3}\\
&tr\ \{(b_2I-B^{\#})(A^{*}-a_1I)^{-2}\} = \alpha_2\ (\mbox{say})\ \geq \  tr\ \{(b_2I-B^{\#,2}_{N,N})(A^{*}_N-a_1I)^{-2}\}=\widetilde{\alpha}_2\ (\mbox{say}), \label{bs15}
\end{align} 
we can find some scalars $\beta_1\geq 0$ and $\beta_2\geq 0$ such that :
\begin{align}
tr\ \{((B^{\#}-\beta_1I)-b_1I)(A^{*}-a_1I)^{-2}\} &= tr\   \{(B^{\#,1}_{N,N}-b_1I)(A^{*}_N-a_1I)^{-2}\} \label{pol3}\\
tr\ \{(b_2I-(B^{\#}+\beta_2I))(A^{*}-a_1I)^{-2}\} & = tr\ \{(b_2I-B^{\#,2}_{N,N})(A^{*}_N-a_1I)^{-2}\}. \label{pol4}
\end{align}
In fact, they are given by 
\begin{equation}
 \beta_1 = \frac{\alpha_1-\widetilde{\alpha}_1}{tr\ (A^{*}-a_1I)^{-2}}\ \mbox{ and } \ \beta_2 = \frac{\alpha_2-\widetilde{\alpha}_2}{tr\ (A^{*}-a_1I)^{-2}}.
\end{equation}
Let us define
\begin{equation}\label{ED6}
B^\epsilon_{N,N}\stackrel{def}{=} 
\frac{1}{(\beta_1+\beta_2)}(\beta_2B^{\epsilon,1}_{N,N}+ \beta_1B^{\epsilon,2}_{N,N}) 
\end{equation}
then using Lemma \ref{pol5}, it is easily checked that 
\begin{equation}\label{ED7}
B^{\epsilon}_{N,N} \xrightarrow{A^\epsilon_N} \frac{1}{(\beta_1+\beta_2)}(\beta_2B^{\#,1}_{N,N}+ \beta_1B^{\#,2}_{N,N}) \stackrel{def}{=} B^{\#}_{N,N} \mbox{ in }\Omega.
\end{equation}
Simple computation using \eqref{pol3},\eqref{pol4} and \eqref{ED7} also shows that
\begin{align}
tr\ \{(B^{\#}-b_1I)(A^{*}-a_1I)^{-2}\} & = tr\   \{(B^{\#}_{N,N}-b_1I)(A^{*}_N-a_1I)^{-2}\}\label{ED8}\\
tr\ \{(b_2I-B^{\#})(A^{*}-a_1I)^{-2}\}  &= tr\ \{(b_2I-B^{\#}_{N,N})(A^{*}_N-a_1I)^{-2}\}.\label{ED9}
\end{align}
Now from \eqref{pol3} by using the similar arguments presented in case$(1)$ we get :
\begin{equation}\label{eg1}
\mbox{Spectrum of $(B^{\#}-\beta_1I)=$ Spectrum of $B^{\#,1}_{N,N}$,}
\end{equation}
and similarly, from \eqref{pol4} by using the similar arguments presented in case$(3)$ we get :
\begin{equation}\label{eg2}
\mbox{Spectrum of $(\beta_2I +B^{\#})=$ Spectrum of $B^{\#,2}_{N,N}$}.
\end{equation}
We claim now that 
\begin{equation}\label{ED10}
\mbox{Spectrum of $B^{\#}=$ Spectrum of $B^{\#}_{N,N}$. }
\end{equation}
Proof of the claim \eqref{ED10} : Let us assume that $\mu_k$ is an eigenvalue of $B^{\#}$ with eigenvector $u_k$, i.e. $B^{\#}u_k =\mu_k u_k$ with $\mu_1\geq\ldots\geq\mu_N$. Then from \eqref{eg1}
shows that the spectrum of $(B^{\#}-\beta_1 I)$ is $\{\mu_i-\beta_1\}$ in decreasing order. Since $(B^{\#}-\beta_1 I)$ is a diagonal matrix , it follows that $(B^{\#}-\beta_1 I) = diag\ \{\mu_i-\beta_1\}$. Similarly \eqref{eg2} shows that, $(\beta_2 I+B^{\#}) = diag\ \{\beta_2+\mu_i\}$.
Now from the definitaion \eqref{ED7} of $B^{\#}_{N,N}$, it follows that $B^{\#}_{N,N} = diag\ \{\mu_i\}$. In particular, the claim \eqref{ED10} follows as a consequence. \\
Thanks to \eqref{ED10}, we are in position to apply the converse part of the  Lemma \ref{zz14} in \eqref{ED8} or \eqref{ED9}, we conclude that $A^{*}$, $B^{\#}$ commute. So there exists an orthogonal matrix $S$ (say) such that $SA^{*}_NS^{-1} = A^{*}$ and $SB^{\#}_{N,N}S^{-1}=B^{\#}$ in $\Omega$ and consequently, $A^\epsilon \stackrel{def}{=}SA^\epsilon_NS^{-1}\xrightarrow{H} A^{*}$ and $B^\epsilon 
\stackrel{def}{=}SB^\epsilon_{N,N}S^{-1}\xrightarrow{A^\epsilon} B^{\#}$ in $\Omega$.
\begin{remark}
 It is important that there should be one common sequence $A^\epsilon$ in the proof of optimality of the region (L1,U1). That is why, we have to go through these arguments.
\end{remark}
\noindent
\textit{Case (5) :}
The optimality of the other regions $(Li,Uj)$ can be shown analogously.
\paragraph{Second Part : Reduction to Macroscopically homogeneous case i.e. $(A^{*},B^{\#})$ and $(\theta_A,\theta_B)$ are constants :}
Here we will show how to reduce the general case of variables $A^{*}(x),B^{\#}(x)$ and $\theta_A(x),\theta_B(x)$  to the special case where 
they are constants. 
It is based on piecewise constant approximation of functions. Such a procedure was adapted in the context of $H$-limits $A^{*}$ (see \cite[Theorem 2.1.2]{A}). With the same inspiration, we want to extend it to include the relative limits $B^{\#}$ also. To this end, we make use of fibre-wise convexity and Lemma \ref{zz15}, Remarks \ref{ub13}, \ref{sii}, \ref{qw1}.  

Let us begin by recalling few elements of the reduction for $A^{*}$.
Let $A^{*}(x)\in \mathcal{G}_{\theta_A(x)}$,  $x\in\Omega$ a.e. be given.  
Let $\{\omega^n_k\}_{1\leq k\leq n}$ be an arbitrary family of disjoint open subsets covering $\Omega$ upto null sets, such that the maximal diameter of the collection goes to $0$ as $n\rightarrow\infty$. 
Let us define a piecewise constant function $\theta^n_A\in L^\infty(\Omega)$ by 
\begin{equation*} \theta^n_A(x) = \sum_{j=1}^n\theta_{A,k}^n\chi_{\omega_k^n}(x) \mbox{ with } \theta_{A,k}^n = \frac{1}{|\omega_k^n|}\int_{\omega_k^n}\theta_A(x) dx. 
\end{equation*}
Then $\theta_A^n(x)\rightarrow \theta_A(x)$ strongly in $L^p(\Omega)$ for $1\leq p<\infty$. We recall the definition of the closed convex set $\mathcal{G}_{\theta_{A,k}^n}$ for $\theta_{A,k}^n$ being a constant :
\begin{align*}
  \mathcal{G}_{\theta_{A,k}^n} = \{ A^{*}\ | \ A^{*} \mbox{ is a constant matrix and it satisfies bounds \eqref{FL11}  } \ &\\
  \mbox{ with constant in proportion }\theta_{A,k}^n\}.&
 \end{align*}
By what is proved in the First Part above, $\mathcal{G}_{\theta_{A,k}^n}$  can also be characterized as 
 \begin{align*}
 \mathcal{G}_{\theta_{A,k}^n} = \{ A^{*}\ | \  A^{*} \mbox{ is a constant matrix and it is a $H$-limit in }\omega_k^n \ & \\
 \mbox{ with $(a_1,a_2)$ constant proportions } (\theta_{A,k}^n,1-\theta_{A,k}^n)\}. &
\end{align*} 
Now, in each open set $\omega_k^n$, we set 
\begin{equation*}
\widetilde{A}^n(x)  \stackrel{def}{=} \mbox{ projection of }A^{*}(x) \mbox{ onto } \mathcal{G}_{\theta_{A,k}^n}.
\end{equation*}
Then for a.e. $x\in\omega_k^n$, following \eqref{qw9} we have 
\begin{equation*}
 |A^{*}(x)-\widetilde{A}^n(x)| \leq C|\theta_A(x)-\theta_{A,k}^n|^{\delta_A},
\end{equation*}
for some $C>0,\delta_A>0$
and therefore the sequence $\widetilde{A}^n$ converges strongly to $A^{*}$ in $L^p(\Omega)$ for any $1\leq p<\infty$. Each matrix  $\widetilde{A}^n$  is not yet piecewise constant. Therefore, we define a sequence of piecewise constant matrices
\begin{equation*}
 \widehat{A}^n(x) = \sum_{k=1}^n\widehat{A}^n_k\chi_{\omega_k^n}(x) \mbox{ with } \widehat{A}^n_k = \frac{1}{|\omega_k^n|}\int_{\omega_k^n}\widetilde{A}^n(x) dx .
\end{equation*}
It follows that the sequence $\widehat{A}^n$ also converges strongly to $A^{*}$ in $L^p(\Omega)$. 
Unfortunately, there is no guarantee that
each $\widehat{A}^n$ belongs to $\mathcal{G}_{\theta_{A,k}^n}$. 
Therefore, we define a constant matrix $A^n_k$ as
\begin{equation*}
A^n_k  \stackrel{def}{=} \mbox{ projection of }\widehat{A}^n_k \mbox{ onto } \mathcal{G}_{\theta_{A,k}^n}.
\end{equation*}
This yields a piecewise constant matrix
\begin{equation*}
 A^n(x) =\sum_{k=1}^nA^n_k\chi_{\omega_k^n}(x).
\end{equation*}
Then $A^n(x)\in \mathcal{G}_{\theta_{A,k}^n}$, $x\in\omega_k^n$
and $A^n$ converges to $A^{*}$ strongly in $L^p(\Omega)$ for $1\leq p<\infty$ and so $A^n$ $H$-converges to $A^{*}$ (see Remark \ref{sii}).

Thanks to Lemma \ref{zz15} and the fibre-wise convexity structure (cf. Remark \ref{qw1}), we can follow the above arguments to obtain the required approximation of $B^{\#}(x)$ too. Indeed, let $(A^{*}(x),B^{\#}(x))\in\mathcal{G}_{(\theta_A(x),\theta_B(x))}$ with $A^{*}(x)\in\mathcal{G}_{\theta_A(x)}$, $x\in\Omega$ a.e. be given. Working with the same partition $\{\omega_k^n\}_k$ and proceeding as before, we define a piecewise constant function $\theta^n_B\in L^\infty(\Omega)$ by 
\begin{equation*} 
\theta^n_B(x) = \sum_{j=1}^n\theta_{B,k}^n\chi_{\omega_k^n}(x) \mbox{ with } \theta_{B,k}^n = \frac{1}{|\omega_k^n|}\int_{\omega_k^n}\theta_B(x) dx
\end{equation*}
We recall that the closed convex set $\mathcal{K}^f_{(\theta_{A,k}^n,\theta_{B,k}^n)}(A^{*})$ for $\theta_{A,k}^n,\theta_{B,k}^n$ being constants, can be characterized as shown in the First Part above, i.e. 
\begin{align*}
 \mathcal{K}^f_{(\theta_{A,k}^n,\theta_{B,k}^n)}(A^{*}) = \{ & B^{\#};\ \ B^{\#} \mbox{ is a constant matrix and it is a relative limit in }\omega_k^n \\
& \mbox{associated with }A^{*}\in \mathcal{G}_{\theta_{A,k}^n}, 
 \mbox{ with constant proportions } (\theta_{A,k}^n,\theta_{B,k}^n)\}. 
\end{align*}
Now, in each open set $\omega_k^n$, we define 
\begin{equation*}
 \widetilde{B}^n(x) \stackrel{def}{=}\mbox{ projection of }B^{\#}(x) \mbox{ onto } \mathcal{K}^f_{(\theta_{A,k}^n,\theta_{B,k}^n)}(A^{*}(x)).
\end{equation*}
Note that because the fibre over $A^{*}$ is convex and closed, this is well defined. \\
\\
Then for a.e. $x\in\omega_k^n$, using \eqref{qw8} we have 
\begin{equation*}
 |B^{\#}(x)-\widetilde{B}^n(x)| \leq C\lb|\theta_A(x)-\theta_{A,k}^n|^{\delta_A} + |\theta_B(x)-\theta_{B,k}^n|^{\delta_B}\rb,
\end{equation*}
for some $C>0$, $\delta_A,\delta_B>0$ and therefore the sequence $\widetilde{B}^n$ converges strongly to $B^{\#}$ in $L^p(\Omega)$ for any $1\leq p<\infty$. Each matrix  $\widetilde{B}^n$  is not yet piecewise constant. Therefore, we define a sequence of piecewise constant matrices
\begin{equation*}
 \widehat{B}^n(x) = \sum_{k=1}^n\widehat{B}^n_k\chi_{\omega_k^n}(x) \mbox{ with } \widehat{B}^n_k = \frac{1}{|\omega_k^n|}\int_{\omega_k^n}\widetilde{B}^n(x) dx .
\end{equation*}
It follows that the sequence $\widehat{B}^n$ also converges strongly to $B^{\#}$ in $L^p(\Omega)$. 
Next, we define a constant matrix $B_j^n$ as
\begin{equation*}
 B^n_k \stackrel{def}{=}\mbox{ projection of }\widetilde{B}^n_k \mbox{ onto } \mathcal{K}^f_{(\theta_{A,k}^n,\theta_{B,k}^n)}(A^{*}).
\end{equation*}
This yields a piecewise constant matrix
\begin{equation*}
 B^n(x) =\sum_{k=1}^nB^n_k\chi_{\omega_k^n}(x), 
\end{equation*}
which is a relative-limit, i.e., belongs to $\mathcal{K}^f_{(\theta_{A,k}^n,\theta_{B,k}^n)}(A^{*})$ for fixed $A^{*}\in \mathcal{G}_{\theta_{A,k}^n}$. Let us prove that the
sequence $B^n$ converges to $B^{\#}$ strongly in $L^p(\Omega)$. 
Then by Remark \ref{sii} $B^n$ will converge to $B^{\#}$ relative to $A^n$.  
By construction, the projection $B^n_k$ satisfies
\begin{equation}
 |B^n(x)-\widehat{B}^n(x)| \leq |\widetilde{B}^n(x)-\widehat{B}^n(x)|.
\end{equation}
Therefore,
\begin{align*}
 |B^n(x)-B^{\#}(x)| \leq 2|\widehat{B}^n(x)-\widetilde{B}^n(x)| +|\widetilde{B}^n(x)-B^{\#}(x)|.
\end{align*}
We know that both of these terms in the right hand side converges strongly to $0$ in $L^p$, so we deduce $B^n$ converges
strongly to $B^{\#}$ in $L^p(\Omega)$ for any $1\leq p<\infty$. Hence 
$B^n$ converges to $B^{\#}$  relative to the sequence $A^n$ in $\Omega$. \\
\\
In our problem, we have four regions $(Li,Uj)$ $i,j=1,2$, and the corresponding physical domains $\Omega_{(Li,Uj)}$ $i,j=1,2$ are defined as follows :
\begin{equation}\begin{aligned}\label{FL1}
\Omega_{(L1,U1)} &= \{ x\in\Omega : \theta_A(x)\leq \theta_B(x),  \theta_A(x)+\theta_B(x)\leq 1\},\\
\Omega_{(L1,U2)} &= \{ x\in\Omega : \theta_A(x)\leq \theta_B(x), \theta_A(x)+\theta_B(x)> 1\},\\
\Omega_{(L2,U1)} &= \{ x\in\Omega : \theta_B(x)< \theta_A(x), \theta_A(x)+\theta_B(x)\leq 1\},\\
\Omega_{(L2,U2)} &= \{ x\in\Omega : \theta_B(x)< \theta_A(x), \theta_A(x)+\theta_B(x)> 1\}.
\end{aligned}\end{equation}
These domains provide a measurable disjoint cover for $\Omega$ : $\Omega = \underset{i,j=1,2}{\cup}\Omega_{(Li,Uj)}$.  \\
\\
In the above arguments we may replace the open set $\Omega$ by $int(\Omega_{(Li,Uj)})$ and deal with its open covering given by  $\{int(\Omega_{(Li,Uj)})\cap \omega^n_k\}_k$. Combination of the optimality result of first part and the approximation result of the second part yields the optimality of the region $(Li,Uj)$ in the sub-domain $int(\Omega_{(Li,Uj)})$. Since these step is analogous to the treatment of the classical case $A^{*}$, we will not give details. 

\paragraph{Third part : Case (a) : }  
If the interior of $\Omega_{(Li,Uj)}$, $i,j=1,2$ cover $\Omega$ upto a null-set, we can prove the optimality of all the four regions as follows using the optimality of individual four regions established in First and Second parts.
Let us assume that
$$|\Omega\ \smallsetminus\ \underset{i,j=1,2}{\cup}\ int\hspace{2pt}(\Omega_{(Li,Uj)})|=0.$$
Then according to First and Second parts, given $(A^{*}(x),B^{\#}(x))$, $x\in \Omega$, by restricting it on $int(\Omega_{(Li,Uj)})$, there exist $ (A^\epsilon_{ij}, B^\epsilon_{ij})$ with the required property such that 
\begin{align*}
 A^\epsilon_{ij} \rightarrow A^{*}_{ij} \mbox{ in }L^p(int\hspace{2pt}(\Omega_{(Li,Uj)}),
\quad  B^\epsilon_{ij} \rightarrow B^{\#}_{ij} \mbox{ in }L^p(int\hspace{2pt}(\Omega_{(Li,Uj)}), \ \ i,j =1,2, \ \ 1\leq p<\infty.
\end{align*}
Putting them together, we define ($A^\epsilon,B^\epsilon$) on $\Omega$ by
\begin{align*}
 A^\epsilon = A^\epsilon_{ij} \mbox{ on } int\hspace{2pt}(\Omega_{(Li,Uj)}),\quad B^\epsilon = B^\epsilon_{ij} \mbox{ on } int\hspace{2pt}(\Omega_{(Li,Uj)}).
\end{align*}
Then we have 
\begin{align*}
 A^\epsilon \rightarrow A^{*} \mbox{ in }L^p(\Omega),
\quad  B^\epsilon \rightarrow B^{\#} \mbox{ in }L^p(\Omega), \ \ i,j =1,2. 
\end{align*}
Consequently,
\begin{align*}
 A^\epsilon \xrightarrow{H} A^{*} \mbox{ in }\Omega,
\quad  B^\epsilon \xrightarrow{A^\epsilon} B^{\#} \mbox{ in }\Omega, \ \ i,j =1,2. 
\end{align*}
\textbf{Case (b) : } If
\begin{center}$|\Omega \smallsetminus \underset{i,j=1,2}{\cup}\ int\hspace{2pt}(\Omega_{(Li,Uj)})|> 0 $.
\end{center}
Covering arguments given in previous case fail. \\
\\
However,the following covering arguments can be advanced. Indeed using 
Vitali covering (cf. \cite[Theorem 17.1]{DiB}), we have a countable collection of open cubes  $\{\omega_{ij}^k\}_{k=1}^{\infty}$ with finite diameter and with pairwise disjoint interiors,  such that 
\begin{equation*}
|\Omega_{(Li,Uj)}\ \smallsetminus\ \underset{k}{\cup}\ \omega_{ij}^k | =0 \ \mbox{ for each }i,j=1,2. 
\end{equation*}
So, we have 
\begin{equation*}
|\Omega\ \smallsetminus\ \underset{i,j=1,2}{\cup}\underset{k}{\cup}\lb \Omega \cap \omega_{ij}^k\rb |=0.
\end{equation*}
Then again following the First and Second parts, given $(A^{*}(x),B^{\#}(x))$, $x\in \Omega$, by restricting it on $\Omega\cap\omega_{ij}^k$, there exist $((A^\epsilon_{ij})_k, (B^\epsilon_{ij})_k)$ with required properties such that  
\begin{align*}
 (A^\epsilon_{ij})_k \rightarrow A^{*} \mbox{ in }L^p(\Omega\cap \omega_{ij}^k),
 \mbox{ and }  (B^\epsilon_{ij})_k \rightarrow B^{\#} \mbox{ in }L^p(\Omega\cap \omega_{ij}^k),  \ i,j =1,2,\ k=1,2,.. 
\end{align*}
Putting them together, we define $(A^\epsilon,B^\epsilon)$ on $\Omega$ via 
\begin{align*}
 A^\epsilon = (A^\epsilon_{ij})_k \mbox{ on } \Omega_{(Li,Uj)}\cap\omega_{ij}^k \mbox{ a.e. }\quad B^\epsilon = (B^\epsilon_{ij})_k \mbox{ on } \Omega_{(Li,Uj)}\cap\omega_{ij}^k \mbox{ a.e. }.
\end{align*}
Then we have 
\begin{align*}
 A^\epsilon \rightarrow A^{*} \mbox{ in }L^p(\Omega),
\quad  B^\epsilon \rightarrow B^{\#} \mbox{ in }L^p(\Omega), \ \ i,j =1,2. 
\end{align*}
Consequently,
\begin{align*}
 A^\epsilon \xrightarrow{H} A^{*} \mbox{ in }\Omega,
\quad  B^\epsilon \xrightarrow{A^\epsilon} B^{\#} \mbox{ in }\Omega, \ \ i,j =1,2. 
\end{align*}
\hfill\end{proof}
\begin{remark}\label{eg3}
Before closing this section, we wish to highlight one important result which was intermediary. 
In the First Part of the proof, we have shown that for any given $(A^{*},B^{\#})\in \mathcal{K}_{(\theta_A,\theta_B)}$ i.e. being in one of the regions $(Li,Uj)$ $i,j=1,2$, i.e. $B^{\#}$ lying on the fibre over $A^{*}$, we have the commutation relation :  
\begin{equation}\label{eg4}
A^{*}B^{\#} = B^{\#}A^{*}.
\end{equation} 
The above result is extended later to
\begin{equation}
 A^{*}(x)B^{\#}(x) = B^{\#}(x)A^{*}(x), \ \ x \mbox{ a.e. in }\Omega.  
\end{equation}
by the arguments in Second and Third Parts. This is somewhat a surprising property.
\hfill\qed\end{remark}
\section{Applications in Calculus of Variations}\label{qw5}
\setcounter{equation}{0}
In this section we mention two applications of our earlier results. In particular we make use of the optimality of the bounds. 
\subsection{Application to an Optimal Design Problem}
Here we mention one application in optimal design problem (ODP). Let us consider the following model problem in ODP already treated in literature \cite{GRB}. 
Our contribution is merely to point out the convergence of integrands (and hence integrals) and the advantage of using $I^{\#}$  in the relaxation process.
We introduce some notations. Let us denote by Char($\Omega$) the set of all characteristic functions of measurable subsets of $\Omega$, i.e.
\begin{equation*} \mbox{Char}(\Omega) = \{\chi :\Omega \mapsto \{0,1\} \mbox{ measurable}\}.  \end{equation*}
For a given $\delta_A\in (0,1)$, let us consider the set $\mathcal{C}_{\delta_A}$ of classical microstructures defined by 
\begin{equation}\label{hsu} \mathcal{C}_{\delta_A} = \{\chi \in \mbox{Char}(\Omega) : \frac{1}{|\Omega|}\int_{\Omega} \chi(x) dx = {\delta_A}\}\end{equation}
and for any $\chi_{\omega_A}\in $Char$(\Omega$), we define the functional $J(\chi_{\omega_A})$ as follows 
\begin{equation}\begin{aligned}\label{tz}
 J :\mbox{Char}(\Omega) &\mapsto \mathbb{R}\\ 
 \chi_{\omega_A} &\mapsto J(\chi_{\omega_A}) := \int_{\Omega} \nabla u_{\omega_A} \cdot\nabla u_{\omega_A}\ dx,
\end{aligned}\end{equation}
where $u_{\omega_A}\in H^1_0(\Omega)$ is the solution the following state equation with a given $g\in H^{-1}(\Omega)$ :
\begin{equation}\label{tx}
\begin{aligned}       -div(A\nabla u_{\omega_A})&= g\quad\mbox{in } \Omega\\
                              u_{\omega_A}&= 0 \quad\mbox{on }\partial\Omega.
\end{aligned}
\end{equation}
with $A\in \mathcal{M}(a_1,a_2;\Omega)$ is governed with two-phase medium as 
\begin{equation*} A(x) = \{a_1\chi_{\omega_A}(x) +a_2(1-\chi_{\omega_A}(x))\}I,\ \ x\in\Omega. \end{equation*}
We are interested in finding the infimum value and minimizers of
\begin{equation}\label{eiv}
m=\underset{\chi_{\omega_A}\in \mathcal{C}_{\delta_A}}{\mbox{inf }}J(\chi_{\omega_A}). \end{equation}
To this end, we do relaxation. For any ${\delta_A}\in (0,1)$, let us consider the set $\mathcal{D}_{\delta_A}$ of generalized microstructures
defined by 
\begin{equation}\label{hsv} \mathcal{D}_{\delta_A} = \{\theta \in L^\infty(\Omega;[0,1]): \frac{1}{|\Omega|}\int_\Omega \theta(x) dx  = {\delta_A} \}.\end{equation}
To find the relaxed version of \eqref{eiv}, we consider a sequence
$A^\epsilon\in\mathcal{M}(a_1,a_2;\Omega)$ as in \eqref{ta} with $\A\in \mathcal{C}_{\delta_A}$ and the corresponding state $u^\epsilon\in H^1_0(\Omega)$ solving \eqref{tx}. The functional becomes
\begin{equation}\label{ot6} J(\chi_{\omega_{A^\epsilon}}) = \int_{\Omega}\nabla u^{\epsilon}\cdot\nabla u^{\epsilon}\ dx; \end{equation}
The microstructures $\A$ constitute the following admissible set in which $\delta_A$ is given :  
\begin{align}\label{ED5}
\mathcal{S} = &\mbox{ Collection of all microstructures } \A \mbox{ satisfying the following conditions : }\notag\\
 &\frac{|\omega_{A^\epsilon}|}{|\Omega|}=\delta_A\ \forall \epsilon,\ \A \rightharpoonup \theta_A(x)\mbox{ in $L^\infty(\Omega)$ weak* with }\frac{1}{|\Omega|}\int_\Omega \theta_A(x)dx =\delta_A, \notag\\
 &\mbox{ and } A^\epsilon(x)\xrightarrow{H} A^{*}(x).
\end{align}
Then $A^{*}\in\mathcal{G}_{\theta_A}$ and \eqref{eiv} becomes 
\begin{equation}\label{ot7} m = \underset{\chi_{\omega_{A^\epsilon}}\in \mathcal{S}}{inf}\lb\underset{\epsilon\rightarrow 0}{lim\hspace{2pt}inf}\ \int_\Omega \nabla u^\epsilon\cdot\nabla u^\epsilon\ dx\rb.\end{equation}
Our main claim in this paragraph is that 
\begin{equation}\label{ot8} m = \underset{\theta_A\in \mathcal{D}_{\delta_A}}{min}\lb\underset{A^{*}\in\mathcal{G}_{\theta_A}}{min}\ \int_\Omega \frac{|(a_2I-A^{*}(x))\nabla u_{\theta_A}(x)|^2}{a_2(a_2-a_1)\theta_A(x)} dx+\frac{1}{a_2}\langle g,u_{\theta_A}\rangle \rb,\end{equation}
where, $u_{\theta_A}\in H^1_0(\Omega)$ is the solution of the following homogenized equation with $A^{*}\in\mathcal{G}_{\theta_A}$ :
\begin{equation}\label{ot19}\begin{aligned}
-div(A^{*}\nabla u_{\theta_A}) &= g \mbox{ in }\Omega\\
u_{\theta_A} &=0 \mbox{ on } \Omega .
\end{aligned}
\end{equation}
In order to show the above relaxation identity \eqref{ot8}, we pass to the limit $\epsilon\rightarrow 0$ in \eqref{ot6} and we have
\begin{equation}
J(\theta_A,A^{*},I^{\#}):=\  \underset{\epsilon\rightarrow 0}{lim\hspace{2pt}inf}\  J(\chi_{\omega_{A^\epsilon}}) = \int_{\Omega}I^{\#}\nabla u_{\theta_A}\cdot\nabla u_{\theta_A}\ dx \end{equation}
where $ u^{\epsilon} \rightharpoonup u_{\theta_A} \mbox{ weakly in }H^{1}_{0}(\Omega),\ A^{\epsilon}\nabla u^{\epsilon} \rightharpoonup A^{*}\nabla u_{\theta_A} \mbox{ weakly in }(L^{2}(\Omega))^N$, with $A^{*}\in\mathcal{G}_{\theta_A}$
and the pair $(A^{*},I^{\#})\in \mathcal{G}_{(\theta_A,1)}$, the set defined by the lower trace bound L (cf.\eqref{tw}) and upper trace bound U (cf.\eqref{tq}) with $b=1$. (See \eqref{qw2} for the characterization of $\mathcal{G}_{(\theta_A,1)}$). The optimality of the region defined by the above bounds gives that the relaxation of \eqref{eiv} :
\begin{equation}\label{lb13} 
m =\ \mbox{min }\{ J(\theta_A, A^{*},I^{\#});\ \theta_A\in\mathcal{D}_{\delta_A},\ (A^{*},I^{\#})\in\mathcal{G}_{(\theta_A,1)}\}.
\end{equation}
Now using the lower bound \eqref{bs1} in Remark \ref{hsg}, we have the pointwise lower bound on the integrand $I^{\#}\nabla u_{\theta_A}\cdot\nabla u_{\theta_A}$ :
\begin{equation}\label{os1}
I^{\#}\nabla u_{\theta_A}\cdot\nabla u_{\theta_A} \geq\ {a_2}^{-1}\{A^{*} + (a_2I-\overline{A})^{-1}(a_2I-A^{*})^2\}\nabla u_{\theta_A}\cdot \nabla u_{\theta_A}.
\end{equation}
Therefore, 
\begin{equation}\label{eg8}
m \geq \underset{\theta_A\in \mathcal{D}_{\delta_A}}{min}\lb\underset{A^{*}\in\mathcal{G}_{\theta_A}}{min}\ \int_\Omega h(A^{*}) \nabla u_{\theta_A}\cdot \nabla u_{\theta_A}dx\rb
\end{equation}
where $h$ is the map
$$ A^{*}\mapsto h(A^{*}):= {a_2}^{-1}\{A^{*} + (a_2I-\overline{A})^{-1}(a_2I-A^{*})^2\}.$$
Now we vary $A^{*}\in\mathcal{G}_{\theta_A}$ along the horizontal segment connecting  $A^{*}\in\mathcal{G}_{\theta_A}$ to the $N$-rank laminate  $A^{*}_N\in\partial\mathcal{G}^{U}_{\theta_A}$ in the phase space of eigenvalues of  $A^{*}$. Note that, the eigenvalues $\{\lambda_2,..,\lambda_N\}$ of matrices of $A^{*}$ do not change whereas $\lambda_1$ 
increases along the segment. Let us restrict $h$ to the above segment. Such a restriction is a function of a single variable $t$ along the segment. It is easy to check that
$$\frac{d h}{d t}|_{t=\lambda_1} = \frac{(2t-a_2-\overline{a})}{a_2(a_2-\overline{a})}|_{t=\lambda_1} = \frac{(2\lambda_1-a_2-\overline{a})}{a_2(a_2-\overline{a})} <0 $$
as $\lambda_1\leq\overline{a}=(a_1\theta_A+(1-\theta_A)a_2) \mbox{ and }\overline{a}<a_2$.\\
\\
Thus $h$ is decreasing as $\lambda_1$ increases. Hence we get $h(A^{*})\geq h(A^{*}_N)$.\\
\\
Now from the Remark \ref{hsg}, for each $A^{*}_N\in\partial\mathcal{G}^{U}_{\theta_A}$ 
given by \eqref{OP3} with matrix $a_2I$ and core $a_1I$, the corresponding $N$- sequential laminates $I^{\#}_N$ given in \eqref{bs13} achieve the equality in the minimization problem \eqref{eg8}.
Thus from \eqref{lb13}, it follows that
$$m\  \ = \int_{\Omega} I^{\#}_N \nabla u^{*}_{\theta_A}\cdot\nabla u^{*}_{\theta_A}\ dx. $$
Simple computations show that 
$$m = \int_\Omega \frac{|(a_2I-A^{*}_N(x))\nabla u^{*}_{\theta_A}(x)|^2}{a_2(a_2-a_1)\theta_A(x)}dx +\frac{1}{a_2}\langle g(x),u^{*}_{\theta_A}(x)\rangle $$
where, $u^{*}_{\theta_A}$ is the solution of \eqref{ot19} corresponding to the $N$-sequential laminates $A^{*}_N$ with matrix $a_2I$ and core $a_1I$. 
In particular, \eqref{ot8} follows, describing its minimum value as well as the minimizers and the underlying microstructures.
\hfill\qed

\subsection{Application to an Optimal Oscillation-Dissipation Problem}\label{qw10}
Here we mention another application to a minimization problem, where the where the results of Section \ref{ad18} are useful. Let us define the functional $J(\chi_{\omega_A},\chi_{\omega_B})$ as
\begin{equation}\begin{aligned}\label{ub11}
 J :\mbox{Char}(\Omega)\times \mbox{Char}(\Omega)  &\mapsto \mathbb{R}\\ 
 (\chi_{\omega_A},\chi_{\omega_B}) &\mapsto J(\chi_{\omega_A},\chi_{\omega_B}) := \int_{\Omega} B\nabla u_{\omega_A} \cdot\nabla u_{\omega_A} dx,
\end{aligned}\end{equation}
where $u_{\omega_A}$ solves \eqref{tx}, and $B$ is governed with the 
two phase medium as
\begin{equation*} B(x) = \{b_1\chi_{\omega_B} +b_2(1-\chi_{\omega_B})\}I, \ x\in\Omega \end{equation*}
with $\chi_{\omega_B}(x)\in \mathcal{C}_{\delta_B}$ for some given ${\delta_B}\in (0,1)$, as introduced in \eqref{hsu}.\\
\\
We are interested in solving the following non-convex minimization problem : 
\begin{equation}\label{ub20} m^\prime =\underset{(\chi_{\omega_A},\chi_{\omega_B})\in (\mathcal{C}_{{\delta_A}}\times \mathcal{C}_{{\delta_B}})}{\mbox{inf }}J(\chi_{\omega_A},\chi_{\omega_B}). \end{equation}
In classical problems in Calculus of Variations in which the homogenization theory is applied, we usually have 
minimization with respect to $\chi_{\omega_A}$ but not with respect to $\chi_{\omega_B}$. Minimization with respect
to $\chi_{\omega_B}$ is a new aspect. Interpretation of such problems was given in Introduction. As usual, there are no minimizers among characteristic
functions and there is a need to relax the problem. 

To find the relaxation of \eqref{ub20}, we consider sequence  $A^\epsilon\in\mathcal{M}(a_1,a_2;\Omega)$ given in \eqref{ta} with $\A\in \mathcal{C}_{\delta_A}$, 
and the corresponding state sequence $u^\epsilon\in H^1_0(\Omega)$ which solves \eqref{tx}.
Let us also consider a sequence $B^\epsilon\in\mathcal{M}(b_1,b_2;\Omega)$ given in \eqref{tb} with $\B\in\mathcal{C}_{\delta_B}$
and the functional becomes  
\begin{equation}\label{Sd16} J(\chi_{\omega_{A^\epsilon}},\chi_{\omega_{B^\epsilon}}) = \int_{\Omega}B^\epsilon\nabla u^{\epsilon}\cdot\nabla u^{\epsilon}\ dx. \end{equation}
The pair of microstructures $(\A,\B)$ constitute the following admissible set in which $\delta_A$,$\delta_B$ are given :
\begin{align}\label{ub17}
\mathcal{S} =  &\mbox{ Collection of all pair of microstructures } (\A,\B) \mbox{ having the properties : }\notag\\
&(\A,\B) \ |\ \lb\frac{|\omega_{A^\epsilon}|}{|\Omega|},\frac{|\omega_{A^\epsilon}|}{|\Omega|}\rb=({\delta_A},{\delta_B})\ \forall\epsilon, (\A,\B) \rightharpoonup (\theta_A(x),\theta_B(x))\notag\\ 
&\mbox{in $L^\infty(\Omega)$ weak*, with }\lb\frac{1}{|\Omega|}\int_\Omega \theta_A(x)dx,\frac{1}{|\Omega|}\int_\Omega \theta_B(x)dx\rb =({\delta_A},{\delta_B}),\notag\\
&\mbox{ and }\ A^\epsilon(x) \xrightarrow{H}A^{*}(x),\ B^\epsilon(x)\xrightarrow{A^\epsilon}B^{\#}(x).
\end{align}
Then $A^{*}\in\mathcal{G}_{\theta_A}$ and $(A^{*},B^{\#})\in\mathcal{G}_{(\theta_A,\theta_B)}$ a set which has been introduced in Section \ref{ad18}. Note that we need here the convergence of $B^\epsilon$ towards $B^{\#}$ relative to $A^\epsilon$.\\
Let $(\A,\B)\in\mathcal{S}$ and let $u^\epsilon\in H^1_0(\Omega)$ solve \eqref{tx}. Then \eqref{ub20} becomes :
\begin{equation}\label{Sd17} m^\prime = \underset{(\chi_{\omega_{A^\epsilon}},\chi_{\omega_{B^\epsilon}})\in \mathcal{S}}{inf}\lb\underset{\epsilon\rightarrow 0}{lim\hspace{2pt}inf}\ J(\chi_{\omega_{A^\epsilon}},\chi_{\omega_{B^\epsilon}})\rb=\underset{(\chi_{\omega_{A^\epsilon}},\chi_{\omega_{B^\epsilon}})\in \mathcal{S}}{inf}\lb\underset{\epsilon\rightarrow 0}{lim\hspace{2pt}inf}\ \int_{\Omega}B^\epsilon\nabla u^{\epsilon}\cdot\nabla u^{\epsilon}\ dx\rb.\end{equation}
By passing to the limit as $\epsilon\rightarrow 0$ in \eqref{Sd17} we obtain 
\begin{equation}\label{ub14}
J(\theta_A,\theta_B,A^{*},B^{\#}):= \underset{\epsilon\rightarrow 0}{lim\hspace{2pt}inf}  J(\chi_{\omega_{A^\epsilon}},\chi_{\omega_{B^\epsilon}}) = \int_{\Omega}B^{\#}\nabla u_{\theta_A}\cdot\nabla u_{\theta_A}\ dx
\end{equation}
where, $u_{\theta_A}\in H^1_0(\Omega)$ is the solution of \eqref{ot19} with $A^{*}\in\mathcal{G}_{\theta_A}$ and as a pair $(A^{*},B^{\#})\in\mathcal{G}_{(\theta_A,\theta_B)}$.\\
\\
The optimality result (cf. Remark \ref{ED4} and $(5)$ of Theorem \ref{qw6}) shows that the relaxation of \eqref{ub20} is given by :
\begin{equation}\label{lb11}
 m^\prime = \mbox{min\ }\{ J(\theta_A,\theta_B,A^{*},B^{\#}) ; \ (\theta_A,\theta_B)\in\mathcal{D}_{\delta_A}\times \mathcal{D}_{\delta_B},\ A^{*}\in\mathcal{G}_{\theta_A},(A^{*},B^{\#})\in\mathcal{G}_{(\theta_A,\theta_B)}\}
\end{equation}
where, $\mathcal{D}_{\delta_A},\mathcal{D}_{\delta_B}$ are defined for given ${\delta_A},{\delta_B}\in(0,1)$, as introduced in \eqref{hsv}.\\
\\
We split the integral $J(\theta_A,\theta_B,A^{*},B^{\#})$ in \eqref{ub14} into two parts as follows : 
\begin{equation}\label{ub16}
 J(\theta_A,\theta_B,A^{*},B^{\#}) = \int_{\Omega_{L1}} B^{\#}\nabla u_{\theta_A}\cdot\nabla u_{\theta_A}\ dx + \int_{\Omega_{L2}}{A^{*}}^{-1}B^{\#}{A^{*}}^{-1}\sigma_{\theta_A}\cdot\sigma_{\theta_A}\ dx 
\end{equation}
where, $\Omega_{L1} = \{x\in\Omega\ |\ \theta_A(x) \leq \theta_B(x)\}$  and $\Omega_{L2} = \{x\in\Omega\ |\ \theta_B(x)< \theta_A(x)\}$ and 
$\sigma_{\theta_A}=A^{*}\nabla u_{\theta_A}$. Let us assume that $|int\ \Omega_{L1}|=|\Omega_{L1}|$ and $|int\ \Omega_{L2}|=|\Omega_{L2}|$.\\
\\
Now using the pointwise bound on the above integrands $B^{\#}\nabla u_{\theta_A}\cdot\nabla u_{\theta_A}$ and ${A^{*}}^{-1}B^{\#}{A^{*}}^{-1}\sigma_{\theta_A}\cdot\sigma_{\theta_A}$ 
obtained in Remark \ref{siz}, \ref{siz2} for $A^{*}\in\mathcal{G}_{\theta_A}$ and the associated pair $(A^{*},B^{\#})\in\mathcal{G}_{(\theta_A,\theta_B)}$, we have :
\begin{equation}\label{tu}
J(\theta_A,\theta_B,A^{*},B^{\#})\geq J_1(\theta_A,\theta_B,A^{*},M_{AB},M_B) + J_2(\theta_A,\theta_B,A^{*},M^{\prime}_{AB},M^{\prime\prime}_{AB})
\end{equation}
where using \eqref{FG19} we define 
\begin{align*}
J_1(\theta_A,\theta_B,A^{*},M_{AB},M_B)= \int_{\Omega_{L1}}& \{ b_1I  + 2 (\overline{B}-b_1I)(\overline{A}_{\theta}-a_1I)^{-1}(A^{*}-a_1I)\notag\\
&+(Y_{\theta}- (\overline{B}-b_1 I))(\overline{A}_{\theta}-a_1I)^{-2}(A^{*}-a_1I)^2\}\nabla u_{\theta_A}\cdot\nabla u_{\theta_A}\ dx
\end{align*}
with $\overline{A}_\theta = (a_1\theta +a_2(1-\theta))I$, 
\begin{align*}Y_\theta &=\{\frac{b_1(a_2-a_1)^2}{a_1^2}\theta(1-\theta) +\frac{(b_2-b_1)^2}{b_1}\theta_B(1-\theta_B)-\frac{2(b_2-b_1)(a_2-a_1)}{a_1}\theta(1-\theta_B)\}M_{AB}\eta\cdot\eta\\  
                            &\qquad- \frac{\theta_B(1-\theta_B)(b_2-b_1)^2}{b_1}M_B\eta\cdot\eta. 
\end{align*}
$M_B,M_{AB}$ are symmetric non-negative definite matrices with unit trace, whose expression can be found in \eqref{siv} and \eqref{dc19} respectively 
replacing $\theta_A$ by $\theta$, where $\theta$ is uniquely determined by \eqref{eib}.\\
\\
Analogously using \eqref{siu} we define 
\begin{align*}
J_2(\theta_A,\theta_B,A^{*},M^{\prime}_{AB},M^{\prime\prime}_{AB})= \int_{\Omega_{L2}}& \{ cI  + 2(L_\theta-cI)(\underline{A}_{\theta}^{-1} - a_2^{-1}I)^{-1}({A^{*}}^{-1}- a_2^{-1}I)\notag\\
&+(Y^\prime_\theta- (L_\theta-cI))(\underline{A}_\theta^{-1} - a_2^{-1}I)^{-2}({A^{*}}^{-1}- a_2^{-1}I)^{2}\}\sigma_{\theta_A}\cdot\sigma_{\theta_A}\ dx
\end{align*}
with 
\begin{align*} 
&\underline{A}_\theta^{-1} =(\frac{\theta}{a_1} +\frac{1-\theta}{a_2})I,\ c = min\{\frac{b_2}{a_2^2},\frac{b_1}{a_1^2}\}, \ L_\theta= \{\frac{b_1}{a_1^2}\theta_B(x) + \frac{b_2}{a_1^2}(\theta(x) - \theta_B(x)) + \frac{b_2}{a_2^2}(1-\theta(x))\}I,\\
&Y^\prime_\theta = cL^\prime_{AB}(I-M^\prime_{AB})\eta\cdot\eta-  \frac{1}{c}L^{\prime\prime}_{AB}(I-M^{\prime\prime}_{AB})\eta\cdot\eta \ \ (\mbox{cf. \eqref{FG12} where $\theta_A$ is replaced by $\theta$.})
\end{align*}
where $M^\prime_{AB},M^{\prime\prime}_{AB}$ are symmetric non-negative definite matrices with unit trace can be found in \eqref{zz9} and \eqref{zz8} respectively replacing $\theta_A$ by $\theta$
and here $\theta$ is uniquely determined by \eqref{eic} and it depends only on $A^{*}$. 

For the $J_1$- term, we need to use the optimality of the equality in L1 bound described in (1) of Theorem \ref{qw6}. In fact, the construction in Example \ref{ot5} shows that the optimal microstructure is given by  $A^{*,1}_N$, the $N$ sequential laminate with core $a_2I$ and matrix $a_1I$ with $\theta_A\in \mathcal{D}_{\delta_A}$,
and the corresponding $(N,N)$-sequential laminates $B^{\#,1}_{N,N}$ (cf.\eqref{sie}) associated with $A^{*,1}_N$ under the condition $\omega_{A^\epsilon}\subset\omega_{B^\epsilon}$.
This construction can be carried out in $int(\Omega_{L1})$ because $\theta_A\leq\theta_B$. We have also computed the associated matrices $M_B,M_{AB}$ as $M_B=M_{AB}=\sum_{i=1}^N m_i\frac{e_i\otimes e_i}{e_i\cdot e_i}$
where, $m_i\geq 0$ for $i=1,..,N$ and $\sum_i m_i =1$. And further it is found that 
$$
J_1 = \int_{\Omega_{L1}}B^{\#,1}_{N,N}\nabla \widetilde{u_{\theta_A}}\cdot\nabla \widetilde{u_{\theta_A}}\ dx.
$$
Similarly, for the $J_2$- term, we need to use the optimality of the equality in L2 bound described in (2) of Theorem \ref{qw6}. In fact, the construction in Example \ref{Sd20} shows that the optimal microstructure is given by  $A^{*,2}_N$ the $N$ sequential laminate with core $a_1I$ and matrix $a_2I$ with $\theta_A\in \mathcal{D}_{\delta_A}$,
and the corresponding $(N,N)$-sequential laminates $B^{\#,2}_{N,N}$ (cf.\eqref{ot16}) associated with $A^{*,2}_N$ under the condition $\omega_{B^\epsilon}\subset\omega_{A^\epsilon}$.
This construction is possible in $\Omega_{L2}$ since $\theta_B<\theta_A$. Moreover, we have 
$$J_2 = \int_{\Omega_{L2}}(A^{*,2}_N)^{-1}B^{\#,2}_{N,N}(A^{*,2}_N)^{-1}\widetilde{\sigma_{\theta_A}}\cdot\widetilde{\sigma_{\theta_A}}\ dx.$$
In the above we have used $\widetilde{u_{\theta_A}}$ ($\widetilde{\sigma}_{\theta_A}$ is the associated homogenized flux) for the homogenized limit of $\widetilde{u^\epsilon_{\theta_A}}$ which is the solution of \eqref{tx} with coefficient matrix $\widetilde{A^{\epsilon}}$ defined in $\Omega$ by  
\begin{align*}
\widetilde{A^{\epsilon}} &= A^{\epsilon,1} \mbox{ in }\Omega_{L1} \mbox{ \ (the $N$-sequential laminate microstructure with core $a_2I$ and matrix $a_1I$)},\\
                         &= A^{\epsilon,2} \mbox{ in }\Omega_{L2} \mbox{ \ (the $N$-sequential laminate microstructure with core $a_1I$ and matrix $a_2I$)}.
\end{align*}
The associated homogenized matrix $\widetilde{A^{*}}$ in $\Omega$  is also given by 
\begin{align*}
\widetilde{A^{*}} &= A^{*,1}_N \mbox{ in }\Omega_{L1} \mbox{ \ (the $N$-sequential laminate with core $a_2I$ and matrix $a_1I$)},\\
                         &= A^{*,2}_N \mbox{ in }\Omega_{L2} \mbox{ \ (the $N$-sequential laminate with core $a_1I$ and matrix $a_2I$)}.
\end{align*}
Thus,
\begin{equation*}J \geq \int_{\Omega_{L1}}B^{\#,1}_{N,N}\nabla \widetilde{u_{\theta_A}}\cdot\nabla\widetilde{u_{\theta_A}}\ dx + \int_{\Omega_{L2}}(A^{*,2}_N)^{-1}B^{\#,2}_{N,N}(A^{*,2}_N)^{-1}\widetilde{\sigma_{\theta_A}}\cdot\widetilde{\sigma_{\theta_A}}\ dx.\end{equation*}
Therefore, from \eqref{lb11} it follows that 
\begin{equation*} m^\prime = \int_{\Omega_{L1}}B^{\#,1}_{N,N}\nabla\widetilde{u_{\theta_A}}\cdot\nabla\widetilde{u_{\theta_A}}\ dx + \int_{\Omega_{L2}}B^{\#,2}_{N,N}\nabla\widetilde{u_{\theta_A}}\cdot\nabla\widetilde{u_{\theta_A}}\ dx.\end{equation*}
Thus we have solved the problem \eqref{ub20} completely describing the minimum value as well the minimizers $(A^{*},B^{\#})$ and the underlying microstructures for both $A^{*}$ and $B^{\#}$.
They are found among ``piecewise'' $N$-laminates. It is interesting to see minimizer admitting an interface defined by $\{\theta_A=\theta_B\}$ across which it interchanges core and matrix values. 
\hfill\qed
\begin{remark}\label{bs6}
We have solved the minimization problem \eqref{ub20} as an application of Theorem \ref{qw6} $(1),(2),(5)$. One may solve other types of optimization problems as an application of other parts of Theorem \ref{qw6}. 
\begin{equation*}
(i)\ \underset{\chi_{\omega_A}\in \mathcal{C}_{{\delta_A}}}{\mbox{inf }}\ \underset{\chi_{\omega_B}\in \mathcal{C}_{{\delta_B}}}{\mbox{sup }}J(\chi_{\omega_A},\chi_{\omega_B}),
\ \mbox{ or, }\ (ii)\ \underset{\chi_{\omega_B}\in \mathcal{C}_{{\delta_B}}}{\mbox{inf }}\ \underset{\chi_{\omega_A}\in \mathcal{C}_{{\delta_A}}}{\mbox{sup }}J(\chi_{\omega_A},\chi_{\omega_B}) 
\end{equation*}
or,
\begin{equation*}
(iii)\ \underset{(\chi_{\omega_A},\chi_{\omega_B})\in (\mathcal{C}_{{\delta_A}}\times \mathcal{C}_{{\delta_B}})}{\mbox{sup }}J(\chi_{\omega_A},\chi_{\omega_B}).
\end{equation*}
For instance, resolution of $(i)$ requires optimality of the region $(Li,Uj)$, $i,j=1,2$ described in Theorem \ref{qw6} $(3),(4)$. It is more complicated than \eqref{ub20}. For details, see \cite{TG-MV}. 
\hfill\qed\end{remark}
\paragraph{Acknowledgement :}
This work has been carried out within a project supported by the 
Airbus Group Corporate Foundation Chair ``Mathematics of Complex Systems'' 
established at Tata Institute Of fundamental Research (TIFR) - Centre for Applicable Mathematics.

\bibliographystyle{plain}
\bibliography{Master_bibfile}
\end{document}